\def\bb#1\eb{\textcolor{blue}{#1}} %
\def\br#1\er{\textcolor{red}{#1}} %
\def\bm#1\em{\textcolor{purple}{#1}} %
\newcommand{\dv}{\dot{\partial}}
\newcommand{\canon}{\mathbb{C}}
\newcommand{\dive}{\mathrm{div}}
\newcommand{\ZZ}{\mathcal{Z}}
\newcommand{\an}{\Gamma}
\newcommand{\N}{\mathrm{N}}
\newcommand{\LC}{\N^{L}}
\newcommand{\G}{\mathrm{G}}
\newcommand{\sprL}{\G^{L}}
\newcommand{\torN}{\mathrm{Tor}}
\newcommand{\di}{n}
\newcommand{\C}{\mathrm{C}}
\newcommand{\la}{\mathrm{Lan}}
\newcommand{\RN}{\mathcal{R}}
\newcommand{\RL}{\mathcal{R}^L}
\newcommand{\covan}{\nabla^\an}
\newcommand{\covN}{\nabla^\N}
\newcommand{\covL}{\nabla^L}
\newcommand{\ri}{\mathrm{Ric}}
\newcommand{\riL}{\mathrm{Ric}^L}
\newcommand{\B}{\mathcal{B}^{\J}}
\newcommand{\J}{\mathcal{J}}
\newcommand{\dom}{D}
\newcommand{\accion}{\mathscr{S}^{\dom}}
\newcommand{\vol}{d\mu}
\newcommand{\lie}{\mathscr{L}}
\newcommand{\sol}{\mathrm{Sol}_L(A)}
\newcommand{\solsim}{\mathrm{Sol}^{\mathscr{S}\mathrm{ym}}_L(A)}
\newcommand{\solprop}{\mathrm{Sol}_L(\overline{A})}
\newcommand{\prsim}{\Pi^{\mathscr{S}\mathrm{ym}}}
\newcommand{\Z}{\mathcal{W}}
\newcommand{\s}{\sigma^{\Z}}
\newcommand{\K}{\mathcal{K}^{\Z}}
\newcommand{\n}{\nu}
\newcommand{\f}{f}
\newcommand{\ka}{\kappa}
\newcommand{\II}{\bar{c}}
\newcommand{\JJ}{\bar{d}}
\newcommand{\AAA}{\bar{c}}
\newcommand{\BB}{\bar{d}}
\newcommand{\SR}{\Sigma^{\R}}
\newcommand{\SL}{\Sigma^{F_p}}
\newcommand{\R}{\mathbf{r}}
\newcommand{\z}{z}
\newcommand{\A}{\mathcal{A}}
\newcommand{\commentt}[1]{}
\newcommand{\MA}{M_A}
\newcommand{\MAA}{M_{\overline{A}}}
\newtheorem{thm}{Theorem}[section]
\newtheorem{prop}[thm]{Proposition}
\newtheorem{lem}[thm]{Lemma}
\newtheorem{cor}[thm]{Corollary}
\theoremstyle{definition}
\newtheorem{defn}[thm]{Definition}
\newtheorem{rem}[thm]{Remark}
\title[ palatini formalism in pseudo-finsler geometry  ]{The Einstein-Hilbert-Palatini formalism \\ in Pseudo-Finsler Geometry}
\author[M. A. Javaloyes]{Miguel \'Angel Javaloyes}
\address{Departamento de 
	Matem\'aticas, \hfill\break\indent
	Universidad de Murcia, \hfill\break\indent
	Campus de Espinardo,\hfill\break\indent
	30100 Espinardo, Murcia, Spain}
\email{majava@um.es}
\author[M. S\'anchez]{Miguel S\'anchez}
\address{Departamento de Geometr\'{\i}a y Topolog\'{\i}a, Facultad de Ciencias \& \hfill\break\indent
IMAG (Centro de Excelencia Mar\'{\i}a de Maeztu) \hfill\break\indent
	Universidad de Granada, 18071 Granada, 
	Spain}
\email{sanchezm@ugr.es}
\author[F. F. Villaseñor]{Fidel F. Villaseñor}
\address{Departamento de Geometr\'{\i}a y Topolog\'{\i}a, Facultad de Ciencias \& \hfill\break\indent
IMAG (Centro de Excelencia Mar\'{\i}a de Maeztu) \hfill\break\indent
	Universidad de Granada, 18071 Granada, 
	Spain}
\email{fidelfv@ugr.es}
\thanks{
MAJ was partially supported by the project  PGC2018-097046-B-I00 funded by MCIN/ AEI /10.13039/501100011033/ FEDER ``Una manera de hacer Europa'' and Fundaci\'on S\'eneca project with reference 19901/GERM/15.  This work is a result of the activity developed within the framework of the Programme in
	Support of Excellence Groups of the Regi\'on de Murcia, Spain, by Fundaci\'on S\'eneca, Science and Technology Agency of the Regi\'on de Murcia.
MS and FFV were partially supported by 
the project  PID2020-116126GB-I00 funded by MCIN/ AEI /10.13039/501100011033, by the project PY20-01391 (PAIDI 2020)  funded by Junta de Andaluc\'{\i}a---FEDER and by the framework of IMAG-Mar\'{\i}a de Maeztu grant CEX2020-001105-M funded by MCIN/AEI/ 10.13039/50110001103. FFV is partially supported also by an FPU grant (Formación de Profesorado Universitario) from the Spanish Ministerio de Universidades.
}
\thanks{2020 {\it MSC.} Primary: 58J60, 
83D05; Secondary: 53C21, 35A15. \\
	\textbf{Key words:} Finsler spacetimes, Palatini formalism, Hilbert action, 
	uniqueness of partially analytic solutions,
	Finsler-Einstein equations, nonlinear connections, geodesics.}
\begin{document}   

\begin{abstract}
	A systematic development of the so-called Palatini formalism is carried out for pseudo-Finsler metrics $L$ of any signature. Substituting in the classical Einstein-Hilbert-Palatini functional the scalar curvature by the Finslerian Ricci scalar constructed with an independent nonlinear connection $\N$, the   affine and metric   equations for $(\N,L)$ are obtained. In Lorentzian signature with vanishing mean Landsberg tensor $\la_i$,  both the Finslerian Hilbert metric equation and the classical Palatini conclusions are recovered  by means of  a combination of techniques  involving  the (Riemannian) maximum principle and an original argument about divisibility and fiberwise analyticity.  Some of these  findings  are also extended to classical Riemannian solutions by using the eigenvalues of a Laplacian. When $\la_i\neq 0$, the Palatini conclusions fail necessarily, however, a good number of properties of the solutions remain. The framework and proofs are built up in detail.    
\end{abstract}
\maketitle

\tableofcontents



%
%
%
%
%
%
%
%
%
%
%
%
\section{Introduction}

Recently, the interest in Finslerian modifications of General Relativity has grown  \cite{BaDra12,observers,Cas21,CaSta18,EZDGH20,FuPa16,HaPer19,HPV20,KSS14,LeLi17,LLN17,ming3,RaVa15}  motivated in part by the role of Finsler Geometry in the Standard-Model Extension \cite{EdKos18,Kos11,KRT12} and Lorentz violation. The   search for an extension of the Einstein equations to this setting emerges as a fundamental issue. A first way to find them is  to consider Finslerian generalizations of the Einstein tensor $\bf G$,  having several  alternatives  \cite{LiCh07,miron-anastasiei,Rutz,Sta09,Vacaru}. A  second  way is provided by Hilbert's variational approach, developed by Hohmann, Pfeifer, Voicu   and Wohlfarth \cite{HPV,HPV20,pfeifer-wolfhart}, these authors take the natural generalization $\mathscr{S}$ of the Hilbert functional. This $\mathscr{S}$ is given by the integral of the $0$-homogeneized   (Finslerian) Ricci scalar of any Lorentz-Finsler metric $L$   for   a given manifold $M$ (see \cite{HPV21} for a general framework dealing with action functionals of arbitrary homogeneous fields).   The corresponding  Euler-Lagrange equation leads to a  scalar which, when restricted to Lorentzian metrics, yields naturally a tensor field; this tensor is not  exactly equal to $\bf G$,  
but it  still  leads to the same vacuum equations for such metrics. The aim of the present article is to deepen in the  variational approach to the Einstein equations  by considering the so-called Palatini formalism\footnote{This is the usual name in textbooks, even though the approach was actually invented in 1925 by Einstein \cite{Francaviglia}. Anyway, the name is maintained here so that it is distinguished from more general metric-affine formalisms.} for pseudo-Finsler metrics of arbitrary signature, paying special attention to the Lorentzian and positive definite cases. Let us notice that there are also some works that study Finslerian
Einstein manifolds with a variational approach, such as \cite{ChSh08} (which overcomes certain issues encountered in\footnote{See D. Bao's report in Mathematical Reviews, MR1365208 (99m:53130).} \cite{Akbar}). In particular,   in \cite{ChSh08} the authors   use a similar functional to that of \cite{HPV,pfeifer-wolfhart} but dividing by  the total volume in a positive definite setting. Another different approach is the one in \cite{Asanov}, where, indeed, the author explores several possibilities, using in particular the concept of osculation.   Finally, beyond pseudo-Finsler geometry, in \cite{Sta20} variational equations for any Sasaki-type metric on the tangent bundle of $M$ are derived by taking the Palatini formalism into account.  

Recall that the classical Palatini approach considered the affine connection $\nabla$ and the pseudo-Riemanian metric $g$ as independent variables for the Hilbert functional and, given $g$, it recovered its Levi-Civita connection $\nabla^g$ as the unique symmetric solution of the Euler-Lagrange affine equation for $\nabla$ (the properties of the non-symmetric ones are also known \cite{palatini}). This was a milestone for the mathematical foundations of Relativity because it ensured that the connection $\nabla$ which describes gravity is the same one as  the connection $\nabla^g$ which provides the critical points of the length or energy functionals for curves. Thus, light rays and free falling particles are  unequivocally  described by this unique connection. In the Finslerian setting, to ensure such a consistency is a much more  prioritary  task, because there is a huge freedom  when looking for  associated (linear or nonlinear) connections.

Consistently, here we will maintain the functional 
$\mathscr{S}$ but its variables will be  the  nonlinear connection $\N$ and the pseudo-Finsler metric $L$. Notice that no other kind of (linear) Finsler connection is required for the construction of the Ricci scalar. That is, $(\N,L)$ is enough for our functional  and we remain formally close to the classical Palatini setting, thus obtaining  coupled \textit{affine} \eqref{eq:affine equation} and \textit{metric} \eqref{eq:metric equation} Palatini equations.  However, further functionals should be tractable with the basic ingredients that we will develop.  

The  central  question is, given $L$, to what extent its associated nonlinear $\LC$ is the unique affine solution $\N$. In the pseudo-Riemannian case, a simple argument shows that all of these can be written as $\nabla^g+ \mathcal{A}\otimes\mathrm{Id}$, where the arbitrary  1-form $\mathcal{A} \equiv \mathcal{A}_i(x)$ ($\mathrm{Id}\equiv \delta^i_j$ is the identity tensor) determines the torsion \cite{palatini}. In the Finslerian case, the torsion part of $\N$ becomes $\mathcal{A}\otimes \canon$ with $\mathcal{A} \equiv \mathcal{A}_i(x,y)$ ($\canon\equiv y^a\partial_{y^a}$ is Liouville's) and the problem is reduced to the case of symmetric $\N$.  That is, as a first result (Th. \ref{thm:reduction symmetric}, Cor. \ref{cor:projection}): 
\begin{quote}
{\bf Theorem A}. Given a pseudo-Finsler metric $L$, the solutions of the affine equation  have a fibered structure on the  symmetric solutions with fiber isomorphic  to the space of  anisotropic (0-homogeneous) $1$-forms  $\mathcal{A}$, 
so that,  for each solution $\N$, there is a unique symmetric one $\prsim(\N)$ such that $\N=\prsim(\N) + \mathcal{A}\otimes \canon$ for some $\mathcal{A}$.  
\end{quote}

However, the symmetric case is not trivial, as $\N$ is governed by a PDE at each $p\in M$. Even more, the following subtlety appears for global uniqueness at $p$: when $L$ is indefinite, its domain $A\subseteq\mathrm{T}M\setminus\mathbf{0}$ is naturally  conic,  being $L_{\partial A}=0$, as the indicatrix (and some homogeneous elements) becomes ill-defined at $\partial A$. Notice also that, in Lorentzian signature, $A$ would correspond to the future-directed timelike directions, and the restriction to these (including the  future-directed  lightlike directions as a limit) is well motivated by physical interpretations  \cite{observers}.   However, we will develop  (fiberwise)  global techniques which work for \textit{proper} solutions, i.e., smoothly extendible to $\partial A$ (defns.~\ref{defn:proper}, \ref{DEF_5.1}). The fibered structure in Theorem A is naturally transferred to the proper solutions (Prop. \ref{prop:symmetric proper reduction}) and we prove the existence of a unique fibre in relevant general cases such as the following (see Th. \ref{thm: analytic uniqueness}):

\begin{quote}
{\bf Theorem B}.  Any analytic proper indefinite pseudo-Finsler metric $L$ admits  at most one analytic proper symmetric solution $\N$ of the affine variational equation \eqref{eq:affine equation}.
\end{quote} 
The proof relies on an original divisibility argument which is developed in full detail (Lem. \ref{lem:powers of L}). Moreover, we emphasize that the essential property at this point is just \textit{fiberwise analyticity} (Def. \ref{DEF_5.5}, Rems. \ref{REM_5.6}, \ref{REM_5.10}). This is much weaker than analyticity and, indeed, it holds trivially for all the smooth (non-analytic) affine and pseudo-Riemannian elements.

We also give other arguments, based on the maximum principle and the eigenvalues of the Laplacian, which  yield  some extensions of Th. A without fiberwise analyticity (Th. \ref{thm:C_i =00003D 0 Lorentz-Finsler}, Cor \ref{COR_5.14}), as well as applications to the positive definite case (Th. \ref{thm:C_i = 0 riemannian}). 
These arguments provide also the proof of the following result (Th. \ref{TH_5.17}), which is relevant for the  metric Palatini equation. 
\begin{quote}
{\bf Theorem C.} Let $L$ be a   (properly)   Lorentz-Finsler metric  and $\N$ any nonlinear connection  smoothly extendible to $\partial A$ with Ricci scalar  $\ri$. If the Einstein-type scalar $\left(\di+2\right)\ri-L\,g^{ab}\,\ri_{\cdot a\cdot b}$ vanishes, then $\ri$ vanishes too.
\end{quote}
Indeed,  when the mean Landsberg tensor $\la_i$ vanishes,  as it occurs in the classical case, this equation agrees with the one obtained by the Hilbert approach (i.e., the aforementioned in \cite{HPV}). So, the result above is relevant for the consistency of the vacuum Einstein equations. In comparison with the elementary pseudo-Riemannian case (Rem. \ref{REM_5.18}),  where it is valid in any signature, our result is technically more complicated and has a properly Finslerian applicability. As the aforementioned  results,  it  relies on Lem.~\ref{lem:equation f},  also proven in full detail.
 
To complete the approach, one should check at what extent the natural (Berwald) nonlinear connection $\LC$ associated with $L$ plays a   role  similar to that which $\nabla^g$ plays in the classical Palatini setting.   Notice that $N^L$ is naturally associated with the geodesic spray of $L$, so this issue is related to the Palatini physical interpretations about free falling observers. The solution involves  the \textit{Landsberg} tensor $\la$ 
or, more precisely, the mean Landsberg $\la_i= \la^a_{  ai}$ (see Cor. \ref{cor:projection}, Rem. \ref{REM_4.15}, Prop. \ref{prop:metric compatibility 2}, Rem.~\ref{rem:metric compatbility geodesics}):
 
\begin{quote}
 {\bf Theorem D}.  Given a pseudo-Finsler $L$, its nonlinear Berwald connection $\LC$ is a solution of the affine variational equation \eqref{eq:affine equation} iff $\la_i = 0$.

In this case, any other solution $\N$  shares  its  pregeodesics with $\LC$ iff it lies in the same fiber, i.e.,  $\N=\LC + \mathcal{A}\otimes \canon$ for some $\mathcal{A}$; then, it shares  geodesics iff   $\mathcal{A}_a\,y^a=0$.  

Otherwise, when $\la_i$ does not vanish identically, neither $\LC$ is a solution nor any solution $\N$ can share pregeodesics with $\LC$.

In any case, when $L$ and $\N$ are proper,  any $\N$-geodesic $\gamma$ has constant sign of $L(\dot\gamma)$.  Moreover, in the Lorentz-Finsler case (no matter how $\la_i$ is), the causal character (timelike, lightlike) of the $\N$-geodesics does not change,  the lightlike $\N$-geodesics coincide with the corresponding $L$-geodesics  and, hence, the lightlike $\N$-pregeodesics are the cone (pre-)geodesics inherent to the $L$-cone structure.

\end{quote} 
It is worth pointing out that the properties about sharing geodesics and pregeodesics hold not only for the fiber  of $\LC$ but also for any other fiber of solutions (with independence of $\la_i$). Moreover, further compatibility   conditions   of $\nabla$ and $L$ appear for connections differing only in some $\mathcal{A}\otimes \canon$   from a symmetric one   (not necessarily solutions), see Prop. \ref{prop:metric compatibility 1}. As a summary of all these results:
\begin{quote}
\textit{When  $\la_i = 0$, the fibered structure of the affine solutions, the fact that $\LC$ determines one of such fibers,  
the uniqueness of this fiber under mild conditions (properness, fiberwise analyticity), the subsequent  status of $\LC$ as the unique symmetric solution, and the fact that all  these solutions share pregeodesics (those   of $L$), recover and extend naturally all the conclusions of the classical Palatini formalism for the connection (apart from those for the metric, at least in the vacuum case). However, no such extension is possible when $\la_i \neq 0$.}
\end{quote}
 As commented above in Theorem D, when  $\la_i \not= 0$,  the solutions $\N$ of the affine equation do not share   pregeodesics   with $L$. This fact can have several interpretations. Taking into account that the main goal of the Hilbert functional is to obtain the Einstein field equations, one could infer that the solutions $\N$ are very suitable for computing them. Nevertheless, it is not clear which is the best connection to compute the trajectories of the Finsler spacetime. The connections $\N$   relate more closely the Jacobi equation to our field equation, whereas   the geodesics of $L$ satisfy a variational principle. 

From the  technical viewpoint, we introduce detailedly all the elements we need, which are spread in the literature under different viewpoints and implicit frameworks. Full proofs of the results are also provided (including straightforward but lengthy computations) to permit traceability. 

With this spirit, in \S \ref{s.2} the required ingredients on Finsler Geometry and anisotropic calculus are introduced. The so-called   \textit{Finslerian connections} \cite{dahl, minguzzi}, i.e., pairs $(\N,\nabla^*)$ composed  by a nonlinear $\N$ and a linear connection $\nabla^*$, the latter for the vertical bundle $\mathrm{V}A\longrightarrow A$, do not really enter into our work;  
instead, anisotropic connections \cite{anisotropic, mediterranean} will suffice and will introduce a simple and intuitive Koszul derivative directly on $M$. Anyway, any anisotropic connection $\nabla$ can be identified canonically with a vertically trivial $\nabla^*$ (see \cite{gelocor} for this and other results linking both approaches), so the readers tied to this classical  framework  can rewrite our computations in the way they prefer. In \S \ref{s.3}, the  metric-affine (Palatini)  variational calculus is developed. Here, independently, $L$ yields  the indicatrix $\left\{L=1\right\}$ and a volume element, while $\N$ yields the Ricci scalar (Remark \ref{rem:vol independent of N}). Full details of the proofs of the affine and metric equations, as well as of the crucial divergence formula in the suitably projectivized space, are provided in the Appendices. In \S
 \ref{sec:affine equation}, the study of the solutions for $\N$ is reduced to the symmetric case, including the fibered structure of the space of solutions and the properties shared by the elements of each fiber (Cor. \ref{cor:projection}). Moreover, a detailed study of the different types of metric and geodesic compatibility  for the solutions  is carried out (Props. \ref{prop:metric compatibility 1}, \ref{prop:metric compatibility 2}, \ref{prop:metric compatibility 3}).
Finally,  in \S \ref{sec:proper solutions}, the  main results on proper solutions are distributed into two subsections, the first one on techniques related to divisibility  by $L$  (eventually using fiberwise analyticity), and the second one related to the maximum principle. Using both types of results, the classical solutions 
are revisited in the last subsection.   
 
\section{Standard geometric objects}\label{s.2}
The main aim of this section is to fix notation and conventions. 

Let $M$ be a connected\footnote{Only for simplicity. In general, all of our developments are valid on each connected component of $M$.} smooth\footnote{This will mean $\mathcal{C}^\infty$ and all the objects will be smooth. Nevertheless, some results may not need so much regularity. For instance, those of \S \ref{sec:elliptic} only require a finite number of vertical derivatives existing with continuity at each $p\in M$.} manifold of dimension\footnote{In dimension $1$ our action functional would trivialize.} $n\geq2$. The Einstein convention is employed, the indices $a$, $b$, $c$, $d$, $e$, $i$, $j$, $k$, $l$ run in the set $\left\{1,...,\di\right\}$, and for clarity, we use $i$, $j$, $k$ as free indices and $a$, $b$, $c$, $d$, $e$ as summation indices. Charts $(U,x=(x^1,...,x^\di))$ for $M$ induce natural charts $(\mathrm{T}U,(x,y)=(x^1,...,x^\di,y^1,...,y^\di))$ for $\mathrm{T}M$. Putting $\partial_i:=\partial/\partial x^i$ and $\dot \partial_i:=\partial/\partial y^i$, under a change $(U,x)\rightsquigarrow(\bar{U},\bar{x})$, 
\[
\bar{\partial}_i=\frac{\partial x^a}{\partial \bar{x}^i}\,\partial_a+\bar{y}^b\,\frac{\partial^2 x^a}{\partial \bar{x}^b\,\partial \bar{x}^i}\,\dot{\partial}_a,\qquad\dot{\bar{\partial}}_i=\frac{\partial x^{a}}{\partial\bar{x}^{i}}\,\dot{\partial}_a
\]
as local vector fields on $\mathrm{T}M$. Let $A\subseteq\mathrm{T}M$ be open with $\pi(A)=M$ for $\pi$ the natural projection. The restriction $\pi_A\vcentcolon A\longrightarrow M$ defines a fibered manifold with \textit{fibers} $A_p:=A\cap\mathrm{T}_pM$ ($p\in M$) and \textit{vertical distribution} $\mathrm{V}A\longrightarrow A$,
\[
\mathrm{V}_vA:=\mathrm{Ker}\,\mathrm{T}_v\pi_A=\mathrm{T}_v(A_{\pi(v)})=\mathrm{Span}\left\{ \left.\dot{\partial}_i\right|_{v}\right\}\subseteq\mathrm{T}_vA
\] 
($v\in A$, where $\mathrm{T}_v\pi_A$ is the \textit{tangent map} or \textit{differential} of $\pi_A$). The reader is referred to \cite{krupka} for the general theory of fibered manifolds. We shall employ the framework of the anisotropic tensors \cite{anisotropic,mediterranean}; especially, the viewpoint and conventions of \cite{gelocor} can be helpful for the reader. An \textit{$r$-contravariant $s$-covariant $A$-anisotropic tensor} is a section $T$ of the pullback bundle 
\[
\pi_A^\ast(\overset{r)}{\bigotimes}\mathrm{T}M\otimes\overset{s)}{\bigotimes}\mathrm{T}^\ast M)\longrightarrow A;
\] 
we denote by $\mathcal{T}_s^r(\MA)$ the space of such sections. They have locally the form 
\[
T_v=T_{b_1,...,b_s}^{a_1,...,a_r}(v)\left.\partial_{a_1}\right|_{\pi(v)}\otimes...\otimes\left.\partial_{a_r}\right|_{\pi(v)}\otimes\mathrm{d}x^{b_1}_{\pi(v)}\otimes...\otimes\mathrm{d}x^{b_s}_{\pi(v)}
\]
for certain $T_{j_1,...,j_s}^{i_1,...,i_r}(x,y)$'s defined on $A\cap\mathrm{T}U$ that transform tensorially under $(U,x)\rightsquigarrow(\bar{U},\bar{x})$. 
There is a \textit{vertical isomorphism} identifying anisotropic with vertical vector fields on $A$: 
\begin{equation} \label{e_isomorfismo vertical}
X_{v}=X^{a}(v)\left.\partial_a\right|_{\pi(v)}\in\mathrm{T}_{\pi(v)}M\longleftrightarrow X_{v}^{\mathrm{V}}=X^{a}(v)\left.\dot{\partial}_a\right|_{v}\in\mathrm{V}_{v}A
\end{equation}
 (notice that when the $X^i$'s are constant on a fiber $A_p$, this formula makes explicit the identification between the vertical spaces at the different $v\in A_p$). In particular, the canonical anisotropic vector $\canon\in\mathcal{T}_0^1(\MA)$ defined by 
\begin{equation}
	\canon_v=v=y^a(v)\left.\partial_a\right|_{\pi(v)}
	\label{eq:canon}
\end{equation}
corresponds to the  \textit{Liouville vector field} $\canon^{\mathrm{V}}$ \cite{minguzzi,bucataru,HPV} (note that in the last two references $\canon$ is used for what we denote $\canon^{\mathrm{V}}$). The vertical derivatives 
\[
T_{j_1,...,j_s\,\cdot j_{s+1}}^{i_1,...,i_r}(x,y):=\dv_{j_{s+1}}T_{j_1,...,j_s}^{i_1,...,i_r}(x,y)=\frac{\partial T_{j_1,...,j_s}^{i_1,...,i_r}}{\partial y^{j_{s+1}}}(x,y)
\]
define a new anisotropic tensor: the \textit{vertical differential} of $T$; we denote it by $\dv T\in\mathcal{T}_{s+1}^r(\MA)$ and by $\dv_X T\in\mathcal{T}_s^r(\MA)$ its contraction with $X$ in the new index. For instance, 
\[
\dv_\canon T=y^{b_{s+1}}\,T_{b_1,...,b_s\,\cdot b_{s+1}}^{a_1,...,a_r}\,\partial_{a_1}\otimes...\otimes\partial_{a_r}\otimes\mathrm{d}x^{b_1}\otimes...\otimes\mathrm{d}x^{b_s}.
\]

 An anisotropic tensor $T$ can actually be \textit{isotropic}, in that $T_{j_1,...,j_s}^{i_1,...,i_r}(x,y)=T_{j_1,...,j_s}^{i_1,...,i_r}(x)$. This is equivalent to the constancy of the restriction $T_p$ to each fiber $A_p$ ($p\in M$). Hence, it means that $T$ reduces to a tensor field on $M$, which we will not distinguish notationally from $T$ itself. 

\subsection{Homogeneous tensors}  The following three notions of (positive) homogeneity are extracted from \cite{anisotropic} and \cite[Defs. 1.5.2 and 1.5.3]{bucataru} respectively. 

\begin{defn} \label{def:homogeneity}  $A$ is \textit{conic} if $A\subseteq\mathrm{T}M\setminus\mathbf{0}$ and $\lambda\,v\in A$ for all $v\in A$, $\lambda\in\mathbb{R}^+$. In such a case, let $\alpha\in\mathbb{R}$.
	\begin{enumerate}
		\item $T\in\mathcal{T}_s^r(M_A)$ is \textit{$\alpha$-homogeneous} if $T_{\lambda\,v}=\lambda^{\alpha}\,T_v$. That is, its coordinates are $\alpha$-homogeneous (in $y$):   $T_{j_1,...,j_s}^{i_1,...,i_r}(x,\lambda y)=\lambda^\alpha\, T_{j_1,...,j_s}^{i_1,...,i_r}(x,y)$.
		\item A vector field $\mathscr{X}$ on $A$ is \textit{$\alpha$-homogeneous} if $\mathscr{X}_{\lambda\,v}=\lambda^{\alpha-1}\left(\mathrm{T}h_{\lambda}\right)_v(\mathscr{X}_v)$, where $h_\lambda\vcentcolon A\longrightarrow A$, $h_\lambda(v)=\lambda\,v$. That is, if $\mathscr{X}=\mathscr{X}^a\,\partial_a +\mathscr{X}^{\di+a}\, \dot\partial_a$, then $\mathscr{X}^i(x,y)$ and $\mathscr{X}^{\di+i}(x,y)$ are, resp., $(\alpha-1)$- and $\alpha$-homogeneous.
		\item An $s$-form $\omega$ on $A$ is \textit{$\alpha$-homogeneous} if $\left(\mathrm{T}h_{\lambda}\right)_v^{\ast}(\omega_{\lambda\,v})=\lambda^{\alpha}\,\omega_v$, ($\ast$ means pullback). That is, if $\omega_{i_1,...,i_{\mu}\mid j_1,...,j_{\nu}}$ is the component of $\omega$ on $\mathrm{d}x^{i_1}\wedge...\wedge\mathrm{d}x^{i_{\mu}}\wedge\mathrm{d}y^{j_1}\wedge...\wedge\mathrm{d}y^{j_{\nu}}$ ($\mu+\nu=s$), then $\omega_{i_1,...,i_{\mu}\mid j_1,...,j_{\nu}}(x,y)$ is $(\alpha-\nu)$-homogeneous.
	\end{enumerate}
	Moreover, $\mathrm{h}^{\alpha}\mathcal{T}_s^r(M_A)$ and $\mathrm{h}^{\alpha}\mathcal{F}(A):=\mathrm{h}^{\alpha}\mathcal{T}_0^0(M_A)$ will denote the space of $\alpha$-homogeneous anisotropic tensors and functions, resp. 
\end{defn}

Clearly, $\dv\vcentcolon\mathrm{h}^{\alpha}\mathcal{T}_s^r(\MA)$ $\longrightarrow\mathrm{h}^{\alpha-1}\mathcal{T}_{s+1}^r(\MA)$ is a well-defined linear morphism. The items (i) and (ii) are consistent with the identification of anisotropic  and vertical vector fields in \eqref{e_isomorfismo vertical}. In particular, both $\canon$ and $\canon^\mathrm{V}$ are $1$-homogeneous, whereas any isotropic tensor field ($T_{j_1,...,j_s}^{i_1,...,i_r}(x,y)=T_{j_1,...,j_s}^{i_1,...,i_r}(x)$) is $0$-homogeneous. The homogeneities of the coordinates of a $1$-form $\omega=\omega_{a\mid}\,\mathrm{d}x^{a}+\omega_{\mid a}\,\mathrm{d}y^{a}$ are switched with respect to those of $\mathscr{X}=\mathscr{X}^a\,\partial_a +\mathscr{X}^{\di+a}\, \dot\partial_a$ in concordance with the intrinsic meanings of $\mathscr{X}^{i}= 0$ and $\omega_{\mid i}=0$.  The above expressions in coordinates and Euler's Theorem yield directly the following characterizations (consistently with \cite[(6)]{anisotropic} and \cite[Ths. 1.5.2 and 1.5.3]{bucataru}). 

\begin{prop} \label{prop:euler}  Assume that $A$ is conic. Then: 
\begin{enumerate}  
		\item  $T\in\mathcal{T}_s^r(M_A)$ is in $\mathrm{h}^{\alpha}\mathcal{T}_s^r(M_A)$ if and only if $\dv_\canon T=\alpha\,T$, i.e., 
		\[
		y^{b_{s+1}}\,T_{b_1,...,b_s\,\cdot b_{s+1}}^{a_1,...,a_r}(x,y)=\alpha\,T_{b_1,...,b_s}^{a_1,...,a_r}(x,y).
		\] 
		
		\item  A vector field $\mathscr{X}$ on $A$ is $\alpha$-homogeneous if and only if its Lie derivative along the Liouville field satisfies $\lie_{\canon^{\mathrm{V}}}(\mathscr{X})=\left(\alpha-1\right)\mathscr{X}$. 
		
		\item  An $s$-form $\omega$ on $A$ is $\alpha$-homogeneous if and only if $\lie_{\canon^{\mathrm{V}}}(\omega)=\alpha\,\omega$. 
\end{enumerate} 
\end{prop}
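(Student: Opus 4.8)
The plan is to reduce each of the three equivalences to the scalar \emph{Euler's Theorem}: a function $f$ on the conic set $A$ lies in $\mathrm{h}^{\beta}\mathcal{F}(A)$ if and only if $y^{a}\,\dv_{a}f=\beta\,f$. I would establish this preliminary fact first; the forward implication follows by differentiating $f(x,\lambda y)=\lambda^{\beta}f(x,y)$ at $\lambda=1$, and the converse by integrating the ODE $g'(\lambda)=(\beta/\lambda)\,g(\lambda)$ for $g(\lambda):=f(x,\lambda y)$, which is legitimate precisely because coniticity keeps the whole ray $\{\lambda y:\lambda\in\mathbb{R}^{+}\}$ inside $A$. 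Item (i) is then immediate: by Definition \ref{def:homogeneity}(i), $T\in\mathrm{h}^{\alpha}\mathcal{T}_s^r(M_A)$ exactly when each component $T^{i_1,\dots,i_r}_{j_1,\dots,j_s}(x,\cdot)$ is $\alpha$-homogeneous, i.e. $y^{b}\,\dv_{b}T^{\dots}_{\dots}=\alpha\,T^{\dots}_{\dots}$, and since the left-hand side is by definition the component array of $\dv_\canon T$, this reads $\dv_\canon T=\alpha\,T$. This case is essentially bookkeeping.

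For item (ii) I would compute $\lie_{\canon^{\mathrm{V}}}\mathscr{X}=[\canon^{\mathrm{V}},\mathscr{X}]$ directly in the natural coordinates $(x,y)$, writing $\canon^{\mathrm{V}}=y^{a}\,\dv_{a}$ and $\mathscr{X}=\mathscr{X}^{a}\,\partial_{a}+\mathscr{X}^{\di+a}\,\dv_{a}$. Because $\canon^{\mathrm{V}}$ carries only the vertical components $y^{a}$, the horizontal part of the bracket is simply $\big(y^{a}\,\dv_{a}\mathscr{X}^{b}\big)\partial_{b}$, whereas in the vertical part the bracket term $-\mathscr{X}^{\di+a}\,\dv_{a}y^{c}=-\mathscr{X}^{\di+c}$ (coming from $\dv_{a}y^{c}=\delta^{c}_{a}$) produces an extra summand, giving $\big(y^{a}\,\dv_{a}\mathscr{X}^{\di+c}-\mathscr{X}^{\di+c}\big)\dv_{c}$. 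Hence $\lie_{\canon^{\mathrm{V}}}\mathscr{X}=(\alpha-1)\mathscr{X}$ is equivalent to $\mathscr{X}^{i}$ being $(\alpha-1)$-homogeneous and $\mathscr{X}^{\di+i}$ being $\alpha$-homogeneous, which is exactly Definition \ref{def:homogeneity}(ii) read through Euler's Theorem and the identification \eqref{e_isomorfismo vertical}.

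For item (iii) I would first record, via Cartan's formula $\lie_{\canon^{\mathrm{V}}}=\mathrm{d}\,\iota_{\canon^{\mathrm{V}}}+\iota_{\canon^{\mathrm{V}}}\,\mathrm{d}$, the two basic facts $\lie_{\canon^{\mathrm{V}}}\mathrm{d}x^{i}=0$ (since $\iota_{\canon^{\mathrm{V}}}\mathrm{d}x^{i}=0$) and $\lie_{\canon^{\mathrm{V}}}\mathrm{d}y^{j}=\mathrm{d}y^{j}$ (since $\iota_{\canon^{\mathrm{V}}}\mathrm{d}y^{j}=y^{j}$). Applying the Leibniz rule to a basis form $\mathrm{d}x^{i_1}\wedge\cdots\wedge\mathrm{d}x^{i_\mu}\wedge\mathrm{d}y^{j_1}\wedge\cdots\wedge\mathrm{d}y^{j_\nu}$, the $\mathrm{d}x$ factors contribute nothing while each of the $\nu$ factors $\mathrm{d}y^{j}$ reproduces itself, so $\lie_{\canon^{\mathrm{V}}}$ multiplies the basis form by $\nu$; meanwhile the coefficient $\omega_{i_1,\dots,i_\mu\mid j_1,\dots,j_\nu}$, assumed $(\alpha-\nu)$-homogeneous by Definition \ref{def:homogeneity}(iii), contributes the factor $\alpha-\nu$ by Euler's Theorem. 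The one point that must be handled with care is precisely this bookkeeping: the $-\nu$ shift hard-wired into the homogeneity of the coefficients of a $\nu$-vertical form is exactly cancelled by the $+\nu$ arising from $\lie_{\canon^{\mathrm{V}}}$ acting on the $\mathrm{d}y$-factors, so the two pieces sum to $\alpha\,\omega$; running the computation in reverse recovers the coefficientwise homogeneities, hence the $\alpha$-homogeneity of $\omega$. None of the three steps is deep, so the only genuine obstacle is keeping the homogeneity shifts and the bracket and Lie-derivative signs consistent throughout.
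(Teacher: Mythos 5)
Your proposal is correct and follows exactly the route the paper intends: the paper gives no written proof, stating only that "the above expressions in coordinates and Euler's Theorem yield directly" the three characterizations, and your argument (scalar Euler's Theorem on the conic fibers, the coordinate computation of $[\canon^{\mathrm{V}},\mathscr{X}]$, and the Cartan/Leibniz bookkeeping showing the $+\nu$ from the $\mathrm{d}y$-factors cancels the $-\nu$ shift in the coefficients) is a faithful and complete filling-in of that sketch. No gaps.
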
 

The \textit{positive projectivization} of the conic $A$ plays the same role in our variational calculus as in \cite{HPV}. We denote it by $\mathbb{P}^+A$, so that $\mathbb{P}\vcentcolon A\longrightarrow\mathbb{P}^+A$, $v\longmapsto \mathbb{P}^+v$, is the natural projection. The $0$-homogeneous $s$-forms on $A$ induce $\left(s-1\right)$-forms on $\mathbb{P}^+A$. This correspondence was implicitly taken into account in the notation of \cite{HPV}, but we state it in ours for the reader's convenience. 

\begin{prop}  \label{prop:inducing forms} Assume that $A$ is conic, and let  $\omega$ be a  $0$-homogeneous $s$-form and $\mathscr{X}$ a $1$-homogeneous vector field there. Then:
	\begin{enumerate}
		\item \label{aa} The interior product  $\mathscr{X}\lrcorner\omega$ is $0$-homogeneous as well. 
		\item In the case $\mathscr{X}=\canon^{\mathrm{V}}$, this interior product is the pullback of a unique $(s-1)$-form on $\mathbb{P}^+A$. We denote this one by $\underline{\omega}$, so that 
		\begin{equation}
			\canon^\mathrm{V}\lrcorner\omega=\left(\mathbb{P}^+\right)^\ast\underline{\omega}.
			\label{eq:underline}
		\end{equation}
		Moreover, $\underline{\omega}$ vanishes at $\mathbb{P}^+v\in\mathbb{P}^+A$ if and only if $\canon^\mathrm{V}\lrcorner\omega$ vanishes at one, and hence all, representatives $v$ of $\mathbb{P}^+v$.  
		\item The exterior differential $\mathrm{d}\omega$ is $0$-homogeneous too with 
		\[
		\underline{\mathrm{d}\omega}=-\mathrm{d}\underline{\omega}.
		\]
	\end{enumerate} 
\end{prop}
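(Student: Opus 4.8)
The plan is to render all three assertions in terms of the Lie derivative $\lie_{\canon^{\mathrm{V}}}$ along the Liouville field, using the homogeneity characterizations of Proposition~\ref{prop:euler}, and to exploit that $\mathbb{P}\colon A\to\mathbb{P}^+A$ is a surjective submersion whose fibers are exactly the rays $\mathbb{R}^+v$, with $\canon^{\mathrm{V}}$ as the infinitesimal generator of the scaling action $h_\lambda$. Once this is set up, everything reduces to a few Cartan-calculus identities plus the standard descent criterion for basic forms. For (i), I would use that $\lie_{\canon^{\mathrm{V}}}$ is a derivation over the contraction pairing,
\[
\lie_{\canon^{\mathrm{V}}}(\mathscr{X}\lrcorner\omega)=\left(\lie_{\canon^{\mathrm{V}}}\mathscr{X}\right)\lrcorner\omega+\mathscr{X}\lrcorner\left(\lie_{\canon^{\mathrm{V}}}\omega\right).
\]
By Proposition~\ref{prop:euler}(ii), $\mathscr{X}$ being $1$-homogeneous gives $\lie_{\canon^{\mathrm{V}}}\mathscr{X}=(1-1)\mathscr{X}=0$, and by Proposition~\ref{prop:euler}(iii), $\omega$ being $0$-homogeneous gives $\lie_{\canon^{\mathrm{V}}}\omega=0$; hence both summands vanish, and Proposition~\ref{prop:euler}(iii) applied to the form $\mathscr{X}\lrcorner\omega$ shows it is $0$-homogeneous.

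For (ii), since $\canon^{\mathrm{V}}$ is $1$-homogeneous, part (i) already yields $\lie_{\canon^{\mathrm{V}}}(\canon^{\mathrm{V}}\lrcorner\omega)=0$ (invariance), and $\canon^{\mathrm{V}}\lrcorner(\canon^{\mathrm{V}}\lrcorner\omega)=0$ because $\lrcorner$ squares to zero (horizontality). A form that is both horizontal and invariant for the projection $\mathbb{P}$ is basic, hence descends to a unique $(s-1)$-form $\underline{\omega}$ on $\mathbb{P}^+A$ with $\left(\mathbb{P}^+\right)^\ast\underline{\omega}=\canon^{\mathrm{V}}\lrcorner\omega$; uniqueness is injectivity of $\left(\mathbb{P}^+\right)^\ast$ on forms, valid since $\mathbb{P}$ is a surjective submersion. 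For the vanishing statement, surjectivity of $\mathrm{T}_v\mathbb{P}$ gives that $\underline{\omega}$ vanishes at $\mathbb{P}^+v$ if and only if $\canon^{\mathrm{V}}\lrcorner\omega$ vanishes at $v$; independence of the chosen representative then follows from $0$-homogeneity, since $(\mathrm{T}h_\lambda)^\ast$ identifies the values at $v$ and $\lambda v$ up to the factor $\lambda^0=1$ and $\mathrm{T}h_\lambda$ is a linear isomorphism.

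For (iii), $\mathrm{d}\omega$ is $0$-homogeneous because $\lie_{\canon^{\mathrm{V}}}$ commutes with $\mathrm{d}$, so $\lie_{\canon^{\mathrm{V}}}(\mathrm{d}\omega)=\mathrm{d}(\lie_{\canon^{\mathrm{V}}}\omega)=0$, whence $\underline{\mathrm{d}\omega}$ is defined by part (ii). Applying Cartan's formula to the $0$-homogeneous $\omega$,
\[
0=\lie_{\canon^{\mathrm{V}}}\omega=\mathrm{d}\left(\canon^{\mathrm{V}}\lrcorner\omega\right)+\canon^{\mathrm{V}}\lrcorner\,\mathrm{d}\omega,
\]
I obtain $\canon^{\mathrm{V}}\lrcorner\,\mathrm{d}\omega=-\mathrm{d}(\canon^{\mathrm{V}}\lrcorner\omega)$. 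Rewriting both sides as pullbacks and using that $\mathrm{d}$ commutes with pullback gives $\left(\mathbb{P}^+\right)^\ast\underline{\mathrm{d}\omega}=-\mathrm{d}\left(\mathbb{P}^+\right)^\ast\underline{\omega}=\left(\mathbb{P}^+\right)^\ast(-\mathrm{d}\underline{\omega})$, so injectivity of $\left(\mathbb{P}^+\right)^\ast$ yields $\underline{\mathrm{d}\omega}=-\mathrm{d}\underline{\omega}$.

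The only step beyond formal manipulation is the descent in (ii): one needs that $\mathbb{P}$ realizes $\mathbb{P}^+A$ as a genuine smooth quotient (a principal $\mathbb{R}^+$-bundle, the scaling action being free and proper on the conic $A$ with $\mathbf{0}\notin A$), so that the basic-form criterion applies and $\left(\mathbb{P}^+\right)^\ast$ is injective; connectedness of the fibers (the rays) is precisely what makes horizontality together with invariance sufficient for a form to come from the base. Everything else is routine Cartan calculus and should present no real difficulty.
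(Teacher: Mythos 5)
Your proposal is correct, and parts (i) and (ii) take a noticeably more invariant route than the paper, while part (iii) coincides with the paper's argument almost verbatim (Cartan's formula plus commutation of $\mathrm{d}$ with $\lie_{\canon^{\mathrm{V}}}$ and with pullback). For (i), the paper simply reads off the homogeneity from the coordinate expression of $\mathscr{X}\lrcorner\omega$ and Def.~\ref{def:homogeneity}(iii), whereas you derive it from the identity $\lie_{\canon^{\mathrm{V}}}(\mathscr{X}\lrcorner\omega)=(\lie_{\canon^{\mathrm{V}}}\mathscr{X})\lrcorner\omega+\mathscr{X}\lrcorner(\lie_{\canon^{\mathrm{V}}}\omega)$ together with the Euler characterizations of Prop.~\ref{prop:euler}; both are fine. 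For (ii), the paper constructs $\underline{\omega}$ pointwise: it defines $\underline{\omega}_{\mathbb{P}^+v}$ on images $\mathrm{T}_v\mathbb{P}^+u_\mu$ and checks well-definedness using $\mathrm{Ker}\,\mathrm{T}_v\mathbb{P}^+=\mathrm{Span}\{\canon^{\mathrm{V}}_v\}$ and the $h_\lambda$-equivariance of $\omega$ and $\canon^{\mathrm{V}}$. You instead invoke the general descent criterion for basic forms (horizontal plus invariant along a surjective submersion with connected fibers), with horizontality from $\canon^{\mathrm{V}}\lrcorner(\canon^{\mathrm{V}}\lrcorner\omega)=0$ and invariance from part (i). These are the same two facts, just packaged abstractly versus concretely; your version is shorter and makes the structural reason transparent, but it does lean on $\mathbb{P}^+A$ carrying a smooth quotient structure for which $\left(\mathbb{P}^+\right)^\ast$ is injective --- a point the paper's hands-on construction sidesteps by never quoting a general theorem (though the paper, too, implicitly assumes $\mathbb{P}^+A$ is a manifold and $\mathbb{P}^+$ a submersion). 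You correctly flag this hypothesis yourself, so there is no gap.
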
 

\begin{proof}
	 (i) This is clear from the expression in coordinates of $\mathscr{X}\lrcorner\omega$ and Def. \ref{def:homogeneity} (iii). 
	
	 (ii) In order to define $\underline{\omega}$ at $\mathbb{P}^+v\in\mathbb{P}^+A$, one has to specify how it acts on $s$ vectors in $\mathrm{T}_{\mathbb{P}^+v}\mathbb{P}^+A$. As $\mathrm{T}_{v}\mathbb{P}^+\vcentcolon\mathrm{T}_{v}A\longrightarrow\mathrm{T}_{\mathbb{P}^+v}\mathbb{P}^+A$ is onto, those are always of the form $\mathrm{T}_{v}\mathbb{P}^+u_1$, ..., $\mathrm{T}_{v}\mathbb{P}^+u_s$ for some $u_1,...,u_s\in\mathrm{T}_{v}A$. And as \eqref{eq:underline} must be satisfied, the only possibility is to define
	\[
	\begin{split}
	\underline{\omega}_{\mathbb{P}^+v}(\mathrm{T}_{v}\mathbb{P}^+u_1,...,\mathrm{T}_{v}\mathbb{P}^+u_s)(=:\left\{\left(\mathbb{P}^+\right)^\ast\underline{\omega}\right\}_v(u_1,...,u_s))=&\left(\canon^\mathrm{V}\lrcorner\omega\right)_v(u_1,...,u_s) \\
	=&\omega_v(\canon^\mathrm{V}_v,u_1,...,u_s)
	\end{split}
	\]
	(where $\canon^\mathrm{V}_v$ is just $v$ under the natural identification $\mathrm{T}_{\pi(v)}M\equiv\mathrm{V}_v A\subseteq\mathrm{T}_v A$, recall \eqref{eq:canon}). Finally, it is straightforward to see that this definition is consistent: the property $\mathrm{Ker}\,\mathrm{T}_{v}\mathbb{P}^+=\mathrm{Span}\left\{\canon^\mathrm{V}_v\right\}$ allows one to check that it is independent of the representatives $u_{\mu}$ of $\mathrm{T}_{v}\mathbb{P}^+u_{\mu}$, whereas the properties $\left(\mathrm{T}h_{\lambda}\right)_v^{\ast}(\omega_{\lambda\,v})=\omega_v$ and $\canon^\mathrm{V}_{\lambda\,v}=\left(\mathrm{T}h_{\lambda}\right)_v(\canon^\mathrm{V}_v)$ allow one to check that it is independent of the representative $v$ of $\mathbb{P}^+v$. Finally, from the construction with arbitrary $\left\{u_1,...,u_s\right\}$, it is clear that $\underline{\omega}_{\mathbb{P}^+v}=0$ if and only if $\omega_v(\canon^\mathrm{V}_v,-,...,-)=0$. 
	
	 (iii) Prop. \ref{prop:euler} (iii), Cartan's formula for the Lie derivative and $\lie_{\canon^\mathrm{V}}(\omega)=0$ give the $0$-homogeneity of $\mathrm{d}\omega$: 
	\[
	\lie_{\canon^\mathrm{V}}(\mathrm{d}\omega)=\canon^\mathrm{V}\lrcorner\mathrm{d}\mathrm{d}\omega+\mathrm{d}(\canon^\mathrm{V}\lrcorner\mathrm{d}\omega)=\mathrm{d}(\canon^\mathrm{V}\lrcorner\mathrm{d}\omega)=\mathrm{d}(\lie_{\canon^\mathrm{V}}(\omega))-\mathrm{d}\mathrm{d}(\canon^\mathrm{V}\lrcorner\omega)=0.
	\] 
	For the last assertion, it suffices to see that $-\mathrm{d}\underline{\omega}$ satisfies the property that defines $\underline{\mathrm{d}\omega}$. Using the same properties as above,
	\[
	\left(\mathbb{P}^+\right)^\ast(-\mathrm{d}\underline{\omega})=-\mathrm{d}\left(\mathbb{P}^+\right)^\ast\underline{\omega}=-\mathrm{d}(\canon^\mathrm{V}\lrcorner\omega)=-\lie_{\canon^\mathrm{V}}(\omega)+\canon^\mathrm{V}\lrcorner\mathrm{d}\omega=\canon^\mathrm{V}\lrcorner\mathrm{d}\omega, 
	\]
	so indeed $-\mathrm{d}\underline{\omega}=\underline{\mathrm{d}\omega}$. 
\end{proof}

\subsection{Homogeneous connections}  There are a number of equivalent ways of defining the connections that we work with; most of them were discussed in \cite{gelocor}. Here, motivated by the spirit of the variational calculus, we choose alternative definitions that present the connections as sections of certain affine bundles over $A$. Then we pass to their coordinates, to ensure that we indeed are working with the same objects as in \cite[(5) and (12)]{gelocor}. This conveys notational differences: for instance, when anisotropic connections are regarded as sections, we denote them by $\an$, and when they are regarded as \textit{Koszul covariant derivations}, we denote them by $\nabla$. As a last comment, we will always work with homogeneous objects (even if we keep mentioning their homogeneity), so from now onward we assume that $A$ is conic. 

 Consider affine connections on $M$ (i.e., linear connections for $\mathrm{T}M\longrightarrow M$). Their Christoffel symbols $\an_{ij}^k(x)$ have the transformation cocycle
\begin{equation}
	\bar{\an}_{ij}^{k}(x)=\frac{\partial\bar{x}^{k}}{\partial x^{c}}(x)\,\frac{\partial^{2}x^{c}}{\partial\bar{x}^{i}\,\partial\bar{x}^{j}}(x)+\frac{\partial\bar{x}^{k}}{\partial x^{c}}(x)\,\frac{\partial x^{a}}{\partial\bar{x}^{i}}(x)\,\frac{\partial x^{b}}{\partial\bar{x}^{j}}\,\an_{ab}^{c}(x)
	\label{eq:cocycle gamma}
\end{equation}
under changes of charts. Using an analogous of \cite[\S 6.4]{michor}, one can check that this cocycle determines an affine bundle $\mathbf{C}M\longrightarrow M$, which is so that its sections are precisely the affine connections on $M$.\footnote{A more specific presentation of this affine bundle is given as follows. Given $p\in M$, say that two affine connections on $M$ are \textit{equivalent at p} if when they act on any vector fields on $M$, the results coincide at $p$ for both connections. Then the equivalence classes are the elements of the fiber $\mathbf{C}_pM$. Hence, it is clear that an affine connection yields such an element at each $p$. } 

\begin{defn}
	A \textit{homogeneous $A$-anisotropic connection} is a section $\an$ of the pullback affine bundle $\pi_A^\ast(\mathbf{C}M)\longrightarrow A$  (hence a map $v\in A\longmapsto\an_v\in\mathbf{C}_{\pi(v)}M$)  subject to $\an_{\lambda\,v}=\an_{v}$. 
\end{defn} 

\begin{rem}  The construction of $\mathbf{C}M\longrightarrow M$ guarantees that such a $\an$ has natural coordinates $\an_{ij}^k(x,y)$, while the condition $\an_{\lambda\,v}=\an_{v}$ translates into the $0$-homogeneity of those. This means that a (homogeneous) anisotropic connection in the sense above is equivalent to a collection of ($0$-homogeneous) functions $\an_{ij}^k$ on $A\cap\mathrm{T}U$ associated with each chart such that, under changes $(U,x)\rightsquigarrow(\bar{U},\bar{x})$, \eqref{eq:cocycle gamma} is satisfied with $\bar{\an}_{ij}^{k}(x,y)$, $\an_{ab}^{c}(x,y)$ in place of $\bar{\an}_{ij}^{k}(x)$, $\an_{ab}^{c}(x)$. By \cite[Prop. 1 (2)]{gelocor}, it is also equivalent to a (homogeneous) anisotropic connection $\nabla$ in the sense of \cite[Def. 4]{gelocor}, \cite[Def. 3.1]{anisotropic}. Hence, as announced, the viewpoint here is unified with the one of those references and all the developments in \cite{gelocor,anisotropic} can be applied. 
\end{rem}

 Consider now the $1$-jet prolongation $\mathbf{J}^1A\longrightarrow A\longrightarrow M$; one is referred to \cite[\S 12]{michor} for a systematic treatment of jets. Recall that for $p\in M$, two local $A$-valued vector fields $V$, $V^{\prime}$ on $M$ \textit{determine the same $1$-jet at $p$} if they and their first order partial derivatives (on any chart) coincide at $p$. These \textit{$1$-jets} (equivalence classes) $\jmath_{p}^1V$ are the elements of the fiber $\mathbf{J}^1_pA$ of $\mathbf{J}^1A\longrightarrow M$, but also $\jmath_{p}^1V\longmapsto V_p$ is a well-defined projection and one obtains $\mathbf{J}^1A\longrightarrow A$, which is an affine bundle. The following definition is standard in the theory of fibered manifolds, see \cite[\S 17.1]{michor} for instance. 

\begin{defn}
	A \textit{homogeneous nonlinear} (or \textit{Ehresmann}) \textit{connection for $A\longrightarrow M$} is a section $\N$ of $\mathbf{J}^1A\longrightarrow A$  (hence a choice of $1$-jet $\N_v=\jmath_{\pi(v)}^1V$ with $V_{\pi(v)}=v$ at each $v\in A$)  with the requirement that if $\N_v=\jmath_{\pi(v)}^1V$, then $\N_{\lambda\,v}=\jmath_{\pi(\lambda\,v)}^1\left(\lambda\,V\right)$.
\end{defn}

\begin{rem} \!
	 (A) Knowing that $V_{\pi(v)}=v$, the $1$-jet $\N_v=\jmath_{\pi(v)}^1V$ is determined by the partial derivatives $\N_i^k(v)=-\partial_iV^k(\pi(v))$; these are functions $\N_i^k(x,y)$, while the condition $\N_{\lambda\,v}=\jmath_{\pi(\lambda\,v)}^1\left(\lambda\,V\right)$ translates into their $1$-homogeneity. This means that a (homogeneous) nonlinear connection is equivalent to a collection of ($1$-homogeneous) functions $\N_{i}^k$ on $A\cap\mathrm{T}U$ associated with each chart such that, under changes $(U,x)\rightsquigarrow(\bar{U},\bar{x})$, the transformation cocycle 
	\begin{equation}
	\bar{\N}_{i}^{k}(x,y)=\frac{\partial\bar{x}^{k}}{\partial x^{c}}(x)\,\frac{\partial^{2}x^{c}}{\partial\bar{x}^{i}\,\partial\bar{x}^{b}}(x)\,\bar{y}^{b}+\frac{\partial\bar{x}^{k}}{\partial x^{c}}(x)\,\frac{\partial x^{a}}{\partial\bar{x}^{i}}(x)\,\N_{a}^{c}(x,y)
	\label{eq:cocycle N}
	\end{equation}
	is satisfied. By \cite[Rem. 3]{gelocor}, it is also equivalent to a (homogeneous) nonlinear connection in any of the usual senses; for instance, that of an \textit{(invariant by homotheties) horizontal distribution} $\mathrm{H}A\longrightarrow A$, where 
	\begin{equation}
	\mathrm{H}_{v}A:=\mathrm{Span}\left\{ \left.\delta_i\right|_{v}\right\}\subseteq\mathrm{T}_vA,\qquad\left.\delta_i\right|_{v}:=\left.\partial_i\right|_{v}-\N_{i}^{a}(v)\,\left.\dot{\partial}_a\right|_{v}.
	\label{eq:horizontal distribution}
	\end{equation}
	Hence, the perspective here is unified with the one of references such as \cite[\S 4]{gelocor}, \cite[\S 3]{minguzzi}, \cite[\S 4]{dahl} and \cite[Ch. 2]{bucataru}\footnote{ Even though the $\N_{i}^k$'s in this reference are not the same as ours (see the different cocycle \cite[(2.8)]{bucataru}), they necessarily are in correspondence with ours. }. The $\N$-horizontal distribution provides the \textit{$\N$-horizontal isomorphism}
	\begin{equation}
	X_{v}=X^{a}(v)\left.\partial_a\right|_{\pi(v)}\in\mathrm{T}_{\pi(v)}M\longleftrightarrow X_{v}^{\mathrm{H}}:=X^{a}(v)\left.\delta_a\right|_{v}\in\mathrm{H}_{v}A,
	\label{eq:horizontal isomorphism}
	\end{equation}
	which identifies $\mathrm{h}^\alpha\mathcal{T}_{0}^{1}(M_A)$ with the space of $\left(\alpha+1\right)$-homogeneous horizontal vector fields on $A$. 
	
	(B) From the cocycles \eqref{eq:cocycle gamma} (for $\an_{ik}^k(x,y)$) and \eqref{eq:cocycle N}, the affine structures of the spaces of homogeneous anisotropic and nonlinear connections are given respectively as follows. For a fixed $\an_0$ and $Q\in\mathrm{h}^0\mathcal{T}_{2}^{1}(M_A)$, $\an:=\an_0+Q$ has coordinates $\left(\an_0 \right)_{ij}^k+Q_{ij}^k$, while for a fixed $\N_0$ and $J \in\mathrm{h}^1\mathcal{T}_{1}^{1}(M_A)$, $\N:=\N_0+J$ has coordinates $\left(\N_0 \right)_{i}^k+J_{i}^k$.
\end{rem}

\begin{defn} \label{def:underlying 1} \!
	\begin{enumerate}
	\item By \cite[Th. 2 (1)]{gelocor}, any homogeneous anisotropic connection $\an$ induces canonically a homogeneous nonlinear connection of coordinates $\N_i^k=\an_{i\,a}^k\,y^a$. We call it the \textit{underlying nonlinear connection of $\an$}.
	\item By \cite[Th. 2 (2)]{gelocor}, any homogeneous nonlinear connection $\N$ induces canonically a homogeneous anisotropic connection of coordinates $\an_{ij}^k=\N_{i\,\cdot j}^k=\dot\partial_j\N_{i}^k$. We call it the \textit{vertical differential} or \textit{Berwald anisotropic connection of $\N$} and denote it by $\dv\N$.
	\end{enumerate}
\end{defn}

 Given any homogeneous anisotropic connection $\an$, the corresponding covariant derivative $\nabla$ maps $\mathrm{h}^\alpha\mathcal{T}_{s}^{r}(\MA)$ to $\mathrm{h}^\alpha\mathcal{T}_{s+1}^{r}(\MA)$. For $T\in\mathrm{h}^\alpha\mathcal{T}_{s}^{r}(\MA)$, $\nabla T$ is given in coordinates by 
\begin{equation}
\nabla_{j_{s+1}}T_{j_1,...,j_s}^{i_1,...,i_r}:=\delta_{j_{s+1}}T_{j_1,...,j_s}^{i_1,...,i_r}+\underset{\mu}{\sum}\an_{j_{s+1}a}^{i_\mu}\,T_{j_1,...,j_s}^{i_1,...,\overset{(\mu)}{a},...,i_r}-\underset{\mu}{\sum}\an_{j_{s+1}j_\mu}^a\,T_{j_1,...,\underset{(\mu)}{a},...,j_s}^{i_1,...,i_r},
\label{eq:covariant derivative}
\end{equation}
where the $\delta_j$ are those of \eqref{eq:horizontal distribution} for the underlying nonlinear connection (and thus underlying horizontal distribution) $\N$ of $\an$. In particular, for $f\in\mathrm{h}^\alpha\mathcal{F}(A)$ and $X\in\mathrm{h}^\alpha\mathcal{T}_{0}^{1}(\MA)$, $\nabla_Xf=X^\mathrm{H}(f)$ only depends on that underlying nonlinear connection. 

\begin{prop} \label{prop:covariante canon = 0}
	 For any anisotropic connection, $\nabla\canon=0$, i.e., $\nabla_j y^i=0$. 
\end{prop}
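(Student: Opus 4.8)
The plan is to reduce the tensorial identity $\nabla\canon=0$ to its components and verify $\nabla_j y^i=0$ by a direct coordinate computation, using that $\canon$ has components $\canon^i=y^i$ by \eqref{eq:canon}. Applying the covariant-derivative formula \eqref{eq:covariant derivative} to the $(1,0)$-anisotropic tensor $T=\canon$ (so $r=1$, $s=0$, and no lower-index correction term appears) gives
\[
\nabla_j y^i=\delta_j y^i+\an_{ja}^i\,y^a,
\]
where $\delta_j=\partial_j-\N_j^a\,\dv_a$ is the horizontal field \eqref{eq:horizontal distribution} attached to the nonlinear connection $\N$ underlying $\an$. Thus the whole matter splits into evaluating the horizontal term and the connection term.

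First I would compute $\delta_j y^i$. Since $y^i$ is a fiber coordinate on $\mathrm{T}M$, it is constant along the base directions, so $\partial_j y^i=0$, while $\dv_a y^i=\partial y^i/\partial y^a=\delta_a^i$. Hence $\delta_j y^i=-\N_j^a\,\delta_a^i=-\N_j^i$. The key step is then to invoke Def. \ref{def:underlying 1}(i), which states that the nonlinear connection underlying $\an$ has coordinates $\N_i^k=\an_{ia}^k\,y^a$. Substituting this identity makes the two contributions cancel exactly:
\[
\nabla_j y^i=-\N_j^i+\an_{ja}^i\,y^a=-\an_{ja}^i\,y^a+\an_{ja}^i\,y^a=0.
\]

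There is no genuine obstacle: the result is a formal consequence of the compatibility between $\an$ and the nonlinear connection it induces, encoded precisely in $\N_i^k=\an_{ia}^k\,y^a$. The only point demanding care is bookkeeping, namely that the horizontal derivative $\delta_j$ appearing in \eqref{eq:covariant derivative} is built from the \emph{underlying} nonlinear connection of $\an$ and not from some other choice, since it is exactly this matching that produces the cancellation.
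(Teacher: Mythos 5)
Your proposal is correct and follows essentially the same route as the paper: expand $\nabla_j y^i$ via \eqref{eq:covariant derivative}, compute $\delta_j y^i=-\N_j^i$, and cancel against $\an_{ja}^i\,y^a$ using that $\N$ is the underlying nonlinear connection of $\an$ (Def. \ref{def:underlying 1}(i)). Your closing remark about the need for $\delta_j$ to be built from the underlying connection is exactly the point the paper also emphasizes.
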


\begin{proof}
	 $\canon=y^a\,\partial_a\in\mathrm{h}^1\mathcal{T}_{0}^{1}(\MA)$, so by \eqref{eq:covariant derivative}, $\nabla\canon$ has coordinates
	\[
	\nabla_j y^i=\delta_j y^i+\an_{ja}^i\,y^a=\partial_j y^i-\N_j^a\,\dot\partial_a\,y^i+\an_{ja}^i\,y^a=-\N_j^a\,\delta_a^i+\an_{ja}^i\,y^a=0,
	\]
	where $\delta_a^i$ is the usual Kronecker's and only the fact that $\N$ is the underlying nonlinear connection of $\an$ was used for the last equality. 
\end{proof}

The \textit{curvature}, the \textit{(Finslerian) Ricci scalar} and the \textit{torsion\footnote{ Note that when defining, as in \cite[Def. 5]{gelocor}, the \textit{torsion} of any homogeneous anisotropic connection $\an$ by $\an_{ij}^k-\an_{ji}^k$, the torsion of $\N$ turns out to be just that of $\dv\N$. However, in this work we will reserve the notation $\torN$ for the torsion of a nonlinear connection. Compare with more abstract references such as \cite[\S 3.3]{minguzzi}, \cite[\S 7]{modugno}. } of a homogeneous nonlinear connection $\N$} can be regarded as homogeneous anisotropic tensors $\RN\in\mathrm{h}^{1}\mathcal{T}_2^1(M_A)$, $\ri\in\mathrm{h}^{2}\mathcal{F}(A)$ and $\torN\in\mathrm{h}^{0}\mathcal{T}_2^1(M_A)$ respectively, with coordinates
\begin{equation}
	\RN_{ij}^k=\delta_j\N_i^k-\delta_i\N_j^k,\qquad\ri=  y^b\,\RN_{ba}^a   ,\qquad\torN_{ij}^k=\N_{i\,\cdot j}^k-\N_{j\,\cdot i}^k
	\label{eq:curvature and torsion}
\end{equation}
(recall \eqref{eq:horizontal isomorphism}). We say that $\N$ is \textit{symmetric} when $\torN=0$.  By direct computation, one has the following commutation formulas: 
\begin{equation}
	\left[\delta_{i},\delta_{j}\right]=\RN_{ij}^{k}\,\dv_{k},\qquad\left[\delta_{i},\dv_{j}\right]=\N_{i\,\cdot j}^{k}\,\dv_{k},\qquad\left[\dv_{i},\dv_{j}\right]=0.
	\label{eq:commutation formulas}
\end{equation} 

\begin{rem} \label{rem:affine connections}
   Anisotropic connections $\an$ can actually be \textit{isotropic}, in the sense that $\an_{ij}^k(x,y)=\an_{ij}^k(x)$, while   nonlinear   connections $\N$ can actually be \textit{linear}, in the sense that $\N_{i}^k(x,y)=\an_{ia}^k(x)\,y^a$. In either case, the $\an_{ij}^k(x)$'s are some functions that necessarily define an affine connection (as a section of $\mathbf{C}M\longrightarrow M$, see \eqref{eq:cocycle gamma} and \eqref{eq:cocycle N}) and $\an$ or $\N$ is homogeneous. Hence, there is a natural identification between affine connections on $M$, isotropic $\an$'s and linear $\N$'s. Under this identification, each isotropic $\an$ gets identified with its underlying $\N$, which turns out to be linear, and then $\an=\dv\N$. This is consistent with \cite[Th. 2 (4)]{gelocor}. 
\end{rem}

\begin{rem}
	  Let $\nabla_{\partial_k}\nabla_{\partial_j}\partial_i-\nabla_{\partial_j}\nabla_{\partial_k}\partial_i=\mathrm{R}_{ijk}^l(x)\,\partial_l$ define the classical curvature of an affine connection $\an\vcentcolon M\longrightarrow\mathbf{C}M$ with the convention of \cite{oneill}. If, as above, one identifies this with a connection $\N$ of curvature $\RN$, then it is straightforward to prove that 
	\begin{equation}
	y^a\,\mathrm{R}_{ajk}^l(x)=\RN_{jk}^l(x,y), \qquad	y^a\,y^b\,\mathrm{R}_{abc}^c(x)=\ri(x,y),
	\label{eq:classical curvature}
	\end{equation}
	so the symmetric part of the classical Ricci tensor is 
	\[
	\frac{1}{2}\left(\mathrm{R}_{ijc}^c(x)+\mathrm{R}_{jic}^c(x)\right)=\frac{1}{2}\left(y^a\,y^b\,\mathrm{R}_{abc}^c(x)\right)_{\cdot i \cdot j}=\frac{1}{2}\,\ri_{\cdot i \cdot j}(x,y)
	\]
	and the scalar curvature constructed with any pseudo-Riemannian metric $g$ on $M$ is 
	\begin{equation}
	\mathrm{Scal}(x)=\frac{1}{2}\,g^{ab}(x)\,\left(\mathrm{R}_{abc}^c(x)+\mathrm{R}_{bac}^c(x)\right)=\frac{1}{2}\,g^{ab}(x)\,\ri_{\cdot a \cdot b}(x).
	\label{eq:classical scalar}
	\end{equation}
	Observe that we follow the same sign convention for $\RN$ as in \cite[\S II A]{pfeifer-wolfhart}, \cite[\S II B]{HPV} but our sign for $\ri$ is the standard one in Riemannian Geometry and thus opposite to that of the cited references.   
	
\end{rem}

\subsection{Sprays}
 In this subsection, we will present the sprays as sections of an affine bundle, unifying later this viewpoint with the more classical one discussed in \cite[\S 6.1]{gelocor}. 

 $\mathrm{T}A$ has natural coordinates $(x,y,z,w)$, where $(x,y)$ are the natural coordinates of any $v\in A$ and then we write $z^a\,\partial_a+w^a\,\dot\partial_a$ for the elements of $\mathrm{T}_vA$. The vertical distribution $\mathrm{V}A$ is described on them by $\left\{z^i=0\right\}$, which implies that it is a vector subbundle of $\mathrm{T}A\longrightarrow A$. Analogously, it follows that the set $\mathrm{S}A$ described by $\left\{z^i=y^i\right\}$ is an affine subbundle of $\mathrm{T}A\longrightarrow A$. In \cite[\S 2]{minguzzi}, this is referred to as the \textit{symmetrized bundle}. 


\begin{defn}
	A \textit{spray on A} is a section $\G$ of $\mathrm{S}A\longrightarrow A$, $2$-homogeneous as a vector field on $A$ (see Def. \ref{def:homogeneity} (ii) and Prop. \ref{prop:euler} (ii)).
\end{defn}

\begin{rem}
	(A) These are exactly the fields of the form 
	\[
	\G=y^a\,\partial_a-2\,\G^a\,\dot{\partial}_a
	\]
	for certain $2$-homogeneous coefficients $\G^k(x,y)$. This means that a spray is equivalent to a collection of $2$-homogeneous functions $\G^k$ on $A\cap\mathrm{T}U$ associated with each chart such that, under changes $(U,x)\rightsquigarrow(\bar{U},\bar{x})$,
	\begin{equation}
	\bar{\G}^{k}=\frac{1}{2}\,\frac{\partial\bar{x}^{k}}{\partial x^{c}}\,\frac{\partial^{2}x^{c}}{\partial\bar{x}^{a}\,\partial\bar{x}^{b}}\,\bar{y}^{a}\,\bar{y}^{b}+\frac{\partial\bar{x}^{k}}{\partial x^{c}}\,\G^{c}.
	\label{eq:cocycle G}
	\end{equation}
	
	(B) From the cocycle \eqref{eq:cocycle G}, the affine structure of the space of sprays is given as follows: for a fixed spray $\G_0$ and $Z:=Z^a\,\partial_a\in\mathrm{h}^2\mathcal{T}_{0}^{1}(\MA)$, $\G=\G_0-2\,Z$ has coordinates $\G_0^k+Z^k$.  The cause of this discrepancy is that we have decided to maintain the standard convention that $\G$ (and not $-2\,\G$) equals $y^a\,\partial_a-2\,\G^a\,\dot{\partial}_a$, whereas the anisotropic vector with coordinates $-2\,Z^i$ is $-2\,Z$ (and not $Z$). 
\end{rem}

\begin{defn} \label{def:underlying 2} \!
	\begin{enumerate}
		\item  By \cite[Prop. 3 (1)]{gelocor}, any homogeneous nonlinear connection $\N$ induces canonically a spray of coordinates $\G^i=\N_{a}^i\,y^a/2$. We call it the \textit{underlying spray of $\N$}. 
		\item  By \cite[Prop. 3 (2)]{gelocor}, any spray $\G$ induces canonically a symmetric homogeneous nonlinear connection of coordinates $\N_{i}^k=\G_{\cdot i}^k=\dot\partial_i\G^k$. We call it the \textit{vertical differential} or \textit{Berwald nonlinear connection of $\G$} and denote it by $\dv\G$. 
	\end{enumerate}
\end{defn}



The (projections to $M$ of the) integral curves of a spray $\G$ are its \textit{geodesics}. Its \textit{pregeodesics} are those curves in $M$ that can be (positively) reparametrized to be geodesics.


\begin{prop} \label{prop:sprays sharing pregeodesics} A spray $\G=\G_0-2\,Z$ shares pregeodesics with $\G_0$ if and only if $Z=\rho\,\canon$ for some $\rho\in\mathrm{h}^{1}\mathcal{F}(A)$.
\end{prop}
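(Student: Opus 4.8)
The plan is to reduce the statement to the pointwise condition that $Z_v$ be parallel to $\canon_v=v$, by tracking how the geodesic equation of a spray behaves under reparametrization. Recall that, writing $\G=y^a\,\partial_a-2\,\G^a\,\dot\partial_a$, a curve $\gamma$ in $M$ is a $\G$-geodesic precisely when $\ddot\gamma^k+2\,\G^k(\gamma,\dot\gamma)=0$, and that by the affine structure of the space of sprays one has $\G^k=\G_0^k+Z^k$ with $\G^k,\G_0^k,Z^k$ all $2$-homogeneous. First I would carry out the reparametrization computation: given a $\G_0$-geodesic $\gamma(t)$ and a positive reparametrization $\tilde\gamma(s)=\gamma(t(s))$ (so $t'(s)>0$), the chain rule together with the $2$-homogeneity of $\G^k$ gives
\[
\ddot{\tilde\gamma}^k+2\,\G^k(\tilde\gamma,\dot{\tilde\gamma})=(t')^2\big[\ddot\gamma^k+2\,\G_0^k(\gamma,\dot\gamma)\big]+t''\,\dot\gamma^k+2\,(t')^2\,Z^k(\gamma,\dot\gamma).
\]
Since the bracket vanishes, $\tilde\gamma$ is a $\G$-geodesic if and only if $t''\,\dot\gamma^k+2\,(t')^2\,Z^k(\gamma,\dot\gamma)=0$, i.e.\ if and only if $Z^k(\gamma,\dot\gamma)$ is proportional to $\dot\gamma^k$ at every point of the curve.

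For the implication ($\Leftarrow$), I would assume $Z=\rho\,\canon$ with $\rho\in\mathrm{h}^{1}\mathcal{F}(A)$, so $Z^k=\rho\,y^k$. Then the obstruction above collapses to the scalar equation $t''+2\,\rho(\gamma,\dot\gamma)\,(t')^2=0$ for the reparametrization; setting $u=t'$ this is the linear ODE $\dot u=-2\,\rho(\gamma,\dot\gamma)\,u$ along $\gamma$, which is solvable with $u>0$ (by an integrating factor). Hence every $\G_0$-geodesic admits a positive reparametrization that is a $\G$-geodesic. Since $-Z=(-\rho)\,\canon$ is again of this form, the symmetric argument (exchanging the roles of $\G$ and $\G_0$) shows every $\G$-geodesic reparametrizes positively to a $\G_0$-geodesic, so the two sprays share pregeodesics.

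For ($\Rightarrow$), I would fix $v\in A$ and take the $\G_0$-geodesic $\gamma$ with $\dot\gamma(0)=v$. By hypothesis $\gamma$ is a $\G$-pregeodesic, hence reparametrizes positively to a $\G$-geodesic, and evaluating the displayed identity at $t=0$ forces $Z^k(v)=-\tfrac12\,\frac{t''}{(t')^2}\big|_0\,y^k(v)$, i.e.\ $Z_v$ is parallel to $\canon_v$. Defining $\rho(v):=Z^k(v)/y^k(v)$ using any index with $y^k(v)\neq0$ (which exists since $A\subseteq\mathrm{T}M\setminus\mathbf{0}$) gives a well-defined smooth function with $Z=\rho\,\canon$; comparing homogeneities ($Z\in\mathrm{h}^{2}\mathcal{T}_0^1(\MA)$, $\canon\in\mathrm{h}^{1}\mathcal{T}_0^1(\MA)$) yields $\rho\in\mathrm{h}^{1}\mathcal{F}(A)$. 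The main obstacle I expect is in this forward direction: passing cleanly from the geometric hypothesis ``shares pregeodesics'' to the genuinely \emph{pointwise} proportionality $Z_v\parallel v$ (one must guarantee that through each $v$ there is an honest geodesic whose reparametrization is available at that point), and then verifying that the resulting $\rho$ is globally well-defined, smooth across the different index choices, and correctly $1$-homogeneous; the solvability-with-positive-derivative of the reparametrization ODE in the backward direction is the other point requiring care.
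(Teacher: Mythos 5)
Your argument is correct and follows essentially the same route as the paper, which does not prove this statement itself but defers to \cite[Lem.~12.1.1]{spray and finsler}: the reparametrization identity, the pointwise proportionality $Z_v\parallel\canon_v$ extracted from it in the forward direction, and the local definition $\rho=Z^k/y^k$ with the homogeneity count are all the standard proof. The one small slip is in the backward direction: with $Z=\rho\,\canon$ the substitution $u=t'$ gives $u'=-2\,\rho(\dot\gamma(t(s)))\,u^{2}$, which is quadratic rather than linear in $u$ (it becomes linear only if $\rho$ is evaluated on $\dot{\tilde\gamma}=t'\dot\gamma$, whose coefficient then depends on the unknown), but it still integrates by quadrature --- $(\log t')'=-2\,\rho(\dot\gamma(t(s)))\,t'(s)$ yields $t'(s)=\exp\bigl(-2\int_{t(0)}^{t(s)}\rho(\dot\gamma(\tau))\,d\tau\bigr)>0$ --- so the required positive reparametrization exists and your conclusion stands.
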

 For a  proof see \cite[Lem. 12.1.1]{spray and finsler}. 

\subsection{Pseudo-Finsler metrics} \label{sec:pseudo-finsler}
\begin{defn} \label{def:pseudo-finsler}
A \textit{(conic) pseudo-Finsler metric} defined on the open and conic  $A\subseteq\mathrm{T}M\setminus\mathbf{0}$ with $\pi(A)=M$ is an $L\in\mathrm{h}^{2}\mathcal{F}(A)$ whose \textit{fundamental tensor} $g=\dv^2L/2\in\mathrm{h}^0\mathcal{T}_2^0(\MA)$ is non-degenerate at every $v\in A$.
\end{defn}

\begin{rem}
	 Taking into account the nature of the variational problem that we will pose, we shall assume that our pseudo-Finsler metrics do not have \textit{lightlike} directions in the fixed $A$, namely $L(v)\neq 0$ for all $v\in A$. 
\end{rem}

We always denote $F:=\sqrt{\left|L\right|}\in\mathrm{h}^1\mathcal{F}(A)$; indices of tensors are lowered and raised with $g_{ij}$ and $g^{ij}$ respectively. By direct computation, one has the following identites: 
\[
L_{\cdot i}=2\,y_i(:=2\,g_{ia}\,y^a),\qquad y_{i\,\cdot j}=g_{ij},
\]
\[
F_{\cdot i}=\frac{\mathrm{sgn}(L)}{F}\,y_i,\qquad\left(\frac{y_{i}}{L}\right)_{\cdot j}=\frac{g_{ij}}{L}-2\,\frac{y_{i}}{L}\,\frac{y_{j}}{L}=\left(\frac{y_{j}}{L}\right)_{\cdot i}.
\]
From these and the $2$-homogeneity of $L$, it follows that
\[
L=\frac{1}{2}\,L_{\cdot a \cdot b}\,y^a\,y^b=g_{ab}\,y^a\,y^b=y_b\,y^b.
\]

\begin{defn} \label{defn:proper} (A) We say that a pseudo-Finsler metric $L$ defined on $A$ is \textit{proper} if  
	\begin{enumerate}
		\item Each fiber $A_p$ ($p\in M$) is connected with $L>0$ on $A$,
		\item $L$ extends smoothly to $\overline{A}\subseteq\mathrm{T}M\setminus\mathbf{0}$ with $L(v)=0$ and $g_v$ non-degenerate for $v\in\partial A:=\overline{A}\setminus A$.   
	\end{enumerate}
Then $g$ has a constant signature on $\overline{A}$.

(B) When that signature is Lorentzian $(+,-,...,-)$, $L$ is \textit{(properly) Lorentz-Finsler}. A \textit{Finsler spacetime} is any triple $(M,A,L)$ with $L$ Lorentz-Finsler.

(C) When the signature is positive definite, necessarily $A=\mathrm{T}M\setminus\mathbf{0}$ and $L$ is \textit{Finsler}.
\end{defn}

\begin{rem} \label{rem:proper}  Let us comment the parts of the last definition: 

(A) $g$ has constant signature on $\overline{A}$ because the connectedness of  $M$ together with (i) implies that $A$ is connected.  Moreover, the \textit{indicatrix} $\left\{L=1\right\}$ and (thanks to (ii)) the \textit{lightcone} $\partial A=\left\{L=0\right\}$ are smooth hypersurfaces: 
\[
\mathrm{d}L_v(u^\mathrm{V})=u^a\,L_{\cdot a}(v)=2\,u^a\,y_{a}(v)=2\,u^a\,g_{ab}(v)\,v^b=2\,g_v(u,v)
\]
for $u\in\mathrm{T}_{\pi(v)}M$, so $\mathrm{d}L_v$ never vanishes identically for $v\in\overline{A}\subseteq\mathrm{T}M\setminus\mathbf{0}$. 

(B) We want such an $L$ to be defined only on \textit{future causal vectors} (so $L\geq0$ together with $(+,-,...,-)$ as the Lorentzian signature is a choice of convention).  There is a Physics motivation for this assumption \cite[\S 1]{observers}, but it also has interesting mathematical implications.  For instance, $\overline{A_p}\subseteq\mathrm{T}_pM\setminus0$ is contained in an open half-space: there is a vector hyperplane $\varPi_p$ that does not intersect $\overline{A_p}$; thus, $A$ already determines a \textit{time orientation}. For this and other geometric consequences (such as convexity) for $\overline{A}$ of $L$ being Lorentz-Finsler, see \cite[Props. 2.6 and 3.4]{cones}. \footnote{Additionally, in \cite{minguzzi2} it is proven that one can actually extend $L$ to a pseudo-Finsler metric with Lorentzian fundamental tensor on the whole $\mathrm{T}M\setminus\mathbf{0}$ (in a highly non-unique way in contrast to the extension to $\overline{A}$).} 

(C)  The positive definiteness of $g$ together with (ii) implies that actually $\partial A=\emptyset$, so necessarily $A=\mathrm{T}M\setminus\mathbf{0}$. 
\end{rem}

\commentt{$L$ is equivalent to a $g$ with totally symmetric vertical differential. Because of this, it can be useful to think of either of them as a pseudo-Finsler metric. Now we extend the terminology introduced in \cite[Prop. 3.1]{cones}.
	
	\begin{defn} 
		We say that $L$ is \textit{proper} when the following conditions hold: 
		\begin{enumerate}
			\item Each fiber $A_p$ is connected,
			\item $L>0$ on $A$,
			\item $\partial A\subseteq\mathrm{T}M\setminus\mathbf{0}$ is smooth and $L$ extends smoothly to it with $L(v)=0$ and $g_v$ non-degenerate for $v\in\partial A$.
		\end{enumerate}
	\end{defn}
	\begin{rem}
		$M$ being connected, (i) implies that $A$ is connected and $g$ has a fixed signature on $\overline{A}\subseteq\mathrm{T}M\setminus\mathbf{0}$. When, in dimension $m\geq2$, said signature is Lorentzian $(+,-,...,-)$, $L$ is \textit{(properly) Lorentz-Finsler}. We want such an $L$ to be defined only on future causal vectors (see \cite[\S 1]{observers}), so note that if we had chosen $(-,+,...,+)$ as our convention, we would have had to put $L<0$ in (ii). When, again for $m\geq2$ so that $\mathrm{T}M\setminus\mathbf{0}$ is connected, the signature is positive definite $(+,+...,+)$, (iii) implies that $\partial A=\emptyset$, so $A=\mathrm{T}M\setminus\mathbf{0}$ and $L$ is Finsler. 
	\end{rem}
	\begin{defn}
		For us, a \textit{Finsler spacetime} is just a triple $(M,A,L)$ with $L$ Lorentz-Finsler.
	\end{defn}
	\begin{rem} \label{rem:hyperplane}
		In such a triple, $\overline{A_p}\subseteq\mathrm{T}_pM\setminus0$ is contained in an open half-space: there is a vector hyperplane $\varPi$ that does not intersect $\overline{A_p}$. Because of it, $A$ already carries the information of a time orientation. For this and other geometric consequences on $\overline{A}$ of $L$ being Lorentz-Finsler, see \cite[Props. 2.6 and 3.4]{cones}.
\end{rem}}

A key geometric object associated with a pseudo-Finsler metric $L$ defined on $A$ is its \textit{metric spray} $\sprL$,  
\begin{equation}
	\left(\sprL\right)^{i}:=\frac{1}{4}\,g^{ia}\left(2\,\partial_{c}g_{ab}-\partial_a g_{bc}\right)y^{b}\,y^{c}.
	\label{eq:metric spray}
\end{equation}
 The Berwald $\LC:=\dv\sprL$ is the \textit{metric nonlinear connection}. From now on, given any anisotropic connection $\an$, it will be convenient to write $\covan$ instead of just $\nabla$ for its corresponding covariant derivative, $\covN$ in case that $\an=\dv\N$ for a nonlinear connection $\N$, and $\covL$ in case that $\an=\dv\N^L$ (this is the \textit{Berwald anisotropic connection of $L$} \cite[\S 4.3]{anisotropic}, \cite[Ch. 7]{spray and finsler}). Due to Defs. \ref{def:underlying 2} (ii) and \ref{def:underlying 1} (ii), the notions of \textit{$\an$-(pre)geodesics} and \textit{$\N$-(pre)geodesics} make sense, and due to \eqref{eq:metric spray}, so does that of \textit{$L$-(pre)geodesics}.  When using $\LC$, which is always symmetric, the  curvature and the Ricci scalar in \eqref{eq:curvature and torsion} will be denoted $\RL$ and  $\riL$ resp., as they can be associated with\footnote{For a Finsler $L$ ($g$ is positive definite), $\riL$ coincides on $\left\{L=1\right\}$ with the Ricci scalar defined as a sum of $n-1$ \textit{flag curvatures} as in \cite[(7.6.2a)]{BCS}.} $L$. 
 

The \textit{Cartan tensor} is 
\[
\C:=\frac{1}{2}\,\dv g\in\mathrm{h}^{-1}\mathcal{T}_{3}^{0}(\MA).
\]
It is symmetric, so it makes sense to define the \textit{mean Cartan tensor} as its metric trace, with components
\[
\C_{i}:=g^{ab}\,\C_{abi}.
\]
By vertically differentiating $g_{ia}\,g^{ak}=\delta_i^k$, one obtains the following identities: 
\[\C_{i}^{jk}=-\frac{1}{2}\,g_{\cdot i}^{jk},\qquad\C^{j}=-\frac{1}{2}\,g_{\cdot a}^{ja}.
\]
The \textit{Landsberg tensor} is 
\[
\la:=\frac{1}{2}\,\covL g\in\mathrm{h}^{0}\mathcal{T}_{3}^{0}(\MA)
\]
 (it can also be defined in terms of the \textit{Berwald tensor} \cite[(37)]{anisotropic}, however, $\la=\covL g/2$ is the way in which it will arise in this work).  Note that here it has the same sign as in \cite{anisotropic,mediterranean,pfeifer-wolfhart} and the opposite in \cite{spray and finsler,BCS,HPV}.  The Landsberg tensor is symmetric too, so it makes sense to define the \textit{mean Landsberg tensor}, with components  
\[
\la_{i}:=g^{ab}\,\la_{abi}. 
\]

\begin{rem}
 A pseudo-Finsler $L$ is equivalent to a symmetric and non-degenerate $g\in\mathrm{h}^0\mathcal{T}_2^0(\MA)$ with totally symmetric Cartan tensor \cite[Th. 3.4.2.1]{AIM}. This justifies being able to identify $L$ with $g$   whenever it is needed.   For instance, $L$ can be \textit{pseudo-Riemannian}, in the sense that $g$ is such kind of metric. This is equivalent to $g$ being isotropic and to $L$ being \textit{quadratic}, namely $L(x,y)=\varPsi_{ab}(x)\,y^a\,y^b/2$ for some isotropic and symmetric tensor $\varPsi/2$ that then necessarily equals $g$. 
\end{rem}

\section{Metric-affine variational calculus} \label{s.3}

For the remainder of the manuscript, $\N$ and $L$ are, respectively, a homogeneous nonlinear connection and a pseudo-Finsler metric defined on the open and conic $A$ with $L>0$ there. Our metric-affine formalism is akin to the metric formalism of \cite{HPV}. Its steps are: determination of a volume form on $A$, divergence formulas, choice of a Lagrangian function, induction (according to Prop. \ref{prop:inducing forms}) of forms on\footnote{  Integrating on this projectivization as in \cite{HPV}, instead of the indicatrix $\left\{L=1\right\}$, solves the technical issue of the integration domain depending on the variable $L$, present in \cite{pfeifer-wolfhart}.  } $\mathbb{P}^+A$ to construct an action there, and variation of this with respect to $\N$ and with respect to $L$.

Given $(\N,L)$, there is a natural way of constructing a $0$-homogeneous
volume form on $A$. The $\N$-horizontal and vertical isomorphisms allow us to define scalar products on $\mathrm{H}_{v}A$ and $\mathrm{V}_{v}A$:
\begin{equation} 
g_{v}^{\mathrm{H}}(X_{v}^{\mathrm{H}},Y_{v}^{\mathrm{H}}):=g_{v}(X_{v},Y_{v}),\qquad g_{v}^{\mathrm{V}}(X_{v}^{\mathrm{V}},Y_{v}^{\mathrm{V}}):=g_{v}(\frac{X_v}{F(v)},\frac{X_v}{F(v)})=\frac{g_{v}(X_v,Y_v)}{L(v)}
\label{eq:g^H and g^V}
\end{equation} 
for $X,Y\in\mathcal{T}_{0}^{1}(\MA)$. Each one has its own volume form:  
\[
\vol_{v}^{\mathrm{H}}:=\sqrt{\left|\det g_{v}^{\mathrm{H}}\left(\left.\delta_i\right|_{v},\left.\delta_j\right|_{v}\right)\right|}\,\mathrm{d}x^1_v\wedge...\wedge\mathrm{d}x^\di_v=:\sqrt{\left|\det g_{ij}(v)\right|}\,\mathrm{d}x_v,
\]
\[
\vol_{v}^{\mathrm{V}}:=\sqrt{\left|\det g_{v}^{\mathrm{V}}\left(\left.\dot{\partial}_i\right|_{v},\left.\dot{\partial}_j\right|_{v}\right)\right|}\,\delta y^1_v\wedge...\wedge\delta y^\di_v=:\frac{\sqrt{\left|\det g_{ij}(v)\right|}}{F(v)^\di}\,\delta y_v,
\]
where the $\mathrm{d}x^i_v$ and $\delta y^i_v:=\mathrm{d}y^i_v+\N^i_a(v)\,\mathrm{d}x^a_v$ are restricted to the horizontal and vertical subspaces respectively. A $2\di$-form is induced on $\mathrm{T}_vA=\mathrm{H}_vA\oplus\mathrm{V}_vA$: 
\begin{equation}
	\vol_v:=\vol_v^\mathrm{H}\wedge\vol_v^\mathrm{V}=\frac{\left|\det g_{ij}(v)\right|}{F(v)^\di}\,\mathrm{d}x_v\wedge\delta y_v.
	\label{eq:vol}
\end{equation}

\begin{rem} \label{rem:vol independent of N}
Even though we used $\N$ and $L$ to construct $\vol$, this turns out to depend on $L$ alone, as 
\[
\begin{split} \mathrm{d}x\wedge\delta y& =\mathrm{d}x^1\wedge...\wedge\mathrm{d}x^\di\wedge\left(\mathrm{d}y^1+\N^1_{a_1}\,\mathrm{d}x^{a_1}\right)\wedge...\wedge\left(\mathrm{d}y^\di+\N^\di_{a_\di}\,\mathrm{d}x^{a_\di}\right)   \\
& = \mathrm{d}x^1\wedge...\wedge\mathrm{d}x^\di\wedge\mathrm{d}y^1\wedge...\wedge\mathrm{d}y^\di \\
& =  \mathrm{d}x\wedge\mathrm{d}y.\\
\end{split}
\]
Taking the nature of our variational approach into account, it was of the most theoretical importance to define our volume form a priori in terms of both the connection and the metric.   On the other hand, by \eqref{eq:vol}, $\vol$ is the volume form of the \textit{Sasaki-type metric} $g_{v}^{\mathrm{H}}\overset{\perp}{\oplus}g_{v}^{\mathrm{V}}$, and by the previous observation, it also coincides with the volume form of the \textit{Sasaki metric of $g$} (that is, $g_{v}^{\mathrm{H}}\overset{\perp}{\oplus}g_{v}^{\mathrm{V}}$ for $\N=\LC$). Note that the definition of $g_{v}^{\mathrm{V}}$ dividing by $F$ as in \eqref{eq:g^H and g^V} is what guarantees the $0$-homogeneity of $\vol$.   
\end{rem}

This $\vol$ allows us to define the divergence of any vector field $\mathscr{X}$ on $A$ as 
\[
\dive(\mathscr{X})\,\vol:=\lie_\mathscr{X}(\vol)=\mathrm{d}(\mathscr{X}\lrcorner\vol).
\]
In the case of a $1$-homogeneous $\mathscr{X}$, by Prop. \ref{prop:inducing forms} (iii), one has the property that justifies discarding the divergence terms in the variational calculus:
\[
\underline{\dive(\mathscr{X})\,\vol}=-\mathrm{d}(\underline{\mathscr{X}\lrcorner\vol}).
\]
The following divergence formulas, generalizing \cite[(24) and (25)]{HPV}, are the key to the derivation of our equations. Their proof is in Appendix \ref{A1}. 

\begin{prop} \label{prop:divergence formulas}
	For $X\in\mathcal{T}_{0}^{1}(\MA)$, 
	
	\begin{equation}
	\dive(X^{\mathrm{H}})=X^{c}\left\{ \left(g^{ab}-\frac{\di}{2}\,\frac{1}{L}\,y^{a}\,y^{b}\right)\covN_c g_{ab}+\torN_{ca}^{a}\right\} +\covN_a X^{a},
	\label{eq:horizontal divergence}
	\end{equation}	
	
	\begin{equation}
	\dive(X^{\mathrm{V}})=\left(2\,\C_{a}-\di\,\frac{y_{a}}{L}\right)X^{a}+X_{\cdot a}^{a}.
	\label{eq: vertical divergence}
	\end{equation}
	
	If $X\in\mathrm{h}^{0}\mathcal{T}_{0}^{1}(\MA)$, then $\underline{\dive(X^\mathrm{H})\,\vol}=-\mathrm{d}(\underline{X^\mathrm{H}\lrcorner\vol})$ on $\mathbb{P}^+A$, and if $X\in\mathrm{h}^{1}\mathcal{T}_{0}^{1}(\MA)$, then  $\underline{\dive(X^\mathrm{V})\,\vol}=-\mathrm{d}(\underline{X^\mathrm{V}\lrcorner\vol})$.
\end{prop}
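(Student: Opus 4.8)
\emph{Approach.} The plan is to turn both identities into a single coordinate computation against the explicit volume form. By Remark \ref{rem:vol independent of N} we may write $\vol=\Omega\,\mathrm{d}x^1\wedge\cdots\wedge\mathrm{d}x^\di\wedge\mathrm{d}y^1\wedge\cdots\wedge\mathrm{d}y^\di$ with $\Omega:=|\det g_{ij}|/F^\di$ in the natural coordinates $(x,y)$ of $A$. For any $\mathscr{X}=\mathscr{X}^a\,\partial_a+\mathscr{X}^{\di+a}\,\dv_a$, unwinding $\dive(\mathscr{X})\,\vol=\lie_\mathscr{X}(\vol)=\mathrm{d}(\mathscr{X}\lrcorner\vol)$ in these coordinates yields the classical expression
\[
\dive(\mathscr{X})=\partial_a\mathscr{X}^a+\dv_a\mathscr{X}^{\di+a}+\mathscr{X}^a\,\partial_a\ln\Omega+\mathscr{X}^{\di+a}\,\dv_a\ln\Omega,
\]
so everything reduces to specializing $(\mathscr{X}^i,\mathscr{X}^{\di+i})$ to the two lifts and to identifying the logarithmic derivatives of $\Omega$.

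\emph{Logarithmic derivatives.} First I would record $\dv_a\ln\Omega=2\,\C_a-\di\,y_a/L$: Jacobi's formula gives $\dv_a\ln|\det g_{ij}|=g^{bc}\,\dv_a g_{bc}=2\,\C_a$ (using $\C=\dv g/2$ and $\C_a=g^{bc}\,\C_{bca}$), while $\dv_a\ln F=\tfrac12\,L_{\cdot a}/L=y_a/L$ follows from $L_{\cdot a}=2\,y_a$. For the horizontal direction the essential identity is $\delta_c L=(\covN_c g_{ab})\,y^a y^b$: since $L=g_{ab}\,y^a y^b$ and $\covN\canon=0$ by Prop.~\ref{prop:covariante canon = 0}, applying $\covN_c$ and recalling that $\covN_c$ acts as $\delta_c$ on scalars gives it at once.

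\emph{The two formulas.} The vertical case is immediate: feeding $\mathscr{X}^i=0$, $\mathscr{X}^{\di+i}=X^i$ into the displayed divergence gives $\dive(X^{\mathrm{V}})=X_{\cdot a}^a+(2\,\C_a-\di\,y_a/L)\,X^a$, i.e.\ \eqref{eq: vertical divergence}. The horizontal case \eqref{eq:horizontal divergence} is where the work lies. Feeding $\mathscr{X}^i=X^i$ and $\mathscr{X}^{\di+i}=-X^a\,\N_a^i$ (recall $X^{\mathrm{H}}=X^a\,\delta_a$ and \eqref{eq:horizontal distribution}), the two logarithmic terms combine via $\partial_a-\N_a^c\,\dv_c=\delta_a$ into the single horizontal term $X^c\,\delta_c\ln\Omega$; rewriting $\delta_c g_{ab}$ through $\covN_c g_{ab}$ with \eqref{eq:covariant derivative} produces $g^{ab}\,\delta_c g_{ab}=g^{ab}\,\covN_c g_{ab}+2\,\N_{c\,\cdot a}^a$, and substituting $\delta_c L=(\covN_c g_{ab})y^a y^b$ converts the $F$-part into the $-\di/(2L)$ term. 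Meanwhile the first two terms $\partial_a X^a+\dv_a(-X^b\,\N_b^a)$ reassemble into $\covN_a X^a$ up to the two Christoffel traces $\N_{a\,\cdot c}^a$ and $\N_{c\,\cdot a}^a$ of the Berwald anisotropic connection $\dv\N$ (Def.~\ref{def:underlying 1}). Collecting every Christoffel-trace contribution, the surviving combination is exactly $\N_{c\,\cdot a}^a-\N_{a\,\cdot c}^a=\torN_{ca}^a$ by \eqref{eq:curvature and torsion}, and one lands on \eqref{eq:horizontal divergence}.

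\emph{Projectivization.} Finally, the statements on $\mathbb{P}^+A$ are formal. When $X\in\mathrm{h}^0\mathcal{T}_0^1(\MA)$ the lift $X^{\mathrm{H}}$ is $1$-homogeneous by \eqref{eq:horizontal isomorphism}, and when $X\in\mathrm{h}^1\mathcal{T}_0^1(\MA)$ the lift $X^{\mathrm{V}}$ is $1$-homogeneous by Def.~\ref{def:homogeneity} (ii); in both cases $\mathscr{X}\lrcorner\vol$ is a $0$-homogeneous $(2\di-1)$-form by Prop.~\ref{prop:inducing forms} (i), as $\vol$ is $0$-homogeneous. Applying Prop.~\ref{prop:inducing forms} (iii) to it gives $\underline{\dive(\mathscr{X})\,\vol}=\underline{\mathrm{d}(\mathscr{X}\lrcorner\vol)}=-\mathrm{d}(\underline{\mathscr{X}\lrcorner\vol})$, as claimed. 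The main obstacle is the bookkeeping in the horizontal case: one must keep the non-tensorial $\partial_a\ln\Omega$, the $\N^i_a$ coming from the horizontal frame, and the two inequivalent Christoffel traces of $\dv\N$ carefully aligned so that precisely $\torN_{ca}^a$ and $\bigl(g^{ab}-\tfrac{\di}{2L}\,y^a y^b\bigr)\covN_c g_{ab}$ remain.
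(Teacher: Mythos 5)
Your proof is correct, and it reaches both formulas by a route that is genuinely different in framework from the paper's, even though the key computational ingredients coincide. The paper first lifts the Berwald anisotropic connection $\dv\N$ to a linear connection $\widehat{\nabla}^\N$ on $\mathrm{T}A\longrightarrow A$, works in the adapted frame $(\delta_1,\dots,\delta_\di,\dot{\partial}_1,\dots,\dot{\partial}_\di)$, and expresses $\lie_{\mathscr{X}}(\vol)$ through the Christoffel symbols and torsion of $\widehat{\nabla}^\N$ before specializing to $X^{\mathrm{H}}$ and $X^{\mathrm{V}}$; you instead stay in the natural coordinates $(x,y)$, use Rem.~\ref{rem:vol independent of N} to write $\vol=\Omega\,\mathrm{d}x\wedge\mathrm{d}y$ with $\Omega=|\det g_{ij}|/F^{\di}$, and apply the classical identity $\dive(\mathscr{X})=\Omega^{-1}\partial_\mu(\Omega\,\mathscr{X}^\mu)$, only afterwards reassembling $\partial_a-\N_a^c\,\dv_c$ into $\delta_a$ and the Christoffel traces $\N_{c\,\cdot a}^a-\N_{a\,\cdot c}^a$ into $\torN_{ca}^a$. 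The shared core is the same in both: Jacobi's formula giving $\dv_a\ln|\det g|=2\,\C_a$ and $g^{ab}\,\delta_c g_{ab}=g^{ab}\,\covN_c g_{ab}+2\,\N_{c\,\cdot a}^a$, the identity $\delta_cL=(\covN_c g_{ab})\,y^a y^b$ from $\covN\canon=0$, and the final cancellation leaving exactly the torsion trace. Your version is more elementary (it never constructs the lifted connection or the hatted-index bookkeeping) and makes the vertical formula's independence of $\N$ manifest; the paper's version buys a frame-covariant divergence identity \eqref{eq:div fundamental} valid for arbitrary vector fields on $A$, which packages the torsion contribution structurally rather than by inspection. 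Your treatment of the projectivized statements via Prop.~\ref{prop:inducing forms} (i) and (iii) is exactly the intended one.
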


\begin{defn} \label{def:accion}
	Let $\dom\subseteq\mathbb{P}^+A$\footnote{$\vol$ defines a global orientation on $A$, the one making $(\partial_1,...,\partial_\di,\dot{\partial}_1,...,\dot{\partial}_\di)$ positive, regardless of the ones that we chose for $\vol^\mathrm{H}$, $\vol^\mathrm{V}$ and without requiring $M$ to be orientable. As  $\underline{\vol}$ is again a volume form  (see the comment at the end of Prop. \ref{prop:inducing forms} (ii)), an orientation on $\mathbb{P}^+A$ is inherited.} be a relatively compact subset. Along this article and relative to $D$, the  \textit{action functional} will be 
	\[
	\accion[\N,L]:=\int_{\dom}\underline{L^{-1}\,\ri\,\vol}
	\]
	and the \textit{alternative action functional} will be 
	\[
	\accion_{\star}[\N,L]:=\int_{\dom}\underline{g^{ab}\,\ri_{\cdot a \cdot b}\,\vol}.
	\]
\end{defn}

The relation between these two is due to \cite[Lem. 3]{HPV}. We state it in our notation. 

\begin{prop}
	For $f\in\mathrm{h}^{0}\mathcal{F}(A)$, one has
	\[
	\left\{g^{ab}\left(Lf\right)_{\cdot a \cdot b}-2\di f\right\}\vol=\dive(X^\mathrm{V})\,\vol,
	\]
	where $X^\mathrm{V}$ is the vertical field corresponding to $X:=L\,g^{ab}\,f_{\cdot b}\,\partial_a\in\mathrm{h}^1\mathcal{T}_{0}^{1}(\MA)$. As a consequence, the functionals that we are considering are equal up to a factor of $2\di$ and a boundary term: 
	\[
	\accion_{\star}[\N,L]-2\di\,\accion[\N,L]= -\int_{\partial\dom}\underline{X^\mathrm{V}\lrcorner\vol}. 
	\]
\end{prop}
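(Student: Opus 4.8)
The plan is to establish the pointwise identity
\[
\left\{g^{ab}\left(Lf\right)_{\cdot a \cdot b}-2\di f\right\}\vol=\dive(X^\mathrm{V})\,\vol
\]
by a direct computation using the vertical divergence formula \eqref{eq: vertical divergence}, and then to integrate it over $\dom$ with the discarding property from Prop.~\ref{prop:divergence formulas} to obtain the relation between the two functionals. First I would compute $X^a = L\,g^{ab}\,f_{\cdot b}$ and apply \eqref{eq: vertical divergence}, which requires the two pieces $X^a_{\cdot a}$ and $\left(2\,\C_a - \di\,y_a/L\right)X^a$. For the first piece, I would vertically differentiate the product $L\,g^{ab}\,f_{\cdot b}$ using the Leibniz rule and the identities already collected in \S\ref{sec:pseudo-finsler}, namely $L_{\cdot a}=2\,y_a$, $g^{ab}_{\cdot a}=-2\,\C^b$ (from $\C^j=-\tfrac12 g^{ja}_{\cdot a}$), and $f_{\cdot b \cdot a}=f_{\cdot a \cdot b}$.

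The key is that the terms involving the mean Cartan tensor $\C_a$ and the terms coming from differentiating $g^{ab}$ should cancel against the $2\,\C_a X^a$ term in the divergence formula, while the $\di\,y_a X^a/L$ term should combine with the differentiation of $L$ to produce the $-2\di f$ term. Concretely, $X^a_{\cdot a}=L_{\cdot a}g^{ab}f_{\cdot b}+L\,g^{ab}_{\cdot a}f_{\cdot b}+L\,g^{ab}f_{\cdot b \cdot a}=2\,y^b f_{\cdot b}+L(-2\,\C^b)f_{\cdot b}+L\,g^{ab}f_{\cdot a \cdot b}$. Since $f\in\mathrm{h}^0\mathcal{F}(A)$, Euler's theorem (Prop.~\ref{prop:euler}(i)) gives $y^b f_{\cdot b}=0$, so the first term vanishes. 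The term $-2L\,\C^b f_{\cdot b}$ should cancel the $2\,\C_a X^a=2\,\C_a L\,g^{ab}f_{\cdot b}=2L\,\C^b f_{\cdot b}$ contribution. Meanwhile $-\di\,\tfrac{y_a}{L}X^a=-\di\,\tfrac{y_a}{L}\,L\,g^{ab}f_{\cdot b}=-\di\,y^b f_{\cdot b}=0$ again by homogeneity of $f$; so I would then need to verify that $g^{ab}(Lf)_{\cdot a \cdot b}$ matches $L\,g^{ab}f_{\cdot a \cdot b}+2\di f$. Expanding $(Lf)_{\cdot a \cdot b}=L_{\cdot a \cdot b}f+L_{\cdot a}f_{\cdot b}+L_{\cdot b}f_{\cdot a}+L\,f_{\cdot a \cdot b}$ and using $L_{\cdot a \cdot b}=2\,g_{ab}$ (so $g^{ab}L_{\cdot a \cdot b}=2\di$) together with $g^{ab}L_{\cdot a}f_{\cdot b}=2\,g^{ab}y_a f_{\cdot b}=2\,y^b f_{\cdot b}=0$ closes the identity.

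The consequence then follows by integrating over $\dom$: applying the underline operation and the final assertion of Prop.~\ref{prop:divergence formulas} (valid since $X\in\mathrm{h}^1\mathcal{T}_0^1(\MA)$), the divergence integrates to a boundary term, $\int_\dom \underline{\dive(X^\mathrm{V})\,\vol}=-\int_\dom \mathrm{d}\left(\underline{X^\mathrm{V}\lrcorner\vol}\right)=-\int_{\partial\dom}\underline{X^\mathrm{V}\lrcorner\vol}$ by Stokes. Applying the pointwise identity with $f=L^{-1}\ri\in\mathrm{h}^0\mathcal{F}(A)$ (noting $\ri\in\mathrm{h}^2\mathcal{F}(A)$, so this $f$ is indeed $0$-homogeneous and $Lf=\ri$) yields $g^{ab}\ri_{\cdot a \cdot b}-2\di\,L^{-1}\ri$ under the integral, which is exactly $\accion_\star[\N,L]-2\di\,\accion[\N,L]$ after integration. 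I expect the main obstacle to be purely bookkeeping: tracking the homogeneity-driven cancellations carefully and confirming that $g^{ab}_{\cdot a}=-2\,\C^b$ is applied with the correct sign and index placement, since a sign error there would break the cancellation of the Cartan terms rather than produce the clean boundary-term statement.
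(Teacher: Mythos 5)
Your proposal is correct and follows essentially the same route as the paper: expand $g^{ab}(Lf)_{\cdot a\cdot b}=2\di f+L\,g^{ab}f_{\cdot a\cdot b}$ via $L_{\cdot a\cdot b}=2g_{ab}$ and $y^af_{\cdot a}=0$, compute $\dive(X^{\mathrm V})=L\,g^{ab}f_{\cdot a\cdot b}$ from \eqref{eq: vertical divergence} using $g^{ab}_{\cdot a}=-2\,\C^b$ and homogeneity, and then integrate with the boundary-term identity of Prop.~\ref{prop:divergence formulas}. All the cancellations you flag are exactly the ones the paper performs, with the correct signs.
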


\begin{proof}
	 As in the proof of \cite[Lem. 3]{HPV}, using the $0$-homogeneity of $f$, one directly computes 
	\[
	g^{ab}\left(Lf\right)_{\cdot a \cdot b}=2\di f+L\,g^{ab}\,f_{\cdot a \cdot b}.
	\]
	On the other hand, by \eqref{eq: vertical divergence}, 
	\[
	\begin{split}
	\dive(X^\mathrm{V})=&\left(2\,\C_{a}-\di\,\frac{y_{a}}{L}\right)L\,g^{ab}\,f_{\cdot b}+\left(L\,g^{ab}\,f_{\cdot b}\right)_{\cdot a} \\
	=&2L\,\C^b\,f_{\cdot b}+\left(2\,y_a\,g^{ab}\,f_{\cdot b}+L\,g^{ab}_{\cdot a}\,f_{\cdot b}+L\,g^{ab}\,f_{\cdot a \cdot b}\right) \\
	=&L\,g^{ab}\,f_{\cdot a \cdot b};
	\end{split}
	\]
	the $0$-homogeneity of $f$ was used twice and $g^{ab}_{\cdot a}=-2\,\C^b$ (\S \ref{sec:pseudo-finsler}) was used once. 
\end{proof}

We shall work with $\accion$, as it is of first order on $\N$ and second order on $L$ while $\accion_{\star}$ is of third order on $\N$. The advantage of the latter, on the other hand, is that it  is closer to  the Einstein-Hilbert-Palatini action, the functional of the classical metric-affine formalism \cite{palatini}   (compare with \cite[Prop. 6]{HPV}).  

\begin{prop} \label{prop:classical EHP action}
	Suppose that $\N$ is linear, $L$ is (positive definite) Riemannian and $\dom=\underset{p\in\dom_0}{\bigcup}\mathbb{P}^+(\mathrm{T}M\setminus\mathbf{0})_p$ for a relatively compact $\dom_0\subseteq M$. Then 
	\[
	\accion_{\star}[\N,L]= 2\,\mathrm{Vol}(\mathbb{S}^{\di-1})\int_{\dom_0}\mathrm{Scal}\,d\mathrm{V},
	\]
	where $\mathrm{Scal}$ is the scalar curvature constructed   with    $\N$ (regarded as an affine connection) and $g$,   $d\mathrm{V}$ is the $g$-volume element on $M$, and $\mathrm{Vol}(\mathbb{S}^{\di-1})$ is a universal constant. 
\end{prop}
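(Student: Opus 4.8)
The plan is to turn the integral over $\dom\subseteq\mathbb{P}^+A$ into a classical curvature integral on $M$ in three moves: identify the (isotropic) integrand, descend the computation to the indicatrix, and carry out the fibrewise integration over the projectivized tangent spaces. First I would pin down the Lagrangian. Since $\N$ is linear, Remark \ref{rem:affine connections} identifies it with an affine connection on $M$ and makes $\dv\N$ isotropic; then \eqref{eq:classical curvature} gives $\ri(x,y)=y^ay^b\,\mathrm{R}_{abc}^c(x)$, so that $\ri_{\cdot a\cdot b}$ depends on $x$ alone, and \eqref{eq:classical scalar} yields $g^{ab}\,\ri_{\cdot a\cdot b}=2\,\mathrm{Scal}$, where $g=g(x)$ is the Riemannian metric against which $\mathrm{Scal}$ is formed. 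Thus the Lagrangian is $0$-homogeneous and constant along the fibers $A_p$.

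Next I would descend to the indicatrix. By \eqref{eq:underline} and the previous step, $(\mathbb{P}^+)^\ast\underline{g^{ab}\ri_{\cdot a\cdot b}\,\vol}=\canon^\mathrm{V}\lrcorner(g^{ab}\ri_{\cdot a\cdot b}\,\vol)=2\,\mathrm{Scal}\,(\canon^\mathrm{V}\lrcorner\vol)$. Because $L$ is Riemannian and positive, every positive ray of $A=\mathrm{T}M\setminus\mathbf{0}$ meets $\{L=1\}$ exactly once, so $\{L=1\}$ is a global section of $\mathbb{P}^+$ and $\mathbb{P}^+|_{\{L=1\}}$ is a diffeomorphism onto $\mathbb{P}^+A$. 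Pulling the displayed identity back along this section, with the orientation on $\Sigma:=\{L=1\}\cap\pi_A^{-1}(\dom_0)$ that makes $\mathbb{P}^+|_\Sigma$ orientation-preserving with respect to the one induced by $\underline{\vol}$, gives
\[
\accion_\star[\N,L]=\int_{\Sigma}\iota^\ast\bigl(2\,\mathrm{Scal}\,(\canon^\mathrm{V}\lrcorner\vol)\bigr),\qquad \iota\colon\Sigma\hookrightarrow A.
\]

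Finally I would integrate along the fibers. From \eqref{eq:vol} and Remark \ref{rem:vol independent of N}, $\vol=\frac{\det g_{ij}(x)}{(g_{ab}(x)y^ay^b)^{\di/2}}\,\mathrm{d}x\wedge\mathrm{d}y$, whence $\canon^\mathrm{V}\lrcorner\vol=(-1)^\di\frac{\det g_{ij}}{(g_{ab}y^ay^b)^{\di/2}}\,\mathrm{d}x\wedge(\canon^\mathrm{V}\lrcorner\mathrm{d}y)$ with $\canon^\mathrm{V}\lrcorner\mathrm{d}y=\sum_a(-1)^{a-1}y^a\,\mathrm{d}y^1\wedge\cdots\wedge\widehat{\mathrm{d}y^a}\wedge\cdots\wedge\mathrm{d}y^\di$. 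On $\Sigma$ the denominator is $\equiv 1$, and $\Sigma$ fibers over $\dom_0$ with fiber the ellipsoid $E_x=\{g_{ab}(x)y^ay^b=1\}$. Holding $x$ fixed (the $\mathrm{d}x$-part being the base) and using Stokes with $\mathrm{d}(\canon^\mathrm{V}\lrcorner\mathrm{d}y)=\di\,\mathrm{d}y^1\wedge\cdots\wedge\mathrm{d}y^\di$, the fiber integral equals $\di$ times the Euclidean volume of the solid ellipsoid, i.e. $\di\,\mathrm{Vol}(B^\di)/\sqrt{\det g_{ij}(x)}$; multiplying by the coefficient $\det g_{ij}(x)$ produces $\di\,\mathrm{Vol}(B^\di)\sqrt{\det g_{ij}(x)}=\mathrm{Vol}(\mathbb{S}^{\di-1})\sqrt{\det g_{ij}(x)}$, since $\mathrm{Vol}(\mathbb{S}^{\di-1})=\di\,\mathrm{Vol}(B^\di)$. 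Applying Fubini over $\dom_0$ together with $\sqrt{\det g_{ij}}\,\mathrm{d}x=d\mathrm{V}$ then yields $2\,\mathrm{Vol}(\mathbb{S}^{\di-1})\int_{\dom_0}\mathrm{Scal}\,d\mathrm{V}$, as claimed; in particular the universal constant is the surface area $\mathrm{Vol}(\mathbb{S}^{\di-1})$ of the unit sphere.

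The routine parts are the first step, which is immediate from the cited identities. The hard part will be the orientation and sign bookkeeping of the last move: justifying that integrating $\underline{(\,\cdot\,)}$ over $\mathbb{P}^+A$ equals integrating $\canon^\mathrm{V}\lrcorner(\,\cdot\,)$ over the section $\{L=1\}$ with the correct orientation, splitting the $(2\di-1)$-form into base and fiber parts for Fubini, and checking that the stray $(-1)^\di$ is absorbed by the orientation induced from $\underline{\vol}$, so that the universal constant comes out positive and exactly equal to $\mathrm{Vol}(\mathbb{S}^{\di-1})$.
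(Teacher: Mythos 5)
Your proposal is correct, and the first two moves (identifying the isotropic integrand $g^{ab}\,\ri_{\cdot a\cdot b}=2\,\mathrm{Scal}$ via \eqref{eq:classical curvature}--\eqref{eq:classical scalar}, then reducing to a fibrewise integral of $\underline{\vol}$ over $\dom_p$ by Fubini) coincide with the paper's. Where you diverge is in evaluating the fiber integral: the paper identifies $\mathbb{P}^+(\mathrm{T}M\setminus\mathbf{0})_p$ with the indicatrix $\left\{L=1\right\}\cap\mathrm{T}_pM$, observes that the Sasaki metric $g^{\mathrm{H}}\overset{\perp}{\oplus}g^{\mathrm{V}}$ induces on it the round metric and that $\underline{\vol}$ restricts to the corresponding Riemannian volume form (Rem. \ref{rem:vol independent of N}), so the fiber integral is $\mathrm{Vol}(\mathbb{S}^{\di-1})$ by an abstract isometry; you instead compute $\canon^{\mathrm{V}}\lrcorner\vol$ in coordinates, restrict to the ellipsoid $\left\{g_{ab}(x)\,y^a y^b=1\right\}$ and apply Stokes on the solid ellipsoid, obtaining $\di\,\mathrm{Vol}(B^\di)/\sqrt{\det g}$ and hence the same constant after multiplying by $\det g_{ij}$. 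Your route is more elementary and self-contained (no appeal to the Sasaki isometry or to $\underline{\vol}$ being the induced Riemannian volume form), at the cost of the orientation and sign bookkeeping you flag, which does work out since $\underline{\vol}$ is by construction positive for the orientation chosen on $\mathbb{P}^+A$; the paper's route is shorter and conceptually cleaner but leans on the geometric identification of $\vol$ with the Sasaki volume form. Both give the same constant $\mathrm{Vol}(\mathbb{S}^{\di-1})=\di\,\mathrm{Vol}(B^\di)$.
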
 

\begin{proof}
	 A standard argument   with    a partition of the unity on $\mathbb{P}^+(\mathrm{T}M\setminus\mathbf{0})$ induced by one on $M$ allows us to use Fubini's Theorem to obtain the following:   
	\[
	\begin{split}
	\accion_{\star}[\N,L]=&\int_{\mathbb{P}^+v\in\dom}\underline{g^{ab}\,\ri_{\cdot a \cdot b}\,\vol}_{\mathbb{P}^+v}\\
	=&\int_{\mathbb{P}^+v\in\dom}g^{ab}( \pi(v) )\,\ri_{\cdot a \cdot b}( \pi(v) )\,\underline{\vol}_{\mathbb{P}^+v}\\
	=&\int_{p\in\dom_0}g^{ab}(p)\,\ri_{\cdot a \cdot b}(p)\left(\int_{\mathbb{P}^+v\in\dom_p}\underline{\vol}_{\dom_p}\right)d\mathrm{V}_p \\
	=&\mathrm{Vol}(\mathbb{S}^{\di-1})\int_{p\in\dom_0}g^{ab}(p)\,\ri_{\cdot a \cdot b}(p)\,d\mathrm{V}_p \\
	=& 2\,\mathrm{Vol}(\mathbb{S}^{\di-1})\int_{p\in\dom_0}\mathrm{Scal}(p)\,d\mathrm{V}_p, 
	\end{split}
	\]
	where we used   \eqref{eq:classical scalar} and the fact    that each fiber $\dom_p=\mathbb{P}^+(\mathrm{T}M\setminus\mathbf{0})_p$ inherits a metric that makes it isometric to the round sphere $\mathbb{S}^{\di-1}$.   Indeed, $\mathbb{P}^+(\mathrm{T}M\setminus\mathbf{0})$ is naturally identified with the sphere bundle $\left\{L=1\right\}$, where the metric is induced by $g^\mathrm{H}\overset{\perp}{\oplus}g^{\mathrm{V}}$, the Sasaki metric of $g$. Moreover, the induced $\underline{\vol}_{\dom_p}$ is the volume form of the round metric on $\dom_p$ because $\vol$ is the volume form of $g^\mathrm{H}\overset{\perp}{\oplus}g^{\mathrm{V}}$ (see Rem. \ref{rem:vol independent of N}).     
\end{proof} 

In the non-definite case, it is not possible to integrate on a compact fiber with universal volume at each $p\in M$. Hence, one does not seem to be able to actually recover the Einstein-Hilbert-Palatini action in general. Nonetheless, the positive definiteness of $g$ and the compactness of the fibers are superfluous when it comes to our variational calculus, for all of it is local on $\mathbb{P}^+A$ and formally the same in every signature. Thus, Prop. \ref{prop:classical EHP action} indeed guarantees a priori the consistency of our equations with the (vacuum) EHP ones.

\begin{rem}
	Let us sum up the reasons for choosing $L^{-1}\,\ri$ as our metric-affine Lagrangian function. 
	\begin{enumerate}
		\item It is the first and most natural ($0$-homogeneous) curvature scalar that is derived from $\N$.
		\item The second most natural scalar, $g^{ab}\,\ri_{\cdot a \cdot b}$, turns out to be variationally equivalent to it.
		\item Moreover, $g^{ab}\,\ri_{\cdot a \cdot b}$ reduces to the EHP Lagrangian in the classical case.
		\item The metric Lagrangian of \cite{HPV,pfeifer-wolfhart} is $L^{-1}\,\riL$.
	\end{enumerate}
\end{rem}

\begin{defn} \label{def:variations} \! (A) A \textit{variation of $\N$} is a smooth one-parameter family of homogeneous nonlinear connections $\N(\tau)$ with $\N(0)=\N$. Its \textit{variational field} is
	\[
	\N^{\prime}=\left.\frac{\partial}{\partial\tau}\right|_{\tau=0}\N(\tau)\in\mathrm{h}^{1}\mathcal{T}_{1}^{1}(\MA)
	\]
(see \eqref{eq:cocycle N}). Analogously for a \textit{variation of $L$}, whose \textit{variational field} is
	\[
	L^{\prime}=\left.\frac{\partial}{\partial\tau}\right|_{\tau=0}L(\tau)\in\mathrm{h}^{2}\mathcal{F}(A).
	\]
	
(B)  Given a relatively compact subset $\dom\subseteq\mathbb{P}^{+}A$, we say that a variation $\N(\tau)$ is \textit{$\dom$-admissible} if the projectivized support of its variational field, $\mathbb{P}^{+}(\overline{\left\{v\in A:\,\N^{\prime}_v\neq 0\right\}}^A)$, is contained in $\dom$. In such a case, without loss of generality, we shall assume that $D$ is open with smooth boundary $\partial\dom\subseteq\mathbb{P}^{+}A$. We say that $\N(\tau)$ is \textit{admissible} if it is $\dom$-admissible for some $\dom$.  Analogously for $L(\tau)$. 
\end{defn}

In terms of the metric connection, we write 
\[
\N=\LC+\J,\qquad\J\in\mathrm{h}^{1}\mathcal{T}_{1}^{1}(\MA).
\]
The computations needed to derive our equations are in Appendices \ref{app:A} and \ref{app:B}. 

\begin{thm}[Metric-affine Finslerian Einstein equations] \! \label{thm:variational equations}
\begin{enumerate}
\item \textsc{(Affine equation)} The equality
\[
\left.\frac{\partial}{\partial\tau}\right|_{\tau=0}\accion[\N(\tau),L]=0
\]
is fulfilled for all admissible variations $\N(\tau)$ of $\N$ if and
only if the equality of homogeneous anisotropic tensors 
\begin{equation}
\begin{split}\left\{ 2\,\la_{b}+\left(\di+2\right)\,\frac{y_{a}}{L}\,\J_{b}^{a}-2\,\C_{a}\,\J_{b}^{a}-\left(\J_{b\,\cdot a}^{a}+\J_{a\,\cdot b}^{a}\right)\right\} \left(\delta_{i}^{b}\,y^{j}-y^{b}\,\delta_{i}^{j}\right)\\
-\left(\J_{i\,\cdot a}^{j}-\J_{a\,\cdot i}^{j}\right)y^{a} & =0
\end{split}
\label{eq:affine equation}
\end{equation}
is fulfilled on $A$. 
\item \textsc{(Metric equation)} The equality 
\[
\left.\frac{\partial}{\partial\tau}\right|_{\tau=0}\accion[\N,L(\tau)]=0
\]
is fulfilled for all admissible variations $L(\tau)$ of $L$ if and only
if the equality of homogeneous anisotropic scalars 
\begin{equation}
\left(\di+2\right)\ri-L\,g^{ab}\,\ri_{\cdot a\cdot b}=0\label{eq:metric equation}
\end{equation}
is fulfilled on $A$. 
\end{enumerate}
\end{thm}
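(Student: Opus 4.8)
The plan is to derive both equations as Euler--Lagrange equations for the functional $\accion[\N,L]=\int_{\dom}\underline{L^{-1}\,\ri\,\vol}$ by computing the first variations with respect to $\N$ and to $L$ separately, and then invoking the fundamental lemma of the calculus of variations (in the projectivized setting) to convert the vanishing of the variation for all admissible variations into a pointwise tensorial identity on $A$. The key simplification throughout is that $\vol$ depends only on $L$ (Remark \ref{rem:vol independent of N}), so for the affine equation $\vol$ is held fixed and only $\ri$ is varied, whereas for the metric equation both $L^{-1}\,\vol$ and $\ri$ (through the $\dot\partial$'s and the underlying connection) respond to $L'$.

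For the \textbf{affine equation}, I would parametrize nearby connections by $\N(\tau)=\LC+\J+\tau\,\N'$ and compute $\partial_\tau|_{\tau=0}\ri$. Since $\ri=y^b\,\RN_{ba}^a$ with $\RN_{ij}^k=\delta_j\N_i^k-\delta_i\N_j^k$ (see \eqref{eq:curvature and torsion}), and since the $\delta_j$ themselves depend on $\N$, the variation of the curvature is the Finslerian analogue of the Palatini identity: $\ri'$ becomes a total $\covN$-divergence of an expression built from $\N'$ (a ``$\delta\Gamma$'' term), up to curvature/torsion corrections coming from the commutators \eqref{eq:commutation formulas}. I would then multiply by $L^{-1}\,\vol$, move the horizontal and vertical derivatives off $\N'$ using the two divergence formulas of Proposition \ref{prop:divergence formulas}, and discard the resulting boundary terms on $\partial\dom$ (legitimate because $\N'$ has projectivized support inside $\dom$ and the divergence of a $1$-homogeneous field underlines to $-\mathrm{d}$ of something, so the underlined integral vanishes). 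What survives is $\int_\dom \langle E,\N'\rangle$ for a tensor $E\in\mathrm{h}^{?}\mathcal{T}_{2}^{1}(\MA)$; setting $E=0$ (fundamental lemma) and expanding $E$ in terms of $\la_b$, $\C_a$, $y_a/L$, and the vertical derivatives $\J^a_{b\,\cdot a}$, $\J^j_{i\,\cdot a}$ gives exactly \eqref{eq:affine equation}. The appearance of $\la_b=\covL g_{\cdot\,}/2$ is natural because converting $\covN$-derivatives back to $\covL$-derivatives introduces the difference tensor $\J$ and the non-metricity of $\covN$, whose trace is governed by $\la_b$ and $\C_a$.

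For the \textbf{metric equation}, I would vary $L(\tau)=L+\tau\,L'$ with $\N$ held fixed, so $\ri$ is fixed \emph{as a function} but the Lagrangian density $L^{-1}\,\ri\,\vol$ and, crucially, the underlining operation involve vertical derivatives that interact with $L'$. The cleanest route is to pass to the alternative functional: by the Proposition preceding Theorem \ref{thm:variational equations}, $\accion_\star[\N,L]$ and $2\di\,\accion[\N,L]$ differ by a boundary term, so they have the same Euler--Lagrange equation; and $\accion_\star[\N,L]=\int_\dom\underline{g^{ab}\,\ri_{\cdot a\cdot b}\,\vol}$ is of \emph{zeroth} order in the varied derivatives of $\ri$. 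Varying $g^{ab}=g^{ab}(L)$ and $\underline{\vol}=\underline{\vol}(L)$ against $L'$, and repeatedly using the vertical-divergence formula \eqref{eq: vertical divergence} together with Euler's theorem (Proposition \ref{prop:euler}) to absorb terms into boundary integrals, should collapse the bulk integrand to a multiple of $\bigl((\di+2)\ri-L\,g^{ab}\,\ri_{\cdot a\cdot b}\bigr)\,L'$; the fundamental lemma with arbitrary $L'\in\mathrm{h}^2\mathcal{F}(A)$ then yields \eqref{eq:metric equation}.

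The main obstacle I anticipate is the \textbf{affine} variation: keeping careful track of how the horizontal frame $\delta_j$ varies with $\N$ (the ``$\delta\Gamma$'' contributions and the commutator corrections in \eqref{eq:commutation formulas}), and then correctly converting all $\covN$-divergences into boundary terms via \emph{both} formulas of Proposition \ref{prop:divergence formulas}, is where the signature-independent but lengthy bookkeeping lives, and where the precise coefficients $(\di+2)$, $2\,\la_b$, $-2\,\C_a$, and the antisymmetrization structure $(\delta_i^b y^j-y^b\delta_i^j)$ and $(\J^j_{i\,\cdot a}-\J^j_{a\,\cdot i})y^a$ must emerge exactly. These computations are straightforward in principle but error-prone, which is why they are deferred to Appendices \ref{app:A} and \ref{app:B}; here I would only set up the variation, identify the two divergence formulas as the mechanism for discarding boundary terms, and invoke the fundamental lemma to pass from ``vanishing for all admissible variations'' to the pointwise identities \eqref{eq:affine equation} and \eqref{eq:metric equation}.
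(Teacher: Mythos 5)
Your outline for the affine equation follows the paper's own route in Appendix \ref{app:A}: vary $\N$, use that $\vol$ is independent of $\N$ (Rem. \ref{rem:vol independent of N}), compute $\left.\partial_\tau\right|_0\ri(\tau)$ via a Finslerian Palatini identity, integrate by parts, discard boundary terms, apply the fundamental lemma, and then re-express the resulting equation \eqref{eq:affine equation 0} (written in terms of $\covN g$ and $\torN$) through Lem. \ref{lem:auxiliar}, which is exactly where $\la_b$ and $\C_a$ enter. One small correction: for the affine equation only the \emph{horizontal} divergence formula \eqref{eq:horizontal divergence} is needed, since the two fields produced by the integration by parts, $L^{-1}(\N^\prime)^d_d\,y^c\,\partial_c$ and $L^{-1}(\N^\prime)^c_d\,y^d\,\partial_c$, are both lifted horizontally; the vertical formula \eqref{eq: vertical divergence} is reserved for the metric equation. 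For the metric equation your route differs from the paper's: you propose varying $\accion_\star$ instead of $\accion$. This is legitimate, but it requires an extra observation you only implicitly rely on, namely that the boundary term $\int_{\partial\dom}\underline{X^\mathrm{V}\lrcorner\vol}$ relating the two functionals has vanishing variation (true because an admissible $L^\prime$ vanishes on a neighbourhood of $(\mathbb{P}^+)^{-1}(\partial\dom)$), and it is not actually cheaper: in $\accion_\star$ the second vertical derivatives of $L^\prime$ enter through \emph{both} $g^{\prime\,ab}$ and $\vol^\prime$, whereas in the paper's direct variation of $\accion$ the only such term is the single $g^{ab}L^\prime_{\cdot a\cdot b}$ coming from $\det g$ in $\vol$ via Jacobi's formula, after which two applications of \eqref{eq: vertical divergence} finish the job. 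Also, the underlining operation itself does not ``interact with $L^\prime$''; it is linear and commutes with $\partial_\tau$ --- the vertical derivatives of $L^\prime$ appear only because $g=\dv^2L/2$ sits inside $\vol$ and $g^{ab}$.

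The genuine shortfall is that the proposal stops at the level of strategy: the theorem's content is precisely the exact coefficients and index structure of \eqref{eq:affine equation} and \eqref{eq:metric equation} --- the factor $\di+2$, the terms $2\,\la_b$ and $-2\,\C_a$, the antisymmetrizations $(\delta_i^b y^j-y^b\delta_i^j)$ and $(\J^j_{i\,\cdot a}-\J^j_{a\,\cdot i})y^a$ --- and these are asserted to ``emerge exactly'' rather than derived. In particular, the computation of $\left.\partial_\tau\right|_0\{\delta_j(\tau)\N_i^k(\tau)\}$ (where the $\tau$-dependence of the horizontal frame must be handled) and the two integrations by parts producing the non-metricity and torsion traces are never carried out, so the proposal does not by itself certify the stated equations.
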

\section{The affine equation} \label{sec:affine equation}

 Along this section, $L$ (and thus its associated $\LC$) is fixed. 

\begin{defn}\label{def41}
	 $\sol$ will be the \textit{space of solutions of the affine equation \eqref{eq:affine equation}}.
	That is, the set of those $\N$'s such that $\J:=\N-\LC \in\mathrm{h}^1\mathcal{T}_1^1(\MA) $ solves 
	\begin{equation}
	\left(2\,\la_{a}+2\,\B_{a}\right)\left(\delta_{i}^{a}\,y^{j}-y^{a}\,\delta_{i}^{j}\right)-\left(\J_{i\,\cdot a}^{j}-\J_{a\,\cdot i}^{j}\right)y^{a}=0\label{eq:affine equation 2}
	\end{equation}
	on $A$ (but not necessarily the metric equation \eqref{eq:metric equation}); here, 
	\begin{equation}
	\B_{i}:=\frac{\di+2}{2}\,\frac{y_{a}}{L}\,\J_{i}^{a}-\C_{a}\,\J_{i}^{a}-\frac{1}{2}\left(\J_{i\,\cdot a}^{a}+\J_{a\,\cdot i}^{a}\right),\qquad\B\in\mathrm{h}^{0}\mathcal{T}_{1}^{0}(\MA).
	\label{eq:B}
	\end{equation}
	$\solsim$ will be the \textit{space of symmetric solutions of the affine equation}. 
\end{defn}
 
\begin{rem} \label{rem:la_i = 0}
 When nonempty,  $\sol$ is an affine space directed by the space of solutions of 
\begin{equation}
2\,\mathcal{B}_{a}^{\J_{\ast}}\left(\delta_{i}^{a}\,y^{j}-y^{a}\,\delta_{i}^{j}\right)-\left\{ \left(\J_{\ast}\right)_{i\,\cdot a}^{j}-\left(\J_{\ast}\right)_{a\,\cdot i}^{j}\right\} y^{a}=0,\label{eq:affine equation 3}
\end{equation}
 while $\solsim$ is an affine subspace of $\sol$.
$\LC$ is in $\sol$ (and thus in $\solsim$) when $\J=0$ solves $\eqref{eq:affine equation 2}$, i.e., precisely when the mean Landsberg tensor vanishes ($\la_{i}=0$).   Notice that the vanishing of this tensor does not imply the vanishing of the whole $\la$, see \cite{LiShe07}.  
\end{rem}

\begin{rem} \label{rem:palatini}
	 Recall that the affine connections solving the classical metric-affine formalism (see \cite[(17)]{palatini} and references therein) are a Levi-Civita $\nabla^g$   (with Christoffel symbols $\left(\Gamma^g\right)_{ij}^k(x)$)   plus any tensor of the form $\A\otimes\mathrm{Id}$ with $\A$ an isotropic $1$-form. These affine connections can be regarded either as isotropic $\an$'s or linear $\N$'s (Rem. \ref{rem:affine connections}); from the latter viewpoint, they are of the form $\LC+\A\otimes\canon$. In other words, the isotropic connection $\left(  \an^g  \right)_{ij}^k(x)+ \A_i(x)\,\delta_j^k$ is identified with its underlying linear connection $\left(  \an^g  \right)_{ib}^k(x)\,y^b+ \A_i(x)\,y^k$). Thus, the map $\N\longmapsto\N+\A\otimes\canon$ is a translation on the space of solutions of the classical formalism whenever $\A$ is isotropic.   Here we shall prove the extension of this result to our formalism stating a previous lemma for further referencing.  
\end{rem}


\begin{lem} \label{lem:auxiliar}
 Let $\N=\LC+\J$ with $\J\in\mathrm{h}^{1}\mathcal{T}_{1}^{1}(\MA)$. Then:
\begin{enumerate}
	\item The torsion of $\N$ is given by 
	\begin{equation}
		\torN_{ij}^k=\J_{i\,\cdot j}^k-\J_{j\,\cdot i}^k.
		\label{eq:4 aux 7}
	\end{equation}
	\item The curvature of $\N$ is given in terms of that of $\LC$ by 
	\begin{equation}
		\RN_{ij}^k=\left(\RL\right)_{ij}^k+\left(\covL_j\J_{i}^k-\J_{i\,\cdot a}^k\,\J_j^a\right)-\left(\covL_i\J_{j}^k-\J_{j\,\cdot a}^k\,\J_i^a\right).
		\label{eq:4 aux 12}
	\end{equation}
	  \item The Ricci scalar of $\N$ is given in terms of that of $\LC$ by 
	\begin{equation}
	\ri=\riL-   y^{b}\,\covL_{b}\,\J_a^a+  \covL_{a}\left(\J^a_b\,y^b\right)+   y^b\,\J_{c\,\cdot a}^c\,\J_b^a-   y^b\,\J_{b\,\cdot a}^c\,\J_c^a.
	\label{eq:4 aux 15}
	\end{equation}  
	\item The $\N$-covariant derivative of $g$ is given by
	\begin{equation}
		\covN_k g_{ij}=2\,\la_{ijk}-2\,\C_{ija}\,\J_{k}^{a}-\J_{k\,\cdot i}^{a}\,g_{aj}-\J_{k\,\cdot j}^{a}\,g_{ia}.
		\label{eq:4 aux 13}
	\end{equation}
\end{enumerate} 
\end{lem}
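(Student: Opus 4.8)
The plan is to establish all four identities by direct computation in coordinates, systematically rewriting each $\N$-object in terms of the corresponding $\LC$-object and the tensor $\J$. The argument rests on two substitution rules together with two structural facts. The substitution rules are: (a) the horizontal frames are related by $\delta_i^{\N}=\delta_i-\J_i^a\,\dv_a$, where $\delta_i=\partial_i-(\LC)_i^a\,\dv_a$ is the $\LC$-horizontal frame of \eqref{eq:horizontal distribution} and $\delta_i^{\N}$ the $\N$-horizontal one; and (b) by Def.~\ref{def:underlying 1}(ii) the Berwald anisotropic connections satisfy $\dv\N=\dv\LC+\dv\J$, so the Christoffel symbols of $\covN$ are $(\LC)_{i\,\cdot j}^k+\J_{i\,\cdot j}^k$ whereas those of $\covL$ are $(\LC)_{i\,\cdot j}^k$. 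The structural facts are that $\LC$ is symmetric, i.e.\ $(\LC)_{i\,\cdot j}^k=(\LC)_{j\,\cdot i}^k$, and that $\covL\canon=0$ by Prop.~\ref{prop:covariante canon = 0}.

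For (i), inserting $\N_{i\,\cdot j}^k=(\LC)_{i\,\cdot j}^k+\J_{i\,\cdot j}^k$ into the torsion formula $\torN_{ij}^k=\N_{i\,\cdot j}^k-\N_{j\,\cdot i}^k$ of \eqref{eq:curvature and torsion} makes the $\LC$-part vanish by symmetry, leaving exactly \eqref{eq:4 aux 7}. For (ii), I would expand $\RN_{ij}^k=\delta_j^{\N}\N_i^k-\delta_i^{\N}\N_j^k$ using both rules: the pure $\LC$-part $\delta_j(\LC)_i^k-\delta_i(\LC)_j^k$ reproduces $(\RL)_{ij}^k$, while each remaining derivative is converted via \eqref{eq:covariant derivative}, e.g.\ $\delta_j\J_i^k=\covL_j\J_i^k-(\LC)_{j\,\cdot a}^k\,\J_i^a+(\LC)_{j\,\cdot i}^a\,\J_a^k$ and analogously for $\delta_i\J_j^k$. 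The resulting corrections $-(\LC)_{j\,\cdot a}^k\,\J_i^a$ and $+(\LC)_{i\,\cdot a}^k\,\J_j^a$ cancel the pieces $+\J_i^a\,(\LC)_{j\,\cdot a}^k$ and $-\J_j^a\,(\LC)_{i\,\cdot a}^k$ produced by the $\J\,\dv$ parts of $\delta_i^{\N}$ and $\delta_j^{\N}$, while the terms $(\LC)_{j\,\cdot i}^a\,\J_a^k$ cancel between the two antisymmetrized halves because $\LC$ is symmetric. What survives is $\bigl(\covL_j\J_i^k-\J_{i\,\cdot a}^k\,\J_j^a\bigr)-\bigl(\covL_i\J_j^k-\J_{j\,\cdot a}^k\,\J_i^a\bigr)$, which is \eqref{eq:4 aux 12}.

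For (iii), I would contract (ii) along $\ri=y^b\,\RN_{ba}^a$: the term $y^b\,(\RL)_{ba}^a$ is $\riL$, and since $\covL\canon=0$ one rewrites $y^b\,\covL_a\J_b^a=\covL_a(\J_b^a\,y^b)$, so that a relabeling of summation indices turns the contracted identity into \eqref{eq:4 aux 15}. For (iv), I would apply the $\covN$-version of \eqref{eq:covariant derivative} to $g$, namely $\covN_k g_{ij}=\delta_k^{\N}g_{ij}-\N_{k\,\cdot i}^a\,g_{aj}-\N_{k\,\cdot j}^a\,g_{ia}$, and insert both rules. Writing $\delta_k^{\N}g_{ij}=\delta_k g_{ij}-\J_k^a\,g_{ij\,\cdot a}$ with $g_{ij\,\cdot a}=2\,\C_{ija}$ (from $\C=\tfrac12\,\dv g$), and recognizing that $\delta_k g_{ij}-(\LC)_{k\,\cdot i}^a\,g_{aj}-(\LC)_{k\,\cdot j}^a\,g_{ia}=\covL_k g_{ij}=2\,\la_{ijk}$ (from $\la=\tfrac12\,\covL g$), one is left with precisely \eqref{eq:4 aux 13}.

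The main obstacle is purely the index bookkeeping in step (ii): one must keep scrupulous track of which horizontal frame ($\delta$ versus $\delta^{\N}$) and which Christoffel symbols (those of $\covL$ versus those of $\covN$) enter each term, and verify that every non-covariant contribution cancels, the cancellations hinging on the symmetry of $\LC$. Once (ii) is established, (iii) is immediate by contraction and (i), (iv) are short direct substitutions; no global or analytic input is required, as everything reduces to a fiberwise algebraic identity on $A$.
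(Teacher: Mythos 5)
Your proposal is correct and takes essentially the same approach as the paper: each identity is obtained by the same coordinate substitutions $\delta_i^{\N}=\delta_i^{L}-\J_i^a\,\dv_a$ and $\N_{i\,\cdot j}^k=(\LC)_{i\,\cdot j}^k+\J_{i\,\cdot j}^k$, completing $\delta^L$-derivatives of $\J$ to $\covL$-derivatives, and invoking the symmetry of $\LC$, $\covL\canon=0$, and the definitions $\C=\tfrac12\dv g$, $\la=\tfrac12\covL g$ exactly as the paper does. The cancellations you identify in step (ii) are precisely those carried out in the paper's computation.
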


\begin{proof} 
(i) This comes from the definition \eqref{eq:curvature and torsion} together with the symmetry of $\LC$. 

(ii) Using \eqref{eq:horizontal distribution}, 
\[
\begin{split}
\delta_j\N_i^k =&\left(\delta_j^L-\J_j^a\,\dv_a\right)\left\{\left(\LC\right)_{i}^k+\J_i^k\right\} \\
=&\delta_j^L\left(\LC\right)_{i}^k+\delta_j^L\J_{i}^k-\left(\LC\right)_{i\,\cdot a}^k\,\J_j^a-\J_{i\,\cdot a}^k\,\J_j^a,
\end{split}
\]
and completing $\delta_j^L\J_{i}^k$ to $\covL_j\J_{i}^k$ (see \eqref{eq:covariant derivative}), 
\[
\delta_j\N_i^k=\delta_j^L\left(\LC\right)_{i}^k+\covL_j\J_{i}^k+\left(\LC\right)_{j\,\cdot i}^a\,\J_a^k-\left(\LC\right)_{j\,\cdot a}^k\,\J_i^a-\left(\LC\right)_{i\,\cdot a}^k\,\J_j^a-\J_{i\,\cdot a}^k\,\J_j^a.
\]
Hence, again   by   the symmetry of $\LC$, \eqref{eq:curvature and torsion} yields \eqref{eq:4 aux 12}. 

(iii) This also comes from the definition \eqref{eq:curvature and torsion}, this time together with \eqref{eq:4 aux 12} and the fact that $\covL_iy^j=0$ (Prop. \ref{prop:covariante canon = 0}).  

(iv) Again using \eqref{eq:covariant derivative} and \eqref{eq:horizontal distribution},
\[
\begin{split} \covN_k g_{ij} & =\delta_{k}g_{ij}-\N_{k\,\cdot i}^{a}\,g_{aj}-\N_{k\,\cdot j}^{a}\,g_{ia}\\
& =\delta_{k}^{L}g_{ij}-\J_{k}^{a}\,\dv_{a}g_{ij}-\left(\LC\right)_{k\,\cdot i}^{a}\,g_{aj}-\J_{k\,\cdot i}^{a}\,g_{aj}-\left(\LC\right)_{k\,\cdot j}^{a}\,g_{ia}-\J_{k\,\cdot j}^{a}\,g_{ia},
\end{split}
\]
from where the definitions $\C=\dv g/2$ and $\la=\covL g/2$ yield \eqref{eq:4 aux 13}. 
\end{proof}

\begin{lem} \label{lem:translations}
	For any $\A\in\mathrm{h}^0\mathcal{T}_1^0(\MA)$, the map $\N\longmapsto\N+\A\otimes\canon$   preserves the Ricci scalar of all homogeneous nonlinear connections. As a consequence, such a map   is a translation on $\sol$, i.e., $\left(\J_\ast\right)_i^k:=\A_i\,y^k$ solves \eqref{eq:affine equation 3}. 
\end{lem}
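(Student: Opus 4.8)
The plan is to prove the stronger first claim — that $\N\mapsto\N+\A\otimes\canon$ leaves the Ricci scalar of \emph{any} homogeneous nonlinear connection unchanged — by a direct curvature computation, and then to read off the statement about $\sol$ as a formal consequence. Write $\widehat{\N}:=\N+\A\otimes\canon$, so that $\widehat{\N}_i^k=\N_i^k+\A_i\,y^k$, and note that the horizontal frame \eqref{eq:horizontal distribution} shifts by a multiple of the Liouville operator, $\widehat\delta_j=\delta_j-\A_j\,y^a\,\dv_a$. I would insert this into the curvature formula \eqref{eq:curvature and torsion}, $\widehat\RN_{ij}^k=\widehat\delta_j\widehat{\N}_i^k-\widehat\delta_i\widehat{\N}_j^k$. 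The computation is controlled by three elementary identities: $\delta_j y^k=-\N_j^k$, Euler's theorem (Prop. \ref{prop:euler}(i)) in the form $y^a\,\N_{i\,\cdot a}^k=\N_i^k$ (as $\N_i^k$ is $1$-homogeneous), and $y^a\,\A_{i\,\cdot a}=0$ (as $\A_i$ is $0$-homogeneous). With these, the cross terms $\A_i\,\N_j^k$ and $\A_j\,\N_i^k$, as well as the quadratic term $\A_i\,\A_j\,y^k$, are symmetric in $i,j$ and hence drop out upon antisymmetrization, leaving
\[
\widehat\RN_{ij}^k=\RN_{ij}^k+(\delta_j\A_i-\delta_i\A_j)\,y^k.
\]

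Contracting as in \eqref{eq:curvature and torsion} then gives
\[
\widehat\ri=y^b\,\widehat\RN_{ba}^a=\ri+y^a\,y^b\,(\delta_a\A_b-\delta_b\A_a),
\]
and the correction vanishes because $y^a y^b$ is symmetric while $\delta_a\A_b-\delta_b\A_a$ is antisymmetric in $a,b$ (the non-tensorial $\partial$-terms cancel for the same reason). Thus $\widehat\ri=\ri$, which is the first assertion.

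For the consequence I would argue as follows. Since the Ricci scalar is preserved pointwise for every nonlinear connection, the functional $\accion[\cdot,L]$ is invariant under the translation by $\A\otimes\canon$; applying this to the connection $\LC+\J+\tau W$ for an arbitrary variational field $W$ and differentiating at $\tau=0$, the first-variation computation behind Thm \ref{thm:variational equations} yields the same left-hand side of \eqref{eq:affine equation 2} at $\J$ and at $\J+\J_\ast$. As this left-hand side is affine-linear in $\J$ with linear part exactly the left-hand side of \eqref{eq:affine equation 3}, subtracting shows that $\J_\ast=\A\otimes\canon$ solves \eqref{eq:affine equation 3}; by the affine structure of $\sol$ (Rem. \ref{rem:la_i = 0}) this is precisely the assertion that the map is a translation on $\sol$. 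Alternatively, one may verify \eqref{eq:affine equation 3} for $\J_\ast$ by a direct substitution, using $y_a y^a=L$, $\C_a\,y^a=0$, and the two homogeneity identities above.

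The routine-but-delicate point, and the main reason I prefer the curvature route, is that the Euler identities do \emph{not} all collapse individually: while $y^a\A_{i\,\cdot a}=0$, the companion contraction $y^a\A_{a\,\cdot i}$ survives (as does $\A_a y^a$), so in any direct check of \eqref{eq:affine equation 3} these terms must be carried along and seen to cancel only in the full combination. The curvature computation sidesteps this bookkeeping by reducing everything to the single symmetric/antisymmetric contraction displayed above.
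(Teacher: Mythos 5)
Your proof is correct, and the first (and main) half takes a genuinely different route from the paper's. The paper does not touch the curvature tensor directly: it invokes the already-established formula \eqref{eq:4 aux 15} of Lem.~\ref{lem:auxiliar}~(iii), which expresses $\ri$ of $\N=\LC+\J$ through $\riL$ and $\J$, substitutes $\J_\ast=\J+\A\otimes\canon$ there, and verifies term by term (using $\covL\canon=0$ and the homogeneities of $\J$ and $\A$) that all the extra contributions cancel. You instead compute the full curvature of $\N+\A\otimes\canon$ from the definition \eqref{eq:curvature and torsion}, observing that the horizontal frame shifts by $-\A_j\,\canon^{\mathrm{V}}$ and that the cross and quadratic terms are symmetric in $i,j$, which yields the transformation rule $\RN_{ij}^k\mapsto\RN_{ij}^k+(\delta_j\A_i-\delta_i\A_j)\,y^k$; the Ricci contraction $y^b\,(\cdot)_{ba}^a$ then kills the correction by the symmetric--antisymmetric pairing. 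Your route is shorter, does not rely on Lem.~\ref{lem:auxiliar}, and gives a slightly stronger intermediate statement (the behaviour of the whole curvature tensor, not only of its trace); the paper's route is a by-product of machinery it needs anyway for Th.~\ref{thm:reduction symmetric}. For the second assertion both arguments are the same variational one (invariance of $\accion$ under the translation plus the affine dependence of the Euler--Lagrange expression on $\J$); your phrasing, which applies it to an arbitrary $\N$ rather than only to $\N\in\sol$, is in fact marginally more robust, since it establishes that $\J_\ast$ solves \eqref{eq:affine equation 3} even when $\sol$ happens to be empty.
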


\begin{proof}
	  For $\N=:\LC+\J$, the Ricci scalar of $\N_{\ast}:=\N+\A\otimes\canon$ can be computed with \eqref{eq:4 aux 15} by putting $\J_{\ast}:=\J+\A\otimes\canon$ in place of $\J$. Using $\covL_iy^j=0$ (Prop. ~\ref{prop:covariante canon = 0}), the $1$-homogeneity of $\J$ and the $0$-homogeneity of $\A$,
	\[
	y^{b}\,\covL_{b}\left(\J_{\ast}\right)_a^a=y^{b}\,\covL_{b}\,\J_a^a+y^{b}\,\covL_{b}\left(\A_a\,y^a\right),
	\]
	\[
	\covL_{a}\left(\J_{\ast}\right)^a_b\,y^b=\covL_{a}\left(\J^a_b\,y^b\right)+y^{a}\,\covL_{a}\left(\A_b\,y^b\right),
	\]
	\[
	\begin{split}
	y^b\left(\J_{\ast}\right)_{c\,\cdot a}^c\left(\J_{\ast}\right)_b^a=&y^b\left(\J_{c\,\cdot a}^c+\A_{c\,\cdot a}\,y^c+\A_{c}\,\delta^c_a\right)\left(\J_b^a+\A_b\,y^a\right)\\
	=&y^b\left(\J_{c\,\cdot a}^c\,\J_b^a+\A_{c\,\cdot a}\,y^c\,\J_b^a+\A_c\,\J^c_b+\J_c^c\,\A_b+\A_a\,y^a\,\A_b\right),
	\end{split}
	\]
	\[
	\begin{split}
	y^b\left(\J_{\ast}\right)_{b\,\cdot a}^c\left(\J_{\ast}\right)_c^a=&y^b\left(\J_{b\,\cdot a}^c+\A_{b\,\cdot a}\,y^c+\A_{b}\,\delta^c_a\right)\left(\J_c^a+\A_c\,y^a\right) \\
	=&y^b\left(\J_{b\,\cdot a}^c\,\J_c^a+\A_{b\,\cdot a}\,y^c\,\J_c^a+\A_b\,\J^c_c+\J_b^c\,\A_c+\A_a\,y^a\,\A_b\right).
	\end{split}
	\]
	Putting these together, 
	\[
	\begin{split}
	\ri_\ast=&\riL-   y^{b}\,\covL_{b}\,\J_a^a+  \covL_{a}\left(\J^a_b\,y^b\right)+   y^b\,\J_{c\,\cdot a}^c\,\J_b^a-   y^b\,\J_{b\,\cdot a}^c\,\J_c^a \\
	&-   y^{b}\,\covL_{b}\left(\A_a\,y^a\right)+   y^{a}\,\covL_{a}\left(\A_b\,y^b\right) \\
	&+   y^b\left(\A_{c\,\cdot a}\,y^c\,\J_b^a+\A_c\,\J^c_b+\J_c^c\,\A_b+\A_a\,y^a\,\A_b\right) \\
	&-   y^b\left(\A_{b\,\cdot a}\,y^c\,\J_c^a+\A_b\,\J^c_c+\J_b^c\,\A_c+\A_a\,y^a\,\A_b\right) \\
	=&\riL-   y^{b}\,\covL_{b}\,\J_a^a+\covL_{a}\left(\J^a_b\,y^b\right)+ y^b\,\J_{c\,\cdot a}^c\,\J_b^a-y^b\,\J_{b\,\cdot a}^c\,\J_c^a \\
	=&\ri.
	\end{split}
	\]

	  Having established that the translation by $\A\otimes\canon$ preserves the Ricci scalar, recall Th. \ref{thm:variational equations} (ii) and Def. \ref{def:accion}. Clearly, $\accion[\N+\A\otimes\canon,L]=\accion[\N,L]$ for any nonlinear connection $\N$, so, as it is standard in Variational Calculus, the translation maps critical points of the action to critical points. Indeed, if $\N\in\sol$, then every ($\dom$-admissible) variation of $\N+\A\otimes\canon$ is of the form $\N(\tau)+\A\otimes\canon$ for a ($\dom$-admissible) variation of $\N$, so 
	\[
	\left.\frac{\partial}{\partial\tau}\right|_{\tau=0}\accion[\N(\tau)+\A\otimes\canon,L]=\left.\frac{\partial}{\partial\tau}\right|_{\tau=0}\accion[\N(\tau),L]=0;
	\]
	by Th. \ref{thm:variational equations} (ii), $\N+\A\otimes\canon$ solves \eqref{eq:affine equation 2} too and so $\A\otimes\canon$ solves \eqref{eq:affine equation 3}.  
\end{proof}

\subsection{Reduction to the symmetric case}
Keep in mind that a homogeneous nonlinear connection is symmetric if and only if it is the vertical differential (also called \textit{Berwald nonlinear connection}) of a spray, see \cite[Prop. 3 (4)]{gelocor}. This is the case for $\LC$, so a homogeneous nonlinear connection is symmetric if and only if it is of the form $\LC+\dv\ZZ$ for some $\ZZ\in\mathrm{h}^{2}\mathcal{T}_{0}^{1}(\MA)$.  The next result provides the geometric invariants of the type of non-symmetric connections that will be relevant when reducing the affine equation to the symmetric case. 

\begin{prop} \label{prop:invariants solution} Suppose that $\N=\LC+\dv\ZZ+\A\otimes\canon$
	for some $\ZZ\in\mathrm{h}^{2}\mathcal{T}_{0}^{1}(\MA)$ and $\A\in\mathrm{h}^{0}\mathcal{T}_{1}^{0}(\MA)$. Then:
\begin{enumerate}
		\item  Its torsion, underlying spray 
		and covariant derivative of $g$ are given respectively by
		\begin{equation}
		\torN_{ij}^{k}=\left(\A_{i\,\cdot j}-\A_{j\,\cdot i}\right)y^{k}+\A_{i}\,\delta_{j}^{k}-\A_{j}\,\delta_{i}^{k},
		\label{eq:tor solution}
		\end{equation}
		\begin{equation}
		\G^{i}=\left(\sprL\right)^{i}+\ZZ^{i}+\frac{1}{2}\,\A_{a}\,y^{a}\,y^{i},
		\label{eq:spray solution}
		\end{equation}
		\begin{equation}
		\begin{split}\covN_k g_{ij} & =2\,\la_{ijk}-2\,\C_{ija}\,\ZZ^a_{\cdot k}-\left(\ZZ^a_{\cdot k \cdot i}\,g_{aj}+\ZZ^a_{\cdot k \cdot j}\,g_{ai}\right) \\
		& \quad-\left(\A_{k\,\cdot i}\,y_j+\A_{k\,\cdot j}\,y_i\right)-2\,g_{ij}\,\A_k.
		\label{eq:covariant g solution}
		\end{split}
		\end{equation} 
		
		\item  The torsion of $\N$ determines $\A$ as 
		\begin{equation}
		 2\left(\di-1\right)\A_i\,y^k=\left(\di-1\right)\torN_{ib}^k\,y^b-\left(\torN_{ab}^a\,y^b\right)_{\cdot i}y^k-\torN_{ab}^a\,y^b\,\delta_i^k. 
		\label{eq:torsion determines A}
		\end{equation} 
		
		\item  $\N$ shares pregeodesics with another $\N_{0}=\LC+\dv\ZZ_0+\A_0\otimes\canon$ if and only if $\ZZ=\ZZ_{0}+\varrho\,\canon$
		for some $\varrho\in\mathrm{h}^{1}\mathcal{F}(A)$. 
\end{enumerate}
\end{prop}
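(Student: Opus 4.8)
The plan is to treat the three parts in turn, reducing each to routine coordinate computations fed by Lemma~\ref{lem:auxiliar} and the homogeneity characterizations of Prop.~\ref{prop:euler}. The starting point common to all three is the explicit form of $\J := \N - \LC$. Since $\dv\ZZ$ has coordinates $\ZZ^k_{\cdot i}$ and $\A\otimes\canon$ has coordinates $\A_i\,y^k$, we have $\J_i^k = \ZZ^k_{\cdot i} + \A_i\,y^k \in \mathrm{h}^1\mathcal{T}_1^1(\MA)$, and everything else follows by substitution.

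For part (i) I would simply plug this $\J$ into the three formulas of Lemma~\ref{lem:auxiliar}. The torsion formula \eqref{eq:4 aux 7} gives $\torN_{ij}^k = \J_{i\,\cdot j}^k - \J_{j\,\cdot i}^k$; expanding $\J_{i\,\cdot j}^k = \ZZ^k_{\cdot i\cdot j} + \A_{i\,\cdot j}\,y^k + \A_i\,\delta_j^k$ and using the symmetry $\ZZ^k_{\cdot i\cdot j} = \ZZ^k_{\cdot j\cdot i}$ of second vertical derivatives, the second-derivative terms cancel and \eqref{eq:tor solution} drops out. For the underlying spray, the defining contraction $\G^i = \N_a^i\,y^a/2$ (Def.~\ref{def:underlying 2}~(i)) splits into the three pieces of $\N$; Euler's theorem (Prop.~\ref{prop:euler}) turns $\ZZ^i_{\cdot a}\,y^a$ into $2\ZZ^i$ and $(\LC)_a^i\,y^a/2$ into $(\sprL)^i$, leaving \eqref{eq:spray solution}. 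For $\covN g$, I substitute $\J$ into \eqref{eq:4 aux 13}; the only points to notice are that $\C_{ija}\,y^a = 0$ (so the $\A_k\,y^a$ contribution to $-2\,\C_{ija}\J_k^a$ vanishes) and that $y^a\,g_{aj} = y_j$, after which the remaining terms reorganize into \eqref{eq:covariant g solution}.

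Part (ii) is a pure contraction exercise starting from \eqref{eq:tor solution}, using Prop.~\ref{prop:euler} repeatedly in the form $\A_{i\,\cdot b}\,y^b = 0$ (0-homogeneity of $\A$) and the product-rule identity $\A_{b\,\cdot i}\,y^b = (\A_a\,y^a)_{\cdot i} - \A_i$. A short calculation gives $\torN_{ib}^k\,y^b = -(\A_a\,y^a)_{\cdot i}\,y^k + 2\,\A_i\,y^k - (\A_a\,y^a)\,\delta_i^k$ and the full trace $\torN_{ab}^a = (\A_a\,y^a)_{\cdot b} - \di\,\A_b$; contracting the latter once more with $y^b$ and using $(\A_a\,y^a)_{\cdot b}\,y^b = \A_a\,y^a$ (1-homogeneity) yields $\torN_{ab}^a\,y^b = -(\di-1)\,\A_a\,y^a$. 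Substituting these two expressions into the right-hand side of \eqref{eq:torsion determines A}, the $(\A_a\,y^a)_{\cdot i}\,y^k$ and $(\A_a\,y^a)\,\delta_i^k$ terms cancel in pairs and exactly $2(\di-1)\,\A_i\,y^k$ survives.

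Part (iii) is the only genuinely conceptual step. Since the $\N$-pregeodesics are by definition those of the underlying spray of $\N$, the claim reduces via Prop.~\ref{prop:sprays sharing pregeodesics} to comparing the two sprays computed in \eqref{eq:spray solution}. Writing $\G = \G_0 - 2\,Z$, one reads off $Z^i = -\tfrac{1}{2}(\ZZ^i - \ZZ_0^i) - \tfrac{1}{4}(\A_a - (\A_0)_a)\,y^a\,y^i$. The decisive observation is that the entire $\A$-contribution is already a multiple of $\canon$ (a $1$-homogeneous scalar times $y^i$), so $Z = \varrho\,\canon$ for some $\varrho\in\mathrm{h}^1\mathcal{F}(A)$ holds if and only if the remaining piece $-\tfrac{1}{2}(\ZZ^i - \ZZ_0^i)$ is itself of the form (scalar)$\cdot y^i$, that is, if and only if $\ZZ - \ZZ_0 = \varrho\,\canon$ for some $\varrho\in\mathrm{h}^1\mathcal{F}(A)$ (the $1$-homogeneity of $\varrho$ being forced by the $2$-homogeneity of $\ZZ$ and $\ZZ_0$). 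Applying Prop.~\ref{prop:sprays sharing pregeodesics} then gives the equivalence, and in particular the condition is independent of $\A$ and $\A_0$. The main obstacle throughout is purely bookkeeping --- keeping careful track of which Euler contractions vanish --- the one substantive idea being this absorption of the $\A$-terms into the $\canon$-direction in part (iii).
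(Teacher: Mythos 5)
Your proposal is correct and follows essentially the same route as the paper: substitution of $\J_i^k=\ZZ^k_{\cdot i}+\A_i\,y^k$ into Lemma~\ref{lem:auxiliar} and Definition~\ref{def:underlying 2}~(i) for part~(i), the same two contractions of \eqref{eq:tor solution} for part~(ii), and Proposition~\ref{prop:sprays sharing pregeodesics} applied to the sprays \eqref{eq:spray solution} for part~(iii), where your key observation that the $\A$-terms are absorbed into the $\canon$-direction is exactly what the paper leaves implicit. (The factor $-\tfrac{1}{2}$ in your expression for $Z^i$ does not match the paper's convention $Z^k=\G^k-\G_0^k$, but this is immaterial since being a multiple of $\canon$ is scale-invariant.)
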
 
\begin{proof}
 (i) Formula \eqref{eq:tor solution} is obtained by substituting $\J_i^k=\ZZ_{\cdot i}^k+\A_i\,y^k$ in Lem. \ref{lem:auxiliar} (i) and using that $\ZZ_{\cdot i\,\cdot j}^k=\ZZ_{\cdot j\,\cdot i}^k$. Formula \eqref{eq:spray solution} follows from Def. \ref{def:underlying 2} (i) and the $2$-homogeneity of $\ZZ$ (the underlying spray of $\LC$ is $\sprL$). 
Finally, formula \eqref{eq:covariant g solution} is obtained by substitution in Lem. \ref{lem:auxiliar} (iii) of the term
\[
\begin{split} &2\,\la_{ijk}-2\,\C_{ija}\,\J_{k}^{a}-\J_{k\,\cdot i}^{a}\,g_{aj}-\J_{k\,\cdot j}^{a}\,g_{ia} \\
=&2\,\la_{ijk}-2\,\C_{ija}\left(\ZZ_{\cdot k}^a+\A_k\,y^a\right) \\
&\quad-\left(\ZZ_{\cdot k \cdot i}^a+\A_{k\,\cdot i}\,y^a+\A_k\,\delta_i^a\right)g_{aj}-\left(\ZZ_{\cdot k \cdot j}^a+\A_{k\,\cdot j}\,y^a+\A_k\,\delta_j^a\right)g_{ia};
\end{split}
\]
using $\C_{ija}\,y^{a}=0$ yields the result. 

 (ii) From \eqref{eq:tor solution}, one computes 
\begin{equation}
\torN_{ib}^k\,y^b=-\A_{b\,\cdot i}\,y^{b}\,y^{k}+\A_{i}\,y^{k}-\A_{b}\,y^{b}\,\delta_{i}^{k}=-\left(\A_{b}\,y^{b}\right)_{\cdot i}\,y^{k}+2\,\A_{i}\,y^{k}-\A_{b}\,y^{b}\,\delta_{i}^{k},
\label{eq:4 aux 1}
\end{equation}
\[
\torN_{ab}^a\,y^b=-\left(\di-1\right)\A_{b}\,y^{b}
\]
(the $0$-homogeneity of $\A$ and the $1$-homogeneity of $\A_{b}\,y^{b}$ were used). Substituting $\A_{b}\,y^{b}$ back in \eqref{eq:4 aux 1}, multiplying everything by $\left(\di-1\right)$ and rearranging produces \eqref{eq:torsion determines A}. 

(iii) This follows from applying \ref{prop:sprays sharing pregeodesics} to sprays $\G$ and $\G_0$ of the form \eqref{eq:spray solution}. 
\end{proof}


\begin{thm} \label{thm:reduction symmetric}
$\N\in\sol$ if and only if it is of the form $\N=\LC+\dv\ZZ+\A\otimes\canon$ for some $\ZZ\in\mathrm{h}^{2}\mathcal{T}_{0}^{1}(\MA)$ such that $\LC+\dv\ZZ\in\solsim$ and $\A\in\mathrm{h}^{0}\mathcal{T}_{1}^{0}(\MA)$. In such a case, $(\ZZ,\A)$ is unequivocally determined by $\N$ as 
\begin{equation}
	\ZZ^j=\frac{1}{2}\,\J^j_a\,y^a-\B_a\,y^a\,y^j,\qquad\A_i=\la_i+\B_i+\left(\B_a\,y^a\right)_{\cdot i},
	\label{eq:Z and A}
\end{equation}
where $\J:=\N-\LC$ and $\B$ is defined by \eqref{eq:B}. 
\end{thm}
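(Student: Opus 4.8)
The plan is to prove the two implications separately and then to read off the explicit formulas together with uniqueness. The ``if'' direction is immediate: if $\LC+\dv\ZZ\in\solsim\subseteq\sol$ and $\A\in\mathrm{h}^0\mathcal{T}_1^0(\MA)$, then $\N=(\LC+\dv\ZZ)+\A\otimes\canon$ lies in $\sol$ by Lemma~\ref{lem:translations}, which asserts precisely that translation by $\A\otimes\canon$ preserves the space of solutions. For the ``only if'' direction I would not argue abstractly but instead \emph{define} $\ZZ$ and $\A$ by the candidate formulas \eqref{eq:Z and A} and verify directly that they work. One first checks homogeneity and tensoriality: $\ZZ^j=\frac{1}{2}\J^j_a y^a-\B_a y^a y^j$ is a $2$-homogeneous vector field and $\A_i=\la_i+\B_i+(\B_a y^a)_{\cdot i}$ is a $0$-homogeneous $1$-form, using $\J\in\mathrm{h}^1\mathcal{T}_1^1(\MA)$, $\B\in\mathrm{h}^0\mathcal{T}_1^0(\MA)$ and that a vertical derivative lowers homogeneity by one.

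The core of the argument is to show $\J=\dv\ZZ+\A\otimes\canon$, i.e.\ $\J^k_i=\ZZ^k_{\cdot i}+\A_i y^k$. Two ingredients enter. The first is the standard Finsler identity $\la_{abc}y^c=0$ (the Landsberg tensor is transverse to $\canon$; in the notation here this is $\covL_\canon g=0$), which gives $\la_a y^a=g^{bc}\la_{bca}y^a=0$ and, by Euler's theorem (Prop.~\ref{prop:euler}), $\A_a y^a=2\,\B_a y^a$; this is exactly what reconciles the coefficient $(\B_a y^a)_{\cdot i}$ in $\A_i$ with the term $(2\la_a+2\B_a)y^a$ of the affine equation. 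The second is a rewriting of \eqref{eq:affine equation 2}: since $\torN_{ia}^j y^a=(\J^j_{i\cdot a}-\J^j_{a\cdot i})y^a$ (Lemma~\ref{lem:auxiliar}~(i)) and $\J^j_{i\cdot a}y^a=\J^j_i$ (Euler), the affine equation becomes
\[
\J^k_i-\J^k_{a\cdot i}y^a=2(\la_i+\B_i)y^k-2\,\B_a y^a\,\delta_i^k .
\]
Differentiating $\ZZ^k$ vertically, forming $\ZZ^k_{\cdot i}+\A_i y^k$, and substituting this rewritten equation for the term $\J^k_{a\cdot i}y^a$, every term cancels and the result is exactly $\J^k_i$. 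This single identity is what the existence part rests on, and it is the one place where $\la_a y^a=0$ is indispensable; I expect the main obstacle to be the bookkeeping here --- keeping track of the $1$-homogeneity contractions and of the exact role of $\B$ as defined by \eqref{eq:B} --- although the computation is entirely mechanical once the rewritten affine equation is in hand.

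Granting $\J=\dv\ZZ+\A\otimes\canon$, the connection $\LC+\dv\ZZ=\N-\A\otimes\canon$ is symmetric by the characterization of symmetric connections recalled before Prop.~\ref{prop:invariants solution}, and it is a solution by Lemma~\ref{lem:translations}; hence $\LC+\dv\ZZ\in\solsim$, which yields the asserted decomposition. For uniqueness, suppose $\N=\LC+\dv\ZZ_1+\A_1\otimes\canon=\LC+\dv\ZZ_2+\A_2\otimes\canon$ with both $\LC+\dv\ZZ_\ell\in\solsim$. Subtracting the $\J$-parts gives $\dv(\ZZ_1-\ZZ_2)=(\A_2-\A_1)\otimes\canon$, whose left-hand side has vanishing torsion; thus the torsion of $\LC+(\A_2-\A_1)\otimes\canon$ is zero, and \eqref{eq:torsion determines A} in Prop.~\ref{prop:invariants solution}~(ii) (valid since $\di\geq2$) forces $\A_1=\A_2$. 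Then $\dv(\ZZ_1-\ZZ_2)=0$, and contracting with $\canon$ gives $\ZZ_1=\ZZ_2$. Since a decomposition exists and is unique while the formulas \eqref{eq:Z and A} exhibit one, they must furnish the unique pair $(\ZZ,\A)$, completing the proof.
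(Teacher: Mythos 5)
Your proof is correct and follows essentially the same route as the paper's: both hinge on rewriting the affine equation \eqref{eq:affine equation 2} via the $1$-homogeneity of $\J$ and the identity $\la_a\,y^a=0$ so that $\J$ splits as $\dv\ZZ+\A\otimes\canon$ with $(\ZZ,\A)$ given by \eqref{eq:Z and A}, then invoke Lemma \ref{lem:translations} for the equivalence with $\LC+\dv\ZZ\in\solsim$ and the torsion formula \eqref{eq:torsion determines A} for uniqueness. The only cosmetic difference is that you verify the candidate formulas a posteriori, whereas the paper derives them by completing terms to vertical derivatives; the computation is the same run in the opposite direction.
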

\begin{proof}
 We observe that, using the $1$-homogeneity of $\J$, the affine equation \eqref{eq:affine equation 2} can be rewritten as
\[
\J_{i}^{j}=\left(\la_a+\B_{a}\right)\left(\delta_{i}^{a}\,y^{j}-y^{a}\,\delta_{i}^{j}\right)+\frac{1}{2}\left(\J_{a}^{j}\,y^{a}\right)_{\cdot i}
\]
and that this allows one to derive the form of the general solution. Indeed, using that $\la_a\,y^a=0$, 
\[
\begin{split}\J_{i}^{j} &=\la_{i}\,y^{j}+\B_{i}\,y^{j}-\B_a\,y^a\,\delta_i^j+\frac{1}{2}\left(\J_{a}^{j}\,y^{a}\right)_{\cdot i} \\ 
& =\la_{i}\,y^{j}+\B_{i}\,y^{j}-\left(\B_{a}\,y^{a}\,y^{j}\right)_{\cdot i}+\left(\B_{a}\,y^{a}\right)_{\cdot i}y^{j}+\frac{1}{2}\left(\J_{a}^{j}\,y^{a}\right)_{\cdot i} \\
& =\left(\frac{1}{2}\,\J_{a}^{j}\,y^{a}-\B_{a}\,y^{a}\,y^{j}\right)_{\cdot i}+\left\{\la_i+\B_{i}+\left(\B_{a}\,y^{a}\right)_{\cdot i}\right\} y^{j},
\end{split}
\] 
which tells us that $\J=(\N-\LC)=\dv\ZZ+\A\otimes\canon$ together with \eqref{eq:Z and A}. Lemma \ref{lem:translations} ensures that $\N=\LC+\dv\ZZ+\A\otimes\canon$ is in $\sol$ if and only if the symmetric part $\LC+\dv\ZZ$ is. 

 We derive the uniqueness of the pair $(\ZZ,\A)$ from Prop. \ref{prop:invariants solution} (ii): the torsion of $\N=\LC+\dv\ZZ+\A\otimes\canon$ determines $\A$, which in turn determines $\dv\ZZ$, and from here $\ZZ$ is determined due to its $2$-homogeneity. 
\end{proof}

Now we characterize the elements of $\solsim$.

\begin{prop} \label{prop:torsion-free affine equations 0}  $\LC+\dv\ZZ\in\solsim$ if and only if $\ZZ$ solves
	\begin{equation}
	\la_{i}+\frac{\di+2}{2}\,\frac{y_a}{L}\,\ZZ_{\cdot i}^{a}-\C_a\,\ZZ_{\cdot i}^{a}-\left\{\left(\di+2\right)\frac{y_a}{L}\,\ZZ^{a}-2\,\C_a\,\ZZ^{a}\right\}_{\cdot i}=0,
	\label{eq:affine equation 7}
	\end{equation}
	\begin{equation}
	\left(\di+2\right)\frac{y_a}{L}\,\ZZ^{a}-2\,\C_a\,\ZZ^{a}-\ZZ_{\cdot a}^{a}=0.
	\label{eq:affine equation 8}
	\end{equation} 
\end{prop}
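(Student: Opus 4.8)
The plan is to use that $\N=\LC+\dv\ZZ$ is automatically symmetric, so that the torsion term of the affine equation disappears, and then to split the surviving $1$-form identity into a radial part (its contraction with $\canon$) and a remainder, which turn out to be \eqref{eq:affine equation 8} and \eqref{eq:affine equation 7} respectively.

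First I would note that for $\J:=\dv\ZZ$ one has $\J_i^k=\ZZ_{\cdot i}^k$, hence $\J_{i\,\cdot j}^k=\ZZ_{\cdot i\,\cdot j}^k$ is symmetric in $i,j$; by Lem.~\ref{lem:auxiliar}(i) this means $\torN=0$, so the term $\left(\J_{i\,\cdot a}^j-\J_{a\,\cdot i}^j\right)y^a$ in \eqref{eq:affine equation 2} vanishes and the equation reduces to $\left(\la_a+\B_a\right)\left(\delta_i^a\,y^j-y^a\,\delta_i^j\right)=0$. Contracting the free indices (setting $j=i$ and summing) and using $\di\geq2$ gives $\left(\la_a+\B_a\right)y^a=0$; since $y\neq0$ on $A$, the whole tensor equation then collapses to the single $1$-form equation $\la_i+\B_i=0$. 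Substituting $\J=\dv\ZZ$ into \eqref{eq:B}, where $\tfrac12\left(\J_{i\,\cdot a}^a+\J_{a\,\cdot i}^a\right)=\ZZ_{\cdot a\,\cdot i}^a$ because partial derivatives commute, this reads
\begin{equation*}
\la_i+\frac{\di+2}{2}\,\frac{y_a}{L}\,\ZZ_{\cdot i}^a-\C_a\,\ZZ_{\cdot i}^a-\ZZ_{\cdot a\,\cdot i}^a=0. \tag{$\star$}
\end{equation*}

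Next I would set $W:=\left(\di+2\right)\tfrac{y_a}{L}\,\ZZ^a-2\,\C_a\,\ZZ^a$ and $u:=\ZZ_{\cdot a}^a$, so that \eqref{eq:affine equation 8} is exactly $W-u=0$ and $\ZZ_{\cdot a\,\cdot i}^a=u_{\cdot i}$. Differentiating $W$ vertically with the identity $\left(\tfrac{y_a}{L}\right)_{\cdot i}=\tfrac{g_{ai}}{L}-2\,\tfrac{y_a}{L}\,\tfrac{y_i}{L}$ (from \S\ref{sec:pseudo-finsler}), one checks that the left-hand side of \eqref{eq:affine equation 7} equals that of $(\star)$ minus $\left(W-u\right)_{\cdot i}$; in other words $(\star)$ is \eqref{eq:affine equation 7} with $W_{\cdot i}$ replaced by $u_{\cdot i}$. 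This renders the equivalence transparent. If \eqref{eq:affine equation 7} and \eqref{eq:affine equation 8} hold, then $W-u=0$ forces $\left(W-u\right)_{\cdot i}=0$ and $(\star)$ follows at once. Conversely, assuming $(\star)$, I would contract it with $y^i$: by Prop.~\ref{prop:euler} the homogeneities give $\ZZ_{\cdot i}^a\,y^i=2\,\ZZ^a$ and $u_{\cdot i}\,y^i=u$, and using the transversality $\la_a\,y^a=0$ of the mean Landsberg tensor the contraction becomes $W-u=0$, i.e.\ \eqref{eq:affine equation 8}; feeding this back into $(\star)$ kills the term $\left(W-u\right)_{\cdot i}$ and leaves \eqref{eq:affine equation 7}.

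The single external ingredient is the transversality $\la_a\,y^a=0$ (equivalently $\la_{abi}\,y^i=0$, or $\covL_\canon g=0$), and I expect this to be the delicate point, since it is a property of the metric spray rather than a consequence of the affine equation. I would obtain it by first verifying $\delta_k^L L=0$, a short computation from \eqref{eq:metric spray} that yields $y_a\left(\sprL\right)_{\cdot k}^a=\tfrac12\,\partial_k L$; this gives $\covL_k L=0$ and hence $\la_{abk}\,y^a\,y^b=0$. The full one-index statement then follows from the symmetrization identity $\left(\sprL\right)_{i\,\cdot j}+\left(\sprL\right)_{j\,\cdot i}=y^a\,\partial_a g_{ij}$, which reduces via $\C_{ija}\,y^a=0$ to the explicit form of $\sprL$; contracting with $g^{ij}$ then delivers $\la_a\,y^a=0$. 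Alternatively, this transversality may simply be invoked as a standard property of the Landsberg tensor.
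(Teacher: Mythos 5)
Your proof is correct and follows essentially the same route as the paper's: both reduce the symmetric case of \eqref{eq:affine equation 2} to the single $1$-form identity $\la_i+\B_i=0$ (your $(\star)$ is exactly the paper's intermediate equation), and both recover \eqref{eq:affine equation 8} by contracting that identity with $y^i$ via Euler's theorem and $\la_a\,y^a=0$, after which \eqref{eq:affine equation 7} follows by substituting the vanishing quantity back in. Your $W$ and $u$ bookkeeping and your closing remarks on establishing $\la_a\,y^a=0$ (which the paper simply invokes as a standard property of the Landsberg tensor) are only presentational additions.
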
 

\begin{proof}
	 We restrict the affine equation \eqref{eq:affine equation 2} to symmetric connections (see Lem. \ref{lem:auxiliar} (i)). As for these connections $\J_{i\,\cdot k}^{j}-\J_{k\,\cdot i}^{j}=\torN_{ik}^{j}=0$, using also $\la_a\,y^a=0$, the equation reads
	\begin{equation}
	0=\left(\la_a+\B_{a}\right)\left(\delta_{i}^{a}\,y^{j}-y^{a}\,\delta_{i}^{j}\right)=\left(\la_i+\B_{i}\right)y^{j}-\B_{a}\,y^{a}\,\delta_{i}^{j}.
	\label{eq:4 aux 5}
	\end{equation}
	This is trivially implied by $\la_i+\B_{i}=0$, but the converse is also true, for taking the trace of \eqref{eq:4 aux 5} yields $-\left(\di-1\right)\B_{a}\,y^{a}=0$. Thus, recalling \eqref{eq:B} and writing $\J_i^k=\ZZ_{\cdot i}^k$, $\ZZ^a_{\cdot i\,\cdot a}+\ZZ^a_{\cdot a\,\cdot i}=2\,\ZZ^a_{\cdot a\,\cdot i}$, the equation describing $\solsim$ is 
	\begin{equation}
	\la_{i}+\frac{\di+2}{2}\,\frac{y_{a}}{L}\,\ZZ_{\cdot i}^a-\C_{a}\,\ZZ_{\cdot i}^a-\ZZ_{\cdot a\cdot i}^{a}\left(=\la_i+\B_{i}\right)=0.\label{eq:4 aux 6}
	\end{equation}
	Clearly, \eqref{eq:affine equation 7}+\eqref{eq:affine equation 8} are sufficient for this. However, they are also necessary: \eqref{eq:affine equation 8} is obtained by contracting \eqref{eq:4 aux 6} with $y^i$ and using $\la_a\,y^a=0$, the $2$-homogeneity of $\ZZ$, and the $1$-homogeneity of $\ZZ_{\cdot a}^{a}$. 
\end{proof}

 In Prop. \ref{prop:torsion-free affine equations 0}, we have obtained two \textit{torsion-free affine equations} with somewhat complicated expressions. Next, we are going to formulate them in a way that it is much more convenient for our main results (those of \S \ref{sec:proper solutions}). 

\begin{defn}  \label{def:sigma and K}
	For $\ZZ\in\mathrm{h}^{2}\mathcal{T}_{0}^{1}(\MA)$, we denote 
	\begin{equation}
	\sigma^{\ZZ}:=\frac{y_{a}}{L}\,\ZZ^{a}=\frac{g(\ZZ,\canon)}{L}\in\mathrm{h}^{1}\mathcal{F}(A)
	\label{eq:sigma}
	\end{equation}
	and 
	\begin{equation}
	\mathcal{K}_{i}^{\ZZ}:=-\frac{2}{\di+2}\left(2\,\C_{a\,\cdot i}\,\ZZ^{a}+\C_{a}\,\ZZ_{\cdot i}^{a}\right),\qquad\mathcal{K}^{\ZZ}\in\mathrm{h}^{0}\mathcal{T}_{1}^{0}(\MA).
	\label{eq:K}
	\end{equation} 
\end{defn}

\begin{rem} \label{rem:property K}
	 Thanks to the $(-1)$-homogeneity of the mean Cartan tensor and the $2$-homogeneity of $\ZZ$, one has the important property 
	\[
	\mathcal{K}^\ZZ_a\,y^a=0,
	\]
	exactly the same as for the mean Landsberg tensor. 
\end{rem}

\begin{lem} \label{prop: torsion-free affine equations}  $\LC+\dv\ZZ\in\solsim$ if and only if $\ZZ$ solves 
\begin{equation}
\ZZ^{i}=2\sigma^{\ZZ}\,y^{i}-L\,g^{ia}\left(\sigma_{\cdot a}^{\ZZ}+\mathcal{K}_{a}^{\ZZ}\right)+\frac{2}{\di+2}L\,\la^{i},\label{eq:affine equation 4}
\end{equation}
\begin{equation}
\left(\di+2\right)\sigma^{\ZZ}-2\,\C_{a}\,\ZZ^{a}-\ZZ_{\cdot a}^{a}=0.\label{eq:affine equation 5}
\end{equation}
Moreover, when assuming the form \eqref{eq:affine equation 4} for $\ZZ$, \eqref{eq:affine equation 5} reads  
\begin{equation}
	\left(\di-2\right)\sigma^{\ZZ}-L\,g^{ab}\left(\sigma^{\ZZ}_{\cdot a\cdot b}+\mathcal{K}^\ZZ_{a\,\cdot b}-\frac{2}{\di+2}\,\la_{a\,\cdot b}\right)=0.
	\label{eq:affine equation 6}
\end{equation} 
\end{lem}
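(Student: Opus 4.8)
The statement packages the two torsion-free affine equations of Proposition \ref{prop:torsion-free affine equations 0} into a tidier form, so the plan is to treat it as a purely algebraic rewriting of \eqref{eq:affine equation 7}--\eqref{eq:affine equation 8} into \eqref{eq:affine equation 4}--\eqref{eq:affine equation 5}, arranging every manipulation to be reversible so that both directions of the ``if and only if'' come for free. The second equation is immediate: since $\sigma^{\ZZ}=\tfrac{y_a}{L}\ZZ^a$ by definition, \eqref{eq:affine equation 8} and \eqref{eq:affine equation 5} are literally the same identity. Thus the whole content of the first claim is the equivalence \eqref{eq:affine equation 7} $\Leftrightarrow$ \eqref{eq:affine equation 4}, and the ``moreover'' part is the substitution of \eqref{eq:affine equation 4} into \eqref{eq:affine equation 5}.

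For \eqref{eq:affine equation 7} $\Leftrightarrow$ \eqref{eq:affine equation 4}, first I would trade the term $\tfrac{y_a}{L}\ZZ^a_{\cdot i}$ for $\sigma^{\ZZ}_{\cdot i}$. Vertically differentiating $\sigma^{\ZZ}=\tfrac{y_a}{L}\ZZ^a$ and using the identity $\bigl(\tfrac{y_a}{L}\bigr)_{\cdot i}=\tfrac{g_{ai}}{L}-2\tfrac{y_a}{L}\tfrac{y_i}{L}$ from \S \ref{sec:pseudo-finsler} gives $\tfrac{y_a}{L}\ZZ^a_{\cdot i}=\sigma^{\ZZ}_{\cdot i}-\tfrac{\ZZ_i}{L}+2\tfrac{y_i}{L}\sigma^{\ZZ}$, where $\ZZ_i:=g_{ia}\ZZ^a$. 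Next I would expand the bracketed derivative $\bigl\{(\di+2)\tfrac{y_a}{L}\ZZ^a-2\C_a\ZZ^a\bigr\}_{\cdot i}=(\di+2)\sigma^{\ZZ}_{\cdot i}-2\C_{a\cdot i}\ZZ^a-2\C_a\ZZ^a_{\cdot i}$ and recognize, straight from the definition \eqref{eq:K}, that $2\C_{a\cdot i}\ZZ^a+\C_a\ZZ^a_{\cdot i}=-\tfrac{\di+2}{2}\mathcal{K}^{\ZZ}_i$. Substituting both into \eqref{eq:affine equation 7} collapses the $\sigma^{\ZZ}_{\cdot i}$ and Cartan contributions and leaves, after multiplying by $-\tfrac{2}{\di+2}$, the first-order relation $\tfrac{\ZZ_i}{L}=\tfrac{2}{\di+2}\la_i-\sigma^{\ZZ}_{\cdot i}+2\tfrac{y_i}{L}\sigma^{\ZZ}-\mathcal{K}^{\ZZ}_i$; multiplying by $L$ and raising the index with $g^{ia}$ (using $g^{ia}y_a=y^i$ and $g^{ia}\la_a=\la^i$) yields exactly \eqref{eq:affine equation 4}. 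As each step is an identity, the converse is automatic.

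For the ``moreover'' part I would substitute $\ZZ^i=2\sigma^{\ZZ}y^i-Lg^{ia}W_a$, with the shorthand $W_a:=\sigma^{\ZZ}_{\cdot a}+\mathcal{K}^{\ZZ}_a-\tfrac{2}{\di+2}\la_a$, into \eqref{eq:affine equation 5} and compute $2\C_a\ZZ^a$ and $\ZZ^a_{\cdot a}$ separately. Using $\C_a y^a=0$ one gets $\C_a\ZZ^a=-Lg^{ab}\C_a W_b$. For the divergence, $\ZZ^a_{\cdot a}=2\sigma^{\ZZ}_{\cdot a}y^a+2\di\,\sigma^{\ZZ}-(Lg^{ab})_{\cdot a}W_b-Lg^{ab}W_{b\cdot a}$, and here I would use $L_{\cdot a}=2y_a$ together with $g^{ab}_{\cdot a}=-2\C^b$ to get $(Lg^{ab})_{\cdot a}=2y^b-2L\C^b$, while $y^a\sigma^{\ZZ}_{\cdot a}=\sigma^{\ZZ}$ by Euler (Prop. \ref{prop:euler}) and $y^bW_b=\sigma^{\ZZ}$ because $\mathcal{K}^{\ZZ}_ay^a=0$ (Rem. \ref{rem:property K}) and $\la_ay^a=0$. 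The two terms $\pm 2L\C^bW_b$ then cancel between $2\C_a\ZZ^a$ and $\ZZ^a_{\cdot a}$, leaving $2\C_a\ZZ^a+\ZZ^a_{\cdot a}=2\di\,\sigma^{\ZZ}-Lg^{ab}W_{b\cdot a}$, so that \eqref{eq:affine equation 5} reads $(\di-2)\sigma^{\ZZ}-Lg^{ab}W_{b\cdot a}=0$; expanding $W_{b\cdot a}$ and relabeling by the symmetry of $g^{ab}$ recovers \eqref{eq:affine equation 6}.

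The computation is entirely mechanical, so the only real obstacle is bookkeeping discipline: keeping track of which contractions vanish by homogeneity ($\C_ay^a$, $\la_ay^a$, $\mathcal{K}^{\ZZ}_ay^a$, and the Euler identities for $\sigma^{\ZZ}$ and $\ZZ$) and making sure the Cartan terms cancel so that the mean Cartan tensor disappears from the final second-order equation. The clean grouping $W_a=\sigma^{\ZZ}_{\cdot a}+\mathcal{K}^{\ZZ}_a-\tfrac{2}{\di+2}\la_a$ is what makes both \eqref{eq:affine equation 4} compact and the cancellations in \eqref{eq:affine equation 6} transparent, so identifying this combination early is the one non-automatic insight.
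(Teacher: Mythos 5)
Your proposal is correct and follows essentially the same route as the paper: both reduce the lemma to an algebraic, reversible rewriting of \eqref{eq:affine equation 7}--\eqref{eq:affine equation 8} using the identity $\left(y_a/L\right)_{\cdot i}=g_{ai}/L-2\,y_a\,y_i/L^2$, the recognition $2\,\C_{a\,\cdot i}\,\ZZ^a+\C_a\,\ZZ^a_{\cdot i}=-\tfrac{\di+2}{2}\,\mathcal{K}^{\ZZ}_i$, and the homogeneity contractions $\C_a\,y^a=\la_a\,y^a=\mathcal{K}^{\ZZ}_a\,y^a=0$. The only cosmetic difference is your shorthand $W_a$ and working with raised rather than lowered indices in the second computation (the paper instead passes through $g^{ab}\,\ZZ_{a\,\cdot b}=2\,\C_a\,\ZZ^a+\ZZ^a_{\cdot a}$), which changes nothing of substance.
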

\begin{proof}
	 In the notation introduced in Def. \ref{def:sigma and K}, \eqref{eq:affine equation 8} becomes \eqref{eq:affine equation 5}. For the reexpression of \eqref{eq:affine equation 7} as \eqref{eq:affine equation 4}, recall from \S \ref{sec:pseudo-finsler} that
	\[
	\left(\frac{y_{j}}{L}\right)_{\cdot i}=\frac{g_{ij}}{L}-2\,\frac{y_{i}}{L}\,\frac{y_{j}}{L}.
	\]
	By completing $L^{-1}\,y_a\,\ZZ^a_{\cdot i}$ to a derivative of $\sigma^{\ZZ}=L^{-1}\,y_a\,\ZZ^a$ and simplifying, the left hand side of \eqref{eq:affine equation 7} becomes
	\[
	\begin{split}
	&\la_{i}+\frac{\di+2}{2}\,\frac{y_a}{L}\,\ZZ_{\cdot i}^{a}-\C_a\,\ZZ_{\cdot i}^{a}-\left\{\left(\di+2\right)\frac{y_a}{L}\,\ZZ^{a}-2\,\C_a\,\ZZ^{a}\right\}_{\cdot i} \\
	=&\la_{i}+\frac{\di+2}{2}\,\sigma^{\ZZ}_{\cdot i}-\frac{\di+2}{2}\left(\frac{y_{a}}{L}\right)_{\cdot i}\ZZ^{a}-\C_{a}\,\ZZ_{\cdot i}^{a}-\left(\di+2\right)\sigma^\ZZ_{\cdot i}+2\left(\C_{a}\,\ZZ^{a}\right)_{\cdot i}\\
	=&-\frac{\di+2}{2}\,\frac{g_{ia}}{L}\,\ZZ^{a}+\left(\di+2\right)\frac{y_{a}}{L}\,\ZZ^{a}\,\frac{y_{i}}{L}-\frac{\di+2}{2}\,\sigma^{\ZZ}_{\cdot i} \\
	&+2\,\C_{a\,\cdot i}\,\ZZ^{a}+\C_{a}\,\ZZ_{\cdot i}^{a}+\la_{i}, \\
	=&-\frac{\di+2}{2}\,\frac{g_{ia}}{L}\,\ZZ^{a}+\left(\di+2\right)\sigma^{\ZZ}\,\frac{y_{i}}{L}-\frac{\di+2}{2}\,\sigma^{\ZZ}_{\cdot i} \\
	&-\frac{\di+2}{2}\,\mathcal{K}^{\ZZ}_i+\la_{i}. 
	\end{split}
	\]
	Thus, after multiplying by $2\left(\di+2\right)^{-1}L$ and raising the index, \eqref{eq:affine equation 7} becomes \eqref{eq:affine equation 4}. 
	
	 Let us reexpress \eqref{eq:affine equation 5} as \eqref{eq:affine equation 6}. For $\ZZ$ of the form 
	\[
	\ZZ_{i}=2\sigma^{\ZZ}\,y_{i}-L\left(\sigma_{\cdot i}^{\ZZ}+\mathcal{K}_{i}^{\ZZ}\right)+\frac{2}{\di+2}L\,\la_{i},
	\]
	using $y_{i\,\cdot j}=g_{ij}$ and $L_{\cdot j}=2\,y_j$, one has 
	\[
	\begin{split}
	\quad\ZZ_{i\,\cdot j}=&2\,y_{i}\,\sigma^{\ZZ}_{\cdot j}+2\sigma^{\ZZ}\,g_{ij}-2\left(\sigma_{\cdot i}^{\ZZ}+\mathcal{K}_{i}^{\ZZ}\right)y_j-L\left(\sigma_{\cdot i\cdot j}^{\ZZ}+\mathcal{K}_{i\,\cdot j}^{\ZZ}\right)\\
	&+\frac{4}{\di+2}\,\la_{i}\,y_j+\frac{2}{\di+2}L\,\la_{i\,\cdot j} 
	\end{split}
	\] 
	Using now the $1$-homogeneity of $\sigma^{\ZZ}$, $\mathcal{K}^\ZZ_a\,y^a=0$ (see Rem. \ref{rem:property K}) and $\la_a\,y^a=0$, 
	\[
	\begin{split}
		g^{ab}\,\ZZ_{a\,\cdot b}&=2\sigma^{\ZZ}+2\di\,\sigma^{\ZZ}-2\sigma^{\ZZ}-L\,g^{ab}\left(\sigma^{\ZZ}_{\cdot a\cdot b}+\mathcal{K}^\ZZ_{a\,\cdot b}\right)+\frac{2}{\di+2}\,L\,g^{ab}\,\la_{a\,\cdot b}\\
		&=2\di\,\sigma^{\ZZ}-L\,g^{ab}\left(\sigma^{\ZZ}_{\cdot a\cdot b}+\mathcal{K}^\ZZ_{a\,\cdot b}-\frac{2}{\di+2}\,\la_{a\,\cdot b}\right).
	\end{split}
	\]
	On the other hand, it is also true that 
	\[
	g^{ab}\,\ZZ_{a\,\cdot b}=g^{ab}\left(g_{ac}\,\ZZ^c\right)_{\cdot b}=g^{ab}\left(2\,\C_{abc}\,\ZZ^c+g_{ac}\,\ZZ^c_{\cdot b}\right)=2\,\C_{a}\,\ZZ^{a}+\ZZ_{\cdot a}^{a}.
	\]
	Taking into account the last two formulas, the left hand side of \eqref{eq:affine equation 5} becomes 
	\[
	\begin{split}
	&\left(\di+2\right)\sigma^{\ZZ}-2\,\C_{a}\,\ZZ^{a}-\ZZ_{\cdot a}^{a} \\
	=&\left(\di+2\right)\sigma^{\ZZ}-\left\{2\di\,\sigma^{\ZZ}-L\,g^{ab}\left(\sigma^{\ZZ}_{\cdot a\cdot b}+\mathcal{K}^\ZZ_{a\,\cdot b}-\frac{2}{\di+2}\,\la_{a\,\cdot b}\right)\right\}.
	\end{split}
	\]
	Thus, after simplifying and rearranging, \eqref{eq:affine equation 5} becomes \eqref{eq:affine equation 6}. 
\end{proof}

\subsection{Pregeodesics and Ricci scalar of solutions}
\begin{cor} \label{cor:projection}
There is a well-defined projection 
\[
\begin{split}\prsim\vcentcolon\sol & \longrightarrow\solsim,\\
\N=\LC+\dv\ZZ+\A\otimes\canon & \longmapsto\LC+\dv\ZZ,
\end{split}
\]
 with the following properties:  
\begin{enumerate}
\item For $\LC+\dv\ZZ\in\solsim$, the only symmetric representative of the fiber
$\left(\prsim\right)^{-1}(\LC+\dv\ZZ)$ is $\LC+\dv\ZZ$ itself.
\item Two elements $\N,\N_{0}\in\sol$ share pregeodesics if and only if
they are on the same fiber. 
\item The pregeodesics of $\N\in\sol$ are those of $L$ only in case that $\prsim(\N)=\LC$.
\item All the representatives of a fiber share Ricci scalar. 
\end{enumerate}
\end{cor}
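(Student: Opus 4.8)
The plan is to read everything off the unique decomposition of Theorem~\ref{thm:reduction symmetric} together with the structural results of this section, so that the four assertions reduce to short verifications. First I would check that $\prsim$ is well defined: by Theorem~\ref{thm:reduction symmetric} every $\N\in\sol$ admits a unique pair $(\ZZ,\A)$ with $\N=\LC+\dv\ZZ+\A\otimes\canon$ and $\LC+\dv\ZZ\in\solsim$, so $\prsim(\N):=\LC+\dv\ZZ$ is unambiguous and lands in $\solsim$; since on $\solsim$ one has $\A=0$, the map restricts to the identity there and is a genuine projection. By Lemma~\ref{lem:translations} the translation $\N\mapsto\N+\A\otimes\canon$ stays inside $\sol$, so the fiber over $\LC+\dv\ZZ$ is exactly $\{\LC+\dv\ZZ+\A\otimes\canon:\A\in\mathrm{h}^0\mathcal{T}_1^0(\MA)\}$. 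Consequently two solutions lie in the same fiber precisely when they share the same $\ZZ$ (equivalently the same $\dv\ZZ$, since $\ZZ$ is $2$-homogeneous and Euler's identity recovers $\ZZ$ from $\dv\ZZ$).

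With this in hand, (i) and (iv) are immediate. For (iv), Lemma~\ref{lem:translations} states that translation by $\A\otimes\canon$ preserves the Ricci scalar, so all representatives of a fiber share $\ri$. For (i), a symmetric element $\N'=\LC+\dv\ZZ+\A\otimes\canon$ of the fiber has vanishing torsion; feeding this into \eqref{eq:tor solution} and \eqref{eq:torsion determines A} forces $2(\di-1)\A_i\,y^k=0$, hence $\A=0$ (as $\di\geq2$ and $y\neq0$ on $A$), i.e. $\N'=\LC+\dv\ZZ$. Alternatively this is just the fact that $\prsim$ fixes $\solsim$ pointwise.

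The real content is in (ii) and (iii), which I would route through the pregeodesic descriptions. For (ii), two solutions are of the form $\LC+\dv\ZZ+\A\otimes\canon$ and $\LC+\dv\ZZ_0+\A_0\otimes\canon$, so Proposition~\ref{prop:invariants solution}(iii) says they share pregeodesics iff $\ZZ=\ZZ_0+\varrho\,\canon$ for some $\varrho\in\mathrm{h}^1\mathcal{F}(A)$; the ``same fiber'' direction is then trivial with $\varrho=0$. For (iii), combining the underlying-spray formula \eqref{eq:spray solution} with Proposition~\ref{prop:sprays sharing pregeodesics} shows that the $\N$-pregeodesics coincide with the $L$-pregeodesics iff $\ZZ=\varrho\,\canon$ for some $\varrho$, while $\prsim(\N)=\LC$ means exactly $\ZZ=0$.

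The one genuine obstacle is that ``sharing pregeodesics'' only pins down $\ZZ$ up to an additive $\varrho\,\canon$, whereas ``same fiber''/``equal to $\LC$'' requires $\ZZ$ on the nose, so I must kill any nonzero $\varrho$. Here the \emph{trace} affine equation \eqref{eq:affine equation 5}, available for symmetric solutions by Lemma~\ref{prop: torsion-free affine equations}, does the work: substituting $\ZZ=\varrho\,\canon$ and using $\sigma^{\ZZ}=\varrho$, $\C_a\,y^a=0$ and $\ZZ^a_{\cdot a}=(\di+1)\varrho$ (Euler) collapses \eqref{eq:affine equation 5} to $\varrho=0$. Applying this to the difference $\ZZ-\ZZ_0=\varrho\,\canon$ of two symmetric solutions yields $\ZZ=\ZZ_0$ and finishes (ii); applying it to $\ZZ=\varrho\,\canon$ for a single symmetric solution yields $\ZZ=0$, i.e. $\prsim(\N)=\LC$, finishing (iii). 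Thus the apparent gauge freedom in the pregeodesic condition is annihilated by the scalar trace part of the symmetric affine equation, which is the key mechanism I expect to carry both nontrivial directions.
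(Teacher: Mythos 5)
Your proposal is correct and follows essentially the same route as the paper: well-definedness via Theorem~\ref{thm:reduction symmetric}, (i) via the torsion formula \eqref{eq:torsion determines A}, (ii) and (iii) via Proposition~\ref{prop:invariants solution}(iii) with the residual $\varrho\,\canon$ killed by the trace equation \eqref{eq:affine equation 5}, and (iv) via Lemma~\ref{lem:translations}. Your only cosmetic deviation is to invoke linearity of \eqref{eq:affine equation 5} on the difference $\ZZ-\ZZ_0=\varrho\,\canon$ rather than subtracting the two instances term by term, which amounts to the identical computation $(\di+2)\varrho-(\di+1)\varrho=\varrho=0$.
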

\begin{proof}
$\prsim$ is well-defined due to Th. \ref{thm:reduction symmetric}. \footnote{ It could be defined on any connection of the form $\N=\LC+\dv\ZZ+\A\otimes\canon$ with $\ZZ\in\mathrm{h}^{2}\mathcal{T}_{0}^{1}(\MA)$ and $\A\in\mathrm{h}^{0}\mathcal{T}_{1}^{0}(\MA)$, for the argument that we used to prove the uniqueness of $(\ZZ,\A)$ is independent of $\N$ being in $\sol$ (see the proof of the mentioned theorem). }

(i) By Prop. \ref{prop:invariants solution} (ii), if $\N=\LC+\dv\ZZ+\A\otimes\canon$ is symmetric, then $\A=0$. 

(ii) $\N=\LC+\dv\ZZ+\A\otimes\canon$ and $\N_{0}=\LC+\dv\ZZ_0+\A_0\otimes\canon$ being on the same fiber of $\prsim$ means that $\ZZ=\ZZ_0$, from where Prop. \ref{prop:invariants solution} (iii) tells us that they share pregeodesics. Conversely, if this happens, then $\ZZ=\ZZ_{0}+\varrho\,\canon$ with $\LC+\dv\ZZ,\LC+\dv\ZZ_0\in\solsim$ and $\varrho\in\mathrm{h}^1\mathcal{F}(A)$.  By Lem. \ref{prop: torsion-free affine equations}, both $\ZZ$ and $\ZZ_0$ solve \eqref{eq:affine equation 5}, so 
\[
\begin{split}
0 &=\left(\di+2\right)\sigma^{\ZZ}-2\,\C_{a}\,\ZZ^{a}-\ZZ_{\cdot a}^{a} \\
&=\left(\di+2\right)\sigma^{\ZZ_0}+\left(\di+2\right)\varrho-2\,\C_{a}\left(\ZZ_0\right)^{a}-\left(\ZZ_0\right)_{\cdot a}^{a}-\left(\varrho_{\cdot a}\,y^a+\varrho\,\delta_{ a}^{a}\right) \\
&=\left(\di+2\right)\sigma^{\ZZ_0}-2\,\C_{a}\left(\ZZ_0\right)^{a}-\left(\ZZ_0\right)_{\cdot a}^{a}+\varrho \\
&=\varrho 
\end{split}
\]
(the definition \eqref{eq:sigma} of $\sigma^{\ZZ}$, $\C_{a}\,y^a=0$ and the $1$-homogeneity of $\varrho$ were used). Thus, $\ZZ=\ZZ_0$, which means that $\N$ and $\N_{0}$ are on the same fiber. 

(iii) Suppose that $\N=\LC+\dv\ZZ+\A\otimes\canon$ shares pregeodesics with $\LC$. This time, Prop. \ref{prop:invariants solution} (iii) gives us $\ZZ=\varrho\,\canon$ and analogous computations to the previous item yield $\varrho=0$. From here, $\prsim(\N)=\prsim(\LC+\A\otimes\canon)=\LC$. 

(iv) This is due to   Lem. \ref{lem:translations}.  
\end{proof}
\begin{rem}
  Despite the notation,   this projection $\prsim$ is not the same as the canonical one of (always homogeneous) nonlinear connections onto symmetric nonlinear connections; the latter is $\N=\dv\G+J\longmapsto\dv\G$ with $\G$ the underlying spray of $\N$. While $\N$ and $\dv\G$ actually share geodesics, they do not necessarily share Ricci scalar. 
\end{rem}

Let us focus briefly on those $\N\in\sol$ with $\prsim(\N)=\LC$ (i.e., $\dv\ZZ=0$ and, by homogeneity, $\ZZ=0$). 

\begin{defn} \label{formally classical}
 We refer to the elements of 
\[
\left(\prsim\right)^{-1}(\LC)=\begin{cases}
\left\{\LC+\A\otimes\canon\colon\;\A\in\mathrm{h}^{0}\mathcal{T}_{1}^{0}(\MA)\right\} & \text{if}\quad\la_i=0,\\
\emptyset & \text{otherwise},\\
\end{cases}
\]
as \textit{formally
classical solutions of the affine equation \eqref{eq:affine equation}}. Consistently, in case that $L$ is pseudo-Riemannian, we refer to those elements of $\left(\prsim\right)^{-1}(\LC)$ with $\A$ isotropic as \textit{classical solutions}. 
\end{defn}

\begin{rem}\label{REM_4.15}  $\left(\prsim\right)^{-1}(\LC)$ being nonempty is equivalent to  $\LC$ being in $\solsim$ and, in turn, to $\la_i=0$ (see Rem. \ref{rem:la_i = 0}), which in particular happens in case that $L$ is pseudo-Riemannian. When $\left(\prsim\right)^{-1}(\LC)\neq\emptyset$, its elements have the form of the (underlying linear connections of the) solutions of the classical Palatini formalism (see Rem. \ref{rem:palatini}). The difference is that our formalism allows for a non--pseudo-Riemannian $L$ and an anisotropic $\A$, hence the distintion between \textit{formally classical} and \textit{classical} solutions. 
\end{rem}

In Cor. \ref{cor:projection}, we have seen that the formally classical solutions are exactly those that share pregeodesics with $L$. Their Ricci scalar is   the metric one $\riL$   and, when they do exist, the only symmetric one among them is $\LC$ itself. Their importance can be recognized also from the Physics viewpoint. If one wants to model the free fall of particles in a Finsler spacetime equipped with $\N$, in principle they could choose between two different postulates: either particles follow $\N$-geodesics or they follow $L$-geodesics. When $\N$ is formally classical, at least the trajectories and measured proper times coincide for both options.

For these reasons, in the case $\la_i=0$ it is natural to ask whether actually all solutions are formally classical. In general, one can ask if there is only one fiber (equiv., only one symmetric solution). This is studied in \S \ref{sec:proper solutions}, where a positive answer is provided in many cases of interest.

\subsection{Metric compatibility conditions}

When $g$ and $\an$ are isotropic, the compatibility of the connection with the metric just means $\covan_k g_{ij}=0$. When one further restricts to solutions of the classical metric-affine formalism, either one of the conditions of vanishing torsion or $\covan_k g_{ij}=0$ suffices to select the Levi-Civita connection; moreover, $g^{ab}\,\covan_k g_{ab}=0$ also suffices \cite[(18)]{palatini}. 

In the general Finslerian setting, vanishing torsion together with $\covan_k g_{ij}=0$ determines $\an$ as the Levi-Civita--Chern anisotropic connection of $g$ \cite{gelocor,anisotropic,chern connection,spray and finsler}. Nevertheless, there are at least seven nonequivalent concepts of metric compatibility that one could think of. Each one is given by the vanishing of one of the following tensors, where we assume that $\N$ is the underlying nonlinear connection of $\an$:
\[
\covan_k g_{ij},\qquad\covN_k g_{ij},\qquad\covan_k y_{j}=\covan_k g_{aj}\,y^{a},\qquad\covN_k y_{j}=\covN_k g_{aj}\,y^{a},
\]
\[
\covan_{k}g_{ab}\,y^{a}\,y^{b}=\covan_k L=\covN_k L=\covN_{k}g_{ab}\,y^{a}\,y^{b},\qquad y^{c}\,\covan_{c}g_{ij},\qquad y^{c}\,\covN_{c}g_{ij};
\]
keep in mind that always $\nabla_k y^j=0$ (Prop. \ref{prop:covariante canon = 0}), but $\nabla_k y_{j}:=\nabla_k\left(g_{ja}\,y^a\right)\neq g_{ja}\,\nabla_k y^a$. When restricting to solutions of our affine equation, some metric compatibility conditions select a single element of each fiber $\left(\prsim\right)^{-1}(\LC+\dv\ZZ)$, much like $\torN_{ij}^k=0$ selects $\LC+\dv\ZZ$. This, in turn, has important consequences. 

 Until the end of this section, we use that $\N$ is of the form $\LC+\dv\ZZ+\A\otimes\canon$  for some $\ZZ\in\mathrm{h}^{2}\mathcal{T}_{0}^{1}(\MA)$ and $\A\in\mathrm{h}^{0}\mathcal{T}_{1}^{0}(\MA)$, which in particular holds true whenever $\N\in\sol$. 

\begin{lem}  For $\N=\LC+\dv\ZZ+\A\otimes\canon$, one has 
\begin{equation}
\begin{split}
\covN_i y_{k}(=\covN_i g_{bk} \,y^b)=-\left(\ZZ^a_{\cdot i}\,g_{ak}+y_a\,\ZZ^a_{\cdot i \cdot k}\right)-L\,\A_{i\,\cdot k}-2\,\A_i\,y_k,
\end{split}
\label{eq:4 aux 9}
\end{equation}
\begin{equation}
\covN_i  L(=\covN_{i}y_c\,y^{c})=-2\,y_a\,\ZZ^a_{\cdot i}-2L\,\A_i,
\label{eq:4 aux 10}
\end{equation}	
\begin{equation}
\left(\covN_i  L\right)_{\cdot k}=2\,\covN_i y_{k}.
\label{eq:4 aux 14}
\end{equation} 
\end{lem}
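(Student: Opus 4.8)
The three identities are all consequences of the covariant-derivative formula \eqref{eq:covariant g solution} (Prop.~\ref{prop:invariants solution}~(i)) together with $\covN\canon=0$ (Prop.~\ref{prop:covariante canon = 0}) and elementary homogeneity bookkeeping. The plan is to establish \eqref{eq:4 aux 9} first, and then to obtain \eqref{eq:4 aux 10} and \eqref{eq:4 aux 14} from it by contraction and by vertical differentiation, respectively, so that the only substantial computation is carried out once.

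For \eqref{eq:4 aux 9} I would begin from $\covN_i y_k=\covN_i(g_{bk}\,y^b)=(\covN_i g_{bk})\,y^b$, the second equality using $\covN_i y^b=0$. Substituting \eqref{eq:covariant g solution} with derivative index $i$ and metric indices $b,k$, and contracting with $y^b$, every term simplifies by the standing identities $\C_{bka}\,y^b=0$, $y_{a\,\cdot k}=g_{ak}$, $y_a y^a=L$, and by Euler's theorem (Prop.~\ref{prop:euler}): since $\ZZ\in\mathrm{h}^{2}$ one has $\ZZ^a_{\cdot i\cdot b}\,y^b=\ZZ^a_{\cdot i}$, while $\A\in\mathrm{h}^{0}$ gives $\A_{i\,\cdot b}\,y^b=0$. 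The one term that must disappear is the Landsberg contribution $2\,\la_{bki}\,y^b$; here I would invoke the standard property that the (mean and full) Landsberg tensor annihilates $\canon$ in every slot, $\la_{bki}\,y^b=0$ (equivalently $\covL_k y_j=0$). After this, the surviving terms assemble exactly into the right-hand side of \eqref{eq:4 aux 9}.

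With \eqref{eq:4 aux 9} available, \eqref{eq:4 aux 10} follows by contracting it with $y^k$: from $\covN_i L=\covN_i(y_k y^k)=(\covN_i y_k)\,y^k$ the terms collapse via $g_{ak}\,y^k=y_a$, via $\ZZ^a_{\cdot i\cdot k}\,y^k=\ZZ^a_{\cdot i}$ (again $1$-homogeneity of $\ZZ^a_{\cdot i}$), via $\A_{i\,\cdot k}\,y^k=0$ and $y_k y^k=L$, giving $\covN_i L=-2\,y_a\,\ZZ^a_{\cdot i}-2L\,\A_i$. Finally, \eqref{eq:4 aux 14} is obtained by applying $\dv_k$ directly to \eqref{eq:4 aux 10} and using $y_{a\,\cdot k}=g_{ak}$ and $L_{\cdot k}=2\,y_k$; the resulting expression matches twice the right-hand side of \eqref{eq:4 aux 9} term by term.

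The computations are routine, so the only genuine input beyond \eqref{eq:covariant g solution} and homogeneity is the vanishing of the $y$-contracted Landsberg tensor, which is the step one must be careful to justify: it is precisely the reason why the Landsberg term present in \eqref{eq:covariant g solution} leaves no trace in \eqref{eq:4 aux 9}--\eqref{eq:4 aux 10}. I would also emphasize that \eqref{eq:4 aux 14} is \emph{not} a formal commutation of $\dv_k$ with $\covN_i$ (these do not commute in general), but a genuine identity special to the scalar $L$, which the explicit differentiation of \eqref{eq:4 aux 10} confirms.
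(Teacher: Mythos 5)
Your proposal is correct and follows essentially the same route as the paper: both derive \eqref{eq:4 aux 9} by contracting \eqref{eq:covariant g solution} with $\canon$ (using $\covN_i y^j=0$, $\la_{ibk}\,y^b=0$, $\C_{bki}\,y^b=0$ and the homogeneities of $\ZZ$ and $\A$), obtain \eqref{eq:4 aux 10} by a further contraction with $y^k$, and get \eqref{eq:4 aux 14} by comparing the vertical differential of \eqref{eq:4 aux 10} with \eqref{eq:4 aux 9}. Your explicit flagging of the $y$-annihilation of the Landsberg tensor and of the fact that \eqref{eq:4 aux 14} is not a formal commutation of $\dv_k$ with $\covN_i$ are sound expository additions, not a different argument.
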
 

\begin{proof}
	 In Prop. \ref{prop:invariants solution} we showed formula \eqref{eq:covariant g solution}, from where \eqref{eq:4 aux 9} follows by contracting with $y^j$ and using $\la_{ibk}\,y^b=0$, $\C_{bki}\,y^b=0$, the $1$-homogeneity of $\ZZ^j_{\cdot i}$, and the $0$-homogeneity of $\A$. Formula \eqref{eq:4 aux 10} follows from \eqref{eq:4 aux 9} by doing the same. Finally, from comparing the vertical differential of \eqref{eq:4 aux 10} with \eqref{eq:4 aux 9}, and using $y_{j\,\cdot k}=g_{jk}$ and $L_{\cdot k}=2\,y_k$, formula \eqref{eq:4 aux 14} follows. 
\end{proof}

\begin{prop} \label{prop:metric compatibility 1} The following are equivalent: 
	\begin{enumerate}
		\item $\covN_i  L=0$;
		\item  $\N$ is the underlying nonlinear connection of some anisotropic connection $\an$ for which
		$\covan_{k}g_{ij}=0$. In this case, one can choose $\an_{ij}^{k}=\N_{i\,\cdot j}^{k}+Q_{ij}^{k}$ with $Q_{ij}^{k}:=g^{ka}\,\covN_{i}g_{ja}/2$; 
		\item $\covN_i y_{k}=0$;
		\item $\A_{i}=-y_{a}\,\ZZ_{\cdot i}^{a}/L$. 
	\end{enumerate}
\end{prop}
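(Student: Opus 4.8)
The plan is to take condition (i), namely $\covN_i L = 0$, as the hub and to establish its equivalence with each of (iii), (iv) and (ii) separately; since (i) is the most symmetric of the four statements, this is cleaner than running a single long cycle. Throughout I would exploit the three identities \eqref{eq:4 aux 9}, \eqref{eq:4 aux 10} and \eqref{eq:4 aux 14} of the preceding lemma, together with two facts already available: that $L$ is a scalar, so $\covN L$ (and more generally $\covan L$) depends only on the underlying nonlinear connection by the observation recorded after \eqref{eq:covariant derivative}, and that $\nabla\canon = 0$ for every anisotropic connection (Prop.~\ref{prop:covariante canon = 0}).

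For (i) $\Leftrightarrow$ (iv) I would simply read off \eqref{eq:4 aux 10}: as $\covN_i L = -2\,y_a\,\ZZ^a_{\cdot i} - 2L\,\A_i$ and $L$ never vanishes on $A$, the vanishing of $\covN_i L$ is literally equivalent to $\A_i = -y_a\,\ZZ^a_{\cdot i}/L$. For (i) $\Leftrightarrow$ (iii) I would use \eqref{eq:4 aux 14}, $(\covN_i L)_{\cdot k} = 2\,\covN_i y_k$. The direction (iii) $\Rightarrow$ (i) then follows from homogeneity: $\covN_i L$ is $2$-homogeneous, so Euler's identity (Prop.~\ref{prop:euler}(i)) gives $2\,\covN_i L = y^k(\covN_i L)_{\cdot k} = 2\,y^k\,\covN_i y_k = 0$; conversely (i) $\Rightarrow$ (iii) is immediate, since $\covN_i L = 0$ forces its vertical derivative to vanish, whence $\covN_i y_k = 0$ by \eqref{eq:4 aux 14}.

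The substantive direction is (i) $\Rightarrow$ (ii). Assuming (i), hence also (iii), I would take the candidate $\an_{ij}^k := \N_{i\,\cdot j}^k + Q_{ij}^k$ with $Q_{ij}^k := \frac{1}{2}\,g^{ka}\,\covN_i g_{ja}$, which is a $0$-homogeneous element of $\mathrm{h}^0\mathcal{T}_2^1(\MA)$, so that $\an = \dv\N + Q$ is a genuine anisotropic connection. The first point to verify is that its underlying nonlinear connection is $\N$ itself: by Def.~\ref{def:underlying 1}(i) this amounts to $\an_{ia}^k\,y^a = \N_i^k$, and since $Q_{ia}^k\,y^a = \frac{1}{2}\,g^{kb}\,\covN_i(g_{ab}y^a) = \frac{1}{2}\,g^{kb}\,\covN_i y_b$, this is exactly where (iii) is indispensable. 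Once the underlying nonlinear connection is $\N$, the horizontal fields $\delta_k$ entering $\covan$ and $\covN$ coincide, so $\covan_k g_{ij} = \covN_k g_{ij} - Q_{ki}^a g_{aj} - Q_{kj}^a g_{ia}$, and substituting the definition of $Q$ and contracting with $g$ collapses each of the last two terms to $\frac{1}{2}\,\covN_k g_{ij}$, yielding $\covan_k g_{ij}=0$. Finally, for (ii) $\Rightarrow$ (i), whatever metric-compatible $\an$ with underlying nonlinear connection $\N$ is supplied, I would use $\covN_k L = \covan_k L$ (again because $L$ is a scalar), and then $\covan_k L = \covan_k(g_{ab}y^a y^b) = 0$ by the Leibniz rule, invoking $\covan g = 0$ and $\nabla\canon = 0$.

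I expect the main obstacle to lie in the bookkeeping of (i) $\Rightarrow$ (ii): one must check that the constructed $\an$ really has $\N$, and not some other connection, as its underlying nonlinear connection \emph{before} identifying the horizontal derivatives $\delta_k$, since the identity $\covan_k g_{ij} = \covN_k g_{ij} - Q_{ki}^a g_{aj} - Q_{kj}^a g_{ia}$ rests precisely on that coincidence. This is the hinge at which condition (iii) --- equivalently (i) --- is genuinely used, and it is what makes the otherwise routine tensor manipulations go through.
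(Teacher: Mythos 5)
Your proposal is correct and follows essentially the same route as the paper: it uses the same identities \eqref{eq:4 aux 10} and \eqref{eq:4 aux 14} for (i)$\Leftrightarrow$(iv) and (i)$\Leftrightarrow$(iii), the same construction $\an=\dv\N+Q$ with $Q_{ib}^k\,y^b=0$ guaranteeing that the underlying nonlinear connection is $\N$, and the same scalar-function argument for (ii)$\Rightarrow$(i). The only difference is organizational (a hub at (i) rather than the paper's cycle (i)$\Rightarrow$(iii)$\Rightarrow$(ii)$\Rightarrow$(i), which spares the paper your Euler-identity step for (iii)$\Rightarrow$(i)), and that step is itself correct.
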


\begin{proof} 
 (i)$\Longrightarrow$(iii) By \eqref{eq:4 aux 14}, $2\,\covN_i y_{k}=\left(\covN_i  L\right)_{\cdot k}=0$. 
	
 (iii)$\Longrightarrow$(ii) The condition $\covN_i y_{k}=0$ implies that the chosen $Q$ above fulfills $Q_{ib}^{k}\,y^b=0$, so the underlying nonlinear connection of $\an=\dv\N+Q$ is $\N$. Then, $\covan_{k}g_{ij}=0$ is obtained just by substituting our choice in the general expression 
\[
\covan_k g_{ij}=\delta_k g_{ij}-\an_{ki}^a\,g_{aj}-\an_{kj}^a\,g_{ia}=\covN_k g_{ij}-Q_{ki}^a\,g_{aj}-Q_{kj}^a\,g_{ia}.
\]
(see \eqref{eq:covariant derivative}). 

 (ii)$\Longrightarrow$(i) Note that for any $\an$, such as the one above, the covariant derivative of a function only depends on the underlying nonlinear connection $\N$. Together with $L=g_{ab}\,y^a\,y^b$ and $\nabla_i y^j=0$, this provides $\covN_i  L=\covan_i  L=\covan_{i}g_{bc}\,y^{b}\,y^{c}=0$. \footnote{Notice, thus, that (iii)$\Longrightarrow$(ii)$\Longrightarrow$(i) is true for connections of arbitrary form.} 

 (i)$\Longleftrightarrow$(iv) This is clear from \eqref{eq:4 aux 10}. 





\end{proof}

\begin{prop} \label{prop:metric compatibility 2}
	$L$ is constant along $\N$-geodesics if and only if $\A_a\,y^a=-2\,y_{a}\,\ZZ^a/L$. In particular, this is the case if $\covN_i  L=0$. 
\end{prop}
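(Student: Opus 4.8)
The plan is to translate ``$L$ constant along $\N$-geodesics'' into a pointwise identity on $A$ and then to read it off from the already-established formula \eqref{eq:4 aux 10} for $\covN_i L$. The observation that makes this clean is that the spray vector field of $\N$ is purely horizontal.

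First I would recall that the $\N$-geodesics are the projections to $M$ of the integral curves of the underlying spray $\G$ of $\N$, whose coefficients satisfy $2\,\G^c=\N_a^c\,y^a$ by Def.~\ref{def:underlying 2} (i). Substituting $\partial_a=\delta_a+\N_a^c\,\dv_c$ from \eqref{eq:horizontal distribution} into $\G=y^a\,\partial_a-2\,\G^a\,\dv_a$, the vertical part cancels:
\[
\G=y^a\,\delta_a+\left(\N_a^c\,y^a-2\,\G^c\right)\dv_c=y^a\,\delta_a.
\]
Hence, for an $\N$-geodesic $\gamma$ with canonical lift $c(t)=(\gamma(t),\dot\gamma(t))$, the chain rule gives
\[
\frac{d}{dt}\,L(\dot\gamma)=\G(L)\circ c=\left(y^a\,\delta_a L\right)\circ c=\left(y^a\,\covN_a L\right)\circ c,
\]
where the last equality uses $\covN_a L=\delta_a L$ for the scalar $L$ (see the comment after \eqref{eq:covariant derivative}).

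Since every $v\in A$ is the initial velocity of some $\N$-geodesic, the condition that $L$ be constant along all $\N$-geodesics is equivalent to the pointwise identity $y^a\,\covN_a L=0$ on $A$. I would then contract \eqref{eq:4 aux 10} with $y^i$, using the $2$-homogeneity of $\ZZ$ (so that $y^i\,\ZZ^a_{\cdot i}=2\,\ZZ^a$ by Prop.~\ref{prop:euler}), to obtain
\[
y^a\,\covN_a L=-4\,y_a\,\ZZ^a-2\,L\,\A_a\,y^a.
\]
As $L>0$ on $A$, the vanishing of the right-hand side is exactly $\A_a\,y^a=-2\,y_a\,\ZZ^a/L$, which is the asserted equivalence. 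The ``in particular'' clause is then immediate: if $\covN_i L=0$ for every $i$, then $y^a\,\covN_a L=0$ trivially, so $L$ is constant along $\N$-geodesics.

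I do not expect a genuine obstacle here, as the laborious computation is already encapsulated in \eqref{eq:4 aux 10}. The only two points deserving attention are the identity $\G=y^a\,\delta_a$, which reduces everything to a single horizontal derivative, and the promotion of ``constant along each geodesic'' to the global pointwise condition $y^a\,\covN_a L=0$; the latter rests only on the existence of geodesics with arbitrary prescribed initial velocity, which is standard for sprays. Everything else is a one-line index contraction.
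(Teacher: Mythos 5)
Your proposal is correct and follows essentially the same route as the paper: both reduce $\tfrac{d}{dt}L(\dot\gamma)$ to $\dot\gamma^a\,\covN_a L$ (you via the identity $\G=y^a\,\delta_a$, the paper by substituting the geodesic equation directly) and then contract \eqref{eq:4 aux 10} with $y^i$ using the $2$-homogeneity of $\ZZ$. Your treatment of the ``in particular'' clause is even slightly more direct than the paper's, which detours through Prop.~\ref{prop:metric compatibility 1}; both are valid.
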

	
\begin{proof}
Let $\gamma(t)$ be an $\N$-geodesic, so that it solves
\[
0=\frac{\mathrm{d}\dot{\gamma}^k }{\mathrm{d}t}+2\,\G^k(\gamma,\dot{\gamma})=\frac{\mathrm{d}\dot{\gamma}^k }{\mathrm{d}t}+\N^k_c(\gamma,\dot{\gamma})\,\dot{\gamma}^c,
\]	
$\G$ being the underlying spray of $\N$.  Then, using that $\gamma$ solves the above equation,
\begin{align*}\frac{\mathrm{d}}{\mathrm{d}t}L(\gamma,\dot{\gamma})=&\dot\gamma^a \partial_a L(\gamma,\dot\gamma)+\frac{\mathrm{d}\dot{\gamma}^a }{\mathrm{d}t}\dot\partial_a L(\gamma,\dot\gamma)\\=&\dot\gamma^a \partial_a L(\gamma,\dot\gamma)-\N^a_c(\gamma,\dot\gamma)\dot\gamma^c\dot\partial_a L(\gamma,\dot\gamma)=\dot\gamma^a\covN_a L.
\end{align*}
Moreover, from \eqref{eq:4 aux 10} and the $2$-homogeneity of $\ZZ$,
\[
y^c\,\covN_c L=-4\,y_a\,\ZZ^a-2L\,\A_a\,y^a,
\]
which concludes the first equivalence. 
 In case that $\covN_i  L=0$, by Prop. \ref{prop:metric compatibility 1}, one has $\A_{i}=-y_{a}\,\ZZ_{\cdot i}^{a}/L$, and by the $2$-homogeneity of $\ZZ$, also $\A_a\,y^a=-2\,y_{a}\,\ZZ^a/L$. 
\end{proof}

\begin{rem} \label{rem:metric compatbility geodesics} From the beginning we assumed that the connections are defined on $A$, where $L$ does not vanish; however, $L$ and $\N$ could be defined further, on some set with vanishing $L$ (as in the case of Def. \ref{defn:proper}). Then 
Prop. \ref{prop:metric compatibility 2} still applies to it. The conclusion is that the tangent vectors to the $\N$-geodesics starting at $\left\{L=0\right\}$ remain in $\left\{L=0\right\}$ (and so the $\N$-geodesics starting at $\left\{L>0\right\}$ or $\left\{L<0\right\}$ remain in these sets as well). In fact, this is true for the pregeodesics of $\N=\LC+\dv\ZZ+\A\otimes\canon$ with arbitrary $\A$, for all of these $\N$'s share pregeodesics with another one that is of the form of Prop. \ref{prop:metric compatibility 1} (see Cor. \ref{cor:projection} (i)). In the case of proper solutions, this result will be improved by Th. \ref{thm:null geodesics}.
\end{rem}

 Next, we will not only use the form of $\N$, but also that it is a solution of the affine equation \eqref{eq:affine equation} (so $\prsim(\N):=\LC+\dv\ZZ\in\solsim$ and $\ZZ$ solves \eqref{eq:affine equation 7}+\eqref{eq:affine equation 8}, see Cor. \ref{cor:projection} and Prop. \ref{prop:torsion-free affine equations 0} respectively). 

\begin{prop} \label{prop:metric compatibility 3}
	For any $\N=\LC+\dv\ZZ+\A\otimes\canon\in\sol$, the following are equivalent: 
	\begin{enumerate}
		\item $g^{ab}\,\covN_i g_{ab}=0$,
		\item $\A_i=-\left(\di+2\right)y_{a}\,\ZZ_{\cdot i}^{a}/\left(2\di\,L\right)$.
	\end{enumerate}
\end{prop}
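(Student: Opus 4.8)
The plan is to compute the trace $g^{ab}\,\covN_i g_{ab}$ directly from the formula \eqref{eq:covariant g solution} for $\covN_k g_{ij}$ established in Proposition \ref{prop:invariants solution} (i), and then to feed in the two torsion-free affine equations \eqref{eq:affine equation 7} and \eqref{eq:affine equation 8} that $\ZZ$ necessarily satisfies because $\N\in\sol$ (Proposition \ref{prop:torsion-free affine equations 0}). First I would contract \eqref{eq:covariant g solution} with $g^{ij}$ term by term. The Landsberg term yields the mean Landsberg tensor $2\,\la_i$; the Cartan term yields $-2\,\C_a\,\ZZ^a_{\cdot i}$; the two double-vertical-derivative terms both collapse via $g^{ij}g_{aj}=\delta^i_a$ (and $g^{ij}g_{ai}=\delta^j_a$) to $-2\,\ZZ^a_{\cdot i\cdot a}$; the term $-(\A_{i\,\cdot a}\,y^a+\A_{i\,\cdot a}\,y^a)$ vanishes, since $\A$ is $0$-homogeneous and hence $\A_{i\,\cdot a}\,y^a=0$ by Euler's theorem (Proposition \ref{prop:euler} (i)); and the last term gives $-2\,\di\,\A_i$. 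This produces
\[
g^{ab}\,\covN_i g_{ab}=2\,\la_i-2\,\C_a\,\ZZ^a_{\cdot i}-2\,\ZZ^a_{\cdot i\cdot a}-2\,\di\,\A_i.
\]

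Next I would exploit that $\N$ solves the affine equation. By \eqref{eq:affine equation 8}, $\ZZ^a_{\cdot a}=(\di+2)\,y_a\,\ZZ^a/L-2\,\C_a\,\ZZ^a$, so the bracket $\left\{(\di+2)\,y_a\,\ZZ^a/L-2\,\C_a\,\ZZ^a\right\}_{\cdot i}$ appearing inside \eqref{eq:affine equation 7} is exactly $\ZZ^a_{\cdot a\cdot i}$. Substituting this into \eqref{eq:affine equation 7} rewrites that equation as
\[
\la_i=-\tfrac{\di+2}{2}\,\tfrac{y_a}{L}\,\ZZ^a_{\cdot i}+\C_a\,\ZZ^a_{\cdot i}+\ZZ^a_{\cdot a\cdot i}.
\]
Plugging this expression for $\la_i$ into the trace formula above, the two $\C_a\,\ZZ^a_{\cdot i}$ terms cancel, and since vertical derivatives commute ($\ZZ^a_{\cdot a\cdot i}=\ZZ^a_{\cdot i\cdot a}$) the second-order terms cancel as well. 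What remains is the clean identity
\[
g^{ab}\,\covN_i g_{ab}=-(\di+2)\,\tfrac{y_a}{L}\,\ZZ^a_{\cdot i}-2\,\di\,\A_i.
\]

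From this identity the equivalence follows at once in both directions: the right-hand side vanishes identically if and only if $\A_i=-(\di+2)\,y_a\,\ZZ^a_{\cdot i}/(2\di\,L)$, which is precisely condition (ii). I do not expect a genuine obstacle; the argument is essentially bookkeeping, and the places demanding care are (a) the contraction of the two $g_{aj}$-weighted second-derivative terms, and (b) the correct use of the affine equations, in particular substituting \eqref{eq:affine equation 8} \emph{inside} the vertical differential occurring in \eqref{eq:affine equation 7}. The homogeneity facts $\A_{i\,\cdot a}\,y^a=0$ and $\la_a\,y^a=0$ are exactly what make the spurious terms drop out and leave the stated relation for $\A_i$.
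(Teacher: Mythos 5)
Your proposal is correct and follows essentially the same route as the paper: contract \eqref{eq:covariant g solution} with $g^{ij}$, use the $0$-homogeneity of $\A$ to kill the $\A_{i\,\cdot a}\,y^a$ terms, and invoke the symmetric affine equation (your recombination of \eqref{eq:affine equation 7} and \eqref{eq:affine equation 8} is exactly the intermediate identity \eqref{eq:4 aux 6} that the paper cites directly) to arrive at $g^{ab}\,\covN_i g_{ab}=-\left(\di+2\right)\frac{y_a}{L}\,\ZZ^a_{\cdot i}-2\di\,\A_i$, from which the equivalence is immediate.
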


\begin{proof}
Contracting \eqref{eq:covariant g solution} with $g^{ij}$, 
\[
g^{ab}\,\covN_i g_{ab} =2\,\la_{i}-2\,\C_{a}\,\ZZ^a_{\cdot i}-2\,\ZZ^a_{\cdot a \cdot i}-2\di\,\A_i=-\left(\di+2\right)\frac{y_a}{L}\,\ZZ^a_{\cdot i}-2\di\,\A_i
\]
(the $0$-homogeneity of $\A$ and the fact that $\ZZ$ solves \eqref{eq:4 aux 6} were used).
\end{proof} 

\begin{prop} Let $\di\geq 3$ and, for any $\N=\LC+\dv\ZZ+\A\otimes\canon\in\sol$, consider the following conditions: $\torN_{ij}^k=0$, $\covN_k L=0$, $g^{ab}\,\covN_k g_{ab}=0$. If two of them hold, then actually $\N=\LC$ and the three of them hold. In particular, this is the case when $\covN_k g_{ij}=0$.  
\end{prop}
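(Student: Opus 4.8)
The plan is to reduce each of the three compatibility conditions to a single linear relation between the $0$-homogeneous $1$-form $\A$ and the $0$-homogeneous $1$-form $V_i:=y_a\,\ZZ^a_{\cdot i}/L\in\mathrm{h}^0\mathcal{T}_1^0(\MA)$, and then to observe that no two of these relations can hold simultaneously unless both $\A$ and $V$ vanish. First I would recall the three characterizations, all available from earlier in the section, for $\N=\LC+\dv\ZZ+\A\otimes\canon\in\sol$. By Prop.~\ref{prop:invariants solution}(ii), equation \eqref{eq:torsion determines A}, vanishing torsion gives $2(\di-1)\A_i\,y^k=0$; since $\di\geq3$ and $y$ never vanishes on $A$, this forces $\A_i=0$. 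By the equivalence (i)$\Leftrightarrow$(iv) of Prop.~\ref{prop:metric compatibility 1}, the condition $\covN_k L=0$ is equivalent to $\A_i=-V_i$. Finally, by Prop.~\ref{prop:metric compatibility 3} (which is where $\N\in\sol$ enters), the condition $g^{ab}\covN_k g_{ab}=0$ is equivalent to $\A_i=-\tfrac{\di+2}{2\di}\,V_i$. Thus the three conditions read, respectively, $\A_i=0$, $\A_i+V_i=0$, and $\A_i+\tfrac{\di+2}{2\di}\,V_i=0$.

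Next I would view these, for each fixed $i$, as three lines through the origin in the $(\A_i,V_i)$-plane, namely $\A_i=m\,V_i$ with $m=0$, $m=-1$, and $m=-\tfrac{\di+2}{2\di}$. These slopes are pairwise distinct precisely when $\di\neq2$: the first differs from the other two because $\di+2>0$, and $-1\neq-\tfrac{\di+2}{2\di}$ amounts to $2\di\neq\di+2$. Hence, under the standing hypothesis $\di\geq3$, imposing any two of the three conditions forces simultaneously $\A_i=0$ and $V_i=0$ for every $i$. This is exactly where the dimension assumption is essential, and it explains the need for $\di\geq3$: in dimension $2$ the conditions $\covN_k L=0$ and $g^{ab}\covN_k g_{ab}=0$ coincide, and two distinct conditions no longer determine the pair $(\A_i,V_i)$.

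It then remains to pass from $V_i=0$ to $\ZZ=0$. Contracting $V_i=y_a\,\ZZ^a_{\cdot i}/L=0$ with $y^i$ and using Euler's relation $\ZZ^a_{\cdot i}\,y^i=2\,\ZZ^a$ (the $2$-homogeneity of $\ZZ$) gives $2\,y_a\,\ZZ^a/L=2\,\sigma^{\ZZ}=0$, so the function $\sigma^{\ZZ}$ of \eqref{eq:sigma} vanishes identically, and therefore so does its vertical derivative $\sigma^{\ZZ}_{\cdot i}$. On the other hand, differentiating $\sigma^{\ZZ}=L^{-1}y_a\,\ZZ^a$ with $L_{\cdot i}=2\,y_i$ and $y_{a\,\cdot i}=g_{ai}$ yields
\[
\sigma^{\ZZ}_{\cdot i}=-2\,L^{-2}y_i\,(y_a\,\ZZ^a)+L^{-1}g_{ai}\,\ZZ^a+L^{-1}y_a\,\ZZ^a_{\cdot i},
\]
whose first and third terms are multiples of $y_a\,\ZZ^a=L\,\sigma^{\ZZ}=0$ and of $V_i=0$, respectively. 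Hence $0=\sigma^{\ZZ}_{\cdot i}=L^{-1}g_{ai}\,\ZZ^a$, and the non-degeneracy of $g$ gives $\ZZ^a=0$. Together with $\A=0$ this yields $\N=\LC$.

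To close, I would verify that all three conditions indeed hold for $\LC$: substituting $\ZZ=0$, $\A=0$ into \eqref{eq:tor solution}, \eqref{eq:4 aux 10} and the contraction computed in the proof of Prop.~\ref{prop:metric compatibility 3} shows that $\torN$, $\covN_k L$ and $g^{ab}\covN_k g_{ab}$ all vanish. For the last assertion, if $\covN_k g_{ij}=0$ then, using $\covN y=0$ (Prop.~\ref{prop:covariante canon = 0}), both $\covN_k L=(\covN_k g_{ab})\,y^a\,y^b=0$ and $g^{ab}\covN_k g_{ab}=0$; these are two of the three conditions, whence $\N=\LC$. The main obstacle here is conceptual rather than computational: recognizing that three a priori different notions of metric compatibility collapse onto the same two scalar quantities $\A_i$ and $V_i$, and that $\di\geq3$ is precisely the threshold separating the $\covN_k L=0$ and $g^{ab}\covN_k g_{ab}=0$ relations.
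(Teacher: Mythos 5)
Your proposal is correct and follows essentially the same route as the paper: both reduce the three conditions to the linear relations $\A_i=0$, $\A_i=-y_a\ZZ^a_{\cdot i}/L$ and $\A_i=-\tfrac{\di+2}{2\di}\,y_a\ZZ^a_{\cdot i}/L$ via Props.~\ref{prop:invariants solution}, \ref{prop:metric compatibility 1} and \ref{prop:metric compatibility 3}, observe that any two force $\A_i=0=y_a\ZZ^a_{\cdot i}/L$ when $\di\neq2$, and then extract $\ZZ=0$ from the vanishing of $\sigma^{\ZZ}$ and its vertical derivative together with the non-degeneracy of $g$. The ``distinct slopes'' framing and the explicit check of the final assertions are just slightly more verbose presentations of the same argument.
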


\begin{proof}
Due to Props. \ref{prop:invariants solution}, \ref{prop:metric compatibility 1} and \ref{prop:metric compatibility 3}, the conditions are equivalent to
\[
\A_i=0,\qquad\A_i=-\frac{y_a}{L}\,\ZZ_{\cdot i}^{a},\qquad\A_i=-\frac{\di+2}{2\di}\,\frac{y_a}{L}\,\ZZ_{\cdot i}^{a}
\]	
respectively, so combining any two of them results in
\[
0=\A_i=\frac{y_a}{L}\,\ZZ_{\cdot i}^{a},
\]
and, by the $2$-homogenity of $\ZZ$,
\[
y_a\,\ZZ_{\cdot b}^{a}\,y^b=2\,y_a\,\ZZ^{a}.
\]
With this, recall form \S \ref{sec:pseudo-finsler} that 
\[
\left(\frac{y_{j}}{L}\right)_{\cdot i}=\frac{g_{ij}}{L}-2\,\frac{y_{i}}{L}\,\frac{y_{j}}{L}
\]
so 
\[
0=\frac{y_a}{L}\,\ZZ_{\cdot i}^{a}=\left(\frac{y_a}{L}\,\ZZ^{a}\right)_{\cdot i}-\left(\frac{y_a}{L}\right)_{\cdot i}\,\ZZ^{a}=-\left(\frac{g_{ia}}{L}-2\,\frac{y_{a}}{L}\,\frac{y_{i}}{L}\right)\ZZ^{a}=-\frac{g_{ia}}{L}\ZZ^a.
\]
As both $\ZZ$ and $\A$ vanish, $\N$ is the metric connection $\LC$.
\end{proof}

\begin{rem}
	Imposing two conditions is required to select $\LC$ among $\sol$, whereas in the classical Palatini formalism only one suffices. While $\covN_k g_{ij}=0$ is enough to select the metric connection, in the Finslerian setting this should be viewed as a fairly strong requirement, for not even $\LC$ always fulfills it ($\covL_k g_{ij}=2\,\la_{ijk}$).
\end{rem}
	
\section{General results on proper solutions} \label{sec:proper solutions}

The standard theory on differential equations is applicable to the local existence of solutions of our affine and metric equations (Theorem \ref{thm:variational equations}), see for example \cite{titt} in the analytic case. So, generically, one would expect a high multiplicity of solutions, but these solutions would be defined only on a neighborhood of some directions in the tangent bundle. However, a more interesting behaviour occurs if one focuses on the global problem which arises when all the elements can be properly extended at $\partial A$. Notice also that, apart from its mathematical interest, this assumption will be relevant from the 
Physics standpoint in order to consider lightlike geodesics.

We will use two different types of techniques for these uniqueness results. The first one relies on a weak hypothesis of analyticity 
and the second one in the maximum principle. In both cases, the behavior of $L$ at $\partial A$ (or the fact that $\partial A=\emptyset$ in the positive definite case) becomes crucial. 

Along this section, 
we will work essentially in dimension 
$\di\geq 3$, which will be required for different reasons, and we will assume the existence of a  prescribed proper 
$L$ (recall Def. \ref{defn:proper} and Rem. \ref{rem:proper}). So,  
$\LC$ and the other metric objects, such as $\sprL$, $\riL$ and $\la$, are also smooth at the boundary\footnote{This is checked just by looking at the coordinate expression \eqref{eq:metric spray} of $\sprL$ and recalling that $\LC$, $\riL$ or $\la$ are constructed with derivatives of it). Note, however, that the assumption of non-degeneracy of $g$ at $\partial A$ becomes essential.}.  
Accordingly, we work with the solutions $\N=\LC+\J$ of the affine equation \eqref{eq:affine equation} that extend smoothly to $\partial A$ (that is, such that $\J$ does).

\begin{defn}\label{DEF_5.1}
	Given the proper pseudo-Finsler metric $L$, we say that $\N$ is a \textit{proper solution of \eqref{eq:affine equation}} if $\N\in\sol$ and it smoothly extends to all of $\overline{A}$. The set of these solutions will be denoted $\solprop$.
\end{defn} 

As a synthesis of \S \ref{sec:affine equation}, keep in mind that the elements of $\sol$ are of the form $\N=\LC+\dv\ZZ+\A\otimes\canon$ for some $\ZZ\in\mathrm{h}^2\mathcal{T}_0^1(\MA)$, $\A\in\mathrm{h}^0\mathcal{T}_1^0(\MA)$ and that then $\prsim(\N):=\LC+\dv\ZZ$ is in $\sol$ as well. In case that $\ZZ$ and $\A$ extend smoothly to $\overline{A}$, we will write $\ZZ\in\mathrm{h}^2\mathcal{T}_0^1(\MAA)$, $\A\in\mathrm{h}^0\mathcal{T}_1^0(\MAA)$, and analogously for anisotropic tensors of all types. The following result justifies restricting further our study to symmetric ($\A=0$) proper solutions. 

\begin{prop} \label{prop:symmetric proper reduction}
	Given $\N=\LC+\dv\ZZ+\A\otimes\canon\in\sol$, it is in $\solprop$ if and only if $\ZZ\in\mathrm{h}^2\mathcal{T}_0^1(\MAA)$ and $\A\in\mathrm{h}^0\mathcal{T}_1^0(\MAA)$. Consequently, $\prsim\vcentcolon\sol\longrightarrow\solsim$ maps $\solprop$ onto $\solsim\cap\solprop$.
\end{prop}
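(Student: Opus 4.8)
The plan is to prove the equivalence in both directions and then read off the surjectivity statement. The \emph{if} direction is immediate, so I would dispose of it first: if $\ZZ$ and $\A$ extend smoothly to $\overline{A}$, then so do $\dv\ZZ$ and $\A\otimes\canon$, and since $L$ is proper its metric connection $\LC$ is already smooth on $\overline{A}$; hence $\N=\LC+\dv\ZZ+\A\otimes\canon$ is smooth on $\overline{A}$ and, being in $\sol$ by hypothesis, lies in $\solprop$. The substance is the \emph{only if} direction. Here the naive route --- reading $\ZZ$ and $\A$ off the explicit formulas \eqref{eq:Z and A} --- stumbles because $\B$ (see \eqref{eq:B}) carries factors $1/L$ that blow up on $\partial A=\{L=0\}$; the whole point is to recover $(\ZZ,\A)$ from the torsion and from homogeneity instead, quantities that remain manifestly smooth up to the boundary.

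So assume $\N\in\solprop$. Since $\LC$ is smooth on $\overline{A}$, the difference $\J=\N-\LC$ extends smoothly there, and hence so does its torsion $\torN_{ij}^k=\J_{i\,\cdot j}^k-\J_{j\,\cdot i}^k$ by Lem. \ref{lem:auxiliar} (i). Next I would invoke Prop. \ref{prop:invariants solution} (ii) in the form \eqref{eq:torsion determines A}, which expresses $2(\di-1)\,\A_i\,y^k$ entirely through $\torN$; this shows each $\A_i\,y^k$ is smooth on $\overline{A}$. To remove the $y^k$ I would use the $0$-homogeneity of $\A$: Euler's relation (Prop. \ref{prop:euler} (i)) gives $\A_{i\,\cdot a}\,y^a=0$, so $\dv_k(\A_i\,y^k)=\di\,\A_i$ realizes $\di\,\A_i$ as a combination of vertical derivatives of the smooth functions $\A_i\,y^k$, whence $\A$ is smooth on $\overline{A}$. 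With $\A$ in hand, $\dv\ZZ=\J-\A\otimes\canon$ is smooth, and the $2$-homogeneity of $\ZZ$ (Euler again) gives $2\,\ZZ^j=\ZZ^j_{\cdot a}\,y^a$, so $\ZZ$ is a contraction of the smooth $\dv\ZZ$ with $\canon$ and is therefore smooth on $\overline{A}$. This completes the equivalence.

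For the final claim I would use that, by Cor. \ref{cor:projection} (i), $\prsim$ restricts to the identity on $\solsim$. Given $\N\in\solprop$, the equivalence just shown yields $\ZZ\in\mathrm{h}^2\mathcal{T}_0^1(\MAA)$, so $\prsim(\N)=\LC+\dv\ZZ$ is smooth on $\overline{A}$; as it also lies in $\solsim$ by Th. \ref{thm:reduction symmetric}, we obtain $\prsim(\solprop)\subseteq\solsim\cap\solprop$. Conversely, any $\N_0\in\solsim\cap\solprop$ satisfies $\prsim(\N_0)=\N_0$ and is itself in $\solprop$, so it belongs to $\prsim(\solprop)$; thus $\prsim$ maps $\solprop$ onto $\solsim\cap\solprop$. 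The one genuine difficulty throughout is the boundary behaviour of the $1/L$ terms in the \emph{only if} direction, which the torsion-plus-homogeneity route is designed to avoid.
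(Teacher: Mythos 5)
Your proof is correct and follows essentially the same route as the paper's: the trivial \emph{if} direction, then recovering $\A$ from the smoothly extendible torsion via \eqref{eq:torsion determines A} and $\ZZ$ from $\dv\ZZ=\J-\A\otimes\canon$ by $2$-homogeneity. The only (harmless) cosmetic difference is how you strip the factor $y^k$ from $\A_i\,y^k$ — you take a vertical trace using Euler's relation, whereas the paper simply divides by a nonvanishing component of $\canon$ on $\overline{A}\subseteq\mathrm{T}M\setminus\mathbf{0}$; your explicit treatment of surjectivity in the last step is also a welcome, if minor, completion of what the paper leaves implicit.
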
 

\begin{proof} 
	Trivially, the smoothness at $\partial A$ of $\ZZ$ and $\A$ suffices for that of $\N$. Conversely, if $\N$ is smooth on  $\overline{A}$, then so is its torsion, from where \eqref{eq:torsion determines A} shows that so is $\A$ (this uses that the canonical $\canon=y^a\,\partial_a$ never vanishes on $\overline{A}$). As now $\N$, $\LC$ and $\A$ are smooth on $\overline{A}$, so must be $\dv\ZZ=\N-\LC-\A\otimes\canon$; by homogeneity, the smoothness of $\dv\ZZ$ anywhere is equivalent to that of $\ZZ$ (because $2\,\ZZ^i=\ZZ^i_{\cdot a}\,y^a$). 
	For the last assertion, if $\N=\LC+\dv\ZZ+\A\otimes\canon\in\solprop$, we have seen that the symmetric solution $\prsim(\N)=\LC+\dv\ZZ$ is smooth on $\overline{A}$ as well. 
\end{proof} 

\begin{rem}\label{r_miguel5}
	The space of proper solutions of the affine equation is the affine space $\solprop$, which is equal to the proper solutions of \eqref{eq:affine equation 2}. Its associated vector space 
	given by the proper solutions of \eqref{eq:affine equation 3}, that is, the equation obtained from  \eqref{eq:affine equation 2} \textit{dropping the Landsberg term} (recall Def.~\ref{def41} and Rem.~\ref{rem:la_i = 0}). 
	From Prop. \ref{prop:symmetric proper reduction} only the space $\solsim\cap\solprop$ will be relevant for the issues of uniqueness. As this is also an affine space, our aim will be to prove that $\Z:=\ZZ-\ZZ_0$ will vanish whenever $\LC+\ZZ,\LC+\ZZ_0\in \solsim\cap\solprop$. Taking into account Lem. \ref{prop: torsion-free affine equations},  the problem  is reduced to the uniqueness of $\Z=0$ as a  solution  of both eqn. \eqref{eq:affine equation 4} \textit{setting $\la_i=  0$ } and either 
	\eqref{eq:affine equation 5} or \eqref{eq:affine equation 6}.
\end{rem}

\subsection{ Fiberwise analytic solutions } \label{sec:analytic}
Taking into account Rem. \ref{r_miguel5}, let us study the uniqueness of $\Z$ on each fiber $A_{p}\subseteq\mathrm{T}_{p}M$, $p\in M$. 
Let $\Z\in\mathrm{h}^{2}\mathcal{T}_{0}^{1}(\MA)$ and define $\s\in\mathrm{h}^{1}\mathcal{F}(A)$, $\K\in\mathrm{h}^{0}\mathcal{T}_{1}^{0}(\MA)$ exactly as in \eqref{eq:sigma}, \eqref{eq:K} recalling 
$\K_a\,y^a=0$ (Rem. \ref{rem:property K}), so that $\Z$ satisfies: 
\begin{equation}
\Z^{i}=2\s\,y^{i}-L\,g^{ia}\left(\s_{\cdot a}+\K_{a}\right),\label{eq:E1}
\end{equation} 
\begin{equation}
\left(\di+2\right)\s - 2\,\C_{a}\,\Z^{a}-\Z_{\cdot a}^{a}=0,\label{eq:E2}
\end{equation}
the latter interchangeable with 
\begin{equation}
\left(\di-2\right)\s-L\,g^{ab}\left(\s_{\cdot a\cdot b}+\K_{a\,\cdot b}\right)=0.\label{eq:E2bis}
\end{equation} 

\begin{lem}
	\label{lem:powers of L} Suppose that  $\Z$ solves \eqref{eq:E1}, \eqref{eq:E2} on $A$,  it extends smoothly to $\overline{A}$ and $n\geq 3$. Then, $\Z$ is divisible up to the boundary by all the powers of $L$, that is,  $\Z=L^{\n}\,\widetilde{\Z}$ for all $\n\in\mathbb{N}$ with $\widetilde{\Z}$ smooth on\footnote{Whenever an anisotropic tensor is said to be  ``divisible by $L^{\n}$'', we mean that the quotient by this is a tensor that extends smoothly to $\partial A=\left\{L=0\right\}$,
	as it is trivially smooth on $A=\left\{L>0\right\}$.} $\overline{A}$.  
\end{lem}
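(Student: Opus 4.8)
The plan is to prove the sharper statement that $\Z$ vanishes to \emph{infinite order} along $\partial A=\{L=0\}$. Everything can be done fiberwise over a fixed $p\in M$, since the operations appearing in \eqref{eq:E1}--\eqref{eq:E2bis} are all vertical. Because $L$ is proper, $\mathrm{d}L$ never vanishes on $\overline{A}$, so $\partial A$ is a smooth hypersurface and the transverse $L$-coordinate lets me use Taylor/Hadamard division: a smooth function (or anisotropic tensor, componentwise) on $\overline{A}$ vanishing on $\partial A$ is divisible by $L$ up to the boundary. The key preliminary is that $\s$ extends smoothly to $\overline{A}$: equation \eqref{eq:E2} gives $\s=(2\,\C_a\Z^a+\Z^a_{\cdot a})/(\di+2)$, whose right-hand side is manifestly smooth on $\overline{A}$. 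I would then organize the argument into a \emph{coupling claim} (which transfers divisibility of $\s$ to $\Z$) and a \emph{main induction} (which proves divisibility of $\s$ by every power of $L$), and combine the two.

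\emph{Coupling claim: if $\s$ is divisible by $L^{\n}$ then so is $\Z$.} Suppose not, and let $d<\n$ be the exact order of $\Z=L^{d}U$ at the boundary (one checks $d\ge 1$ because $\K$ is smooth). In \eqref{eq:E1} the terms $2\s\,y^{i}$ and $L\,g^{ia}\s_{\cdot a}$ are divisible by $L^{\n}$, so the only contribution at order $L^{d}$ comes from the mean-Cartan term; the leading part of $\K_a$ is $-\tfrac{4d}{\di+2}L^{d-1}y_a\,\C_bU^b$, and matching the coefficients of $L^{d}$ yields $U^{i}|_{\partial A}=\tfrac{4d}{\di+2}\bigl(\C_bU^b\bigr)|_{\partial A}\,y^{i}$. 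Contracting with $\C_i$ and invoking $\C_a y^a=0$ forces $\bigl(\C_bU^b\bigr)|_{\partial A}=0$, hence $U|_{\partial A}=0$, contradicting the exactness of $d$. Thus the order of $\Z$ is at least $\n$.

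\emph{Main induction: $\s$ is divisible by $L^{\n}$ for every $\n$.} The base case $\n=1$ is immediate from \eqref{eq:E2bis}, whose right-hand side carries an explicit factor $L$: evaluating at $\partial A$ gives $(\di-2)\,\s|_{\partial A}=0$, so $\s|_{\partial A}=0$ since $\di\ge 3$. For the inductive step, write $\s=L^{\n}\psi$; the coupling claim gives $\Z=L^{\n}U$, so $\K$ is divisible by $L^{\n-1}$ and all leading-order expansions are legitimate. Reading off the $L^{\n}$-coefficient in \eqref{eq:E1} shows once more that $U|_{\partial A}$ is proportional to $\canon$, whence $\bigl(\C_bU^b\bigr)|_{\partial A}=0$; this is exactly what cancels the leading boundary contribution of the $\K$-term in \eqref{eq:E2bis}. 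Substituting $\s=L^{\n}\psi$ into \eqref{eq:E2bis} and isolating the coefficient of $L^{\n}$ then collapses the whole equation to the scalar indicial relation $d_{\n}\,\psi|_{\partial A}=0$, with $d_{\n}=(\di-2)-2\n(\di-2\n)=4\n^{2}-2\di\,\n+(\di-2)$.

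The decisive point—and the main obstacle—is that this indicial coefficient never vanishes. The discriminant of $d_{\n}$ in $\n$ is proportional to $(\di-2)^{2}+4$, and $(\di-2)^{2}+4$ is a perfect square only for $\di=2$ (solving $s^{2}-(\di-2)^{2}=4$ over the integers forces $\di=2$); hence for $\di\ge 3$ the roots of $d_{\n}$ are irrational and $d_{\n}\neq 0$ for every integer $\n$. Therefore $\psi|_{\partial A}=0$, i.e. $\s$ is divisible by $L^{\n+1}$, closing the induction; combining it with the coupling claim yields $\Z=L^{\n}\widetilde{\Z}$ for all $\n$. I expect the genuinely delicate bookkeeping to be the orders of vanishing under vertical differentiation—where the metric trace of a term of order $L^{k-2}$ is only of order $L^{k-1}$, because $g^{ab}y_a y_b=L$—together with checking at each stage that the mean-Cartan coupling through $\K$ is annihilated by $\C_a y^a=0$; the hypotheses $\di\ge 3$ (entering through both the base case and $d_{\n}$) and properness of $L$ (giving the smooth boundary and the division) are used essentially throughout.
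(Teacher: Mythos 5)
Your proof is correct and follows essentially the same route as the paper's: the same indicial polynomial $4\nu^{2}-2n\nu+(n-2)$ with no integer roots for $n\geq 3$, the same use of $\C_{a}\,y^{a}=0$ to annihilate the mean-Cartan coupling through $\K$, and the same homogeneity bookkeeping under vertical differentiation and metric trace. The only difference is organizational: you run the induction on $\s$ with a separate coupling claim transferring divisibility to $\Z$ and invoke Hadamard division at $\partial A$ (legitimate since $\mathrm{d}L\neq 0$ there), whereas the paper runs a five-step induction directly on $\Z$ and reads each divisibility off the explicit factor of $L$ appearing in the equations.
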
 

\begin{proof}
	Reasoning by induction, let $\n=1$. As
	the metric and $\Z$ are smooth on $\overline{A}$, so are $\K$
	(because of its definition \eqref{eq:K}) and $\s$ (because
	of \eqref{eq:E2}). Using this and $\di\geq 3$, \eqref{eq:E2bis} shows that $\s$
	is divisible by $L$: $\s=L\,\widetilde{\s}$ with $\widetilde{\s}$
	smooth on $\overline{A}$. Substituting this in \eqref{eq:E1}:
	\[
	\Z^{i}=L\left\{ 2\widetilde{\s}\,y^{i}-g^{ia}\left(\s_{\cdot a}+\K_{a}\right)\right\} =L\,\widetilde{\Z}^{i}
	\]
	with $\widetilde{\Z}$ smooth on $\overline{A}$. Let us suppose that $\Z$ is divisible by $L^{\n}$ and prove that $\Z$ is actually divisible by $L^{\n+1}$. We do this in five
	steps.
	
	\smallskip
	
	\noindent  \uline{Step \mbox{$1:$}} $\K$ is divisible by $L^{\n-1}$. Indeed, if we substitute $\Z=L^{\n}\,\widetilde{\Z}$ on the definition of $\K$ and use that $L_{\cdot i}=2\,y_{i}$, 
	\begin{equation}
	\begin{split}\K_{i} & =-\frac{2}{\di+2}\left\{ 2L^{\n}\,\C_{a\,\cdot i}\,\widetilde{\Z}^{a}+\C_{a}\left(L^{\n}\,\widetilde{\Z}^{a}\right)_{\cdot i}\right\} \\
	& =-\frac{2}{\di+2}\left\{ 2L^{\n}\,\C_{a\,\cdot i}\,\widetilde{\Z}^{a}+\C_{a}\left(2\n L^{\n-1}\,\widetilde{\Z}^{a}\,y_{i}+L^{\n}\,\widetilde{\Z}_{\cdot i}^{a}\right)\right\} \\
	& =-\frac{2}{\di+2}L^{\n-1}\left(2L\,\C_{a\,\cdot i}\,\widetilde{\Z}^{a}+2\n\,\C_{a}\,\widetilde{\Z}^{a}\,y_{i}+L\,\C_{a}\,\widetilde{\Z}_{\cdot i}^{a}\right)\\
	& =L^{\n-1}\,\widetilde{\K}_{i}
	\end{split}
	\label{eq:K =00003D L^nu-1 K tilde}
	\end{equation}
	with $\widetilde{\K}$ smooth on $\overline{A}$. From $\K_{a}\,y^{a}=0$ (Rem. \ref{rem:property K}),
	it follows that 
	\begin{equation}
	\widetilde{\K}_{a}\,y^{a}=0.\label{eq:K tilde annihilates y}
	\end{equation}
	
	\smallskip 
	
	\noindent \uline{Step \mbox{$2:$}} $\s$ is divisible by $L^{\n}$. First, it is divisible by $L^{\n-1}$: 
	\[
	\s=\frac{y_{a}}{L}\,\Z^{a}=\frac{y_{a}}{L}\,L^{\n}\,\widetilde{\Z}=L^{\n-1}\,\widetilde{\s}
	\]
	(by the definition \eqref{eq:sigma} and the induction hypothesis). It follows that $\widetilde{\s}$ is smooth on $\overline{A}$ and $\left(3-2\n\right)$-homogeneous. Now, rewrite the terms
	appearing in \eqref{eq:E2bis},
	first $L\,g^{ab}\,\K_{a\,\cdot b}$ and then $L\,g^{ab}\,\s_{\cdot a\cdot b}$.
	For the former, we use \eqref{eq:K tilde annihilates y} in the form
	$g^{ab}\,\widetilde{\K}_{a}\,y_{b}=0$ and again $L_{\cdot i}=2\,y_{i}$:
	\begin{equation}
	\begin{split}L\,g^{ab}\,\K_{a\,\cdot b}=L\,g^{ab}\left(L^{\n-1}\,\widetilde{\K}_{a}\right)_{\cdot b} & =L\,g^{ab}\left\{ 2\left(\n-1\right)L^{\n-2}\,\widetilde{\K}_{a}\,y_{b}+L^{\n-1}\,\widetilde{\K}_{a\,\cdot b}\right\} \\
	& =L^{\n}\,g^{ab}\,\widetilde{\K}_{a\,\cdot b}.
	\end{split}
	\label{eq:L g^ab K_a.b}
	\end{equation}
	For the latter,  
	\[
	\s_{\cdot i}=\left(L^{\n-1}\,\widetilde{\s}\right)_{\cdot i}=2\left(\n-1\right)L^{\n-2}\widetilde{\s}\,y_{i}+L^{\n-1}\,\widetilde{\s}_{\cdot i},
	\]	
	\[
	\begin{split}\s_{\cdot i\cdot j} & =2\left(\n-1\right)\left(L^{\n-2}\widetilde{\s}\,y_{i}\right)_{\cdot j}+\left(L^{\n-1}\,\widetilde{\s}_{\cdot i}\right)_{\cdot j}\\
	& =2\left(\n-1\right)\left\{ 2\left(\n-2\right)L^{\n-3}\widetilde{\s}\,y_{i}\,y_{j}+L^{\n-2}\,y_{i}\,\widetilde{\s}_{\cdot j}+L^{\n-2}\widetilde{\s}\,g_{ij}\right\} \\
	& \quad+2\left(\n-1\right)L^{\n-2}\,\widetilde{\s}_{\cdot i}\,y_{j}+L^{\n-1}\,\widetilde{\s}_{\cdot i\cdot j},
	\end{split}
	\]
	and using that $g^{ab}\,y_{a}\,y_{b}=L$ and the $\left(3-2\n\right)$-homogeneity
	of $\widetilde{\s}$,
	\begin{equation}
	\begin{split}L\,g^{ab}\,\s_{\cdot a\cdot b} & =2\left(\n-1\right)L\left\{ 2\left(\n-2\right)L^{\n-2}\widetilde{\s}+\left(3-2\n\right)L^{\n-2}\widetilde{\s}+\di L^{\n-2}\widetilde{\s}\right\} \\
	& \quad+2\left(\n-1\right)\left(3-2\n\right)L^{\n-1}\widetilde{\s}+L^{\n}\,g^{ab}\,\widetilde{\s}_{\cdot a\cdot b},\\
	& =-4\left(\n-1\right)^{2}L^{\n-1}\widetilde{\s}+2\di\left(\n-1\right)L^{\n-1}\widetilde{\s}+L^{\n}\,g^{ab}\,\widetilde{\s}_{\cdot a\cdot b}.\\
	& =-4\left(\n-1\right)^{2}\s+2\di\left(\n-1\right)\s+L^{\n}\,g^{ab}\,\widetilde{\s}_{\cdot a\cdot b}.
	\end{split}
	\label{eq:L g^ab sigma_.a.b}
	\end{equation}
	Substituting \eqref{eq:L g^ab K_a.b} and \eqref{eq:L g^ab sigma_.a.b} in \eqref{eq:E2bis} and rearranging yields 
	\[
	\left\{ 4\left(\n-1\right)^{2}-2\di\left(\n-1\right)+\left(\di-2\right)\right\} \s=L^{\n}\,g^{ab}\left(\widetilde{\s}_{\cdot a\cdot b}+\widetilde{\K}_{a\,\cdot b}\right).
	\]
	The polynomial $4\mathbf{X}^{2}-2\di\mathbf{X}+\left(\di-2\right)$ on $\mathbf{X}$ has no integer roots whenever $\di\neq2$.\footnote{Its roots are $\mathbf{X}=\frac{\di\pm\sqrt{\di^2-4\di+8}}{4}$, so if either of them was an integer, then $\di^2-4\di+8$ would be a perfect square, say $\di^2-4\di+\left(8-m^2\right)=0$  with $m$ integer.  This would mean that $\di=2\pm\sqrt{m^2-4}$, so $m^2-4$ and $m^2$ would be two perfect squares differing by $4$. This is impossible unless $m^2=4$, which corresponds to $\di=2$.} Thus, as required,
	\begin{equation}
	\s=L^{\n}\widetilde{\widetilde{\s}}\label{eq:sigma =00003D L^nu sigma tildetilde}
	\end{equation}
	with $\widetilde{\widetilde{\s}}$ smooth on $\overline{A}$.
	It also follows that $\widetilde{\widetilde{\s}}$ is $\left(1-2\n\right)$-homogeneous. 
	
	\smallskip 
	
	\noindent \uline{Step \mbox{$3:$}} $\K$ is divisible by $L^{\n}$. From the penultimate equality on \eqref{eq:K =00003D L^nu-1 K tilde}, 
	\begin{equation}
	\K_{i}=-\frac{2}{\di+2}L^{\n-1}\left(2L\,\C_{b\,\cdot i}\,\widetilde{\Z}^{b}+2\n\,\C_{b}\,\widetilde{\Z}^{b}\,y_{i}+L\,\C_{b}\,\widetilde{\Z}_{\cdot i}^{b}\right).\label{eq:penultimate equality}
	\end{equation}
	So, it suffices to show that $\C_{a}\,\widetilde{\Z}^{a}$ is divisible
	by $L$. Rewriting \eqref{eq:E1} using induction, 
	\[
	\widetilde{\Z}^{i}=\frac{\Z^{i}}{L^{\n}}=2\s\,\frac{y^{i}}{L^{\n}}-\frac{1}{L^{\n-1}}g^{ia}\left(\s_{\cdot a}+\K_{a}\right).
	\]
	As $\C_{a}\,y^{a}=0$, 
	\[
	\C_{a}\,\widetilde{\Z}^{a}=-\frac{1}{L^{\n-1}}\,\C^{a}\,\s_{\cdot a}-\frac{1}{L^{\n-1}}\,\C^{a}\,\K_{a}.
	\]
	Now we need to check that both $\C^{a}\,\s_{\cdot a}$ and $\C^{a}\,\K_{a}$
	are divisible $\n$ times. For the former, we use 
	\eqref{eq:sigma =00003D L^nu sigma tildetilde}
	and $\C^{a}\,y_{a}=0$: 
	\[
	\C^{a}\,\s_{\cdot a}=\C^{a}\left(L^{\n}\,\widetilde{\widetilde{\s}}\right)_{\cdot a}=\C^{a}\left(2\n L^{\n-1}\widetilde{\widetilde{\s}}\,y_{a}+L^{\n}\,\widetilde{\widetilde{\s}}_{\cdot a}\right)=L^{\n}\,\C^{a}\,\widetilde{\widetilde{\s}}_{\cdot a}.
	\]
	For the latter, again we use \eqref{eq:penultimate equality} and
	$\C^{a}\,y_{a}=0$: 
	\[
	\begin{split}\C^{a}\,\K_{a} & =-\frac{2}{\di+2}L^{\n-1}\left(2L\,\C^{a}\,\C_{b\,\cdot a}\,\widetilde{\Z}^{b}+2\n\,\C_{b}\,\widetilde{\Z}^{b}\,\C^{a}\,y_{a}+L\,\C^{a}\,\C_{b}\,\widetilde{\Z}_{\cdot a}^{b}\right)\\
	& =-\frac{2}{\di+2}L^{\n}\left(2\,\C^{a}\,\C_{b\,\cdot a}\,\widetilde{\Z}^{b}+\C^{a}\,\C_{b}\,\widetilde{\Z}_{\cdot a}^{b}\right).
	\end{split}
	\]
	Going back, these substeps and Rem. \ref{rem:property K}  prove the divisibility
	\begin{equation}
	\K=L^{\n}\,\widetilde{\widetilde{\K}}
	\qquad \hbox{with} \qquad 	
	\widetilde{\widetilde{\K}}_{a}\,y^{a}=0.\label{eq:K tildetilde annihilates y}
	\end{equation}
	
	\smallskip 
	
	\noindent \uline{Step \mbox{$4:$}} $\s$ is divisible by $L^{\n+1}$. Now that we know that $\s=L^{\n}\widetilde{\widetilde{\s}}$ and $\K=L^{\n}\,\widetilde{\widetilde{\K}}$,
	we turn our attention back to \eqref{eq:E2bis}. The analogous computation
	to that on \eqref{eq:L g^ab K_a.b}, this time using \eqref{eq:K tildetilde annihilates y},
	shows that 
	\[
	L\,g^{ab}\,\K_{a\,\cdot b}=L^{\n+1}\,g^{ab}\,\widetilde{\widetilde{\K}}_{a\,\cdot b}.
	\]
	The analogous computations to those leading to \eqref{eq:L g^ab sigma_.a.b},
	this time using the $\left(1-2\n\right)$-homogeneity of $\widetilde{\widetilde{\s}}$,
	shows that 
	\[
	L\,g^{ab}\,\s_{\cdot a\cdot b}=-4\n^{2}\s+2\di\,\nu\s+L^{\n+1}\,g^{ab}\,\widetilde{\widetilde{\s}}_{\cdot a\cdot b}.
	\]
	Substituting these in \eqref{eq:E2bis} and rearranging yields
	\[
	\left\{ 4\n^{2}-2\di\n+\left(\di-2\right)\right\} \s=L^{\n+1}\,g^{ab}\left(\widetilde{\widetilde{\s}}_{\cdot a\cdot b}+\widetilde{\widetilde{\K}}_{a\,\cdot b}\right),
	\]
	and the inexistence of integer roots of  $4\mathbf{X}^{2}-2\di\mathbf{X}+\left(\di -2\right)$ yields the divisibility 
	\[
	\s=L^{\n+1}\widetilde{\widetilde{\widetilde{\s}}}.
	\]
	
	\noindent \uline{Step \mbox{$5:$}} $\Z$ is divisible by $L^{\n+1}$. Substituting $\s=L^{\n+1}\widetilde{\widetilde{\widetilde{\s}}}$,
	$\K=L^{\n}\,\widetilde{\widetilde{\K}}$ in \eqref{eq:E1} and computing, one gets $\Z^{i}=L^{\n+1}\,\widetilde{\widetilde{\Z}}^{i}$ 
	with $\widetilde{\widetilde{\Z}}$ smooth on $\overline{A}$, which completes the proof. 
\end{proof}
\begin{rem}
	\label{rem:powers of L}  Assume that $\LC+\dv\ZZ\in\solsim\cap\solprop$ (so that $\ZZ\in\mathrm{h}^{2}\mathcal{T}_{0}^{1}(\MAA)$ solves  \eqref{eq:affine equation 4}, \eqref{eq:affine equation 5}) and that $\la_i$ is divisible up to $\partial A$ by $L^{\n}$, where $\n\in\mathbb{N}\cup\left\{0\right\}$. Then the argument above proves that $\ZZ$ is divisible by $L^{\n+1}$. In particular, $\ZZ$ always is divisible by $L$. 
\end{rem}
\begin{defn}\label{DEF_5.5}
	We say that an anisotropic tensor $T\in\mathrm{h}^{\alpha}\mathcal{T}_{s}^{r}(\MAA)$
	is \textit{fiberwise analytic on $\overline{A}$} if it is analytic when restricted to every $\overline{A_p}\subseteq\mathrm{T}_{p}M$.
\end{defn}
\begin{rem}\label{REM_5.6} In coordinates,  $T$ is fiberwise analytic when all $T^{i_1, \dots i_r}_{j_1,\dots j_s}(x,y)$ are analytic in  $y$. In particular,  this property holds for most explicit pseudo-Finsler metrics,  $L\equiv L(x,y)$, such as   pseudo-Riemannian or Randers ones. This notion does not require of any additional analytic structure to be well-defined: each $\mathrm{T}_pM$ has a canonical one as a vector space. By contrast, the notion of being \textit{analytic on $\overline{A}$} does.  
Anyway, obviously, ``analytic'' implies ``fiberwise analytic''. 
\end{rem}
\begin{thm} \label{thm:analytic}
	\label{thm: analytic uniqueness} Assume that the proper  pseudo-Finsler metric $L$ is of non-definite signature and $n\geq 3$. Then there exists at most one $\N=\LC+\dv\ZZ\in\solsim\cap\solprop$ such that the spray difference $-2\,\ZZ=\G-\sprL$ (equiv., the connection difference $\dv\ZZ=\N-\LC$) is fiberwise analytic on $\overline{A}$. 
\end{thm}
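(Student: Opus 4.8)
The plan is to reduce the statement to the vanishing of a single difference tensor and then to play Lemma~\ref{lem:powers of L} off against the identity theorem for real-analytic functions, the non-definiteness of $L$ entering exactly to supply a nonempty lightcone on which the difference must vanish flatly. Following Rem.~\ref{r_miguel5}, suppose $\LC+\dv\ZZ$ and $\LC+\dv\ZZ_0$ both lie in $\solsim\cap\solprop$ with $\dv\ZZ$ and $\dv\ZZ_0$ fiberwise analytic on $\overline A$, and set $\Z:=\ZZ-\ZZ_0\in\mathrm{h}^2\mathcal{T}_0^1(\MAA)$; it suffices to prove $\Z=0$. Since $\solsim\cap\solprop$ is an affine space whose direction is cut out by the affine equations with the Landsberg term dropped (Rem.~\ref{r_miguel5}, Lem.~\ref{prop: torsion-free affine equations}), this $\Z$ solves precisely \eqref{eq:E1}, \eqref{eq:E2} (equivalently \eqref{eq:E2bis}) on $A$ and extends smoothly to $\overline A$. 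Moreover $\Z$ is fiberwise analytic: $\dv\Z=\dv\ZZ-\dv\ZZ_0$ is fiberwise analytic by hypothesis, and the $2$-homogeneity identity $2\,\Z^i=\Z^i_{\cdot a}\,y^a$ then forces each $\Z^i$ to be analytic in $y$ on every $\overline{A_p}$ as well.

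The second step is to invoke Lemma~\ref{lem:powers of L}, whose hypotheses are exactly met (note $\di\geq 3$): $\Z$ is divisible up to $\partial A$ by every power of $L$, i.e.\ $\Z=L^{\n}\,\widetilde\Z$ with $\widetilde\Z$ smooth on $\overline A$ for all $\n\in\mathbb{N}$. Because $L$ is proper of non-definite signature, $\partial A=\{L=0\}$ is a nonempty smooth hypersurface on which $\mathrm{d}L\neq 0$ (Rem.~\ref{rem:proper}). Fixing $p\in M$ and a boundary point $v_0\in\partial A_p$, the Leibniz rule applied to $L^{\n}\,\widetilde\Z^i$ leaves a factor $L$ in every summand of $\partial^\alpha(L^{\n}\widetilde\Z^i)$ whenever $|\alpha|<\n$, so $\partial^\alpha\Z^i(v_0)=0$; letting $\n\to\infty$ shows that $\Z^i$ vanishes to infinite order at $v_0$.

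Finally the two facts collide. Fiberwise analyticity means each $\Z^i_p$ is the restriction of a real-analytic function on a connected open neighborhood of the connected set $\overline{A_p}$ (recall $A_p$ is connected by Def.~\ref{defn:proper}), in which $v_0$ is an interior point; as its full Taylor expansion at $v_0$ vanishes, the identity theorem gives $\Z^i_p\equiv 0$ on $\overline{A_p}$. Letting $p$ vary yields $\Z=0$, hence $\ZZ=\ZZ_0$ and $\N=\N_0$, which is the asserted uniqueness.

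The decisive step, and the one where the hypothesis of non-definite signature is genuinely used, is the last anchoring: the infinite-order vanishing produced by Lemma~\ref{lem:powers of L} is useless unless it can be localized at an honest boundary point of each fiber. In the positive definite case $\partial A=\emptyset$ (Rem.~\ref{rem:proper}(C)), so no such anchor exists and the analytic-continuation argument collapses; non-definiteness is precisely what guarantees the nonempty cone $\partial A$ along which $\Z$ is forced to be flat. The remaining ingredients—the reduction to the homogeneous system, the transfer of analyticity from $\dv\Z$ to $\Z$, and the iterated divisibility—are either bookkeeping or already packaged in Lemma~\ref{lem:powers of L}.
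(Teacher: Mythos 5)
Your proposal is correct and follows essentially the same route as the paper's proof: reduce to the difference $\Z=\ZZ-\ZZ_0$ solving \eqref{eq:E1}--\eqref{eq:E2}, apply Lem.~\ref{lem:powers of L} to get divisibility by every power of $L$, deduce infinite-order vanishing at a point of $\partial A_p$ (which exists by non-definiteness), and conclude by fiberwise analyticity and connectedness of $A_p$. The only cosmetic difference is that you extract the flat vanishing via the Leibniz rule on $\partial^\alpha(L^{\n}\widetilde\Z^i)$ where the paper phrases the same computation as an induction on vertical derivatives; both are equivalent.
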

\begin{proof}
	The analyticity (resp., fiberwise analyticity) of $-2\,\ZZ$ is equivalent to that of $\dv\ZZ$ because this is constructed with fiber derivatives of $\ZZ$ but also $-2\,\ZZ^i=-\ZZ^i_{\cdot a}\,y^a$. 
	
	Let $\N_{0}=\LC+\dv\ZZ_{0}$ be another  solution with the same properties.  Then $\Z:=\ZZ-\ZZ_{0}$ is fiberwise analytic on $\overline{A}$ too. By Prop. \ref{prop:symmetric proper reduction}, $\Z$ is smooth there, and by Lem. \ref{prop: torsion-free affine equations}, it solves \eqref{eq:E1}+\eqref{eq:E2}. For all $\n\in\mathbb{N}$, Lem. \ref{lem:powers of L} allows us to write $\Z=L^{\n}\,\widetilde{\Z}$ with $\widetilde{\Z}$ smooth on $\overline{A}$. After restricting this to each $\overline{A_p}$, when one computes the vertical derivatives of the functions $\Z^{i}$ by induction,
	it becomes clear that $\Z_{\cdot j_{1}\cdot j_{2}...\cdot j_{\n-1}}^{i}=L\,T_{j_{1}...j_{\n-1}}^{i}$ with $T_{j_{1}...j_{\n-1}}^{i}$ a smooth function on $\overline{A_p}$. This shows that all derivatives of all orders vanish on $\partial A_p=\left\{v\in\overline{A_p}:\:L(v)=0\right\}$. Now we develop $\Z^{i}$ in Taylor series on an open subset of $\overline{A_p}$ around some $v\in\partial A_p$ (this exists due to the signature being non-definite). Clearly the analytic $\Z^{i}$ vanishes on that open set and, as $A_p$ is connected, it vanishes on all of $A_p$. Thus, $\ZZ_p=\left.\ZZ_{0}\right|_p+\Z_p=\left.\ZZ_{0}\right|_p$.  
\end{proof}

\begin{cor}
	 With the hypotheses of Th. \ref{thm:analytic}, in case that $L$ (equiv., $g$) is analytic on $\overline{A}$, there exists at most one symmetric and proper solution $\N=\LC+\dv\ZZ$ of the affine equation \eqref{eq:affine equation} analytic on $\overline{A}$. 
\end{cor}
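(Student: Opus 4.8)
The plan is to deduce this directly from Theorem \ref{thm:analytic} by observing that the hypothesis ``analytic on $\overline{A}$'' is strictly stronger than the ``fiberwise analytic'' hypothesis used there. Since Theorem \ref{thm:analytic} already guarantees uniqueness within the larger class of solutions whose difference with $\LC$ is merely fiberwise analytic, uniqueness within the smaller class of fully analytic solutions will be automatic, once I check that an analytic symmetric proper solution does fall into that larger class.

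The only point requiring verification is that $\dv\ZZ=\N-\LC$ is fiberwise analytic on $\overline{A}$ whenever $\N$ is analytic there. For this I first note that $\LC$ itself is analytic on $\overline{A}$ as soon as $L$ (equiv.\ $g$) is: indeed $g_{ij}=\tfrac12\,\dv_i\dv_j L$ is analytic, its inverse $g^{ij}$ is analytic by non-degeneracy (Cramer's rule applied to an analytic, invertible matrix field), the metric spray $\sprL$ is then analytic by its coordinate expression \eqref{eq:metric spray}, and finally $\LC=\dv\sprL$ is analytic as a fiber derivative of an analytic object. Consequently, for any symmetric proper solution $\N=\LC+\dv\ZZ$ that is analytic on $\overline{A}$, the difference $\dv\ZZ=\N-\LC$ is analytic on $\overline{A}$ too, and hence fiberwise analytic (Rem.\ \ref{REM_5.6}).

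With this in hand, the conclusion is immediate: any two analytic symmetric proper solutions $\N_1,\N_2\in\solsim\cap\solprop$ have fiberwise analytic differences $\N_1-\LC$ and $\N_2-\LC$, so Theorem \ref{thm:analytic} forces $\N_1=\N_2$. I do not expect any genuine obstacle here: all the substance of the uniqueness argument (the divisibility by arbitrary powers of $L$ in Lem.\ \ref{lem:powers of L} and the Taylor-expansion argument at $\partial A$) is already carried out in Theorem \ref{thm:analytic}; the corollary merely records the pleasant fact that the classically natural hypothesis of joint analyticity is a special case of the weaker, more intrinsic fiberwise analyticity.
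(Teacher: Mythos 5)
Your proposal is correct and follows essentially the same route as the paper: show that $L$ analytic forces $g$, $g^{ij}$, $\sprL$ and hence $\LC=\dv\sprL$ to be analytic on $\overline{A}$, so the analyticity of $\N$ is equivalent to that of $\dv\ZZ$, which in particular is fiberwise analytic, and then invoke Th.~\ref{thm:analytic}. The extra detail you supply (Cramer's rule for $g^{ij}$) is a harmless elaboration of the paper's own argument.
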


\begin{proof}
	 The analyticity of $L$ is equivalent to that of $g$ by the analogous reasoning as in the theorem above. In case that $L$ is analytic, so are $\sprL$ and $\LC=\dv\sprL$ (recall the coordinate expression \eqref{eq:metric spray}), so the analyticity of $\N=\LC+\dv\ZZ$ becomes equivalent to that of $\dv\ZZ$ and implies its fiberwise analyticity. Thus, Th. \ref{thm:analytic} applies. 
\end{proof}

\begin{rem} \label{rem:nonexistence}
	The techniques above can be used to obtain nonexistence results for
	fiberwise analytic solutions in some cases. Namely, if $\la_{i}$
	is not $0$ but it is divisible by all the powers of $L$ (what implies that $\la_{i}$ is not fiberwise analytic on $\overline{A}$), then no proper solution $\N=\LC+\dv\ZZ$ with $\ZZ$ fiberwise analytic can exist (indeed, by Rem. \ref{rem:powers of L} such a $\ZZ$ would be divisible by all the powers of $L$ too and the same argument of Th. \ref{thm: analytic uniqueness} would prove that $\ZZ=0$, contradicting $\la_{i}\neq0$). 
\end{rem}

A relevant issue is whether the $\N$-geodesics will be defined on all the $L$-lightlike directions, which becomes obviously important for physical interpretations in Lorentzian signature. We will take advantage of the fact that $\ZZ$ is always divisible by $L$ (Rem. \ref{rem:powers of L}) to prove that every symmetric and proper solution of the affine equation \eqref{eq:affine equation} shares its lightlike geodesics with $L$, notably with their parametrizations included. In the Lorentz-Finsler case, they are the cone geodesics of the cone structure naturally associated with $L$ \cite[Th. 6.6]{cones} with distinguished parametrizations. Recall that the tangent vectors to the $L$-geodesics starting at $\partial A=\left\{L=0\right\}$ remain in $\partial A$ (this, for instance, follows from Prop. \ref{prop:metric compatibility 2} by taking $\ZZ=0$ and $\A =0$).

\begin{thm} \label{thm:null geodesics}
	Let $\N=\LC+\dv\ZZ\in\solsim\cap\solprop$. Then the unique $\N$-geodesic starting at each $v\in\partial A$ coincides with the corresponding (lightlike) $L$-geodesic. 
\end{thm}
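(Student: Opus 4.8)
The plan is to reduce the statement to an elementary uniqueness argument for the geodesic ODE, exploiting the fact that the spray of $\N$ differs from $\sprL$ by a term which is forced to vanish on the lightcone. First I would record the underlying spray of $\N=\LC+\dv\ZZ$: by Prop.~\ref{prop:invariants solution}(i) (formula \eqref{eq:spray solution} with $\A=0$), it is $\G^i=(\sprL)^i+\ZZ^i$, so that the $\N$-geodesic equation $\ddot\gamma^k+2\,\G^k(\gamma,\dot\gamma)=0$ differs from the $L$-geodesic equation $\ddot\gamma^k+2\,(\sprL)^k(\gamma,\dot\gamma)=0$ exactly by the term $2\,\ZZ^k$. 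Both $\sprL$ and $\G$ are smooth on $\overline A$ (because $L$ is proper and $\ZZ\in\mathrm{h}^2\mathcal{T}_0^1(\MAA)$ by Prop.~\ref{prop:symmetric proper reduction}), so integral curves issuing from any $v\in\partial A$ exist and are unique by the standard theory of ODEs; in particular the ``unique $\N$-geodesic starting at $v$'' in the statement makes sense.

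The crux is the divisibility fact of Rem.~\ref{rem:powers of L}: every $\ZZ$ with $\LC+\dv\ZZ\in\solsim\cap\solprop$ is divisible by $L$, i.e.\ $\ZZ=L\,\widetilde\ZZ$ with $\widetilde\ZZ$ smooth on $\overline A$. Hence $\ZZ$ vanishes identically on $\partial A=\{L=0\}$, and therefore $\G$ and $\sprL$ coincide as vector fields along $\partial A$ (their difference is the vertical field $-2\,\ZZ^a\,\dv_a$, which dies on the lightcone). Now let $\gamma$ be the $L$-geodesic with $\dot\gamma(0)=v\in\partial A$. As recalled just before the statement (Prop.~\ref{prop:metric compatibility 2} with $\ZZ=0$, $\A=0$), its velocity remains in $\partial A$, i.e.\ $\dot\gamma(t)\in\{L=0\}$ for all $t$. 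Consequently $\ZZ(\dot\gamma(t))=0$ along $\gamma$, so $\G(\dot\gamma(t))=\sprL(\dot\gamma(t))$ and the velocity curve $t\mapsto\dot\gamma(t)$ is simultaneously an integral curve of $\sprL$ and of $\G$. Thus $\gamma$ is an $\N$-geodesic sharing the initial condition $v$, and uniqueness of integral curves of the smooth vector field $\G$ forces the $\N$-geodesic from $v$ to be $\gamma$ itself---parametrization included, since we are identifying full solutions of the geodesic ODE rather than mere point sets.

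I do not expect a genuine obstacle here: once the divisibility $\ZZ=L\,\widetilde\ZZ$ is granted, the argument is just ``the two sprays agree along a curve that stays on the lightcone, hence the curve solves both geodesic equations, hence ODE uniqueness''. The only point requiring minor care is to argue at the level of the spray as a vector field on $\overline A$ (so that the initial value $v\in\partial A$ is admissible and the flow is defined up to and including the boundary), rather than only on the open $A$; this is legitimate precisely because properness guarantees smoothness of $\sprL$, $\G$, and hence of the geodesic flow, on all of $\overline A$.
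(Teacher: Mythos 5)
Your proposal is correct and follows essentially the same route as the paper's own proof: both use the divisibility $\ZZ=L\,\widetilde{\ZZ}$ from Rem.~\ref{rem:powers of L}, the fact that the velocity of an $L$-geodesic issuing from $\partial A$ stays in $\{L=0\}$, and the resulting vanishing of $\ZZ$ along that curve to conclude that the $L$-geodesic solves the $\N$-geodesic equation with the same initial condition. Your extra remarks on smoothness of the sprays up to $\overline{A}$ and ODE uniqueness only make explicit what the paper leaves implicit.
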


\begin{proof}
	We saw that $\ZZ=L\,\widetilde{\ZZ}$ with $\widetilde{\ZZ}$ smooth on $\overline{A}$. Let $\gamma(t)$ be the unique $L$-geodesic with initial condition $\dot{\gamma}(0)=v$, so that it solves 
	\[
	\frac{\mathrm{d}{\dot{\gamma}}^i}{\mathrm{d}t}+2\left(\sprL\right)^{i}(\dot{\gamma}(t))=0.
	\]
	Then $L(\dot{\gamma}(t))=L(v)=0$ and $\ZZ_{\dot{\gamma}(t)}=L(\dot{\gamma}(t))\,\widetilde{\ZZ}_{\dot{\gamma}(t)}=0$, allowing us to write
	\[
	0=\frac{\mathrm{d}{\dot{\gamma}}^i}{\mathrm{d}t}+2\left(\sprL\right)^{i}(\dot{\gamma}(t))+2\,\ZZ^{i}(\dot{\gamma}(t))=\frac{\mathrm{d}{\dot{\gamma}}^i}{\mathrm{d}t}+2\,\G^{i}(\dot{\gamma}(t)).
	\]
	Recall that $\G$ is the underlying spray of $\N$, so $\gamma(t)$ turns out to be the $\N$-geodesic with initial condition $v$. 
\end{proof}

\begin{rem}\label{REM_5.10}  Although we have been working with proper metrics, as far as the results of this section \ref{sec:analytic} are concerned, this assumption can be somewhat weakened. Indeed, assume only: (i) each fiber $A_p$ ($p\in M$) is connected and $L\neq 0$ on it; (ii) $L$ extends smoothly to some conic $B$ with $A\subseteq B\subseteq\overline{A}\subseteq\mathrm{T}M\setminus\mathbf{0}$ and $g$  is non-degenerate therein; (iii) each $B_p\setminus A_p$ is nonempty and formed by $L$-lightlike directions.
	Accordingly, consider those $\N=\LC+\dv\ZZ\in\solsim$ that extend smoothly to $B$. Then Ths. \ref{thm: analytic uniqueness} and \ref{thm:null geodesics}, as well as Rem. \ref{rem:nonexistence}, still hold true. Moreover, Lem. \ref{lem:powers of L} and Th. \ref{thm: analytic uniqueness} could straightforwardly be stated for a single fiber $B_p$. Summing up, the  point here is that 
	the techniques of this subsection do not really require of any global hypothesis at the boundary of each $A_p$, but only the existence at each point of a lightlike direction to which $L$ and $\N$ can be smoothly extended. By contrast, those of the next subsection will actually require of solutions  defined on the whole $\overline{A_p}$. 
\end{rem}

\subsection{Results from scalar elliptic PDEs} \label{sec:elliptic} Inspired by \eqref{eq:metric equation} and \eqref{eq:affine equation 6}, we consider
the equation 
\begin{equation}
\ka\f-L\,g^{ab}\,\f_{\cdot a\cdot b}=0\label{eq:E3}
\end{equation}
with parameter $\ka\in\mathbb{R}$. This time we emphasize its study on each single fiber $A_p$ ($p\in M$) and   we   work in coordinates adapted to its homogeneity. Thus, regard (by restriction) $\f$ as an $\alpha$-homogeneous   smooth   function on $A_p$ and take another positive $1$-homogeneous function $\R$ there (in particular, we will take $\R=F_p=\sqrt{L_p}$ later). Consider the smooth \footnote{Regarding (also by restriction) $(y^1,...,y^\di)$ as linear coordinates on $T_pM\supseteq A_p$, by homogeneity one has $\mathrm{d}\R_v(\canon_v^\mathrm{V})=y^a(v)\,\R_{\cdot a}(v)=\R(v)=1\neq 0$ for $v\in\SR\subseteq A_p$.} hypersurface $\SR=\left\{ \R=1\right\} $,
so that 
\[
A_p\equiv\mathbb{R}^{+}\times\SR,\qquad v\equiv(\R(v),\frac{v}{\R(v)}).
\]
The indices $\II$, $\JJ$ will run in the set $\left\{1,...,\di-1\right\}$. Take coordinates $(\z_{\mathrm{\Sigma}}^{1},...,\z_{\mathrm{\Sigma}}^{\di-1})$
on $\SR$. Together with the natural coordinate on $\mathbb{R}^{+}$,
they induce coordinates on $A_p$. These turn out to be $(\R,\z_{A}^{1},...,\z_{A}^{\di-1})$,
where the $\z_{A}^{\II}$'s are the $\z_{\Sigma}^{\II}$'s extended by $0$-homogeneity:
\[
\z_{A}^{\II}(v)=\z_{\Sigma}^{\II}(\frac{v}{\R(v)}).
\]
We refer to $(\R,\z_{A}^{1},...,\z_{A}^{\di-1})$ as \textit{generalized polar coordinates}. 

By the $1$-homogeneity of $\R$ and the $0$-homogeneity of the $\z_{A}^{\II}$'s,
\[
\canon^\mathrm{V}=y^{a}\,\partial_{y^{a}}=y^{a}\left(\frac{\partial\R}{\partial y^{a}}\,\partial_{\R}+\frac{\partial\z_{A}^{\AAA}}{\partial y^{a}}\,\partial_{\z_{A}^{\AAA}}\right)=\R\,\partial_{\R}
\]
on $A_p$. For $v_{0}\in\SR$, one straightforwardly checks that $(v_{0},\left.\partial_{\z_{\Sigma}^{1}}\right|_{v_{0}},...,\left.\partial_{\z_{\Sigma}^{\di-1}}\right|_{v_{0}})$
is the dual basis of $(\mathrm{d}\R_{v_{0}},\left(\mathrm{d}\z_{A}^{1}\right)_{v_{0}},...,\left(\mathrm{d}\z_{A}^{\di-1}\right)_{v_{0}})$,
so $\left.\partial_{\z_{\Sigma}^{\II}}\right|_{v_{0}}=\left.\partial_{\z_{A}^{\II}}\right|_{v_{0}}$. From now on we will not distinguish between the $\z_{\Sigma}$ and the $\z_{A}$, denoting either of them by $\z$. For $f$, being $\alpha$-homogeneous means that 
\[
f(\R,\z^{1},...,\z^{\di-1})=f_{\SR}(\z^{1},...,\z^{\di-1})\,\R^{\alpha},
\]
so $\partial_{\z^{\II}}f$ is $\alpha$-homogeneous as well. 
\begin{lem}
	\label{lem:equation f} 
	Let $n\geq 2$. Any $\alpha$-homogeneous solution $\f$ of \eqref{eq:E3} on $A_p$ must be $f=0$ in any of the following two cases:
	
	  (A)   $L$ is Lorentz-Finsler, $f$ extends smoothly to $\overline{A_p}$, $\ka\neq0$, $\alpha\leq2$, and $\ka\leq\alpha\left(\alpha+\di-2\right)$
	with one of these inequalities being strict.
	
	  (B)   $L$ is Finsler (thus $A_p=\overline{A_p}=\mathrm{T}_pM\setminus 0$) and $\kappa >\alpha(\alpha+n-2)$.

\end{lem}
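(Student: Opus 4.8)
The plan is to reduce the fiberwise equation \eqref{eq:E3} on $A_p$ to a genuinely elliptic equation on the indicatrix $\SR=\{\R=1\}$ (with $\R=F_p=\sqrt{L_p}$) and then run the maximum principle, the whole difference between (A) and (B) being whether this indicatrix is compact. First I would record that, on the fiber, the fundamental tensor assembles into a metric cone: the radial field $\canon^{\mathrm V}$ is $g$-orthogonal to $\SR$ (because $g(\canon,\cdot)=\tfrac12\,\mathrm dL$ vanishes on $T\SR=\ker\mathrm dL$) and $g(\canon,\canon)=L=\R^2$, so under $A_p\equiv\mathbb R^{+}\times\SR$, $(\R,u)\mapsto\R u$, the metric $v\mapsto g_v$ pulls back to the warped product $g=\mathrm d\R^{2}+\R^{2}h$, where $h:=g|_{T\SR}$ is positive definite in case (B) and negative definite in case (A). Next I would pass from the coordinate operator to the Laplace--Beltrami $\Delta_g$ of this cone, using $g^{ab}{}_{\cdot a}=-2\,\C^{b}$ and $\dv_a\log\sqrt{|\det g|}=\C_a$ (see \S\ref{sec:pseudo-finsler}) to get $g^{ab}\f_{\cdot a\cdot b}=\Delta_g\f+\C^{a}\f_{\cdot a}$. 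Writing $\f=\f_{\SR}\,\R^{\alpha}$, inserting the warped-product expression $\Delta_g\f=\partial_{\R}^{2}\f+\tfrac{\di-1}{\R}\partial_{\R}\f+\tfrac1{\R^{2}}\Delta_h\f$, and using Euler's identities (Prop.~\ref{prop:euler}, whence $y^{a}y^{b}\f_{\cdot a\cdot b}=\alpha(\alpha-1)\f$), equation \eqref{eq:E3} collapses after dividing by $\R^{\alpha}$ into the reduced equation on $\SR$
\[
\Delta_h\f_{\SR}+W\f_{\SR}+\big(\alpha(\alpha+\di-2)-\ka\big)\f_{\SR}=0,
\]
where $W:=\C^{a}\dv_a|_{\SR}$ is a smooth vector field \emph{tangent} to $\SR$ (tangency because $\C_a y^{a}=0$). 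The radial eigenvalue $\alpha(\alpha+\di-2)$ is exactly the threshold appearing in the statement, which is what fixes the numerology of the hypotheses.

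For (B), $\SR$ is compact and $h>0$, so $\Delta_h$ is the genuine Laplacian and the zeroth-order coefficient $c:=\alpha(\alpha+\di-2)-\ka$ is negative by hypothesis. The maximum principle then finishes immediately: at an interior positive maximum (resp.\ negative minimum) of $\f_{\SR}$ one has $\Delta_h\f_{\SR}\le0$ (resp.\ $\ge0$) and $W\f_{\SR}=0$ since the gradient vanishes, forcing $c\,\f_{\SR}\ge0$ (resp.\ $\le0$), which is impossible unless $\f_{\SR}\equiv0$.

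For (A) the indicatrix is the non-compact timelike hyperboloid and $h<0$, so $\tilde h:=-h$ is Riemannian, $\Delta_h=-\Delta_{\tilde h}$, and the reduced equation becomes $\Delta_{\tilde h}\f_{\SR}-W\f_{\SR}-c\,\f_{\SR}=0$ with $c\ge0$. The extra input that makes this tractable is a clean boundary condition at the lightcone: letting $v\to\partial A_p$ in \eqref{eq:E3} and using that $g$ (nondegenerate by properness, Def.~\ref{defn:proper}) and $\f_{\cdot a\cdot b}$ stay smooth while $L\to0$, one obtains $\ka\,\f=0$ on $\partial A_p$, hence $\f\equiv0$ on the lightcone because $\ka\ne0$. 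I would then pass to the compactification $\overline{\SR}\cong\mathbb P^{+}\overline{A_p}$, a compact manifold with boundary $\partial_\infty$ given by the null directions, on which the smooth homogeneous extension of $\f$ controls the growth of $\f_{\SR}$: since $\f$ vanishes on $\{L=0\}$ and $L$ has a simple zero there (Rem.~\ref{rem:proper}), $\f_{\SR}$ behaves like $\mathrm{dist}^{\,1-\alpha/2}$ at $\partial_\infty$, which is bounded for $\alpha\le2$ and tends to $0$ for $\alpha<2$. The maximum principle on $\overline{\SR}$ with boundary value $0$ and $c\ge0$ then yields $\f_{\SR}\equiv0$.

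I expect the genuine obstacle to be precisely this behaviour at the ideal boundary $\partial_\infty$, where the reduced operator degenerates (its principal part vanishes as $L\to0$), so that one must prevent an extremum from escaping there. The strict-inequality clause is what disposes of the borderline cases: if $\ka<\alpha(\alpha+\di-2)$ then $c>0$ and any interior extremum is excluded outright, whereas if $\alpha<2$ then $\f_{\SR}\to0$ at $\partial_\infty$ so no extremum survives at infinity; the excluded degenerate value $\alpha=2$, $\ka=2\di$ is exactly the case in which $\f=L$ solves \eqref{eq:E3} (as $g^{ab}L_{\cdot a\cdot b}=2\di$), which is why it must be barred. Closing the remaining case $\alpha=2$ with $\ka<2\di$ is where I anticipate the technical bulk, handled either by a Hopf-type/barrier argument for the degenerate operator at $\partial_\infty$ or by subtracting a suitable multiple of $L$ to reduce to the decaying regime.
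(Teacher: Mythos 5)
Your overall strategy is exactly the paper's: restrict to the fiber, divide by $F^{\alpha}$ (equivalently write $\f=\f_{\SR}\,\R^{\alpha}$) to turn \eqref{eq:E3} into an elliptic equation on the indicatrix whose zeroth--order coefficient is $\pm\left\{\alpha\left(\alpha+\di-2\right)-\ka\right\}$, and then invoke the maximum principle, with compactness of $\SR$ doing the work in case (B) and a boundary condition at the lightcone doing it in case (A). The paper carries this out in explicit generalized polar coordinates (so its first--order term is a coordinate artifact $g^{ab}\,\partial^{2}_{y^a y^b}z^{\AAA}$ rather than your Cartan drift $W=\C^{a}\dv_a$), but since first--order terms are irrelevant for the maximum principle the two reductions are interchangeable; your derivation of the boundary value $\f=0$ on $\partial A_p$ from $\ka\neq0$ and the decay $\f_{\SR}\sim L^{1-\alpha/2}$ for $\alpha<2$ also matches the paper's argument.

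The one genuine gap is the subcase $\alpha=2$ with $\ka<2\di$ strict, which you explicitly leave open. Of your two proposed fixes, subtracting a multiple of $L$ does not work: since $g^{ab}L_{\cdot a\cdot b}=2\di$, the function $L$ solves \eqref{eq:E3} only for $\ka=2\di$, so $\f-\lambda L$ fails to satisfy the same equation precisely in the regime you need it, and a Hopf/barrier argument at the degenerate ideal boundary is left undeveloped. The paper's resolution is much cheaper and already available in your setup: for $\alpha=2$, equation \eqref{eq:E3 2} gives $\widetilde{\f}=\ka^{-1}g^{ab}\f_{\cdot a\cdot b}$, so $\widetilde{\f}$ (hence $\f_{\SR}$) extends smoothly to $\overline{A_p}$; then evaluating the reduced equation \eqref{eq:E3 4} on $\partial A_p$, where the factor $L$ kills the entire second-- and first--order part, leaves $\left\{\alpha\left(\alpha+\di-2\right)-\ka\right\}\widetilde{\f}=0$, and the strict inequality forces $\widetilde{\f}=0$ on the boundary. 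After that the weak maximum principle applies with zero boundary data exactly as in your $\alpha<2$ case. With that observation inserted, your proof closes.
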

\begin{proof} \noindent \uline{Case   (A) .} First, rewrite \eqref{eq:E3} on $A_p$ in terms of $F=\sqrt{L} (>0)$, 
	
	\begin{equation}
	\ka\frac{\f}{F^{\alpha}}-F^{2-\alpha}\,g^{ab}\,\f_{\cdot a\cdot b}=0,\label{eq:E3 2}
	\end{equation}
	and this expression in terms of 
	\[
	\widetilde{\f}=\frac{\f}{F^{\alpha}}.
	\]
	Using $F_{\cdot i}=y_{i}/F$, $g^{ab}\,y_{a}\,y_{b}=F^2$ (\S \ref{sec:pseudo-finsler}) and the $0$-homogeneity of $\widetilde{\f}$,
	\[
	\f_{\cdot i}=\left(F^{\alpha}\widetilde{\f}\right)_{\cdot i}=\alpha F^{\alpha-2}\widetilde{\f}\,y_{i}+F^{\alpha}\,\widetilde{\f}_{\cdot i},
	\]
	\[
	\begin{split}\f_{\cdot i\cdot j} & =\alpha\left\{ \left(\alpha-2\right)F^{\alpha-4}\widetilde{\f}\,y_{i}\,y_{j}+F^{\alpha-2}\,y_{i}\,\widetilde{\f}_{\cdot j}+F^{\alpha-2}\widetilde{\f}\,g_{ij}\right\} \\
	& \quad+\alpha F^{\alpha-2}\,\widetilde{\f}_{\cdot i}\,y_{j}+F^{\alpha}\,\widetilde{\f}_{\cdot i\cdot j},
	\end{split}
	\]
	\[
	\begin{split}
	F^{2-\alpha}\,g^{ab}\,\f_{\cdot a\cdot b} & =\alpha\left(\alpha+\di -2\right)
	\widetilde{\f}+
	F^2 \,g^{ab}\,\widetilde{\f}_{\cdot a\cdot b}. 
	\end{split} 
	\]
	Substituting this and rearranging, \eqref{eq:E3 2} reads 
	\begin{equation}
	-L\,g^{ab}\,\widetilde{\f}_{\cdot a\cdot b}-\left\{ \alpha\left(\alpha+\di-2\right)-\ka\right\} \widetilde{\f}=0.\label{eq:E3 3}
	\end{equation}
	
	
	Now, rewrite \eqref{eq:E3 3} in generalized polar coordinates
	$(\R,\z^{1},...,\z^{\di-1})$ with $\R=F_p$, so that $\SR$
	is the indicatrix   of $L$   at $p$ and $(\z^{1},...,\z^{\di-1})$ are global coordinates on $\SR$ 
	with values in a relatively compact domain\footnote{As $\overline{A_p}$ is contained in an open half-space determined
	by some vector hyperplane $\varPi_p\subseteq\mathrm{T}_{p}M$ (Rem. \ref{rem:proper}   (B)), any hyperplane   $\varXi_p$  contained in that   half-space   and parallel to  $\varPi_p$ will be intersected exactly once by each ray in $\overline{A_p}$. These points give $D\subseteq  \varXi_p  $ and its  boundary $\partial D$, which is the intersection of the cone $\partial A_p$ with   $\varXi_p$.  } $D\subseteq\mathbb{R}^{n-1}$ which then are extended to $A_p$ by 0-homogeneity. 
	Using  $\partial_{\R}=\R^{-1}\,\canon^\mathrm{V}$ and $\canon^\mathrm{V}(\widetilde{\f})=0$ (0-homogeneity of $\widetilde{\f}$), 
	\[
	\widetilde{\f}_{\cdot i}=\partial_{y^{i}}\widetilde{\f}=\frac{\partial\R}{\partial y^{i}}\,\partial_{\R}\widetilde{\f}+\frac{\partial\z^{\AAA}}{\partial y^{i}}\,\partial_{\z^{\AAA}}\widetilde{\f}=\frac{\partial\z^{\AAA}}{\partial y^{i}}\,\partial_{\z^{\AAA}}\widetilde{\f}.
	\]
	Using that $\partial_{\z^{\II}}\widetilde{\f}$ is $0$-homogeneous
	too, 
	\[
	\begin{split}\widetilde{\f}_{\cdot i\cdot j}=\partial_{y^{j}}(\frac{\partial\z^{\AAA}}{\partial y^{i}}\,\partial_{\z^{\AAA}}\widetilde{\f}) & =\frac{\partial^{2}\z^{\AAA}}{\partial y^{i}\,\partial y^{j}}\,\partial_{\z^{\AAA}}\widetilde{\f}+\frac{\partial\z^{\AAA}}{\partial y^{i}}\,\partial_{y^{j}}(\partial_{\z^{\AAA}}\widetilde{\f})\\
	&  =\frac{\partial^{2}\z^{\AAA}}{\partial y^{i}\,\partial y^{j}}\,\partial_{\z^{\AAA}}\widetilde{\f}+\frac{\partial\z^{\AAA}}{\partial y^{i}}\,\frac{\partial\z^{\BB}}{\partial y^{j}}\,\partial_{\z^{\AAA}\z^{\BB}}^{2}\widetilde{\f}. 
	\end{split}
	\]
	From these, 
	\[
	\begin{split}L\,g^{ab}\,\widetilde{\f}_{\cdot a\cdot b} & =L\,g^{ab}\,\frac{\partial^{2}\z^{\AAA}}{\partial y^{a}\,\partial y^{b}}\,\partial_{\z^{\AAA}}\widetilde{\f}+L\,g^{ab}\,\frac{\partial\z^{\AAA}}{\partial y^{a}}\,\frac{\partial\z^{\BB}}{\partial y^{b}}\,\partial_{\z^{\AAA}\z^{\BB}}^{2}\widetilde{\f}\\
	& =L\,g^{ab}\,\frac{\partial^{2}\z^{\AAA}}{\partial y^{a}\,\partial y^{b}}\,\partial_{\z^{\AAA}}\widetilde{\f}+L\,g^{-1}(\mathrm{d}y^{a},\mathrm{d}y^{b})\,\frac{\partial\z^{\AAA}}{\partial y^{a}}\,\frac{\partial\z^{\BB}}{\partial y^{b}}\,\partial_{\z^{\AAA}\z^{\BB}}^{2}\widetilde{\f}\\
	& =L\,g^{ab}\,\frac{\partial^{2}\z^{\AAA}}{\partial y^{a}\,\partial y^{b}}\,\partial_{\z^{\AAA}}\widetilde{\f}+L\,g^{-1}(\mathrm{d}\z^{\AAA},\mathrm{d}\z^{\BB})\,\partial_{\z^{\AAA}\z^{\BB}}^{2}\widetilde{\f}.
	\end{split}
	\]
	Substituting this, \eqref{eq:E3 3} reads 
	\begin{equation}
	\begin{split}-L\,g^{-1}(\mathrm{d}\z^{\AAA},\mathrm{d}\z^{\BB})\,\partial_{\z^{\AAA}\z^{\BB}}^{2}\widetilde{\f}-L\,g^{ab}\,\frac{\partial^{2}\z^{\AAA}}{\partial y^{a}\,\partial y^{b}}\,\partial_{\z^{\AAA}}\widetilde{\f}\\
	-\left\{ \alpha\left(\alpha+\di-2\right)-\ka\right\} \widetilde{\f} & =0.
	\end{split}
	\label{eq:E3 4}
	\end{equation}
	To check that  the matrix  $g^{-1}(\mathrm{d}\z^{\II},\mathrm{d}\z^{\JJ})_{\SR}$ 
	is negative definite, notice that, for each  $v_{0}\in\SR$, $g_{v_{0}}$ is of signature
	$(+,-,...,-)$, the radial direction $v_0$ is positive definite and $g_{v_{0}}$-orthogonal to   $\mathrm{T}_{v_{0}}\Sigma_p=\mathrm{Span}\left\{\partial_{z^1}|_{v_0},...,\partial_{z^{n-1}}|_{v_0}\right\}$   and the $g_{v_{0}}$-flat isomorphism maps $\mathrm{T}_{v_{0}}\Sigma_p$ into   $\mathrm{Span}\left\{\mathrm{d}{z^1_{v_0}},...,\mathrm{d}{z^{n-1}_{v_0}}\right\}$.

	
	 The restriction $\widetilde{\f}_{\SR}$  satisfies  \eqref{eq:E3 4} on its domain $D$ with $L=1$:
	\begin{equation}
	\begin{split}-g^{-1}(\mathrm{d}\z^{\AAA},\mathrm{d}\z^{\BB})_{\SR}\,\partial_{\z^{\AAA}\z^{\BB}}^{2}\widetilde{\f}_{\SR}-\left(g^{ab}\,\frac{\partial^{2}\z^{\AAA}}{\partial y^{a}\,\partial y^{b}}\right)_{\SR}\partial_{\z^{\AAA}}\widetilde{\f}_{\SR}\\
	-\left\{ \alpha\left(\alpha+\di-2\right)-\ka\right\} \widetilde{\f}_{\SR} & =0.
	\end{split}
	\label{eq:E3 5}
	\end{equation}
	This equation is  uniformly elliptic on
	compact subsets, as $-g^{-1}(\mathrm{d}\z^{\II},\mathrm{d}\z^{\JJ})_{\SR}$ is continuous and
	positive definite 
	(see \cite[Ch. 3]{gilbarg-trudinger}). Moreover, one of our hypothesis
	is $-\left\{ \alpha\left(\alpha+\di-2\right)-\ka\right\} \leq0$,
	thus,
	the classical maximum principles \cite[\S 3.1 and 3.2]{gilbarg-trudinger} will be applicable to its solutions.  In particular, a standard application of the weak maximum principle 
	\cite[Th. 3.3]{gilbarg-trudinger} shows that $\widetilde{\f}_{\SR}$ and $f_0= 0$ are equal if $\widetilde{\f}_{\SR}$ is continuous and vanishes on  $\partial D$. These conditions follow from \eqref{eq:E3 2} when $\alpha<2$  (recall that $F^{2-\alpha}$ vanishes on $\partial A_p$ and $f$ is smooth therein  by hypothesis), while if $\alpha=2$,
	\eqref{eq:E3 2} still implies that $\widetilde{\f}$ is smooth on $\overline{A_p}$ and the result follows from \eqref{eq:E3 4} using the hypothesis of strict inequality for $\kappa$.
	

	  \uline{Case   (B).} Now, the coordinates $(z^1,...,z^{n-1})$ cannot cover the whole indicatrix   $\SR$ (which is compact)   but,  if $\widetilde{\f}_{\SR}$ is not constant, we can take them around any maximum $v_m\in   \SR  $ where $\widetilde{\f}_{\SR}$  is not locally equal to $c_m:=\widetilde{\f}_{\SR}(v_m)$.
	Reasoning as in the case   (A) , one  arrives at \eqref{eq:E3 4}  and (say, after an overall change of sign) strict uniform ellipticity follows  from the new hypothesis on $\kappa$.  If $c_m\geq 0$,
	a direct application of the strong maximum principle 
	\cite[Th. 3.5]{gilbarg-trudinger} shows that $\widetilde{\f}_{\SR}$ has to be locally equal to $c_m$. So, $\widetilde{\f}_{\SR}$ must be  constant and, by  \eqref{eq:E3 4}, equal to 0. If $c_m\leq 0$,  reason with $-\widetilde{\f}_{\SR}$. 
\end{proof}

%

In Th. \ref{thm: analytic uniqueness}, we obtained a general uniqueness result for solutions of the torsion-free affine equations \eqref{eq:affine equation 4}, \eqref{eq:affine equation 5} under the hypothesis of  fiberwise-analyticity. As a first application of Lemma 5.1,  this hypothesis is dropped in some particular cases.

\begin{thm}
	\label{thm:C_i =00003D 0 Lorentz-Finsler} Assume that $L$ is  Lorentz-Finsler and $n\geq 3$. If $\N=\LC+\dv\ZZ\in\solsim\cap\solprop$ and 
	\begin{equation}
	2\,\C_{a\,\cdot i}\,\ZZ^{a}+\C_{a}\,\ZZ_{\cdot i}^{a}+\la_{i}=0,\label{eq:K =00003D 0}
	\end{equation}
	then actually $\ZZ=0$ and thus $\la_{i}=0$. 
\end{thm}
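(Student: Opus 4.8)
The plan is to exploit that the extra hypothesis \eqref{eq:K =00003D 0} is precisely the statement that the inhomogeneous (source) term in the reformulated symmetric affine equation \eqref{eq:affine equation 6} vanishes, so that this equation degenerates into the homogeneous scalar PDE \eqref{eq:E3} to which the maximum-principle Lemma \ref{lem:equation f} applies. Concretely, recalling the definition of $\mathcal{K}^{\ZZ}$ in Def. \ref{def:sigma and K}, hypothesis \eqref{eq:K =00003D 0} reads
\[
\mathcal{K}^{\ZZ}_i-\frac{2}{\di+2}\,\la_i=0,
\]
and vertically differentiating this identity also gives $\mathcal{K}^{\ZZ}_{a\,\cdot b}-\frac{2}{\di+2}\,\la_{a\,\cdot b}=0$. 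Since $\N\in\solsim\cap\solprop$, Lem. \ref{prop: torsion-free affine equations} guarantees that $\ZZ$ solves \eqref{eq:affine equation 6}; substituting the two vanishing combinations collapses that equation to
\[
\left(\di-2\right)\sigma^{\ZZ}-L\,g^{ab}\,\sigma^{\ZZ}_{\cdot a\cdot b}=0,
\]
which is exactly \eqref{eq:E3} with $\f=\sigma^{\ZZ}$ and $\ka=\di-2$.

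Next I would restrict to each fiber $A_p$ and invoke Case (A) of Lem. \ref{lem:equation f} with $\f=\sigma^{\ZZ}$, which is $1$-homogeneous, so $\alpha=1$. The hypotheses to verify are $\ka\neq0$, $\alpha\leq2$ and $\ka\leq\alpha(\alpha+\di-2)$ with strictness; for $\di\geq3$ these hold, since $\ka=\di-2\geq1$, $\alpha=1<2$ and $\ka=\di-2<\di-1=\alpha(\alpha+\di-2)$. The only genuinely delicate point --- and the main obstacle --- is the requirement in Case (A) that $\f$ extend smoothly to $\overline{A_p}$, which is not obvious because $\sigma^{\ZZ}=y_a\,\ZZ^a/L$ carries a factor $1/L$ that a priori blows up at $\partial A$. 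This is where Rem. \ref{rem:powers of L} enters: for $\N\in\solsim\cap\solprop$ the tensor $\ZZ$ is divisible by $L$, so $\ZZ^a=L\,\widetilde{\ZZ}^a$ with $\widetilde{\ZZ}$ smooth up to the boundary, and hence $\sigma^{\ZZ}=y_a\,\widetilde{\ZZ}^a$ is smooth on $\overline{A}$. With this in hand, Lem. \ref{lem:equation f} forces $\sigma^{\ZZ}=0$ on each $A_p$, hence on all of $A$.

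Finally, I would feed $\sigma^{\ZZ}=0$ (so also $\sigma^{\ZZ}_{\cdot a}=0$) together with $\mathcal{K}^{\ZZ}_a=\frac{2}{\di+2}\,\la_a$ back into the explicit form \eqref{eq:affine equation 4} of $\ZZ$, obtaining
\[
\ZZ^i=-L\,g^{ia}\,\frac{2}{\di+2}\,\la_a+\frac{2}{\di+2}\,L\,\la^i=0,
\]
so that $\ZZ=0$. Once $\ZZ=0$, the first two terms of \eqref{eq:K =00003D 0} vanish identically, leaving $\la_i=0$, which completes the argument. I expect the verification that the degenerate equation really becomes \eqref{eq:E3}, together with the boundary-smoothness of $\sigma^{\ZZ}$ via divisibility, to be the only steps needing care; the rest is bookkeeping with the homogeneity identities already collected in \S\ref{sec:affine equation}.
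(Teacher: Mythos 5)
Your proof is correct and follows essentially the same route as the paper: the hypothesis $\mathcal{K}^{\ZZ}_i=\tfrac{2}{\di+2}\,\la_i$ collapses \eqref{eq:affine equation 4}--\eqref{eq:affine equation 6} so that $\sigma^{\ZZ}$ solves \eqref{eq:E3} with $\alpha=1$, $\ka=\di-2$, Lem. \ref{lem:equation f} (A) then forces $\sigma^{\ZZ}=0$, and substituting back gives $\ZZ=0$ and $\la_i=0$. The only (harmless) divergence is that you obtain the boundary smoothness of $\sigma^{\ZZ}$ from the divisibility of $\ZZ$ by $L$ (Rem. \ref{rem:powers of L}), whereas the paper reads it off directly from \eqref{eq:affine equation 5}; both are valid.
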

\begin{proof}
	Using the notation \eqref{eq:K}, the hypothesis \eqref{eq:K =00003D 0} means 
	\[
	\mathcal{K}_{i}^{\ZZ}=\frac{2}{\di+2}\,\la_i.
	\]
	Thus, the equations \eqref{eq:affine equation 4}, \eqref{eq:affine equation 5}, \eqref{eq:affine equation 6} 
	read, respectively, 
	\begin{equation}
	\ZZ^{i}=2\sigma^{\ZZ}\,y^{i}-L\,g^{ia}\,\sigma_{\cdot a}^{\ZZ},
	\label{eq:5 aux 1}
	\end{equation}
	\begin{equation}
	\left(\di+2\right)\sigma^{\ZZ} =- \la_i , \label{eq:5 aux 2}
	\end{equation}
	\begin{equation}
	\left(\di-2\right)\sigma^{\ZZ}-L\,g^{ab}\,\sigma^{\ZZ}_{\cdot a\cdot b}=0.
	\label{eq:5 aux 3}
	\end{equation}
	 The function $\f:=\sigma^{\ZZ}_p$, which is smooth on $\overline{A_p}$ by \eqref{eq:5 aux 2}, solves \eqref{eq:E3} on $A_p$ with parameters $\alpha=1$, $\ka=\di-2$ (by \eqref{eq:5 aux 3}). Applying Lem. \ref{lem:equation f} (recall $\ka\neq 0$ as $\di\geq 3$) yields $\sigma^{\ZZ}_p=0$,  for all $p\in M$. Thus, \eqref{eq:5 aux 1} yields $\ZZ=0$. Finally, recall Rem. \ref{rem:la_i = 0}: $\LC$ being in $\sol$ implies $\la_i=0$.
\end{proof}

\begin{cor} \label{COR_5.14}
	If $L$ is Lorentz-Finsler with vanishing mean Cartan tensor ($\C_i=0$) and $n\geq 3$, then its associated nonlinear connection $\LC$ is the unique element of $\solsim\cap\solprop$.
\end{cor}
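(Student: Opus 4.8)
The plan is to deduce the statement directly from Theorem~\ref{thm:C_i =00003D 0 Lorentz-Finsler}. Indeed, once $\C_i=0$ is assumed, the extra hypothesis \eqref{eq:K =00003D 0} of that theorem, namely $2\,\C_{a\,\cdot i}\,\ZZ^{a}+\C_{a}\,\ZZ_{\cdot i}^{a}+\la_{i}=0$, collapses to $\la_i=0$ (because $\C_a=0$ identically, and hence $\C_{a\,\cdot i}=\dv_i\C_a=0$, so the first two terms drop for every $\ZZ$). Thus the whole corollary hinges on the single implication $\C_i=0\Rightarrow\la_i=0$; after that, both existence and uniqueness of $\LC$ in $\solsim\cap\solprop$ are immediate.

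First I would prove $\la_i=0$. The tool is the classical Finslerian identity expressing the mean Landsberg tensor as the covariant derivative of the mean Cartan tensor along the geodesic spray, $\la_i=y^a\,\covL_a\,\C_i$ (the relation ``$J=\dot I$'', see e.g.\ \cite{BCS}); it is homogeneity-consistent here, since $\C_i$ is $(-1)$-homogeneous and $\la_i$ is $0$-homogeneous. In the present Berwald-anisotropic conventions it can also be obtained from the tensorial identity $\la_{ijk}=y^a\,\covL_a\,\C_{ijk}$ (Landsberg as the spray-derivative of Cartan) by tracing with $g^{ij}$ and using that $g^{ij}$ is parallel along $\canon$, i.e.\ $y^a\,\covL_a\,g^{ij}=0$. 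This last fact follows at once from $\covL_a\,g_{bc}=2\,\la_{bca}$ together with the total $y$-orthogonality $\la_{bca}\,y^a=0$ of the Landsberg tensor (clear from its expression through the Berwald tensor). Since what we need is only the conclusion $\C_i=0\Rightarrow\la_i=0$, any nonzero multiplicative constant in the identity is harmless.

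With $\la_i=0$ established, existence is free: by Rem.~\ref{rem:la_i = 0}, $\la_i=0$ is exactly the condition for $\LC\in\solsim$, and since $L$ is proper, $\LC$ (built from vertical derivatives of $\sprL$) extends smoothly to $\partial A$, whence $\LC\in\solsim\cap\solprop$. For uniqueness I take an arbitrary $\N=\LC+\dv\ZZ\in\solsim\cap\solprop$ and verify the hypothesis \eqref{eq:K =00003D 0} of Theorem~\ref{thm:C_i =00003D 0 Lorentz-Finsler}: using $\C_a=0$, $\C_{a\,\cdot i}=0$ and $\la_i=0$, the expression $2\,\C_{a\,\cdot i}\,\ZZ^{a}+\C_{a}\,\ZZ_{\cdot i}^{a}+\la_{i}$ vanishes identically, so the theorem yields $\ZZ=0$, i.e.\ $\N=\LC$. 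Hence $\LC$ is the unique element of $\solsim\cap\solprop$.

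The main obstacle is precisely the first step $\C_i=0\Rightarrow\la_i=0$. It is genuinely indispensable rather than cosmetic: were $\la_i$ not to vanish, then $\LC\notin\solsim$ and the assertion would be false, so the corollary silently encodes this Finslerian identity. Everything downstream is a direct invocation of Theorem~\ref{thm:C_i =00003D 0 Lorentz-Finsler}, together with the elementary observation that the vanishing of the mean Cartan tensor propagates to its vertical derivatives.
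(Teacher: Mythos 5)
Your proposal is correct and follows essentially the same route as the paper: both reduce the corollary to Theorem \ref{thm:C_i =00003D 0 Lorentz-Finsler} by observing that $\C_i=0$ forces $\la_i=0$ (via the classical identity expressing the mean Landsberg tensor as a spray-derivative of the mean Cartan tensor, the paper citing \cite[(6.37)]{spray and finsler}), after which the hypothesis \eqref{eq:K =00003D 0} holds trivially for every $\ZZ$. Your explicit verification that $\LC$ itself lies in $\solsim\cap\solprop$ (via Rem.~\ref{rem:la_i = 0} and properness of $L$) is a welcome detail the paper leaves implicit, but it is not a different argument.
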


\begin{proof}
	As the mean Landsberg tensor can be written as a derivative of $\C_i$ (see \cite[(6.37)]{spray and finsler}), the hypothesis \eqref{eq:K =00003D 0} follows trivially and Th. \ref{thm:C_i =00003D 0 Lorentz-Finsler} applies.
\end{proof}

\begin{rem} \label{rem:C_i = 0 lorentzian}
	In \cite[Remark 5.3]{minguzzi}, the relevance of the condition $\C_{i}=0$ in the study of alternative  Finslerian Einstein equations is stressed, namely, it guarantees the symmetry of certain Ricci tensors. In the positive definite case,
	Deicke's   Theorem   \cite[Th. 14.4.1]{BCS} establishes that the only Finsler metrics with $\C_{i}=0$ are the Riemannian ones.  The Berwald-Moor metrics  \cite{berwald-moor} are improper Lorentz-Finsler counterexamples, as they cannot be properly extended to $\partial A$; as far as we know, no proper Lorentz-Finsler counterexamples appears in the literature. 
\end{rem}

In Lem. \ref{lem:equation f}, the case   (B)   provided a positive definite version of the case   (A). However,  it did so for $\ka>\alpha\left(\alpha+\di-2\right)$, which is the opposite inequality arising in the proof Th. \ref{thm:C_i =00003D 0 Lorentz-Finsler}; this prevents a result for Finsler instead of Lorentz-Finsler metrics. However, we are going to prove that the uniqueness of solutions in the Riemannian case can be obtained by means of a further study of the Laplacian of $f$, that is, the  solutions in the Riemannian Palatini approach agree with those in the Finslerian Palatini one. For the following result, recall that in the case of Finsler metrics, $A=\mathrm{T}M\setminus\mathbf{0}$; hence, all the corresponding solutions of the affine equation \eqref{eq:affine equation} are trivially proper. 

\begin{thm} \label{thm:C_i = 0 riemannian}
	Assume that $L$ is (positive definite) Riemannian and $n\geq 3$. Then $\LC$ is the only element of $\solsim=\mathrm{Sol}_L^{\mathscr{S}\mathrm{ym}}(\mathrm{T}M\setminus\mathbf{0})$.
\end{thm}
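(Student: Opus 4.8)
The plan is to follow the fiberwise strategy of Theorem \ref{thm:C_i =00003D 0 Lorentz-Finsler}, reducing everything to a scalar equation on a single fibre, and then to replace the (now inapplicable) maximum-principle step by a direct spectral computation on the round sphere. Since $L$ is Riemannian, $g$ is isotropic, so the Cartan tensor vanishes, $\C=0$ (hence $\C_a=0$), and the Landsberg tensor vanishes, $\la=0$ (hence $\la_i=0$); by Remark \ref{rem:la_i = 0} this already gives $\LC\in\solsim$, and it remains to prove uniqueness. Given any $\N=\LC+\dv\ZZ\in\solsim$, Lemma \ref{prop: torsion-free affine equations} says that $\ZZ$ solves \eqref{eq:affine equation 4} and \eqref{eq:affine equation 6}; but $\C=0$ forces $\mathcal{K}^{\ZZ}=0$ and $\la=0$, so these collapse exactly to \eqref{eq:5 aux 1} and \eqref{eq:5 aux 3}, that is
\[
\ZZ^{i}=2\,\sigma^{\ZZ}\,y^{i}-L\,g^{ia}\,\sigma^{\ZZ}_{\cdot a},\qquad\left(\di-2\right)\sigma^{\ZZ}-L\,g^{ab}\,\sigma^{\ZZ}_{\cdot a\cdot b}=0.
\]
Thus the $1$-homogeneous $\f:=\sigma^{\ZZ}_p$ solves \eqref{eq:E3} on each fibre $A_p=\mathrm{T}_pM\setminus\mathbf{0}$ with $\alpha=1$ and $\ka=\di-2$, and it suffices to show $\f\equiv0$, for then \eqref{eq:5 aux 1} yields $\ZZ=0$, i.e.\ $\N=\LC$.

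The obstruction is that Lemma \ref{lem:equation f}~(B) requires $\ka>\alpha(\alpha+\di-2)=\di-1$, whereas here $\ka=\di-2<\di-1$; the zeroth-order term then has the wrong sign and the maximum principle is unavailable. I would instead carry the change of variable of that proof only up to the normalized equation: writing $\widetilde{\f}=\f/F$, which is $0$-homogeneous, the identity \eqref{eq:E3 3} becomes $L\,g^{ab}\,\widetilde{\f}_{\cdot a\cdot b}=-\widetilde{\f}$, the constant being $\alpha(\alpha+\di-2)-\ka=1$. Next I would pass to the indicatrix: fixing $p$ and choosing $g_p$-orthonormal linear coordinates so that $g_{ab}(p)=\delta_{ab}$, the indicatrix $\SR=\{L=1\}$ is the Euclidean unit sphere, isometric to the round $\mathbb{S}^{\di-1}$ (as in Prop.\ \ref{prop:classical EHP action}), and $L\,g^{ab}\,\partial^2_{y^ay^b}$ is $L\,\Delta_{\mathbb{R}^{\di}}$. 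For a $0$-homogeneous function the radial part of the Euclidean Laplacian vanishes, and the factor $L=r^2$ cancels the $r^{-2}$ in the polar expression, so that on $\SR$ the operator reduces to the round-sphere Laplacian $\Delta_{\mathbb{S}^{\di-1}}$. Hence $\widetilde{\f}_{\SR}$ satisfies $-\Delta_{\mathbb{S}^{\di-1}}\widetilde{\f}_{\SR}=\widetilde{\f}_{\SR}$, i.e.\ it is an eigenfunction of $-\Delta_{\mathbb{S}^{\di-1}}$ with eigenvalue $1$.

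The final step—the genuinely new point with respect to the maximum-principle arguments—is the spectral input. The eigenvalues of $-\Delta_{\mathbb{S}^{\di-1}}$ are $\lambda_k=k(k+\di-2)$ for $k\geq0$, taking the values $0,\,\di-1,\,2\di,\dots$. For $\di\geq3$ there is a spectral gap $(0,\di-1)$ strictly containing $1$, so $1$ is not an eigenvalue and $\widetilde{\f}_{\SR}\equiv0$; therefore $\sigma^{\ZZ}_p\equiv0$ for every $p$ and, by \eqref{eq:5 aux 1}, $\ZZ=0$, giving $\N=\LC$. I expect this spectral gap to be the crux of the argument: it is exactly where the hypothesis $\di\geq3$ is used—for $\di=2$ one has $\lambda_1=1$, and both the argument and the conclusion would fail—and it is what circumvents the maximum principle that the sign of $\ka-\alpha(\alpha+\di-2)$ makes inapplicable.
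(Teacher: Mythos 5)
Your proposal is correct and follows essentially the same route as the paper's proof: the same reduction via Lem.~\ref{prop: torsion-free affine equations} (with $\C=0$, $\la=0$) to the scalar equation for $\sigma^{\ZZ}$, the same normalization $\widetilde{\f}=\f/F$ turning it into $L\,g^{ab}\,\widetilde{\f}_{\cdot a\cdot b}+\widetilde{\f}=0$, the identification of $L\,g^{ab}\,\partial^{2}_{y^{a}y^{b}}$ on $0$-homogeneous functions with $\Delta_{\mathbb{S}^{\di-1}}$, and the same spectral conclusion that $1\notin\mathrm{Spec}(-\Delta_{\mathbb{S}^{\di-1}})=\left\{\n\left(\n+\di-2\right)\right\}$ when $\di\geq 3$. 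The only cosmetic difference is that you justify the passage to the sphere Laplacian by the explicit polar-coordinate computation, where the paper cites \cite[Prop.~22.1]{laplacian sphere}.
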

\begin{proof}
	Let $\N=\LC+\dv\ZZ\in\mathrm{Sol}_L^{\mathscr{S}\mathrm{ym}}(\mathrm{T}M\setminus\mathbf{0})$. By using, in Lem. \ref{prop: torsion-free affine equations}, the vanishing of the mean Cartan and Landsberg tensors, $\ZZ$ solves
	\begin{equation}
	\ZZ^{i}=2\sigma^{\ZZ}\,y^{i}-L\,g^{ia}\,\sigma^{\ZZ}_{\cdot a},
	\label{eq:5 aux 4}
	\end{equation}
	\begin{equation}
	\left(\di-2\right)\sigma^{\ZZ}-L\,g^{ab}\,\sigma^{\ZZ}_{\cdot a\cdot b}=0.
	\label{eq:5 aux 5}
	\end{equation}
	When rewritting \eqref{eq:5 aux 5}  in terms of 
	\[
	\widetilde{\sigma^{\ZZ}}=\frac{\sigma^{\ZZ}}{F}\in\mathrm{h}^0\mathcal{F}(\mathrm{T}M\setminus\mathbf{0})
	\]
	(put $\alpha=1$ and $\kappa=n-2$ in \eqref{eq:E3 3}),
	one gets 
	\begin{equation}
	L\,g^{ab}\,\widetilde{\sigma^{\ZZ}}_{\cdot a\cdot b}+\widetilde{\sigma^{\ZZ}}=0,
	\label{eq:5 aux 6}
	\end{equation}
	which in turn can be restricted to each $\mathrm{T}_p M\setminus0$. This time, $g_p$ is just a positive definite scalar
	product on $\mathrm{T}_pM$, its indicatrix being a round sphere:
	$\SL=\left\{v\in\mathrm{T}_p M\setminus0:\:L(v)=1\right\}\equiv\mathbb{S}^{\di-1}$. Thus, $g^{ab}\,\partial_{y^{a}\,y^{b}}^{2}$ is the Laplacian of the Euclidean $\mathbb{R}^{\di}$ and, as $\widetilde{\sigma^{\ZZ}}_p$ is $0$-homogeneous, it is well-known \cite[Prop. 22.1]{laplacian sphere}
	that 
	\[
	\left(g^{ab}\,\widetilde{\sigma^{\ZZ}}_{\cdot a\cdot b}\right)_{ \mathbb{S}^{n-1}}=\Delta_{ \mathbb{S}^{n-1}}\widetilde{\sigma^{\ZZ}}.
	\]
	Because of this, \eqref{eq:5 aux 6} restricted to $\mathbb{S}^{\di-1}$ becomes 
	\begin{equation}
	-\Delta_{\mathbb{S}^{\di-1}}\widetilde{\sigma^{\ZZ}}=\widetilde{\sigma^{\ZZ}}.
	\label{eq:5 aux 7}
	\end{equation}
	The set of eigenvalues of $-\Delta_{\mathbb{S}^{\di-1}}$ is 
	\[\mathrm{Spec}(-\Delta_{\mathbb{S}^{\di-1}})=\left\{ \n\left(\n+\di-2\right):\:\n\in\mathbb{N}\cup\left\{ 0\right\} \right\}\] (\cite[Th. 22.1]{laplacian sphere}, we follow the conventions of this reference).
	As $\di\geq3$, then $1\notin\mathrm{Spec}(-\Delta_{\mathbb{S}^{\di-1}})$ and $\widetilde{\sigma^{\ZZ}}=0$, as it solves
	\eqref{eq:5 aux 7}. Thus, $\ZZ=0$ from \eqref{eq:5 aux 4}, as required. 
\end{proof}

 The following last consequence of Lem. \ref{lem:equation f}  is relevant for the consistency of the  metric equation \eqref{eq:metric equation}.

\begin{thm} \label{TH_5.17} Let $L$ be Lorentz-Finsler and 
	$\N$  any nonlinear connection (non-necessarily in $\sol$)  which extends smoothly to $\overline A$. 
	If the Ricci scalar $\ri$ of $\N$ satisfies, for some $\kappa<2n$,  
	$$\kappa\,\ri-L\,g^{ab}\,\ri_{\cdot a\cdot b}=0,$$ then actually $\ri=0$.
	In particular, if $n\geq 3$ then the variational  metric eqn.  \eqref{eq:metric equation}, $\left(n+2\right)\,\ri-L\,g^{ab}\,\ri_{\cdot a\cdot b}=0$, implies $\ri=0$. 
\end{thm}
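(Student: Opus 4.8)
The plan is to recognize the hypothesis as an instance of the scalar equation \eqref{eq:E3} treated in Lemma~\ref{lem:equation f} and to invoke part~(A) of that lemma fiberwise, with homogeneity exponent $\alpha=2$. First I would record that the Ricci scalar lies in $\mathrm{h}^2\mathcal{F}(A)$ by \eqref{eq:curvature and torsion}, so for each $p\in M$ the restriction $f:=\ri|_{\overline{A_p}}$ is $2$-homogeneous. Since $\ri=y^b\RN_{ba}^a$ with $\RN_{ij}^k=\delta_j\N_i^k-\delta_i\N_j^k$ is a differential expression in $\N$ alone, the smoothness of $\N$ on $\overline A$ immediately gives that $f$ is smooth on $\overline{A_p}$; the properness of $L$ (non-degeneracy of $g$ up to $\partial A$) will be needed only later, inside Lemma~\ref{lem:equation f}, to guarantee the ellipticity driving the maximum principle. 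With these observations the hypothesis $\kappa\,\ri-L\,g^{ab}\,\ri_{\cdot a\cdot b}=0$ becomes exactly \eqref{eq:E3} for $f$ on every fiber $A_p$.

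The decisive step is purely arithmetic: for $\alpha=2$ one has $\alpha(\alpha+n-2)=2n$, so the threshold $\kappa\le\alpha(\alpha+n-2)$ of Lemma~\ref{lem:equation f}(A) is precisely $\kappa\le 2n$, and the strictness demanded by the lemma is furnished by the hypothesis $\kappa<2n$ (note that $\alpha\le 2$ holds here with equality, so the strict inequality must come from $\kappa$). Applying part~(A) on each $A_p$ then forces $f=\ri|_{A_p}=0$ for all $p\in M$, that is $\ri\equiv 0$ on $A$. The ``in particular'' clause follows at once: the variational metric equation \eqref{eq:metric equation} is this equation with $\kappa=n+2$, and $n\ge 3$ is equivalent to $n+2<2n$, so the general statement applies and yields $\ri=0$.

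The point requiring the most care --- and the one I would flag as the genuine obstacle in the reduction --- is the remaining hypothesis $\kappa\neq 0$ of Lemma~\ref{lem:equation f}(A). In the borderline homogeneity $\alpha=2$ the reduced unknown is $\widetilde f=f/F^2=\ri/L$, and its smoothness up to $\partial A_p$ is read off from \eqref{eq:E3 2}, which for $\alpha=2$ reduces to $\kappa\,\widetilde f=g^{ab}f_{\cdot a\cdot b}$; solving this for $\widetilde f$ uses $\kappa\neq 0$. Once $\widetilde f$ is continuous up to $\partial A_p$, evaluating \eqref{eq:E3 4} on the cone $\{L=0\}$ annihilates the second- and first-order terms and leaves $-(2n-\kappa)\,\widetilde f=0$, so $\kappa<2n$ forces $\widetilde f=0$ on the boundary, and the weak maximum principle (applicable because the zeroth-order coefficient $-(2n-\kappa)$ is $\le 0$) gives $\widetilde f\equiv 0$. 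Thus the argument runs for every $\kappa<2n$ with $\kappa\neq 0$; the single borderline value $\kappa=0$ would fall outside Lemma~\ref{lem:equation f}(A) and demand a separate treatment, but it plays no role in the application of interest, where $\kappa=n+2>0$.
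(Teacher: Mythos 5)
Your proof is correct and takes essentially the same route as the paper's: both reduce the statement to Lemma \ref{lem:equation f}(A) applied fiberwise to $f=\ri_p$ with $\alpha=2$, for which $\alpha(\alpha+n-2)=2n$ turns the hypothesis $\kappa<2n$ into the required strict inequality. Your explicit flagging of the residual hypothesis $\kappa\neq 0$ is a point the paper glosses over (its proof simply says the lemma ``applies for the chosen $\kappa$'', although the theorem as stated formally admits $\kappa=0$); as you note, this caveat is immaterial for the application to the metric equation, where $\kappa=n+2>0$.
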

\begin{proof}
	$\f:=\ri_p$ is $\alpha$-homogeneous for $\alpha=2$, smooth on
	$\overline{A_p}$ (due to the hypothesis on $\N$) and solves \eqref{eq:E3}
	on $A_p$ for $\ka$. Thus,  Lem. \ref{lem:equation f} applies for the chosen $\kappa$. 
\end{proof}
\begin{rem}\label{REM_5.18}
	 (A) This result can be applied to pairs $(\N,L)$ which solve the variational equations. Recall that the Ricci scalar is equal for the solutions obtained starting at one $\N$ and making an  $\A$-translation in the space of solutions $\N+\A\otimes\canon$ (Prop. \ref{prop:invariants solution}). This ensures the consistency of such solutions  as in the classical Palatini case \cite{palatini}. In particular, when $\LC$ is a solution (i.e., when $\la_i=0$), $\ri$ becomes $\riL$.

	(B)   In any dimension $n\geq 3$, the classical vacuum Einstein equation for pseudo-Riemannian metrics $L(x,y)=g_{ab}(x)\,y^a\,y^b$ can be expressed as  
	\[
	4\,\riL-L\,g^{ab}\,\riL_{\cdot a\cdot b}=0
	\]
	(contract both of its indices with $\canon$, and use \eqref{eq:classical curvature} and \eqref{eq:classical scalar} with the Levi-Civita connection).    Thus, when interpreted as an equation for pseudo-Finsler metrics, this one would be the most direct extension of the Einstein equation. Notice that Th. ~\ref{TH_5.17} also applies to it, so for any proper Lorentz-Finsler metric it is equivalent to $\riL=0$ as well.   From a technical viewpoint, it is quite remarkable that this is a nontrivial Finslerian result which requires Lorentzian signature, while in the classical pseudo-Riemannian case an elementary algebraic argument suffices in any signature.

	(C)  The variational equation studied by Hohmann, Pfeifer,  Voicu and Wohlfarth \cite{HPV,pfeifer-wolfhart} agrees with our metric equation 
	when $\la_{i}=0$  (in any dimension)\footnote{Formulas (77) and (79) in \cite{HPV} are immediately generalized from dimension $4$, yielding the terms $-\left(\di+2\right)\riL$ and $L\,g^{ab}\,\riL_{\cdot a\cdot b}$ respectively, while it can be checked that (78) there still yields only terms that vanish when the mean Landsberg tensor does.}. The discrepancy when $\la_i\neq 0$ may be interesting, at least from a mathematical viewpoint. As we have seen, in this case no solution $\N$ of our affine equation can have the same pregeodesics as $\LC$ and it is not clear the role of $\LC$ then. However, no matter the affine solution one chooses, our metric equation is the vanishing of its  $\ri$. For the cited authors, however, it is a more complicated one  which involves $L$ and $\la$.  
	
	(D) Th. \ref{TH_5.17} also complements previous results obtained for the metric nonlinear connection of certain Berwald metrics \cite[Th. 3]{FuPaPf}, \cite[Prop. 4]{HePfFu}. The conclusion of our theorem holds even though the metrics there cannot be extended to $\partial A$ as properly Lorentz-Finsler.  
	
	(E) Previous comments strongly support that the natural generalization of  Einstein vacuum equations  
	must be the vanishing of the Ricci scalar for some solution $\N$ of the affine equation. When $\la_i=0$, $\N^L$ would be a distinguished solution which, in fact, it would be the unique symmetric one under the mild conditions studied before.   Let us point that $\riL=0$ as a vacuum equation was first proposed by Rutz \cite{Rutz} and has been further studied in some cases \cite{MaSh}.  

\end{rem}

\subsection{Recovery of the classical solutions}  Finally, let us restrict our attention to pseudo-Riemannian metrics and affine connections (or, equivalently, linear $\N$'s, $\N_i^k(x,y)=\an_{ib}^k(x)\,y^b$). Then the solutions  of the Finslerian metric-affine formalism (described by $\eqref{eq:affine equation}, \eqref{eq:metric equation}$) are exactly those of the classical one. This fact  will be  proved directly, even though we will give some hints to regard it as a corollary of our results in \S \ref{sec:analytic} and \S \ref{sec:elliptic}, which go way beyond the classical case. Keep in mind that the isotropic $\an$'s solving the classical metric-affine formalism \cite[(17)]{palatini} can be identified with their underlying linear $\N$'s, so   in Def. \ref{formally classical}   we refer as \textit{classical solutions} to those $\N=\LC+\A\otimes\canon$ with $L$ pseudo-Riemannian and $\A$ isotropic. 

\begin{thm}
	Assume that $L$ is pseudo-Riemannian, $\N$ is linear and $n\geq 3$. Then one has $\N\in\sol$ if and only if
	\[
	\N=\LC+\A\otimes\canon
	\]
	for some isotropic $\A$. For these connections, $\ri=\riL$ and $(\N,L)$ solves also the metric equation \eqref{eq:metric equation} if and only if $L$ solves the classical (vacuum) Einstein equation 
	\[
	\riL=0.
	\]
	
\end{thm}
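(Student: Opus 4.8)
The plan is to prove the two implications of the equivalence together with the two curvature assertions, exploiting throughout that a pseudo-Riemannian $L$ has $g$ isotropic, whence the Cartan tensor $\C=\dv g/2$ vanishes (so $\C_i=0$ and $\mathcal{K}^{\ZZ}=0$) and $\la=\covL g/2=0$ (so $\la_i=0$). By Rem.~\ref{rem:la_i = 0} this already puts $\LC$ in $\solsim$. For the ``if'' implication I would start from $\N=\LC+\A\otimes\canon$ with $\A=\A_i(x)$ isotropic: linearity of $\N$ is immediate from the coordinate form $\N_i^k=(\LC)_i^k(x,y)+\A_i(x)\,y^k$, in which $(\LC)_i^k$ is linear in $y$; and $\N\in\sol$ follows at once from Lem.~\ref{lem:translations}, since $\LC\in\sol$ and the $\A$-translation is a translation of the affine solution space. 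The same lemma yields $\ri=\riL$ for every such $\N$, which is the second assertion.

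For the ``only if'' implication I would invoke Th.~\ref{thm:reduction symmetric} to write $\N=\LC+\dv\ZZ+\A\otimes\canon$ with $\LC+\dv\ZZ\in\solsim$, and split the work into showing $\A$ is isotropic and $\ZZ=0$. Linearity is the crucial structural input: the torsion of a linear $\N$ is isotropic, so the right-hand side of \eqref{eq:torsion determines A} is a degree-one polynomial in $y$, i.e.\ $\A_i\,y^k=R^k_{ib}(x)\,y^b$ for isotropic coefficients $R$. Differentiating this identity vertically twice, contracting the upper index against one lower one, and using Euler's relation $\A_{i\,\cdot j\,\cdot a}\,y^a=-\A_{i\,\cdot j}$ on the $(-1)$-homogeneous $\A_{i\,\cdot j}$, I would obtain $n\,\A_{i\,\cdot j}=0$, so $\A$ is isotropic. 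The same polynomiality shows $\dv\ZZ=\N-\LC-\A\otimes\canon$ has degree-one components, so $\ZZ$ is a genuine homogeneous quadratic polynomial in $y$.

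It then remains to kill $\ZZ$. Since $\C=\la=0$, Lem.~\ref{prop: torsion-free affine equations} reduces the symmetric equations to \eqref{eq:5 aux 4}--\eqref{eq:5 aux 5}, so $\sigma^{\ZZ}$ solves \eqref{eq:E3} with $\alpha=1$, $\ka=n-2$. In the indefinite case I would note that the non-degenerate quadratic form $L$ is an irreducible polynomial in $n\geq3$ variables, while Rem.~\ref{rem:powers of L} forces $\ZZ$ to be divisible by $L$ up to $\partial A$; as $\ZZ$ is quadratic this means $\ZZ^i=\lambda^i(x)\,L$, whence $\sigma^{\ZZ}=y_a\lambda^a$ is linear, $\sigma^{\ZZ}_{\cdot a\cdot b}=0$, and \eqref{eq:5 aux 5} gives $(n-2)\sigma^{\ZZ}=0$, so $\sigma^{\ZZ}=0$ and then $\ZZ=0$ by \eqref{eq:5 aux 4}. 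This is exactly the polynomial case of Th.~\ref{thm: analytic uniqueness} (equivalently Cor.~\ref{COR_5.14}). In the positive definite case, where the real cone is trivial, I would instead quote Th.~\ref{thm:C_i = 0 riemannian} directly, whose spectral argument ($1\notin\mathrm{Spec}(-\Delta_{\mathbb{S}^{n-1}})$ for $n\geq3$) already gives $\solsim=\{\LC\}$. Either way $\ZZ=0$, so $\N=\LC+\A\otimes\canon$ with $\A$ isotropic. I expect this step to be the main obstacle: it is precisely the uniqueness phenomenon of \S\ref{sec:proper solutions}, and there is no single elementary mechanism covering all signatures at once, so the care lies in checking that the quadratic nature of $L$ (properness, fiberwise analyticity, irreducibility) makes each of the two routes applicable.

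Finally, for the metric part I would combine $\ri=\riL$ with \eqref{eq:classical curvature}--\eqref{eq:classical scalar}. Here $\riL=y^ay^b\,\mathrm{R}_{abc}^c$ is quadratic, so $g^{ab}\,\riL_{\cdot a\cdot b}=2\,\mathrm{Scal}$ and the metric equation \eqref{eq:metric equation} reads $(n+2)\,\mathrm{R}_{abc}^c\,y^ay^b=2\,\mathrm{Scal}\,g_{ab}\,y^ay^b$, i.e.\ $(n+2)\,\mathrm{R}_{abc}^c=2\,\mathrm{Scal}\,g_{ab}$ as symmetric tensors. Taking the $g$-trace yields $(2-n)\,\mathrm{Scal}=0$, so $\mathrm{Scal}=0$ for $n\geq3$, hence the Ricci tensor and $\riL$ vanish; the converse is trivial. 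This is the pseudo-Riemannian instance of Th.~\ref{TH_5.17}, whose purely algebraic version is signature-independent (cf.\ Rem.~\ref{REM_5.18}(B)), and it completes the identification of our solutions with the classical Palatini ones.
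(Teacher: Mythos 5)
Your proposal is correct and follows the same skeleton as the paper's proof: the ``if'' direction and $\ri=\riL$ via Lem.~\ref{lem:translations}, the ``only if'' direction via Th.~\ref{thm:reduction symmetric} followed by (linearity $\Rightarrow$ isotropic $\A$ via \eqref{eq:torsion determines A} $\Rightarrow$ quadratic $\ZZ$ $\Rightarrow$ $\ZZ=0$), and the metric part via the trace of $(\di+2)\,\mathrm{Ric}=2\,\mathrm{Scal}\,g$. The only genuinely different step is how you kill $\ZZ$. The paper's primary argument is a short, signature-uniform computation: with $\C_i=0$, equation \eqref{eq:affine equation 5} reads $(\di+2)\,\sigma^{\ZZ}=\ZZ^{a}_{\cdot a}$, which is \emph{linear in $y$} because $\ZZ$ is quadratic; hence $\sigma^{\ZZ}_{\cdot a\cdot b}=0$, \eqref{eq:affine equation 6} collapses to $(\di-2)\,\sigma^{\ZZ}=0$, and \eqref{eq:affine equation 4} gives $\ZZ=0$. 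You instead split by signature, and in the indefinite case you reach the linearity of $\sigma^{\ZZ}$ through divisibility of $\ZZ$ by $L$ (Rem.~\ref{rem:powers of L}) plus irreducibility of the quadratic form. That route works, but it formally presupposes that $L$ is a \emph{proper} pseudo-Finsler metric, and the paper explicitly flags (in a footnote to this very proof) that a general pseudo-Riemannian $g$ of indefinite signature can be so regarded only locally (e.g.\ non--time-orientable Lorentzian metrics admit no global choice of the connected component $A_p$); the direct computation is preferred precisely because it avoids this issue, and it also removes the need for the separate positive-definite appeal to Th.~\ref{thm:C_i = 0 riemannian}. Your double-vertical-differentiation argument for the isotropy of $\A$ (yielding $\di\,\A_{i\,\cdot j}=0$) is a valid, mildly different variant of the paper's single differentiation combined with the antisymmetry of the torsion. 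Everything else, including the final trace argument giving $(2-\di)\,\mathrm{Scal}=0$, matches the paper.
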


\begin{proof}
	$L$ being pseudo-Riemannian, $\la_i=0$, so $\LC\in\sol$ (Rem. \ref{rem:la_i = 0}) and $\LC+\A\otimes\canon\in\sol$ (Lem. \ref{lem:translations}). Let us establish that these, with $\A$ isotropic, are all the linear elements of $\sol$. 
	
	Again because $L$ is pseudo-Riemannian, $\LC$ is linear ($\left(\LC\right)_i^k(x,y)=\left(  \an^g  \right)_{ib}^k(x)\,y^b$ with $  \an^g  $ the isotropic Levi-Civita connection), and because $\N=\LC+\dv\ZZ+\A\otimes\canon\in\sol$ is assumed linear too, $\A$ must be isotropic. Indeed, from the definition it is clear that the torsion of the linear $\N$ is isotropic, and from  \eqref{eq:torsion determines A},
	\[
	2\left(\di-1\right)\A_i\,y^k=\left(\di-1\right)\torN_{ib}^k\,y^b-\torN_{ai}^a\,y^k-\torN_{ab}^a\,y^b\,\delta_i^k, 
	\]
	\[
	2\left(\di-1\right)\left(\A_{i\,\cdot j}\,y^k+\A_i\,\delta^k_j\right)=\left(\di-1\right)\torN_{ij}^k-\torN_{ai}^a\,\delta^k_j-\torN_{aj}^a\,\delta_i^k, 
	\]
	\[
	2\di\left(\di-1\right)\A_i=\left(\di-1\right)\torN_{ia}^a-\di\,\torN_{ai}^a-\torN_{ai}^a=-2\di\,\torN_{ai}^a
	\]
	(we vertically differentiated, contracted the indices $k$ with $j$, and used the $0$-homogeneity of $\A$ and the antisymmetry of $\torN$). 
	
	As $\A$ is isotropic, it follows that $\ZZ$ is quadratic: $\ZZ^i(x,y)=\varPhi^i_{ab}(x)\,y^a\,y^b/2$ for some isotropic and symmetric $(1,2)$ tensor $\varPhi$. Indeed, formula \eqref{eq:spray solution} for the underlying spray $\G$ of $\N=\LC+\dv\ZZ+\A\otimes\canon$ can be written as 
	\[
	\frac{1}{2}\,\an_{ab}^i(x)\,y^a\,y^b=\frac{1}{2}\left(  \an^g  \right)_{ab}^i(x)\,y^a\,y^b+\ZZ^i(x,y)+\frac{1}{2}\,\A_a(x)\,\delta_{b}^i\,y^a\,y^b
	\]
	and the symmetric part of $\an_{jk}^i-\left(  \an^g  \right)_{jk}^i-\A_j\,\delta_{k}^i$ is an isotropic tensor. 
	
	Now, recalling that $\LC+\dv\ZZ\in\solsim$, one has two options. In a direct manner, using that $\ZZ$ solves \eqref{eq:affine equation 4}, \eqref{eq:affine equation 5}, \eqref{eq:affine equation 6} and the vanishing of the mean Cartan and Landsberg tensors,
	\[
	\left(\di+2\right)\sigma^{\ZZ}=\ZZ^a_{\cdot a}=\varPhi^a_{ab}\,y^b,
	\]
	\[
	\begin{split}
	0&=\left(\di-2\right)\sigma^{\ZZ}-L\,g^{ab}\,\sigma^{\ZZ}_{\cdot a\cdot b} \\
	&=\left(\di-2\right)\sigma^{\ZZ}-L\,g^{ab}\left(\frac{1}{\di+2}\,\varPhi^c_{cd}\,y^d\right)_{\cdot a\cdot b} \\
	&=\left(\di-2\right)\sigma^{\ZZ},
	\end{split}
	\]
	\[
	\ZZ^i=2\,\sigma^{\ZZ}\,y^i-L\,g^{ia}\,\sigma^{\ZZ}_{\cdot a\cdot b}=0
	\]
	(as $\di\geq 3$). Alternatively, one can use that, as  $\LC$ is linear and $\ZZ$  quadratic, also $\LC+\dv\ZZ\in\solprop$ and $\ZZ$ is fiberwise analytic on $\overline{A}$, so either Th. \ref{thm: analytic uniqueness} or Th. \ref{thm:C_i = 0 riemannian} (depending on the signature and again becuase $\di\geq 3$) can be applied \footnote{There would be the technical issue that in non-definite signature, one can regard a pseudo-Riemannian $g$ as a proper pseudo-Finsler $L$ only locally in general. Namely, under Def. \ref{defn:proper} one chooses a certain connected $A_p$ at each point, but the usual pseudo-Riemannian setting includes cases (i.e. non time-orientable Lorentzian metrics) where such a choice cannot carried out.    
	Anyway, the former approach of direct computations avoids this issue altogether.} to conclude that $\LC+\dv\ZZ=\LC$. 
	
	We have proven that if $\N\in\sol$, then $\N=\LC+\A\otimes\canon$ with $\A$ isotropic. As this $\N$ shares fiber in $\sol$ with $\LC$, Cor. \ref{cor:projection} $\mathrm{iv)}$ gives $\ri=\riL$.  The metric equation \eqref{eq:metric equation} for $(\N,L)$ thus reads 
	\begin{equation}
	\left(\di+2\right)\riL-L\,g^{ab}\,\riL_{\cdot a\cdot b}=0. 
	\label{eq:5 aux 8}
	\end{equation}
	However, once again as $L$ is pseudo-Riemannian, $\riL$ is quadratic too.   Indeed, $\riL=\varPsi_{ab}\,y^a\,y^b/2$
	with $\varPsi/2$ being the  (isotropic and symmetric)  classical Ricci tensor of $L$ (use \eqref{eq:classical curvature} with the Levi-Civita connection). Thus, \eqref{eq:5 aux 8} becomes 
	\[
	\begin{split}
	0=&\frac{\di+2}{2}\,\varPsi_{ab}(x)\,y^a\,y^b-L(x,y)\,g^{ab}(x)\left(\frac{1}{2}\,\varPsi_{cd}(x)\,y^c\,y^d\right)_{\cdot a\cdot b} \\
	=&\frac{\di+2}{2}\,\varPsi_{ab}(x)\,y^a\,y^b-L(x,y)\,g^{ab}(x)\,\varPsi_{ab}(x) \\
	=&\left(\frac{\di+2}{2}\,\varPsi_{cd}(x)-g^{ab}(x)\,\varPsi_{ab}(x)\,g_{cd}(x)\right)\,y^c\,y^d,
	\end{split}
	\] 
	which is clearly equivalent to 
	\[
	\frac{\di+2}{2}\,\varPsi_{ij}-g^{ab}\,\varPsi_{ab}\,g_{ij}=0.
	\]
	   By taking metric trace (and once again as $\di\geq 3$), one sees that this one is equivalent to $\varPsi=0$, but this is also true for the classical Einstein equation $\riL=0$. This completes the proof. 
\end{proof}

\begin{rem}
	As a last remark,   recall   that, apart from the classical solutions, a pseudo-Riemannian $L$     admits also    the   formally classical ones,    $\N=\LC+\A\otimes\canon$ with $\A$ anisotropic   and $0$-homogeneous. No other proper solutions can appear in the Lorentzian and Riemannian cases, by Cor. \ref{COR_5.14} and Th. \ref{thm:C_i = 0 riemannian} resp. For general non-definite signature, Th. \ref{thm: analytic uniqueness} establishes that there cannot appear other proper solutions with fiberwise analytic symmetric part $\prsim(\N)$.  
\end{rem} 

\appendix

\section{Proof of Prop. \ref{prop:divergence formulas} (Divergence formulas)} \label{A1}
In order to prove \eqref{eq:horizontal divergence},   we will   lift the anisotropic connection\footnote{This construction works for any anisotropic connection $\an$ in place of $\dv\N$.   In particular, taking   $\an$ as the Levi-Civita--Chern anisotropic connection of the metric \cite{gelocor,anisotropic,chern connection,spray and finsler}, this justifies regarding Chern-Rund's as a connection for $\mathrm{T}A\longrightarrow A$.}$\dv\N$ to a linear (Koszul) connection $\widehat{\nabla}^\N$ for $\mathrm{T}A\longrightarrow A$. For this, recall \cite[Th. 3]{gelocor}, \cite[\S 4.4]{anisotropic}, and the $\N$-horizontal and vertical isomorphisms \eqref{eq:horizontal isomorphism} and \eqref{e_isomorfismo vertical} respectively. One can regard the anisotropic $\dv\N$ as a vertically trivial linear connection for $\mathrm{V}A\longrightarrow A$   as in \cite[Th. 3]{gelocor}, resulting in
\[
\widehat{\nabla}^\N_{X^{\mathrm{H}}}\left(Y^{\mathrm{V}}\right):=\left(\covN_XY\right)^{\mathrm{V}}
\] 
for $X,Y\in\mathcal{T}^1_0(\MA)$. 
Imposing also 
\[
\widehat{\nabla}^\N_{X^{\mathrm{H}}}\left(Y^{\mathrm{H}}\right):=\left(\covN_XY\right)^{\mathrm{H}}
\] 
and maintaining the vertical triviality, $\widehat{\nabla}^\N$ extends unequivocally (by linearity) to act on any vector fields on $A$. Then, by construction, 
\begin{equation}
	\widehat{\nabla}^\N_{\delta_i}\delta_j=\N_{i\,\cdot j}^a\,\delta_a,\qquad\widehat{\nabla}^\N_{\delta_i}\dot{\partial}_j=\N_{i\,\cdot j}^a\,\dot{\partial}_a,\qquad\widehat{\nabla}^\N_{\dot{\partial}_i}\delta_j=0,\qquad\widehat{\nabla}^\N_{\dot{\partial}_i}\dot{\partial}_j=0.
	\label{eq:new connection}
\end{equation}
The \textit{torsion of $\widehat{\nabla}^\N$} is defined, for vector fields $\mathscr{X}$, $\mathscr{Y}$ on $A$, by
\[
\widehat{\torN}(\mathscr{X},\mathscr{Y})=\widehat{\nabla}_{\mathscr{X}}^\N\mathscr{Y}-\widehat{\nabla}_{\mathscr{Y}}^\N\mathscr{X}-\left[\mathscr{X},\mathscr{Y}\right].
\]
  Along the proof, the indices $\hat{i}$, $\hat{j}$, $\hat{k}$ will run in the set $\left\{ 1,...,2\di\right\}$ ($i$, $j$, $k$ remain in $\left\{ 1,...,\di\right\}$) and the local frame $(\delta_1,...,\delta_\di,\dot{\partial}_1,...,\dot{\partial}_\di)$ is denoted by $(E_1,...,E_{2\di})$ with the dual coframe $(\mathrm{d}x^1,...,\mathrm{d}x^\di,\delta y^1,...,\delta y^\di)$ being denoted by $(E^1,...,E^{2\di})$.   Putting, accordingly, $\widehat{\nabla}^{\N}_{E_{\hat{i}}} E_{\hat{j}}=:\widehat{\an}_{\hat{i}\hat{j}}^{\hat{k}}\,E_{\hat{k}}$ and taking \eqref{eq:new connection} into account, it follows that 
\begin{equation}
	\widehat{\an}_{\hat{i}\hat{j}}^{\hat{k}}=\begin{cases}
	\N_{i\,\cdot j}^k & \text{if}\quad(\hat{i},\hat{j},\hat{k})=(i,j,k)\quad\text{or}\quad(\hat{i},\hat{j},\hat{k})=(i,\di+j,\di+k),\\
	0 & \text{otherwise},
	\end{cases}
	\label{eq:chris levantados}
\end{equation}
while putting $\widehat{\torN}(\mathscr{X},\mathscr{Y})=:\mathscr{X}^{\hat{i}}\,\mathscr{Y}^{\hat{j}}\,\widehat{\torN}_{\hat{i}\hat{j}}^{\hat{k}}\,E_{\hat{k}}$, it follows that
\begin{equation}
	\widehat{\torN}_{\hat{i}\hat{j}}^{\hat{k}}=\widehat{\an}_{\hat{i}\hat{j}}^{\hat{k}}-\widehat{\an}_{\hat{j}\hat{i}}^{\hat{k}}-E^{\hat{k}}(\left[E_{\hat{i}},E_{\hat{j}}\right]).
	\label{eq:tor levantada}
\end{equation}
 In a standard manner, we can express any Lie derivative 
\[
\lie_{\mathscr{X}}(\vol)=\lie_{\mathscr{X}}(\vol)(E_1,...,E_{2\di})\,E^1\wedge...\wedge E^{2\di}=:\lie_{\mathscr{X}}(\vol)_E\,E^1\wedge...\wedge E^{2\di}
\]
where
\begin{equation}
\vol=\frac{\left|\det g_{ij}(v)\right|}{F(v)^\di}\,E^1\wedge...\wedge E^{2\di}=:\vol_E\,E^1\wedge...\wedge E^{2\di}
\label{eq:d mu E}
\end{equation}
in terms of $\widehat{\nabla}^\N$.  Indeed,
\[
\begin{split}
\lie_{\mathscr{X}}(\vol)_E
=&\lie_{\mathscr{X}}(\vol(E_{1},...,E_{2\di}))-\sum_{\hat{j}=1}^{2\di}\vol(E_{1},...,\lie_{\mathscr{X}}E_{\hat{j}},...,E_{2\di}) \\=&\mathscr{X}(\vol_{E})-\sum_{\hat{j}=1}^{2\di}\vol(E_{1},...,\left[\mathscr{X},E_{\hat{j}}\right],...,E_{2\di})\\
=&\mathscr{X}(\vol_{E})-\sum_{\hat{j}=1}^{2\di}\vol(E_{1},...,\widehat{\nabla}^\N_{\mathscr{X}}E_{\hat{j}}-\widehat{\nabla}^\N_{E_{\hat{j}}}\mathscr{X}-\widehat{\torN}(\mathscr{X},E_{\hat{j}}),...,E_{2\di}) \\
=&\mathscr{X}(\log\vol_{E})\,\vol_{E}-\sum_{\hat{j}=1}^{2\di}\vol(...,\mathscr{X}^{\hat{i}}\,\widehat{\an}_{\hat{i}\hat{j}}^{\hat{k}}\,E_{\hat{k}},...) \\
&+\sum_{\hat{j}=1}^{2\di}\vol(...,E_{\hat{j}}(\mathscr{X}^{\hat{i}})\,E_{\hat{i}}+\widehat{\an}_{\hat{j}\hat{i}}^{\hat{k}}\,\mathscr{X}^{\hat{i}}\,E_{\hat{k}},...)+\sum_{\hat{j}=1}^{2\di}\vol(...,\mathscr{X}^{\hat{i}}\,\widehat{\torN}_{\hat{i}\hat{j}}^{\hat{k}}\,E_{\hat{k}},...) \\
=&\left\{\mathscr{X}(\log\vol_{E})-\mathscr{X}^{\hat{i}}\,\widehat{\an}_{\hat{i}\hat{j}}^{\hat{j}}+\left(E_{\hat{j}}(\mathscr{X}^{\hat{j}})+\widehat{\an}_{\hat{j}\hat{i}}^{\hat{j}}\,\mathscr{X}^{\hat{i}}\right)+\mathscr{X}^{\hat{i}}\,\widehat{\torN}_{\hat{i}\hat{j}}^{\hat{j}}\right\}\vol_E,
\end{split}
\]
so 
\begin{equation}
\begin{split}
	&\dive(\mathscr{X})\,\vol \\ =&\lie_{\mathscr{X}}(\vol)_E\,E^1\wedge...\wedge E^{2\di}\\
	=&\left\{\mathscr{X}(\log\vol_{E})-\mathscr{X}^{\hat{i}}\,\widehat{\an}_{\hat{i}\hat{j}}^{\hat{j}}+\left(E_{\hat{j}}(\mathscr{X}^{\hat{j}})+\widehat{\an}_{\hat{j}\hat{i}}^{\hat{j}}\,\mathscr{X}^{\hat{i}}\right)+\mathscr{X}^{\hat{i}}\,\widehat{\torN}_{\hat{i}\hat{j}}^{\hat{j}}\right\}\vol_E\,E^1\wedge...\wedge E^{2\di} \\
	=& \left\{\mathscr{X}(\log\vol_{E})-\mathscr{X}^{\hat{i}}\,\widehat{\an}_{\hat{i}\hat{j}}^{\hat{j}}+\left(E_{\hat{j}}(\mathscr{X}^{\hat{j}})+\widehat{\an}_{\hat{j}\hat{i}}^{\hat{j}}\,\mathscr{X}^{\hat{i}}\right)+\mathscr{X}^{\hat{i}}\,\widehat{\torN}_{\hat{i}\hat{j}}^{\hat{j}}\right\}\vol
\end{split}
\label{eq:div fundamental}
\end{equation}
(and note that $\left(\mathscr{X}(\log\vol_{E})-\mathscr{X}^{\hat{i}}\,\widehat{\an}_{\hat{i}\hat{j}}^{\hat{j}}\right)\vol=\widehat{\nabla}^\N_\mathscr{X}\vol$).

One has the identities 
\[
E_{i}(\det g)=\det(g)\,g^{ab}\,\delta_{i}g_{ab}=\det(g)\left(g^{ab}\,\covN_i g_{ab}+2\,\N_{i\,\cdot a}^{a}\right)
\]
(using Jacobi's formula for the derivative of a determinant and \eqref{eq:covariant derivative}),
\[
\qquad E_{i}(F)=\frac{\mathrm{sgn}(L)}{2F}\,\delta_iL=\frac{\mathrm{sgn}(L)}{2F}\,\covN_iL=\frac{\mathrm{sgn}(L)}{2F}\,\covN_i g_{ab}\,y^{a}\,y^{b}
\]
(using $F=\sqrt{\left|L\right|}$, $L=g_{ab}\,y^a\,y^b$ and $\covN_iy^j=0$),
\[
E_{\di+i}(\det g)=2\det(g)\,\C_{i},
\]
(using again Jacobi and the definition of the mean Cartan tensor), and 
\[
\qquad E_{\di+i}(F)=\frac{\mathrm{sgn}(L)}{F}\,y_{i}
\]
(using again $F=\sqrt{\left|L\right|}$ and $L_{\cdot i}=2\,y_i$). From them and \eqref{eq:d mu E}, it follows that 
\begin{equation}
E_{i}(\log\vol_{E})=\frac{E_{i}(\vol_{E})}{\vol_{E}}=\left(g^{ab}-\frac{\di}{2}\,\frac{1}{L}\,y^{a}\,y^{b}\right)\covN_i g_{ab}+2\,\N_{i\,\cdot a}^{a},\label{eq: E_i(log(vol))}
\end{equation}
\begin{equation}
E_{\di+i}(\log\vol_{E})=\frac{E_{\di+i}(\vol_{E})}{\vol_{E}}=2\,\C_{i}-\di\,\frac{y_{i}}{L}.
\label{eq: E_n+i(log(vol))}
\end{equation}

We take $\mathscr{X}=X^{\mathrm{H}}=X^a\,E_a$. Using \eqref{eq: E_i(log(vol))}, \eqref{eq:chris levantados}, \eqref{eq:tor levantada} and the commutation formulas \eqref{eq:commutation formulas}, we have 
\[
\mathscr{X}(\log \vol_{E})=X^{c}\left(g^{ab}-\frac{\di}{2}\,\frac{1}{L}\,y^{a}\,y^{b}\right)\covN_c g_{ab}+2\,X^{c}\,\N_{c\,\cdot a}^{a},
\]
\[
-\mathscr{X}^{\hat{i}}\,\widehat{\an}_{\hat{i}\hat{j}}^{\hat{j}}=-\mathscr{X}^{\hat{i}}\,\widehat{\an}_{\hat{i}a}^{a}-\mathscr{X}^{\hat{i}}\,\widehat{\an}_{\hat{i}\,\di+a}^{\di+a}=-X^{c}\,\N_{c\,\cdot a}^{a}-X^{c}\,\N_{c\,\cdot a}^{a}=-2\,X^{c}\,\N_{c\,\cdot a}^{a},
\]
\[
\begin{split}
E_{\hat{j}}(\mathscr{X}^{\hat{j}})+\widehat{\an}_{\hat{j}\hat{i}}^{\hat{j}}\,\mathscr{X}^{\hat{i}}=&E_{a}(\mathscr{X}^{a})+E_{\di+a}(\mathscr{X}^{\di+a})+\widehat{\an}_{a\hat{i}}^{a}\,\mathscr{X}^{\hat{i}}+\widehat{\an}_{\di+a\,\hat{i}}^{\di+a}\,\mathscr{X}^{\hat{i}}\\
=&\delta_{a}X^{a}+\N_{a\,\cdot c}^{a}\,X^{c}\\
=&\covN_a X^{a},
\end{split}
\]
\[
\begin{split}
\mathscr{X}^{\hat{i}}\,\widehat{\torN}_{\hat{i}\hat{j}}^{\hat{i}}=&\mathscr{X}^{\hat{i}}\left(\widehat{\an}_{\hat{i}\hat{j}}^{\hat{j}}-\widehat{\an}_{\hat{j}\hat{i}}^{\hat{j}}-E^{\hat{j}}(\left[E_{\hat{i}},E_{\hat{j}}\right])\right)\\
=&\mathscr{X}^{\hat{i}}\left(\widehat{\an}_{\hat{i}a}^{a}+\widehat{\an}_{\hat{i}\,\di+a}^{\di+a}-\widehat{\an}_{a\hat{i}}^{a}-\widehat{\an}_{\di+a\,\hat{i}}^{\di+a}-E^{a}(\left[E_{\hat{i}},E_{a}\right])-E^{\di+a}(\left[E_{\hat{i}},E_{\di+a}\right])\right)\\
=&X^{c}\left(\N_{c\,\cdot a}^{a}+\N_{c\,\cdot a}^{a}-\N_{a\,\cdot c}^{a}-\mathrm{d}x^{a}(\left[\delta_{c},\delta_{a}\right])-\delta y^{a}(\left[\delta_{c},\dot{\partial}_{a}\right])\right) \\
=&X^{c}\left(2\,\N_{c\,\cdot a}^{a}-\N_{a\,\cdot c}^{a}-\N_{c\,\cdot a}^{a}\right)\\
=&X^{c}\,\torN_{ca}^{a}.
\end{split}
\]
Putting these together, \eqref{eq:div fundamental} proves \eqref{eq:horizontal divergence}. 

Now we take  $\mathscr{X}=X^{\mathrm{V}}=X^a\,E_{\di+a}$.  Using \eqref{eq: E_n+i(log(vol))}, and again \eqref{eq:chris levantados}, \eqref{eq:tor levantada} and the commutation formulas \eqref{eq:commutation formulas}, we have
\[
\mathscr{X}(\log\vol_{E})=\left(2\,\C_{c}-\di\,\frac{y_{c}}{L}\right)X^{c},
\]
\[
-\mathscr{X}^{\hat{i}}\,\widehat{\an}_{\hat{i}\hat{j}}^{\hat{j}}=-\mathscr{X}^{\hat{i}}\,\widehat{\an}_{\hat{i}a}^{a}-\mathscr{X}^{\hat{i}}\,\widehat{\an}_{\hat{i}\,\di+a}^{\di+a}=-X^{c}\,\widehat{\an}_{\di+c\,a}^{a}-X^c\,\widehat{\an}_{\di+c\,\di+a}^{\di+a}=0,
\]
\[
\begin{split}
E_{\hat{j}}(\mathscr{X}^{\hat{j}})+\widehat{\an}_{\hat{j}\hat{i}}^{\hat{j}}\,\mathscr{X}^{\hat{i}}=&E_{a}(\mathscr{X}^{a})+E_{\di+a}(\mathscr{X}^{\di+a})+\widehat{\an}_{a\hat{i}}^{a}\,\mathscr{X}^{\hat{i}}+\widehat{\an}_{\di+a\,\hat{i}}^{\di+a}\,\mathscr{X}^{\hat{i}}
=\dot{\partial}_{a}X^{a}
=X^{a}_{\cdot a},
\end{split}
\]
\[
\begin{split}
\mathscr{X}^{\hat{i}}\,\widehat{\torN}_{\hat{i}\hat{j}}^{\hat{i}}=&\mathscr{X}^{\hat{i}}\left(\widehat{\an}_{\hat{i}\hat{j}}^{\hat{j}}-\widehat{\an}_{\hat{j}\hat{i}}^{\hat{j}}-E^{\hat{j}}(\left[E_{\hat{i}},E_{\hat{j}}\right])\right)\\
=&\mathscr{X}^{\hat{i}}\left(\widehat{\an}_{\hat{i}a}^{a}+\widehat{\an}_{\hat{i}\,\di+a}^{\di+a}-\widehat{\an}_{a\hat{i}}^{a}-\widehat{\an}_{\di+a\,\hat{i}}^{\di+a}-E^{a}(\left[E_{\hat{i}},E_{a}\right])-E^{\di+a}(\left[E_{\hat{i}},E_{\di+a}\right])\right)\\
=&X^{c}\left(-\mathrm{d}x^{a}(\left[\dot{\partial}_{c},\delta_{a}\right])-\delta y^{a}(\left[\dot{\partial}_{c},\dot{\partial}_{a}\right])\right) \\
=0.
\end{split}
\]
Putting these together, \eqref{eq:div fundamental} proves \eqref{eq: vertical divergence},\footnote{  Notice, however, that \eqref{eq: vertical divergence} is a purely vertical identity independent of $\N$. So,   it could also have been proven by direct computation without any connection for $\mathrm{T}A\longrightarrow A$.}  and yields the proposition.

\section{Proof of Th. \ref{thm:variational equations} (Affine equation)} \label{app:A}

When varying $\N$ by $\N(\tau)$, taking Rem. \ref{rem:vol independent of N} into account, it is immediate to check that 
\begin{equation}
\begin{split}\left.\frac{\partial}{\partial\tau}\right|_{\tau=0}\mathscr{S}^{D}[\N(\tau),L] & =\int_{\dom}\left.\frac{\partial}{\partial\tau}\right|_{\tau=0}\underline{L^{-1}\,\ri(\tau)\,\vol} \\
 & =\int_{\dom}\underline{L^{-1}\left.\frac{\partial}{\partial\tau}\right|_{\tau=0}\ri(\tau)\,\vol}. 
 \label{eq:variation N}
\end{split}
\end{equation}
Using \eqref{eq:curvature and torsion} and \eqref{eq:horizontal distribution},
\[
\ri(\tau)
=\delta_{b}(\tau)\N_{a}^{c}(\tau)\left(\delta^a_c\,y^b-y^a\,\delta^b_c\right),\qquad\delta_{j}(\tau)\N_{i}^{k}(\tau)=\partial_{j}\N_{i}^{k}(\tau)-\N_{j}^{d}(\tau)\,\dot\partial_d\N_{i}^{k}(\tau);
\]
 here, $\delta_c^a$ is Kronecker's, in contrast to $\delta_j(\tau)$, which comes from $\N(\tau)$. 

Let us express the derivative of $\delta_{j}(\tau)\N_{i}^{k}(\tau)$ in terms of $\covN$ and $\torN_{ib}^k\,y^b=\left(\N_{i\,\cdot b}^k-\N_{b\,\cdot i}^k\right)y^b=\N_{a\,\cdot b}^k\left(\delta^a_i\,y^b-y^a\,\delta^b_i\right)$.  We do this by commuting $\left.\partial_{\tau}\right|_0$ with $\partial_j$ and $\dot\partial_d$, 
\[
\left.\frac{\partial}{\partial\tau}\right|_{\tau=0}\left\{ \delta_{j}(\tau)\N_{i}^{k}(\tau)\right\}=\partial_{j}\left(\N^\prime\right)_{i}^{k}-\left(\N^\prime\right)_{j}^{d}\,\N_{i\,\cdot d}^{k}-\N_{j}^{d}\left(\N^\prime\right)_{i\,\cdot d}^{k}
=\delta_{j}\left(\N^\prime\right)_{i}^{k}-\left(\N^\prime\right)_{j}^{d}\,\N_{i\,\cdot d}^{k}
\]
and then adding and substracting $-\N_{j\,\cdot i}^{d}\left(\N^\prime\right)_{d}^{k}+\N_{j\,\cdot d}^{k}\left(\N^\prime\right)_{i}^{d}$ so as to obtain the same terms as in \eqref{eq:covariant derivative},
\[
 \left.\frac{\partial}{\partial\tau}\right|_{\tau=0}\left\{ \delta_{j}(\tau)\N_{i}^{k}(\tau)\right\} =\covN_j\left(\N^\prime\right)_{i}^{k}+\N_{j\,\cdot i}^{d}\left(\N^\prime\right)_{d}^{k}-\N_{j\,\cdot d}^{k}\left(\N^\prime\right)_{i}^{d}-\N_{i\,\cdot d}^{k}\left(\N^\prime\right)_{j}^{d}.
\] 
With this,
\begin{equation}
	\begin{split}
	& \quad L^{-1}\left.\frac{\partial}{\partial\tau}\right|_{\tau=0}\ri(\tau) \\
	& = L^{-1}\left\{\covN_b\left(\N^\prime\right)_{a}^{c}+\N_{b\,\cdot a}^{d}\left(\N^\prime\right)_{d}^{c}-\N_{b\,\cdot d}^{c}\left(\N^\prime\right)_{a}^{d}-\N_{a\,\cdot d}^{c}\left(\N^\prime\right)_{b}^{d}\right\}\left(\delta^a_c\,y^b-y^a\,\delta^b_c\right) \\
	& = L^{-1}\left\{\covN_b\left(\N^\prime\right)_{a}^{c}\left(\delta^a_c\,y^b-y^a\,\delta^b_c\right)+\N_{b\,\cdot a}^{d}\left(\delta^a_c\,y^b-y^a\,\delta^b_c\right)\left(\N^\prime\right)_{d}^{c}\right\} \\
	& = L^{-1}\,\covN_c\left(\N^\prime\right)_{d}^{d}\,y^c-L^{-1}\,\covN_c\left(\N^\prime\right)_{d}^{c}\,y^d-L^{-1}\,\torN_{ca}^d\,y^a\left(\N^\prime\right)_{d}^{c}.
	\label{eq:variation ric}
	\end{split}
\end{equation}

Recall that, by Prop. \ref{prop:covariante canon = 0}, $\covN_i L=\covN_i g_{ab}\,y^a\,y^b$. Calling $X:=L^{-1}\left(\N^\prime\right)_{d}^{d}\,y^c\,\partial_{c}\in\mathrm{h}^{0}\mathcal{T}_{0}^{1}(M_A)$ and using \eqref{eq:horizontal divergence},  
\begin{equation}
\begin{split} 
& \quad L^{-1}\,\covN_c\left(\N^\prime\right)_{d}^{d}\,y^c \\
&=\covN_c(L^{-1}\left(\N^\prime\right)_{d}^{d}\,y^c)-\covN_c(L^{-1})\left(\N^\prime\right)_{d}^{d}\,y^c \\
&=\dive(X^\mathrm{H})-L^{-1}\left\{ \left(g^{ab}-\frac{\di}{2}\,\frac{1}{L}\,y^{a}\,y^{b}\right)\covN_c g_{ab}+\torN_{ca}^{a}\right\}y^{c}\left(\N^\prime\right)_{d}^{d} \\
& \quad+L^{-2}\,y^c\,\covN_c g_{ab}\,y^a\,y^b\,\left(\N^\prime\right)_{d}^{d} \\
&= \dive(X^\mathrm{H})-L^{-1}\left\{ \left(g^{ab}-\frac{\di+2}{2}\,\frac{1}{L}\,y^{a}\,y^{b}\right)\covN_c g_{ab}+\torN_{ca}^{a}\right\}y^{c}\left(\N^\prime\right)_{d}^{d}.
\label{eq:integration by parts 1}
\end{split}
\end{equation}
Analogously, calling $Y:=L^{-1}\left(\N^\prime\right)_{d}^{c}\,y^{d}\,\partial_{c}\in\mathrm{h}^{0}\mathcal{T}_{0}^{1}(M_A)$,
\begin{equation}
	\begin{split}
	& \quad L^{-1}\,\covN_c\left(\N^\prime\right)_{d}^{c}\,y^d \\
	& = \dive(Y^\mathrm{H})-L^{-1}\left\{ \left(g^{ab}-\frac{\di+2}{2}\,\frac{1}{L}\,y^{a}\,y^{b}\right)\covN_c g_{ab}+\torN_{ca}^{a}\right\}y^{d}\left(\N^\prime\right)_{d}^{c}.
	\label{eq:integration by parts 2}
	\end{split}
\end{equation}
Substituting \eqref{eq:integration by parts 1} and \eqref{eq:integration by parts 2} in \eqref{eq:variation ric},  
\[
\begin{split}
& \quad L^{-1}\left.\frac{\partial}{\partial\tau}\right|_{\tau=0}\ri(\tau) \\ &=\dive(X^{H})-\dive(Y^{H})-L^{-1}\left\{ \left(g^{ab}-\frac{\di+2}{2}\,\frac{1}{L}\,y^{a}\,y^{b}\right)\covN_c g_{ab}+\torN_{ca}^{a}\right\}y^{c}\left(\N^\prime\right)_{d}^{d} \\
& \quad +L^{-1}\left\{ \left(g^{ab}-\frac{\di+2}{2}\,\frac{1}{L}\,y^{a}\,y^{b}\right)\covN_c g_{ab}+\torN_{ca}^{a}\right\}y^{d}\left(\N^\prime\right)_{d}^{c}-L^{-1}\,\torN_{ca}^d\,y^a\left(\N^\prime\right)_{d}^{c}.
\end{split}
\]
 Prop. \ref{prop:divergence formulas} also guarantees that, upon integration on $\mathbb{P}^+A$, the divergence terms can be discarded. Indeed:
\[
\int_{\dom}\underline{\dive(X^\mathrm{H})\,\vol}=-\int_{\dom}\mathrm{d}(\underline{X^\mathrm{H}\lrcorner\vol})=-\int_{\partial\dom}\underline{X^\mathrm{H}\lrcorner\vol}
\]
(analogously for $\underline{\dive(Y^\mathrm{H})\,\vol}$) and, by the fact that $\N(\tau)$ is $\dom$-admissible (Def. \ref{def:variations}), $X$ and $Y$ vanish on $\left(\mathbb{P}^+\right)^{-1}(\partial\dom)$, so $\underline{X^\mathrm{H}\lrcorner\vol}$ and $\underline{Y^\mathrm{H}\lrcorner\vol}$ vanish on $\partial\dom$ (see the comment at the end of Prop. \ref{prop:inducing forms} (ii)).  The remaining terms, substituting back in \eqref{eq:variation N}, can be expressed as
\[
\begin{split}
&\quad\left.\frac{\partial}{\partial\tau}\right|_{\tau=0}\mathscr{S}^{D}[\N(\tau),L] \\
&=\int_\dom\underline{L^{-1}\left\{ \left(g^{ab}-\frac{\di+2}{2}\,\frac{1}{L}\,y^{a}\,y^{b}\right)\covN_c g_{ab}+\torN_{ca}^{a}\right\}\left(\delta^c_e\,y^d-y^c\,\delta^d_e\right)\left(\N^\prime\right)_{d}^{e}\,\vol} \\
& \quad-\int_\dom\underline{L^{-1}\,\torN_{ea}^d\,y^a\left(\N^\prime\right)_{d}^{e}\,\vol}.
\end{split}
\]

 The field $\N^\prime\in\mathrm{h}^1\mathcal{T}^1_1(\MA)$ with $\mathbb{P}^+(\mathrm{Supp}\,\N^\prime)$ relatively compact in $\mathbb{P}^+A$ is arbitrary: for any such $\N^\prime$, there exists a variation $\N(\tau)$ that has it as its variational field (for instance, $\N(\tau)=\N+\tau\,\N^\prime$). Thanks to this, the standard argument of the calculus of variations can be applied (on a $\dom$ around each $\mathbb{P}^+v\in\mathbb{P}^+A$). We conclude that the vanishing of all the $\left.\partial_\tau\right|_{0}\mathscr{S}^{D}[\N(\tau),L]$'s is equivalent to 
\begin{equation}
\left\{\left(g^{ab}-\frac{\di+2}{2}\,\frac{1}{L}\,y^{a}\,y^{b}\right)\covN_c g_{ab}+\torN_{ca}^{a}\right\}\left(\delta^c_i\,y^j-y^c\,\delta^j_i\right)-\torN_{ia}^j\,y^a=0
\label{eq:affine equation 0}
\end{equation}
on $A$. 

The only thing that remains is to reexpress this in terms of $\J:=\N-\LC$. Substituting \eqref{eq:4 aux 7} and \eqref{eq:4 aux 13} in \eqref{eq:affine equation 0} yields the required equation \eqref{eq:affine equation}. 

\section{Proof of Th. \ref{thm:variational equations} (Metric equation)
} \label{app:B}
When varying $L$ by $L(\tau)$, it is immediate that
\begin{equation}
	\begin{split}\left.\frac{\partial}{\partial\tau}\right|_{\tau=0}\mathscr{S}^{D}[\N,L(\tau)] & =\int_{\dom}\left.\frac{\partial}{\partial\tau}\right|_{\tau=0}\underline{L(\tau)^{-1}\,\ri\,\vol(\tau)} \\
	& =\int_{\dom}\underline{\left.\frac{\partial}{\partial\tau}\right|_{\tau=0}\left\{L(\tau)^{-1}\,\ri\,\vol(\tau)\right\}}\\
	& =-\int_{\dom}\underline{L^{-1}\,\frac{\ri}{L}\,L^\prime\,\vol}+\int_{\dom}\underline{L^{-1}\,\ri\left.\frac{\partial}{\partial\tau}\right|_{\tau=0}\vol(\tau)}.
	\label{eq:variation L}
	\end{split}
\end{equation}
By \eqref{eq:vol},  
\[
\vol(\tau)=\frac{\left|\det g_{ij}(\tau)\right|}{L(\tau)^{\frac{\di}{2}}}\,\mathrm{d}x\wedge\mathrm{d}y.
\]

We compute the derivative of this taking into account that 
\[
\left.\frac{\partial}{\partial\tau}\right|_{\tau=0}g_{ij}(\tau)=\frac{1}{2}\,L^\prime_{\cdot i \cdot j},\qquad\left.\frac{\partial}{\partial\tau}\right|_{\tau=0}L(\tau)^{\frac{\di}{2}}=\frac{\di}{2}\,L^{\frac{\di}{2}-1}\,L^\prime:
\]
by Jacobi's formula for the derivative of a determinant,
\[
\begin{split}
\left.\frac{\partial}{\partial\tau}\right|_{\tau=0}\vol(\tau) &=\left(\frac{1}{2}\,\frac{\left|\det g_{ij}\right|}{L^{\frac{\di}{2}}}\,g^{ab}\,L^\prime_{\cdot a\cdot b}-\frac{\di}{2}\,\frac{\left|\det g_{ij}\right|}{L^{\di}}\,L^{\frac{\di}{2}-1}\,L^\prime\right)\mathrm{d}x\wedge\mathrm{d}y \\
&=\left(\frac{1}{2}\,g^{ab}\,L^\prime_{\cdot a\cdot b}-\frac{\di}{2}\,\frac{1}{L}\,L^\prime\right)\vol.
\end{split}
\]
Substituting in \eqref{eq:variation L} and putting $\widetilde{\ri}:=L^{-1}\,\ri\in\mathrm{h}^{0}\mathcal{F}(A)$,
\begin{equation}
	\quad\left.\frac{\partial}{\partial\tau}\right|_{\tau=0}\mathscr{S}^{D}[\N,L(\tau)]=-\frac{\di+2}{2}\int_{\dom}\underline{L^{-1}\,\widetilde{\ri}\,L^\prime\,\vol}+\frac{1}{2}\int_{\dom}\underline{\widetilde{\ri}\,g^{ab}\,L^\prime_{\cdot a\cdot b}\,\vol}.
	\label{eq:variation L 2}
\end{equation}

Calling $X:=\widetilde{\ri}\,g^{ab}\,L^\prime_{\cdot a}\,\partial_{b}\in\mathrm{h}^{1}\mathcal{T}_{0}^{1}(M_A)$ and using \eqref{eq: vertical divergence}, $g^{ib}_{\cdot b}=-2\,\C^i$, and the $2$-homogeneity of $L^\prime$,
\[
\begin{split}
\widetilde{\ri}\,g^{ab}\,L^\prime_{\cdot a\cdot b} &=X^b_{\cdot b}-g^{ab}\,\widetilde{\ri}_{\cdot b}\,L^\prime_{\cdot a}-\widetilde{\ri}\,g^{ab}_{\cdot b}\,L^\prime_{\cdot a}\\
&=\dive(X^\mathrm{V})-\widetilde{\ri}\,g^{ab}\left(2\,\C_{b}-\di\,\frac{y_{b}}{L}\right)L^\prime_{\cdot a}-g^{ab}\,\widetilde{\ri}_{\cdot b}\,L^\prime_{\cdot a}+2\,\widetilde{\ri}\,\C^a\,L^\prime_{\cdot a} \\
&=\dive(X^\mathrm{V})+2\di L^{-1}\,\widetilde{\ri}\,L^\prime-g^{ab}\,\widetilde{\ri}_{\cdot b}\,L^\prime_{\cdot a}.
\end{split}
\]
Calling $Y:=L^\prime\,g^{ab}\,\widetilde{\ri}_{\cdot b}\,\partial_{a}\in\mathrm{h}^{1}\mathcal{T}_{0}^{1}(M_A)$ and again using \eqref{eq: vertical divergence}, $g^{ia}_{\cdot a}=-2\,\C^i$, and the $0$-homogeneity of $\widetilde{\ri}$,
\[
\begin{split}
\widetilde{\ri}\,g^{ab}\,L^\prime_{\cdot a\cdot b} &=\dive(X^\mathrm{V})+2\di L^{-1}\,\widetilde{\ri}\,L^\prime-Y^a_{\cdot a}+g^{ab}_{\cdot a}\,\widetilde{\ri}_{\cdot b}\,L^\prime+g^{ab}\,\widetilde{\ri}_{\cdot a \cdot b}\,L^\prime \\
&=\dive(X^\mathrm{V})+2\di L^{-1}\,\widetilde{\ri}\,L^\prime-\dive(Y^\mathrm{V}) \\
&\quad+g^{ab}\left(2\,\C_{a}-\di\,\frac{y_{a}}{L}\right)\widetilde{\ri}_{\cdot b}\,L^\prime-2\,\C^b\,\widetilde{\ri}_{\cdot b}\,L^\prime+g^{ab}\,\widetilde{\ri}_{\cdot a \cdot b}\,L^\prime \\
&=\dive(X^\mathrm{V})-\dive(Y^\mathrm{V})+2\di L^{-1}\,\widetilde{\ri}\,L^\prime+g^{ab}\,\widetilde{\ri}_{\cdot a \cdot b}\,L^\prime.
\end{split}
\]
Substituting this back in \eqref{eq:variation L 2} and dropping the divergence terms  (by the analogous reasoning as in Appendix \ref{app:A}), 
\[
\begin{split}
&\left.\frac{\partial}{\partial\tau}\right|_{\tau=0}\mathscr{S}^{D}[\N,L(\tau)] \\ =&-\frac{\di+2}{2}\int_{\dom}\underline{L^{-1}\,\widetilde{\ri}\,L^\prime\,\vol}+\di\int_{\dom}\underline{L^{-1}\,\widetilde{\ri}\,L^\prime\,\vol}+\frac{1}{2}\int_{\dom}\underline{g^{ab}\,\widetilde{\ri}_{\cdot a \cdot b}\,L^\prime\,\vol} \\
=&\frac{\di-2}{2}\int_{\dom}\underline{L^{-1}\,\widetilde{\ri}\,L^\prime\,\vol}+\frac{1}{2}\int_{\dom}\underline{g^{ab}\,\widetilde{\ri}_{\cdot a \cdot b}\,L^\prime\,\vol}.
\end{split}
\]

 The field $L^\prime\in\mathrm{h}^2\mathcal{F}(A)$ with $\mathbb{P}^+(\mathrm{Supp}\,L^\prime)$ relatively compact and small enough in $\mathbb{P}^+A$ is arbitrary: for any such $L^\prime$, there exists a variation $L(\tau)$ that has it as its variational field (for instance, $L(\tau)=L+\tau\,L^\prime$). Again, the standard argument of the calculus of variations can be applied around each $\mathbb{P}^+v\in\mathbb{P}^+A$, concluding that the vanishing of all the $\left.\partial_\tau\right|_{0}\mathscr{S}^{D}[\N,L(\tau)]$'s is equivalent to 
\[
\left(\di-2\right)L^{-1}\,\widetilde{\ri}+g^{ab}\,\widetilde{\ri}_{\cdot a \cdot b}=0.
\] 

 Finally, one straightforwardly rewrites 
\[
\left(\di-2\right)L^{-1}\,\widetilde{\ri}+g^{ab}\,\widetilde{\ri}_{\cdot a \cdot b}=-\left(\di+2\right)L^{-2}\,\ri+L^{-1}\,g^{ab}\,\ri_{\cdot a \cdot b};
\]
indeed, the right hand side of this becomes the left hand side by the same computations as in the beginning of the proof of Lem. \ref{lem:equation f}, yielding the required equation \eqref{eq:metric equation}.

\end{document}